\font \manual=manfnt at 7pt
\font \sc=cmcsc10 at 9truept
\font \smallrm=cmr10 at 8truept
\font \smallbf=cmbx10 at 8truept
\font \smallsl=cmsl10 at 8truept
\newtheorem{thm}{Theorem}[section]
\newtheorem{theorem}[thm]{Theorem}
\newtheorem{corollary}[thm]{Corollary}
\newtheorem{lemma}[thm]{Lemma}
\newtheorem{proposition}[thm]{Proposition}
\theoremstyle{definition}
\newtheorem{definition}[thm]{Definition}
\newtheorem{example}[thm]{Example}
\newtheorem{examples}[thm]{Examples}
\newtheorem{remark}[thm]{Remark}
\newtheorem{remarks}[thm]{Remarks}
\newtheorem{free text}[thm]{}
\newcommand{\dbend} {$ {}^{\hbox{\manual \char127}} $}
\newcommand{\N} {\mathbb{N}}
\newcommand{\Z} {\mathbb{Z}}
\newcommand{\KK} {\mathbb{K}}
\newcommand{\bk} {\Bbbk}
\newcommand{\bt} {\mathbf{t}}
\newcommand{\bu} {\mathbf{u}}
\newcommand{\bG} {\mathbf{G}}
\newcommand{\bT} {\mathbf{T}}
\newcommand{\tS} {\widetilde{S}}
\newcommand{\tH} {\widetilde{H}}
\newcommand{\cE} {\mathcal{E}}
\newcommand{\cL} {\mathcal{L}}
\newcommand{\cM} {\mathcal{M}}
\newcommand{\cO}{\mathcal{O}}
\newcommand{\fg} {\mathfrak{g}}
\newcommand{\fh} {\mathfrak{h}}
\newcommand{\fb} {\mathfrak{b}}
\newcommand{\talpha} {\widetilde{\alpha}}
\newcommand{\tbeta} {\widetilde{\beta}}
\newcommand{\tgamma} {\widetilde{\gamma}}
\newcommand{\teta} {\widetilde{\eta}}
\newcommand{\tDelta} {\widetilde{\Delta}}
\newcommand{\zero} {{\bar{0}}}
\newcommand{\uno} {{\bar{1}}}
\newcommand{\bigtimes} {\diagup \hskip-9pt \diagdown}
\newcommand{\kzg} {K_{\Z}(\fg)}
\newcommand{\kzgz} {K_\Z(\fg_0)}
\newcommand{\alg} {\mathrm{(alg)}}
\newcommand{\salg} {\mathrm{(salg)}}
\newcommand{\sets}{{(\mathrm{sets})}}
\newcommand{\grps} {\mathrm{(groups)}}
\newcommand{\lie} {\mathrm{(Lie)}}
\newcommand{\modul} {\mathrm{(mod)}}
\newcommand{\spec}{{\hbox{\sl Spec}\,}}
\newcommand{\uspec}{\underline{\hbox{\sl Spec}\,}}
\newcommand{\Ad}{\hbox{\sl Ad}}
\newcommand{\ad}{\hbox{\sl ad}}
\newcommand{\Hom}{\hbox{\sl Hom}}
\newcommand{\End}{\hbox{\sl End}}
\newcommand{\Der}{\hbox{\sl Der}}
\newcommand{\Lie}{\hbox{\sl Lie}}
\newcommand{\rGL}{\mathrm{GL}}
\newcommand{\rgl}{\mathfrak{gl}}
\newcommand{\rsl}{\mathfrak{sl}}
\newcommand{\rso}{\mathfrak{so}}
\begin{document}

{\ }

\vskip-49pt

   \centerline{ {\smallsl Forum Mathematicum\/}  {\smallbf 26}  (2014), no.\ 5, 1473--1564   \ \ \ --- \ \ \  {\smallbf DOI:}  10.1515/forum-2011-0144 }
 \vskip1pt
   \centerline{\smallrm {\smallsl The original publication is available at\/}
\  www.degruyter.com }

\vskip25pt   {\ }

\centerline{\Large \bf ALGEBRAIC SUPERGROUPS}
 \vskip7pt
\centerline{\Large \bf OF CARTAN TYPE}

\vskip17pt

\centerline{ F. Gavarini }

\vskip7pt

\centerline{\it Dipartimento di Matematica, Universit\`a di Roma ``Tor Vergata'' } \centerline{\it via della ricerca scientifica 1  --- I-00133 Roma, Italy}

\centerline{{\footnotesize e-mail: gavarini@mat.uniroma2.it}}

\vskip29pt

\begin{abstract}
 \vskip2pt
 \hskip-9pt   \footnote{\ 2010 {\it MSC}\;: \, Primary 14M30, 14A22; Secondary 17B20.}
%
   I present a construction of connected affine algebraic supergroups  $ \bG_V $  associated with simple Lie superalgebras  $ \fg $  of Cartan type and with  $ \fg $--modules  $ V $.  Conversely, I prove that every connected affine algebraic supergroup whose tangent Lie superalgebra is of Cartan type is necessarily isomorphic to one of the supergroups  $ \, \bG_V $  that I introduced.  In particular,
the supergroup associated in this way with  $ \, \fg = W(n) \, $  and its standard representation is described.
\end{abstract}

\vskip5pt

   \centerline{\sc Dedicated to Pierre Cartier,}
 \vskip1pt
   \centerline{\sc with great admiration, on the occasion of his 80th birthday.}

 \vskip35pt

\section{Introduction}

\smallskip

   {\ } \quad   A real milestone in classical Lie theory is the celebrated classification theorem for complex finite dimensional simple Lie algebras.  A similar key result is the classification of all complex finite dimensional simple Lie superalgebras  (cf.~\cite{ka});  in particular, this ensures that these objects form two disjoint families: those of  {\sl classical\/}  type, and those of  {\sl Cartan\/}  type.  The ``classical'' ones are strict super-analogue of simple, f.d.~complex Lie algebras; the ``Cartan'' ones instead are a super-analogue of complex Lie algebras of Cartan type, which are simple but  {\sl infinite\/}  dimensional.

\smallskip

   As in the standard Lie context, one can base upon this classification result to tackle the classification problem of existence, construction and uniqueness of simple Lie supergroups, or even simple algebraic supergroups.  A super-analogue of Lie's Third Theorem solves it for Lie supergroups: but the question remains for construction and for the whole algebraic point of view.

\smallskip

   In the standard context, a constructive procedure providing all (f.d., connected) simple algebraic groups was provided by Chevalley, over fields; one starts with a (complex) f.d.~simple Lie algebra  $ \fg \, $,  a faithful  $ \fg $--module  $ V $,  and eventually realizes a group of requested type as a subgroup of  $ \rGL(V) \, $.  In particular, this yields all connected algebraic groups whose tangent Lie algebra is a (f.d.) simple one; this method (and result) also extends to the framework of reductive  $ \Z $--group  schemes.
 By analogy,
 one might try to adapt Chevalley's method to the f.d.~simple Lie superalgebras of classical type, so to provide connected algebraic supergroup-schemes (over  $ \Z $)  which ``integrate'' any such Lie superalgebra.  This is done in  \cite{fg2}   --- see also  \cite{fg1}  and  \cite{ga}.  In this paper instead I implement Chevalley's idea to simple Lie superalgebras of Cartan type, with full success: the main result is an existence result, via a constructive procedure, for connected, algebraic supergroup-schemes (over any ring, e.g.~$ \Z $)  whose tangent Lie superalgebra be simple of Cartan type.
%
   As a second result, I prove also a uniqueness theorem
 for algebraic supergroups of the above mentioned type.

\smallskip

   Hereafter I shortly sketch how the present work is organized.

\smallskip

   The initial datum is a f.d.~simple Lie superalgebra of Cartan type, say  $ \fg \, $.  Basing upon a detailed description of the root spaces (with respect to a fixed Cartan subalgebra), I introduce the key notion of  {\sl Chevalley basis}.  Then I prove two basic results: the existence of Chevalley bases, and a PBW-like theorem for the ``Kostant  $ \Z $--form''  of the universal enveloping superalgebra of  $ \fg \, $.
                                                           \par
   Next I take a faithful  $ \fg $--module $ V $,  and I show that there exists a lattice $ M $  in  $ V $  fixed by the Kostant superalgebra and also by a certain integral form  $ \fg_V $  of  $ \fg \, $.  I define a functor  $ G_V \, $  from the category  $ \salg_\bk $  of commutative  $ \bk $--superalgebras  to the category  $ \grps $  of groups as follows: for  $ \, A \in \salg_\bk \, $,  I let  $ G_V(A) $  be the subgroup of  $ \rGL \big( A \otimes_\Z \! M \big) \, $  generated by ``homogeneous one-parameter subgroups'' associated with the root vectors and with the toral elements in a Chevalley basis.  Then I pick the sheafification  $ \bG_V $  (in the sense of category theory) of the functor  $ G_V \, $.
                                                           \par
   Using commutation relations among generators, I find a factorization of  $ \bG_V \, $  into direct product of representable (algebraic) superschemes: thus  $ \bG_V $  itself is representable, hence it is an ``affine (algebraic) supergroup''.  Some extra work shows how  $ \bG_V $  depends on  $ V $,  that it is independent of the choice of  $ M $ and that its tangent Lie superalgebra is  $ \fg_V \, $.  So the construction of  $ \bG_V $  yields an  {\sl existence theorem\/}  of a supergroup having  $ \fg_V $  as tangent Lie superalgebra.  Right after, I prove the converse, i.e.~a  {\sl uniqueness theorem\/}  showing that any such supergroup is isomorphic to some  $ \bG_V \, $.

\smallskip

   Finally, I illustrate the example of  $ \bG_V $  for  $ \fg $  of type  $ W(n) $  and  $ V $  its defining representation   --- i.e.~the Grassmann algebra in  $ n $  odd indeterminates,  $ W(n) $  being the algebra of its superderivations.

\vskip13pt

   \centerline{\bf Acknowledgements}
 \vskip2pt
   \centerline{The author thanks P.~Cartier, M.~Duflo, R.~Fioresi, V.~Serganova for many useful
conversations.}

\bigskip

\section{Preliminaries}  \label{preliminaries}

\smallskip

   {\ } \quad   We introduce hereafter some preliminaries of supergeometry (main references are \cite{dm},  \cite{ma}, \cite{vsv}).

  \subsection{Superalgebras, superspaces, supergroups}  \label{first_preliminaries}

\smallskip

   {\ } \quad   Let  $ \bk $  be a unital, commutative ring.
%
%
   We call  {\it  $ \bk $--superalgebra\/}  any associative, unital  $ \bk $--algebra  $ A $  which is  $ \Z_2 $--graded,  where  $ \Z_2 $  is the two-element group  $ \, \Z_2 := \big\{ \zero, \uno \big\} \, $:  thus  $ \, A = A_\zero \oplus A_\uno \, $  and  $ \, A_{\overline{a}} \, A_{\overline{b}} \subseteq A_{\overline{a}+\overline{b}} \; $.  The  $ \bk $--submodule  $ A_\zero $  and its elements are called  {\it even},  $ A_\uno $  and its elements  {\it odd}\,.  By  $ \, p(x) $  $(\in \Z_2) \, $  we denote the  {\sl parity\/}  of any homogeneous element  $ \, x \in A_{p(x)} \, $.  All  $ \bk $--super\-algebras  form a category, whose morphisms are those in the category of algebras preserving the unit and the  $ \Z_2 $--grading.  For any  $ \, n \in \N \, $  we call  $ A_\uno^{\,n} $  the  $ A_\zero \, $--span  in  $ A $  of all products  $ \, \vartheta_1 \cdots \vartheta_n \, $  with  $ \, \vartheta_i \in A_\uno \, $  for all  $ i \, $,  and  $ A_\uno^{(n)} $  the unital subalgebra of  $ A $  generated by  $ A_\uno^{\,n} \, $.
 A superalgebra  $ A $  is  {\it commutative\/}  iff  $ \; x y = (-1)^{p(x) p(y)} y x \; $  for all homogeneous  $ \, x $,  $ y \in A \; $  and  $ \; z^2 = 0 \; $  for all odd  $ \, z \in A_\uno \; $.  We denote  by $ \salg_\bk $  the category of commutative  $ \bk $--superalgebras,  dropping the subscript  $ \bk $  if unnecessary.

\vskip9pt

\begin{definition}
  A  {\it superspace}  $ \, S = \big( |S|\,, \cO_S \big) \, $  is a topological space  $ |S| $
%
%
 with a sheaf of commutative superalgebras  $ \cO_S $  such that the stalk  $ \cO_{S,x} $  is a local superalgebra for all  $ \, x \in |S| \, $.  A  {\it morphism}  $ \; \phi: S \longrightarrow T \; $  of superspaces consists of a pair $ \; \phi = \big( |\phi|, \phi^* \big) \, $,  where  $ \; \phi : |S| \longrightarrow |T| \; $  is a morphism of topological spaces and  $ \; \phi^* : \cO_T \longrightarrow \phi_* \cO_S \; $
   \hbox{is a sheaf morphism such that}
 $ \; \phi_x^* \big( {\mathbf{m}}_{|\phi|(x)} \big) \subseteq {\mathbf{m}}_x \; $  where  $ {\mathbf{m}}_{|\phi|(x)} $ and  $ {\mathbf{m}}_{x} $  are the maximal ideals in the stalks $ \cO_{T, \, |\phi|(x)} $  and  $ \cO_{S,x} \, $,  and  $ \phi_x^* $  is the morphism induced by  $ \phi^* $  on the stalk.  Here as usual  $ \phi_*\cO_S $  is the direct image (on  $ |T| $)  of  $ \cO_S(V) \, $.
\end{definition}

\vskip2pt

  Given a superspace  $ \, S = \big( |S| \, , \cO_S \big) \, $,  let  $ \cO_{S,\zero} $  and  $ \cO_{S,\uno} $  be the sheaves on  $ |S| $  defined as follows:  $ \; \cO_{S,\zero}(U) := {\cO_{S}(U)}_\zero \; $,  $ \; \cO_{S,\uno}(U) := {\cO_{S}(U)}_\uno \; $  for each open subset  $ U $  in  $ |S| \, $.  Then  $ \cO_{S,\zero} $  is a sheaf of ordinary commutative algebras, while  $ \cO_{S,\uno} $  is a sheaf of  $ \cO_{S,\zero} \, $--modules.

\vskip10pt

\begin{definition}
   A  {\it superscheme\/}  is a superspace  $ S := \big( |S| \, , \cO_S \big) \, $  such that  $ \big( |S| \, , \cO_{S,\zero} \big) \, $  is an ordinary scheme and  $ \cO_{S,\uno} $  is a quasi-coherent sheaf of  $ \cO_{S,\zero} \, $--modules.  A  {\it morphism\/}  of superschemes is one of the underlying superspaces.
   The  {\it (super)dimension\/}  of  $ S $  is by definition the pair  $ \; \text{\it dim}\,(|S|) \,\big|\, \text{\it rk}\,(\cO_{S,\uno}) \; $  where  $ \text{\it rk}\,(\cO_{S,\uno}) $  is the rank of the quasi-coherent sheaf of  $ \cO_{S,\zero} \, $--modules  $ \cO_{S,\uno} \; $.
\end{definition}

\vskip6pt

\begin{definition}  \label{spec}
  Let  $ \, A \in \salg_\bk \, $  and let  $ \cO_{A_\zero} $  be the structural sheaf of the ordinary scheme  $ \, \uspec(A_\zero) = \big( \spec(A_\zero) \, , \cO_{A_\zero} \big) \, $,  where  $ \spec(A_\zero) $  denotes the prime spectrum of
%
%
 $ A_\zero \; $.  Now  $ A $  is an  $ A_\zero $--module,  so we have a sheaf  $ \cO_A $  of  $ \cO_{A_\zero} $--modules  over  $ \spec(A_\zero) $  with stalk  $ A_p \, $,  the  $ p $--localization of the  $ A_\zero \, $--module  $ A \, $,  at any  $ \, p \in \spec(A_\zero) \, $.
   We set  $ \; \uspec(A) := \big( \spec(A_\zero) \, , \cO_A \big) \; $:  by definition, this is a superscheme.  We call  {\it affine\/}  any superscheme which is isomorphic to  $ \, \uspec(A) \, $  for some  $ \, A \in \salg_\bk \, $;  any affine supercheme is  {\it algebraic\/}  if its representing superalgebra is finitely generated.
\end{definition}

\smallskip

   Clearly any superscheme is locally isomorphic to an affine superscheme.

\begin{example}
   We call  {\sl affine superspace\/}  the superscheme  $ \, \mathbb{A}_\bk^{p|q} \! := \uspec \big( \bk[x_1,\dots,x_p] \otimes_\bk \bk[\xi_1 \dots \xi_q] \big) \, $  ($ \, p , q \in \N \, $),  also denoted  $ \, \bk^{p|q} \, $:  here  $ \bk[\xi_1 \dots \xi_q] $  is the exterior
 algebra generated by  $ q $  anticommuting indeterminates, and  $ \, \bk[x_1,\dots,x_p] \, $  the polynomial algebra in  $ p $  commuting indeterminates.
%
%
 \hfill  $ \diamondsuit $
\end{example}

\smallskip

\begin{definition}
   Let  $ X $  be a superscheme.  Its  {\it functor of points\/}  is the functor  $ \; h_X : \salg_\bk \longrightarrow \sets \; $  defined on objects by  $ \; h_X(A) := \Hom \big(\, \uspec(A) \, , X \big) \; $  and on arrows by  $ \; h_X(f)(\phi) := \phi \circ \uspec (f) \; $.  When  $ h_X $  is actually a functor from  $ \salg_\bk $  to  $ \grps $,  the category of groups, we say that  $ X $  is a  {\it supergroup-scheme}. If  $ X $  is affine, this is equivalent to the fact that $ \, \cO(X) \, $   --- the superalgebra of global sections of the structure sheaf on  $ X $  ---   is a (commutative)  {\sl Hopf superalgebra}. More in general, we shall call  {\it supergroup functor\/}  any functor  $ \; G : \salg_\bk \longrightarrow \grps \; $.
\end{definition}

\vskip4pt

%
%
   Any representable supergroup functor is the same as an affine supergroup: indeed, the former corresponds to the functor of points of the latter.
%
%
 See  \cite{ccf},  Ch.~3--5, for more details.
%
%
%
                                                    \par
%
%
%
   In the present work we consider only affine supergroups, described via their functor of points: we introduce them as supergroup functors, and then show that they are representable and algebraic.
%
%
%
%

\smallskip

\begin{examples}  \label{exs-supvecs}  {\ }
 \vskip3pt
   {\it (a)} \,  Let  $ V $  be a free  $ \bk $--supermodule.  For any commutative  $ \bk $--superalgebra $ \, A \, $  we define  $ \; V(A) \, := \, {(A \otimes_\bk V)}_\zero = \, A_\zero \otimes_\bk V_\zero \oplus A_\uno \otimes_\bk V_\uno \; $.  When  $ V $ is finite dimensional, this is a representable functor (from  $ \salg_\bk $  to  $ \bk $--super-vector spaces).
%
%
 Hence  $ V $  can be seen as an affine superscheme.
 \vskip2pt
   {\it (b)} \,  {\sl  $ \rGL(V) $  as an affine algebraic supergroup}.  Let  $ V $  be a free  $ \bk $--supermodule  of finite (su\-per-)rank  $ p|q \, $.  For any superalgebra  $ A \, $,  let  $ \, \rGL(V)(A) := \rGL\big(V(A)\big) \, $  be the set of isomorphisms  $ \; V(A) \longrightarrow V(A) \; $  preserving the  $ \Z_2 $--grading.  If we fix a homogeneous basis for  $ V $,  we see that  $ \, V \cong \bk^{p|q} \; $;  in other words,  $ \, V_\zero \cong \bk^p \, $  and  $ \, V_\uno \cong \bk^q \, $.  In this case, we also denote $ \, \rGL(V) \, $  with  $ \, \rGL_{p|q} \, $.  Now,  $ \rGL_{p|q}(A) $  is the group of invertible matrices of size  $ (p+q) $  with diagonal block entries in  $ A_\zero $  and off-diagonal block entries in $ A_\uno \, $.  It is known that the functor  $ \rGL(V) $  is representable, so  $ \rGL(V) $  is indeed an affine supergroup, and also algebraic; see (e.g.),  \cite{vsv}, Ch.~3,  for further details.
                                                   \par
   Note that every element  $ \rGL\big(V(A)\big) $  extends to a (degree-preserving,  $ A $--linear) automorphism of  $ \, V_A := A \otimes_\bk \! V \, $;  viceversa, any automorphism of  $ V_A $  restricts to an element of  $ \rGL\big(V(A)\big) \, $.  So  $ \rGL\big(V(A)\big) $  identifies with  $ \rGL\big(V_A\big) \, $,  the group of  ($ A $--linear)  automorphisms of  $ V_A \, $.  We call  $ \rGL(V_\bullet) $  the obvious functor from  $ \salg_\bk $  to  $ \grps $  given on objects by  $ \, A \mapsto \rGL(V_\bullet)(A) := \rGL(V_A) \, $.   \hfill  $ \diamondsuit $
\end{examples}

\medskip

  \subsection{Lie superalgebras}  \label{Lie-superalgebras}

\smallskip

   {\ } \quad   The notion of Lie superalgebra over a field is well known, at least for characteristic neither 2 nor 3.  To take into account all cases, we consider the following modified formulation: it is a ``correct'' notion of Lie superalgebras given by the standard notion enriched with an additional  ``$ 2 $--mapping'', a close analogue to the  $ p $--mapping  in a  $ p $--restricted  Lie algebra over a field of characteristic  $ p > 0 \, $.

\smallskip

\begin{definition}  \label{def-Lie-salg}
 {\it (cf.~\cite{bmpz}, \cite{du})} \,
  Let  $ \, A \in \salg_\bk \, $.  We call  {\sl Lie\/  $ A $--superalgebra\/}  any  $ A $--supermodule  $ \, \fg = \fg_\zero \oplus \fg_\uno \, $  endowed with a  {\it (Lie super)bracket\/}  $ \; [\,\ ,\ ] : \fg \times \fg \longrightarrow \fg \, $,  $ \; (x,y) \mapsto [x,y] \, $,  \, and a  {\it  $ 2 $--operation\/}  $ \; {(\ )}^{\langle 2 \rangle} : \fg_\uno \longrightarrow \fg_\zero \, $,  $ \; z \mapsto z^{\langle 2 \rangle} \, $,  \, such that (for all  $ \, x , y \in \fg_\zero \cup \fg_\uno \, $,  $ \, w \in \fg_\zero \, $,  $ \, z, z_1, z_2 \in \fg_\uno $):
 \vskip5pt
   {\it (a)}  \quad  $ [\,\ ,\ ] \, $  is  $ A $--superbilinear (in the obvious sense)\;,
\qquad  $ [w,w] \; = \; 0  \;\; ,  \qquad  \big[z[z,z]\big] \; = \; 0  \quad $;
 \vskip5pt
   {\it (b)}  $ \qquad  [x,y] \, + \, {(-1)}^{p(x) \, p(y)}[y,x] \; = \; 0  \qquad\; $  {\sl (anti-symmetry)}\,;
 \vskip7pt
   {\it (c)}  $ \quad  {(-\!1)}^{p(x) p(z)} [x,[y,z]] + {(-\!1)}^{p(y) p(x)} [y,[z,x]] \, + \, {(-\!1)}^{p(z) p(y)} [z,[x,y]] \, = \, 0 \;\;\, $  {\sl (Jacobi identity)}\,;
 \vskip6pt
   {\it (d)}  \;\quad  $ {(\ )}^{\langle 2 \rangle} \, $  is  $ A $--quadratic, i.e.~$ \;\;  {(a_\zero \, z)}^{\langle 2 \rangle} = \, a^2 \, z^{\langle 2 \rangle} \;\; $,  $ \;\;  {(a_\uno \, w)}^{\langle 2 \rangle} = \, 0 \;\;\; $  for  $ \; a_\zero \in A_\zero \, $,  $ \; a_\uno \in A_\uno \; $;
 \vskip6pt
%
%
   {\it (e)}  \qquad  $ {(z_1 \! + z_2)}^{\langle 2 \rangle}  \, = \;  z_1^{\langle 2 \rangle} + \, [z_1,z_2] \, + \, z_2^{\langle 2 \rangle}  \quad ,  \quad \qquad  \big[ z^{\langle 2 \rangle}, x \big]  \, = \;  \big[ z \, , [z,x] \big] \quad $.
 \vskip9pt
   All Lie  $ A $--superalgebras  form a category, whose morphisms are the  $ A $--superlinear  (in the obvious sense), graded maps preserving the bracket and the  $ 2 $--operation.
 \eject
 \vskip3pt
   A Lie superalgebra is said to be  {\sl simple\/}  if it has no non-trivial homogenenous ideal.  Simple Lie superalgebras of finite dimension over algebraically closed fields of characteristic zero were classified by V.~Kac  (cf.~\cite{ka}),
 to whom we shall refer for the standard terminology and notions.
\end{definition}

%
%
%

\smallskip

\begin{examples}   \label{def-Lie/Ass_End(V)}
 {\it (a)} \,  Let  $ \, \mathcal{A} = \mathcal{A}_\zero \oplus \mathcal{A}_\uno \, $  be any associative  $ \bk $--superalgebra.  There is a canonical structure of Lie superalgebra on  $ \mathcal{A} $  given by  $ \,\; [x,y] \, := \, x\,y - {(-1)}^{p(x) p(y)} y\,x \;\, $  for all homogeneous  $ \, x, y \in \mathcal{A}_\zero \cup \mathcal{A}_\uno \; $
and  $ 2 $--operation  $ \,\; z^{\langle 2 \rangle} := z^2 = z\,z \;\, $  (the associative square in  $ \mathcal{A} $)  for all odd  $ \, z \in \mathcal{A}_\uno \, $.
 \vskip4pt
   {\it (b)} \,  Let  $ \, V = V_\zero \oplus V_\uno \, $  be a  {\sl free\/}  $ \bk $--supermodule,  and consider  $ \End(V) \, $,  the endomorphisms of  $ V $  as an ordinary  $ \bk $--module.  This is again a free super  $ \bk $--module,  $ \; \End(V) = \End(V)_\zero \oplus \End(V)_\uno \, $,  \; where $ \End(V)_\zero $  are the morphisms which preserve the parity, while  $ \End(V)_\uno $  are the morphisms which reverse the parity.  By the recipe in  {\it (a)},  $ \End(V) $  is a Lie  $ \bk $--superalgebra
with  $ \; [A,B] := A B - {(-1)}^{p(A) p(B)} B A \, $,  $ \; C^{\langle 2 \rangle} := C^2 \, $,  \; for all  $ \, A, B, C \in \End(V) \; $  homogeneous, with  $ C $  odd.
                                                                         \par
   The standard example is for  $ V $  of finite rank, say  $ \, V := \bk^{p|q} = \bk^p \oplus \bk^q \, $,  with  $ \, V_\zero := \bk^p \, $  and  $ \, V_\uno := \bk^q \, $:  in this case we also write  $ \; \End\big(\bk^{p|q}\big) \! := \End(V) \; $  or  $ \; \rgl_{p|q} := \End(V) \; $.  Choosing a basis for  $ V $  of homogeneous elements (writing first the even ones), we identify  $ \End(V)_\zero $  with the set of all diagonal block matrices, and  $ \End(V)_\uno $  with the set of all off-diagonal block matrices.   \hfill  $ \diamondsuit $
\end{examples}

\medskip

\begin{free text}  \label{Lie-salg_funct}
 {\bf Lie superalgebras and Lie algebra valued functors.}  \ Let us fix
%
%
 $ \bk $  and  $ \salg_\bk $  as in section \ref{first_preliminaries},
 and let  $ \modul_\bk $  and  $ \lie_\bk $  be the category of  $ \bk $--modules  and of Lie  $ \bk $--algebras.  Any  $ \bk $--supermodule  $ \mathfrak{m} $  yields a well-defined functor  $ \; \cM_{\mathfrak{m}} : \salg_\bk \longrightarrow \modul_\bk \; $,  \, given on objects by  $ \; \cM_{\mathfrak{m}}(A) \, := \, \big( A \otimes \mathfrak{m} \big)_\zero \, = \, A_\zero \otimes \mathfrak{m}_\zero \, \oplus \, A_\uno \otimes \mathfrak{m}_\uno \; $,  \, for all  $ \, A \in \salg_\bk \, $.  If in addition  $ \, \mathfrak{m} = \fg \, $  is a Lie  $ \bk $--superalgebra,  then
%
%
 $ \, A \otimes \fg \, $  is a Lie  $ A $--superalgebra,  its Lie bracket being defined via sign rules by  $ \; \big[\, a \otimes  X \, , \, a' \otimes X' \,\big] \, := \, {(-1)}^{|X|\,|a'|} \, a\,a' \otimes \big[X,X'\big] \; $,  and similarly for the  $ 2 $--operation:  then  $ \cL_\fg(A) $  is its even part, so it is a Lie algebra.  Thus we have a Lie algebra valued functor  $ \; \cL_\fg : \salg_\bk \longrightarrow \lie_\bk \; $ (see \cite{ccf}, \S 11.2,  for details).  We shall call  {\sl quasi-representable\/}  any functor  $ \; \cL : \salg_\bk \longrightarrow \lie_\bk \; $  for which there exists a Lie  $ \bk $--superalgebra  $ \fg $  such that  $ \, \cL = \cL_\fg \, $:  indeed, any such functor is even representable as soon as the  $ \bk $--module  $ \fg $  is free of finite rank, since  $ \cL_\fg $  is then represented by the commutative  $ \bk $--superalgebra  $ \, S(\fg^*) \in \salg_\bk \; $.  In particular, when  $ V $  is a free super  $ \bk $--module  we have the Lie superalgebra  $ \, \fg := \End(V) \, $  and the functor  $ \cL_{\text{\sl End}(V)} \, $;  then  $ \rGL(V) $  --- cf.~Example \ref{exs-supvecs}{\it (b)}  ---   is a subfunctor of  $ \cL_{\text{\sl End}(V)} $   --- as a set-valued functor.
                                                                   \par
   This ``functorial presentation'' of Lie superalgebras can be adapted to representations too: if  $ \fg $  is a Lie  $ \bk $--superalgebra and  $ V $  a  $ \fg $--module,  the representation map  $ \; \phi : \fg \longrightarrow \End(V) \; $  clearly induces a natural transformation of functors  $ \, \cL_\fg \longrightarrow \cL_{\text{\sl End}(V)} \, $.
\end{free text}

\vskip19pt

  \subsection{Lie superalgebras of Cartan type}  \label{Lie-superalg_Cartan-type}

\smallskip

   {\ } \quad
 In the following,  $ \KK $  is an algebraically closed field of characteristic zero.
                                                           \par
   By definition, a Lie superalgebra  $ \fg $  over  $ \KK $  is of Cartan type if it is finite dimensional, simple, with the odd part  $ \fg_\uno $  which is not semisimple as a module over the even part  $ \fg_\zero \, $.  Actually, Cartan type Lie superalgebras split into four countable families, denoted  $ \; W(n) \, $,  $ \, S(n) \, $,  $ \, \tS(n) \, $  and  $ \, H(n) \, $  with  $ \, n \geq 2 \, $,  $ \, n \geq 3 \, $,  $ \, n \geq 4 \, $  (with  $ n $  being  {\sl even\/})  and  $ \, n \geq 4 \, $  respectively.
%
%
                                                           \par
   We shall now describe in short all these types.  For further details, see  \cite{ka},  \S 3.

\vskip3pt

   Given  $ \, n \in \N_+ \, $,  denote by  $ \, \Lambda(n) = \KK[\xi_1,\dots,\xi_n] \, $  the free commutative superalgebra over  $ \KK $  with  $ n $  odd generators  $ \xi_1 $,  $ \dots $,  $ \xi_n \, $;  this is (isomorphic to) the Grassmann algebra of rank  $ n \, $,  and is naturally  $ \Z $--graded,  with  $ \, \text{\sl deg}(\xi_i) = 1 \, $.
%
%
 A  $ \KK $--basis  of  $ \Lambda(n) $  is the set
%
%
 $ \; B_{\Lambda(n)} \, := \, \big\{\, \underline{\xi}^{\underline{e}}
\;\big|\; \underline{e} \in {\{0,1\}}^n \,\big\} \; $,
where  $ \, \underline{\xi}^{\underline{e}} := \xi_1^{\wedge \underline{e}(1)} \!\! \wedge \xi_2^{\wedge \underline{e}(2)} \!\! \wedge \cdots \wedge \xi_n^{\wedge \underline{e}(n)} \! = \xi_1^{\underline{e}(1)} \! \cdot \xi_2^{\underline{e}(2)} \cdots \xi_n^{\underline{e}(n)} \; $  (hereafter we shall drop the  $ \wedge $'s).
                                                              \par
   For later use, for every  $ \, \underline{e} \in {\{0,1\}}^n \, $  we define  $ \; |\underline{e}| := \sum_{k=1}^n \underline{e}(k) \; $.

\medskip

\begin{free text}  \label{def-W(n)}
 {\bf Definition of  $ W(n) \, $.}  \ For any  $ \, n \in \N_+ \, $  with  $ \, n \geq 2 \, $,  let  $ \, W(n) := \Der_\KK\big(\Lambda(n)\big) \, $  denote the set of  $ \KK $--(super)derivations  of  $ \Lambda(n) \, $.
 This is a Lie subsuperalgebra of  $ \, \End_\KK\big(\Lambda(n)\big) \, $:  explicitly, one has
 \vskip-9pt
  $$  W(n)  \, = \,  \Big\{\, {\textstyle \sum}_{i=1}^n P_i\big(\underline{\xi}\big) \, \partial_i \;\Big|\; P_i\big(\underline{\xi}\big) \in \Lambda(n) \;\;\, \forall \; i = 1, \dots, n \,\Big\}  $$
 \vskip5pt
\noindent
 where each  $ \partial_i $  is the unique superderivation such that  $ \, \partial_i(\xi_j) = \delta_{i,j} \, $.  This Lie superalgebra  $ W(n) $  is naturally  $ \Z $--graded,  with  $ \, \text{\sl deg}(\partial_i) = -1 \, $,  $ \, \text{\sl deg}(\xi_i) = +1 \, $;  in detail,
 \eject
 \vskip-4pt
  $$  {} \hskip-9pt   W(n) = {\textstyle \bigoplus\limits_{z \in \Z}} \, {W(n)}_z \; ,  \qquad  {W(n)}_z  = \,  \Big\{\, {\textstyle \sum_{i=1}^n} P_i\big(\underline{\xi}\big) \, \partial_i \;\Big|\; P_i\big(\underline{\xi}\big) \in {\Lambda(n)}_{z+1} \;\, \forall \; i \,\Big\}   \eqno (2.1)  $$
 \vskip-5pt
\noindent
 so  $ \, {W(n)}_z \not= \{0\} \, $  iff
%
%
 $ \; -1 \leq z \leq n \! - \! 1 \; $.
%
%
 Thus, if
 $ \; {W(n)}_{[z]} := \hskip-9pt {\textstyle \bigoplus\limits_{\zeta \, \equiv \, z \!\! \mod \! (n+1)}} \hskip-17pt {W(n)}_\zeta \phantom{\Big|} \; $  for all  $ \, [z] \in \Z_{n+1} \, $   (the group of integers modulo  $ \, n\!+\!1 \, $)  the  $ \Z $--grading  above yields the  $ \Z_{n+1} $--grading  $ \; W(n) = \hskip-6pt {\textstyle \bigoplus\limits_{[z] \in \Z_{n+1}}} \hskip-8pt {W(n)}_{[z]} \; $.
                                                              \par
   The  $ \Z $--grading  yields a  $ \Z $--filtration  $ \, \big(W(n) \supseteq \!\big) \cdots \supseteq {W(n)}_{\geq z-1} \supseteq {W(n)}_{\geq z} \supseteq {W(n)}_{\geq z+1} \supseteq \cdots \, $  of  $ W(n) $  as a Lie superalgebra, where  $ \, {W(n)}_{\geq z} := \oplus_{k \geq z} {W(n)}_k \phantom{\Big|} $  for all  $ \, z \in \Z \, $;  the associated graded Lie superalgebra then is graded-isomorphic to  $ W(n) $  itself.  Also, this  $ \Z $--grading  is consistent with the  $ \Z_2 $--grading  of  $ W(n) \, $,  i.e.~the  $ \Z_2 $--grading  is given by  $ \; W(n) \, = \, {W(n)}_{\zero} \oplus {W(n)}_{\uno} \; $  with
 \vskip-5pt
  $$  {W(n)}_{\zero}  \; := \,  {\textstyle \bigoplus\limits_{z \in 2 \Z}} \, {W(n)}_z  \quad ,  \qquad  {W(n)}_{\uno}  \; := \,  {\textstyle \bigoplus\limits_{z \in (2 \Z + 1)}} {W(n)}_z  $$
In particular, one also has the following three facts:
 \vskip5pt
   {\it (a)} \  for each  $ \, z \in \Z \, $,  the set
%
%
 $ \; B_{W(n)\,;\,z} \, := \, \big\{\, \underline{\xi}^{\underline{e}} \; \partial_i \;\big|\; \underline{e} \in {\{0,1\}}^n , \; i = 1, \dots, n \, ; \, |\underline{e}| = z+1 \,\big\} \; $
is a  $ \KK $--basis  of  $ {W(n)}_z \, $;  for each  $ \, \overline{z} \in \Z_2 \, $ the set
%
%
 $ \; B_{W(n)\,;\,\overline{z}} \, := \hskip-7pt {\textstyle \bigcup\limits_{(z \; \text{mod} \, 2) \, = \, \overline{z}}} \hskip-11pt B_{W(n)\,;\,z} \; $
is a  $ \KK $--basis  of  $ {W(n)}_{\overline{z}} \; $;  the set
%
%
 $ \; B_{W(n)} \, :=  {\textstyle \bigcup\limits_{z \in \Z}} B_{W(n)\,;\,z} \; $
is a   ---  $ \Z $--homogeneous  and  $ \Z_2 $--homogeneous  ---   $ \KK $--basis  of  $ W(n) \, $.
 \vskip4pt
   {\it (b)} \  $ {W(n)}_0 \, $  is a Lie subalgebra of the even part  $ \, {W(n)}_\zero \, $  of  $ \, W(n) \, $,  isomorphic to  $ \rgl(n) \, $,
%
%
 via
 $ \; \xi_i \, \partial_j \mapsto \text{e}_{i,j} \; $
 (= the elementary  $ (n \times n) $--matrix  bearing 1 in position  $ (i,j) $  and zero elsewhere);
 \vskip4pt
   {\it (c)} \  $ {W(n)}_{-1} \; $,  as a module for  $ \, {W(n)}_0 \cong \rgl(n) \, $,  is
%
%
 the dual of the standard module of  $ \rgl(n) \; $.
\end{free text}

\medskip

\begin{free text}  \label{Lie-struct_W(n)}
 {\bf The Lie structure in  $ W(n) \, $.}  \
%
%
 Our  $ \, W(n) := \Der_\KK\big(W(n)\big) \, $  is a Lie subsuperalgebra of $ \End_\KK\big(\Lambda(n)\big) \, $,  whose Lie bracket in the latter is the ``supercommutator''  (cf.~Example \ref{def-Lie/Ass_End(V)}{\it (b)\/})).  Thus we first consider the composition product of two basis elements in  $ W(n) \, $.  Calculations give
  $$  \underline{\xi}^{\underline{a}} \; \partial_j \circ \, \underline{\xi}^{\underline{b}} \; \partial_\ell  \, = \,  {(-1)}^{|\underline{b}|} \, \underline{\xi}^{\underline{a}} \; \underline{\xi}^{\underline{b}} \; \partial_j \, \partial_\ell  \, + {(-1)}^{\#\{ s \,|\, s<j \, , \, \underline{b}(s) = 1 \}} \, \delta_{\underline{b}(j),1} \, \underline{\xi}^{\underline{a}} \; \underline{\xi}^{\underline{b} - \underline{e}_{\,j}} \; \partial_\ell   \hskip3pt   \eqno (2.2)  $$
where  $ \, \underline{e}_{\,j} \in {\{0,1\}}^n \, $  is given by  $ \, \underline{e}_{\,j}(k) := \delta_{j,k} \, $,  and  $ \, \underline{b} - \underline{e}_{\,j} \, $  is the obvious element in the set  $ {\{0,1\}}^n \, $.  There\-fore the defining formula
  $ \; \Big[\, \underline{\xi}^{\underline{a}} \; \partial_j \; , \, \underline{\xi}^{\underline{b}} \; \partial_\ell \,\Big] \, = \, \underline{\xi}^{\underline{a}} \; \partial_j \circ \, \underline{\xi}^{\underline{b}} \; \partial_\ell  \, - {(-1)}^{\text{deg}(\underline{\xi}^{\underline{a}} \; \partial_j) \, \text{deg}(\underline{\xi}^{\underline{b}} \; \partial_\ell)} \,
\underline{\xi}^{\underline{b}} \; \partial_\ell \circ \, \underline{\xi}^{\underline{a}} \; \partial_j \; $
%
%
 (taken from  Example \ref{def-Lie/Ass_End(V)}{\it (a)\/})
along with (2.2) yields, taking into account that  $ \; \partial_\ell \; \partial_j \, = \, - \partial_j \; \partial_\ell \; $,
  $$  \Big[\, \underline{\xi}^{\underline{a}} \; \partial_j \; , \, \underline{\xi}^{\underline{b}} \; \partial_\ell \,\Big]  \,\; = \;\,  \pm \; \delta_{\underline{b}(j),1} \, \underline{\xi}^{\underline{a}} \; \underline{\xi}^{\underline{b} - \underline{e}_{\,j}} \; \partial_\ell  \; \pm \; \delta_{\underline{a}(\ell\,),1} \, \underline{\xi}^{\underline{b}} \; \underline{\xi}^{\underline{a} - \underline{e}_{\,\ell}} \; \partial_j   \eqno (2.3)  $$
for all  $ \, \underline{a} \, , \underline{b} \in {\{0,1\}}^n \, $,  $ \, j, \ell = 1, \dots n \, $.  In particular   ---
%
 reordering the various factors  $ \xi_k $
%
%
 ---   this shows that  $ \; \Big[\, \underline{\xi}^{\underline{a}} \; \partial_j \; , \, \underline{\xi}^{\underline{b}} \; \partial_\ell \,\Big] \; $  has coefficients in  $ \, \{-1,0,1\} \, $  with respect to the basis  $ B_{W(n)} \, $.

 \vskip5pt

   When  $ \, \delta_{\underline{b}(j),1} = 1 = \delta_{\underline{a}(\ell\,),1}\, $,  i.e.~in the case  $ \, \underline{b}(j) = 1 = \underline{a}(\ell) \, $,  formula (2.3) looks more precise:
  $$  \Big[\, \underline{\xi}^{\underline{a}} \; \partial_j \, , \, \underline{\xi}^{\underline{b}} \; \partial_\ell \,\Big]  \; = \;  {(-1)}^N \, \underline{\xi}^{\underline{a} - \underline{e}_{\,\ell}} \, \underline{\xi}^{\underline{b} - \underline{e}_{\,j}} \, \big(\, \xi_\ell \, \partial_\ell  \, - \, \xi_j \, \partial_j \,\big)
 \;\; ,  \qquad  \forall \; \underline{a} \, , \underline{b} : \underline{a}(\ell)
= 1 = \underline{b}(j)
    \eqno (2.4)  $$
 \vskip5pt
\noindent
 {\sl Note also that (2.4) takes a special form in the following three cases:}
 \vskip7pt
   \centerline{\quad  --- \; if  $ \, j = \ell \, $,  \, then   \hfill
      $ \; \Big[\, \underline{\xi}^{\underline{a}} \; \partial_j \, , \, \underline{\xi}^{\underline{b}} \; \partial_\ell \,\Big]  \; = \;  0  \quad $;   \hskip55pt \hfill  (2.5)}
 \vskip7pt
   \centerline{\quad  --- \; if  $ \, j \not= \ell \, $,  $ \, \underline{a}(j) = 1 \, $,  \, then  \quad  \hfill
      $ \; \Big[\, \underline{\xi}^{\underline{a}} \; \partial_j \, , \, \underline{\xi}^{\underline{b}} \; \partial_\ell \,\Big]  \; = \;  {(-1)}^N \, \underline{\xi}^{\underline{a} - \underline{e}_{\,\ell}} \, \underline{\xi}^{\underline{b} - \underline{e}_{\,j}} \, \xi_\ell \, \partial_\ell  \quad $;   \hskip25pt \hfill  (2.6)}
 \vskip7pt
   \centerline{\quad  --- \; if  $ \, j \not= \ell \, $,  $ \, \underline{b}(\ell) = 1 \, $,  \, then  \quad  \hfill
      $ \; \Big[\, \underline{\xi}^{\underline{a}} \; \partial_j \, , \, \underline{\xi}^{\underline{b}} \; \partial_\ell \,\Big]  \; = \;  {(-1)}^{N+1} \, \underline{\xi}^{\underline{a} - \underline{e}_{\,\ell}} \, \underline{\xi}^{\underline{b} - \underline{e}_{\,j}} \, \xi_j \, \partial_j  \quad $.   \hskip25pt \hfill  (2.7)}

\vskip7pt

   Finally, for the  $ 2 $--operation  the defining formula (from  Example \ref{def-Lie/Ass_End(V)}{\it (b)\/})  along with (2.2) gives
  $$  {\big( \underline{\xi}^{\underline{a}} \; \partial_j \big)}^{\langle 2 \rangle}  \; = \;\,  {\big( \underline{\xi}^{\underline{a}} \; \partial_j \big)}^2  \; = \;\,  0   \eqno \forall \;\; \underline{a} \,:\, \big|\underline{a}\big| \in 2\,\N   \qquad \qquad   (2.8)  $$
\end{free text}

\medskip

\begin{free text}  \label{def-S(n)}
 {\bf Definition of  $ S(n) \, $.}  \ We retain notations of  Definition \ref{def-W(n)}  above, in particular  $ \Lambda(n) $  and  $ \, W(n) := \Der_\KK\big(\Lambda(n)\big) \, $,  for  $ \, n \! \in \! \N_+ \, $,  are defined as therein; in addition, we assume now  $ \, n \geq 3 \;\, $.
%
%
                                                        \par
   Define the  {\sl divergence operator\/}  $ \; \text{\sl div} : W(n) := \Der_\KK\big(\Lambda(n)\big) \longrightarrow \Lambda(n) \; $  by  $ \; \text{\sl div} \big( \sum_{i=1}^n P_i\big(\underline{\xi}\big) \, \partial_i \big) := \sum_{i=1}^n \partial_i\big( P_i\big(\underline{\xi}\big) \big) \; $  for  $ \; \sum_{i=1}^n P_i\big(\underline{\xi}\big) \, \partial_i \in W(n) \; $.  Then set
  $$  S(n)  \,\; := \;\,  \Big\{\, D := {\textstyle \sum}_{i=1}^n P_i\big(\underline{\xi}\big) \, \partial_i \in W(n) \;\Big|\; \text{\sl div}\big(D\big) = 0 \,\Big\}   \qquad \Big( = \, \text{\it Ker}\,\big(\text{\sl div}\big) \Big)  $$
   \indent   This is a  $ \Z $--graded  Lie subsuperalgebra of  $ \, W(n) $   --- with  $ \Z $--grading induced from  $ W(n) \, $:  so
  $$  {} \hskip-9pt   S(n) \, = \, {\textstyle \bigoplus\limits_{z \in \Z}} \, {S(n)}_z \quad ,  \qquad  {S(n)}_z \, = \; {W(n)}_z \,{\textstyle \bigcap}\, S(n)   \eqno (2.9)  $$
--- cf.~(2.1) ---   where  $ \; {S(n)}_z \not= \{0\} \; $  if and only if  $ \; -1 \leq z \leq n \! - \! 2 \, $   --- see below for more details.  Like for  $ W(n) \, $,  this  $ \Z $--grading  yields also a  $ \Z_n $--grading  $ \,\; S(n) = \! {\textstyle \bigoplus\limits_{[z] \in \Z_n}} \hskip-5pt {S(n)}_{[z]} \;\, $  with  $ \; {S(n)}_{[z]} \, := \hskip-7pt {\textstyle \bigoplus\limits_{\zeta \, \equiv \, z \!\! \mod n}} \hskip-11pt {S(n)}_\zeta
%
%
 \; $  for all  $ \, [z] \in \Z_n \, $  (the group of integers modulo  $ \, n \, $).
 Again, the  $ \Z $--grading  yields a  $ \Z $--filtration,  which coincides with the one induced by  $ W(n) \, $,  whose associated graded Lie superalgebra is graded-isomorphic to  $ S(n) $  itself.  Moreover, the  $ \Z $--grading  of  $ S(n) $  is consistent with the  $ \Z_2 $--grading,  in the obvious sense (like for  $ W(n) \, $).
                                                                  \par
   The construction and the results for  $ W(n) $   --- cf.~Definition \ref{def-W(n)}  ---   give:
 \vskip5pt
   {\it (a)} \  a basis of the  $ \KK $--vector  space  $ S(n) $  is given by
 \vskip-15pt
  $$  B_{S(n)}  \; := \;  \Big\{\, \underline{\xi}^{\underline{e}} \; \partial_i \;\Big|\; \underline{e}(i) = 0 \,\Big\}  \;{\textstyle \bigcup}\;  \bigg\{\, \underline{\xi}^{\underline{e}} \; \big( \xi_j \, \partial_j - \xi_{j'} \, \partial_{j'} \big) \;\bigg|\;
 {{1 \! \leq \! j \! < \! j' \! \leq \! n \, , \; \underline{e}(j) = 0 = \underline{e}(j')} \atop {\underline{e}(j'') = 1  \;\quad \forall \;\; j < j'' < j'}}
 \,\bigg\}  $$
 \vskip-4pt
\noindent
 In the following we call  {\sl ``of first type''\/}  the elements of this basis of the form  $ \, \underline{\xi}^{\underline{e}} \; \partial_i \, $,  and  {\sl ``of second type''\/}  those of the form  $ \, \underline{\xi}^{\underline{e}} \; \big( \xi_j \, \partial_j - \xi_{j'} \, \partial_{j'} \big) \, $; more in general, we call  {\sl ``(elements) of second type''\/}  also all those of the form  $ \, \underline{\xi}^{\underline{e}} \; \big( \xi_j \, \partial_j - \xi_k \, \partial_k \big) \, $,  for any  $ \, j \! < \! k \, $  with  $ \, \underline{e}(j) = 0 = \underline{e}(k) \; $.
                                                                   \par
%
%
   Again, this basis is homogeneous (for both the  $ \Z $--grading  and the  $ \Z_2 $--grading),  i.e.~$ \; B_{S(n)} = \bigcup_{z \in \Z} B_{S(n)\,;\,z} \; $  and  $ \; B_{S(n)} = \bigcup_{\overline{z} \in \Z_2} B_{S(n)\,;\,\overline{z}} \; $  where  $ \, B_{S(n)\,;\,z} := B_{S(n)} \cap {S(n)}_z \, $,  respectively  $ \, B_{S(n)\,;\,\overline{z}} := B_{S(n)} \cap {S(n)}_{\overline{z}} \; $,  is a basis of  $ {S(n)}_z \, $,  respectively of  $ {S(n)}_{\overline{z}} \; $,  for every  $ \, z \in \Z \, $,  $ \, \overline{z} \in \Z_2 \, $.
 \vskip4pt
   {\it (b)} \  $ {S(n)}_0 \, $  is a Lie subalgebra of the even part  $ \, {S(n)}_\zero \, $  of  $ \, S(n) \, $,  isomorphic to  $ \rsl(n) \, $,  via  $ \; \xi_i \, \partial_j \mapsto \text{e}_{i,j} \; $  (notation of Definition \ref{def-W(n)})  for  $ \, i \not= j \, $,  and  $ \; \big( \xi_k \, \partial_k - \xi_\ell \, \partial_\ell \big) \mapsto \big( \text{e}_{k,k} - \text{e}_{\ell,\ell} \big) \; $  for  $ \, k \not= \ell \, $;
 \vskip4pt
   {\it (c)} \  $ {S(n)}_{-1} \, $,  as a module for  $ \, {S(n)}_0 \cong \rsl(n) \, $,  is
%
%
 the dual of the standard module of  $ \rsl(n) \, $.
\end{free text}

\medskip

\begin{free text}  \label{Lie-struct_S(n)}
 {\bf The Lie structure in  $ S(n) \, $.}  \
%
%
 We need formulas for the Lie bracket of elements in  $ B_{S(n)} \, $.
                                                          \par
   First we look at pairs of basis elements of the first type  (cf.~\S \ref{def-S(n)},  so  $ \, \underline{a}(j) = 0 \, $,  $ \underline{b}(\ell) = 0 \, $).  For their bracket, formulas (2.2--7) give
(with the right-hand side which in third case might be zero)
  $$  \Big[\, \underline{\xi}^{\underline{a}} \; \partial_j \; , \, \underline{\xi}^{\underline{b}} \; \partial_\ell \,\Big]  \,\; = \;\,
   \begin{cases}
      \; \pm \; \underline{\xi}^{\underline{a}} \; \underline{\xi}^{\underline{b} - \underline{e}_{\,j}} \; \partial_\ell   &  \quad  \text{if \ }  \underline{a}(\ell) = 0 \, , \; \underline{b}(j) = 1  \\
      \; \pm \; \underline{\xi}^{\underline{b}} \; \underline{\xi}^{\underline{a} - \underline{e}_{\,\ell}} \; \partial_j   &  \quad  \text{if \ }  \underline{a}(\ell) = 1 \, , \; \underline{b}(j) = 0  \\
      \; \pm \; \underline{\xi}^{\underline{a} - \underline{e}_\ell} \; \underline{\xi}^{\underline{b} - \underline{e}_{\,j}} \; \big( \xi_\ell \, \partial_\ell - \xi_j \, \partial_j \big)   &  \quad  \text{if \ }  \underline{a}(\ell) = 1 \, , \, \underline{b}(j) = 1  \\
      \;\;\; 0   &  \quad  \text{if \ }  \underline{a}(\ell) = 0 \, , \; \underline{b}(j) = 0
   \end{cases}
  $$

\smallskip

   Now we consider the Lie bracket of a basis element of first type and one of second type (see  \S \ref{def-S(n)},  thus  $ \, \underline{e}(\ell) = 0 \, $).  Then
%
%
 $ \; \Big[\, \underline{\xi}^{\underline{e}} \; \partial_\ell \; , \, \underline{\xi}^{\underline{e}'} \big( \xi_j \, \partial_j - \xi_k \, \partial_k \big) \,\Big] \, = \, \Big(\! {(-1)}^{N'} \! - {(-1)}^{N''} \Big) \; \underline{\xi}^{\underline{e}' - \underline{e}_\ell} \, \underline{\xi}^{\underline{e}} \; \partial_\ell \; $,
 \, by formulas (2.2--7), for some  $ \, N', N'' \in \N \, $;  a detailed (yet elementary) analysis of signs shows that
 \vskip-5pt
  $$  \Big[\, \underline{\xi}^{\underline{e}} \; \partial_\ell \; , \, \underline{\xi}^{\underline{e}'} \big( \xi_j \, \partial_j - \xi_k \, \partial_k \big) \,\Big]  \,\; = \;\,  0  $$

   Third, we look at pairs of elements both of the second type: formulas (2.2--7) eventually give
 \vskip-11pt
  $$  \displaylines{
   \Big[\, \underline{\xi}^{\underline{a}} \, \big( \xi_j \, \partial_j - \xi_k \, \partial_k \big) \; , \, \underline{\xi}^{\underline{b}} \, \big( \xi_\ell \, \partial_\ell - \xi_t \, \partial_t \big) \,\Big]  \,\; =   \hfill  \cr
   \hfill   = \;\,  \big( \underline{b}(j) - \underline{b}(k) \big) \, \underline{\xi}^{\underline{a}} \, \underline{\xi}^{\underline{b}} \, \big( \xi_\ell \, \partial_\ell - \xi_t \, \partial_t \big)  \, - \,  \big( \underline{a}(\ell) - \underline{a}(t) \big) \, \underline{\xi}^{\underline{a}} \, \underline{\xi}^{\underline{b}} \, \big( \xi_j \, \partial_j - \xi_k \, \partial_k \big)  }  $$
We must stress two facts.  First,  $ \, \underline{\xi}^{\underline{a}} \, \underline{\xi}^{\underline{b}} \, \big( \xi_\ell \, \partial_\ell - \xi_t \, \partial_t \big) \, $  and  $ \, \underline{\xi}^{\underline{a}} \, \underline{\xi}^{\underline{b}} \, \big( \xi_j \, \partial_j - \xi_k \, \partial_k \big) \, $  are both either zero or elements of  $ B_{S(n)} $ of the second type; second,
%
%
 $ \; \big( \underline{b}(j) - \underline{b}(k) \big) \, , \, \big( \underline{a}(\ell) - \underline{a}(t) \big) \, \in \, \big\{ \!-\!1 \, , \, 0 \, , \, +1 \big\} \; $.
 \eject

%
%
   Last, the  $ 2 $--operation:  (2.2) and  $ \, {(z_1 \! + z_2)}^{\langle 2 \rangle} = \, z_1^{\langle 2 \rangle} + \, [z_1,z_2] \, + \, z_2^{\langle 2 \rangle} \, $  (cf.~Definition \ref{def-Lie-salg}{\it (e)\/})  give
  $$  {\big( \underline{\xi}^{\underline{a}} \; \partial_j \big)}^{\langle 2 \rangle}  = \,  0  \; ,  \quad  {\big(\, \underline{\xi}^{\underline{b}} \, \big( \xi_\ell \, \partial_\ell - \xi_t \, \partial_t \big) \big)}^{\langle 2 \rangle}  = \,  0  \; ,   \eqno \forall \;\; \underline{a} \, , \, \underline{b} \;:\; \big|\underline{a}\big| \in 2\,\N \; , \; \big|\underline{b}\big| \in (2\,\N +1)   \qquad   (2.10)  $$
\end{free text}

\medskip

\begin{free text}  \label{def-tildeS(n)}
 {\bf Definition of  $ \tS(n) \, $.}  \ We retain notations of  Definitions \ref{def-W(n)}  and  \ref{def-S(n)}  above.
%
%
 In addition, we assume now that  $ n $  is  {\sl even\/}  and  $ \, n \geq 4 \;\, $.  Define
  $$  \tS(n)  \,\; := \;\,  \Big\{\, D \in W(n) \;\Big|\; \big( 1 + \xi_1 \cdots \xi_n \big)
\, \text{\sl div}\big(D\big) + D\big(\xi_1 \cdots \xi_n\big) = 0 \,\Big\}  $$
   \indent   In order to describe  $ \tS(n) \, $,  write  $ \, D \in W(n) \, $  as  $ \, D := {\textstyle \sum}_{z=-1}^{n-1} D_z \, $  with  $ \, D_z \in {W(n)}_z \, $.
%
%
 The defining equation of  $ \tS(n) $   takes place in the  {\sl graded\/}  superalgebra  $ \,  \Lambda(n) = {\textstyle \bigoplus}_{z=0}^n {\Lambda(n)}_z \, $:  when we single out the different homogeneous summands
%
%
the left-hand side of this equation reads
  $$  \displaylines{
   \big( 1 + \xi_1 \cdots \xi_n \big) \, \text{\sl div}\big(D\big)  \, + \,  D\big(\xi_1 \cdots \xi_n\big)  \,\; = \;\,  \text{\sl div}(D_0)  \, + \,  \cdots  \, + \,  \text{\sl div}(D_{n-2})  \; +   \hfill  \cr
   \hfill   + \; \big( \text{\sl div}(D_{n-1}) + D_{-1}(\xi_1 \cdots \xi_n) \big)  \, + \,  \big( D_0(\xi_1 \cdots \xi_n) + \xi_1 \cdots \xi_n \, \text{\sl div}(D_0) \big)  }  $$
(each  $ \, \text{\sl div}(D_z) \, $  is homogeneous of degree  $ z \, $,  in particular  $ \, \text{\sl div}(D_{-1}) = 0 \, $).  Thus the defining equation  $ \; \big( 1 + \xi_1 \cdots \xi_n \big) \, \text{\sl div}\big(D\big) + D\big(\xi_1 \cdots \xi_n\big) \, = \, 0 \; $  of  $ \, \tS(n) \, $  is equivalent to the system
  $$  \circledast : \begin{cases}
   &  \hskip-5pt   \text{\sl div}(D_z)  \, = \,  0   \hskip121pt  \big( \in {\Lambda(n)}_z \,\big)  \hskip35pt \forall \;\;\; 0 \, \leq \, z \, \leq \, n\!-\!2  \\
   &  \hskip-5pt   \text{\sl div}(D_{n-1}) + D_{-1}(\xi_1 \cdots \xi_n)  \, = \,  0  \qquad \quad  \big( \in {\Lambda(n)}_{n-1} \,\big)  \\
   &  \hskip-5pt   D_0(\xi_1 \cdots \xi_n) + \xi_1 \cdots \xi_n \, \text{\sl div}(D_0)  \, = \,  0  \qquad  \big( \in {\Lambda(n)}_n \,\big)  \\
      \end{cases}  $$
The solution of the system  $ \, \circledast \, $  is immediate:  $ \, D_z \in \text{\it Ker}\,\big(\text{\sl div}\big) \,{\textstyle \bigcap}\, {W(n)}_z = {S(n)}_z \, $,  for  $ \, 0 \! \leq \! z \! \leq \! n\!-\!2 \, $,  for the first  $ \, n\!-\!1 \, $  equations,  while the last one has solution  $ \, D_0 \in \text{\it Ker}\,\big(\text{\sl div}\big) \,{\textstyle \bigcap}\, {W(n)}_0 = {S(n)}_0 \, $,  hence it is redundant.  For the last but one, let us write  $ \; D_{n-1} := {\textstyle \sum}_{j=1}^n c_j \, \xi_1 \cdots \xi_n \, \partial_j \, $  and  $ \, D_{-1} := {\textstyle \sum}_{j=1}^n d_j \, \partial_j \; $:  then these yield solutions of that equation if and only if  $ \, c_j + d_ j = 0 \, $  for all  $ \, j = 1, \dots, n \, $,  in other words if and only if  $ \; D_{-1} + D_{n-1} = {\textstyle \sum}_{j=1}^n c_j \, \big( \xi_1 \cdots \xi_n - 1 \big) \, \partial_j \;\, $.  The outcome is that
  $$  {} \hskip-9pt   \tS(n) \, = \, {\textstyle \bigoplus\limits_{[z] \in \Z_n}} \! {\tS(n)}_{[z]} \quad ,  \qquad
   {{{\tS(n)}_{[z]} \, := \, {S(n)}_z  \qquad  \forall \quad 0 \leq z \leq n-2}
     \atop
   {{\tS(n)}_{[n-1]} \, := \; \text{\it Span}_{\,\KK}{\big( (\xi_1 \cdots \xi_n - 1) \, \partial_j \big)}_{j=1,\dots,n}}}   \eqno (2.11)  $$
which is a splitting of  $ \tS(n) $  as a  $ \Z_n $--graded  Lie superalgebra   --- see below for more details.  Moreover, the natural  $ \Z $--filtration  of  $ W(n) $  induces a similar filtration on  $ \tS(n) \, $:  the associated graded Lie superalgebra then is graded-isomorphic to  $ S(n) \, $.  Finally, the  $ \Z_n $--grading  of  $ \tS(n) $  is consistent with the  $ \Z_2 $--grading,
%
 again in the obvious sense (like for  $ W(n) $  and  $ S(n) \, $).

\smallskip

\noindent
   {\sl  $ \underline{\text{Remark}} $:}  to have a uniform notation we shall also write  $ \; {\tS(n)}_z := {\tS(n)}_{[z]} \; $  for all  $ \, -1 \leq z \leq n-2 \; $  and  $ \; {\tS(n)}_z := \{0\} \; $  for all  $ \, z \in \Z \setminus \{ -1, \dots, n-2 \} \; $.  Then  $ \; \tS(n) = {\textstyle \bigoplus}_{z \in \Z} {\tS(n)}_z \; $  as a vector space.

\bigskip

   The results we found for  $ W(n) $   --- cf.~Definition \ref{def-W(n)}  ---   and  $ S(n) $   --- cf.~Definition \ref{def-S(n)}  ---   give:
 \vskip5pt
   {\it (a)} \  a basis of the  $ \KK $--vector  space  $ \tS(n) $  is given by the union of the set  $ \; \bigcup_{z=0}^{n-2} B_{S(n)\,;\,z} \; $
 with the set  $ \; {\big\{\! (\xi_1 \cdots \xi_n \! - \! 1) \, \partial_j \big\}}_{j=1,\dots,n} \; $;  in detail, it is
  $$  B_{\tS(n)}  :=  \Big\{ \underline{\xi}^{\underline{e}} \, \partial_i \,\Big|
 {\textstyle {{\underline{e}(i) = 0} \atop {\phantom{\big|} |\underline{e}| > 0 \phantom{\big|}}}} \!\Big\}  \,{\textstyle \bigcup}\,  \Big\{ \underline{\xi}^{\underline{e}} \, \big( \xi_j \partial_j - \xi_{j'} \partial_{j'} \!\big) \Big|
 {\textstyle {{1 \leq j < j' \leq n-1 \, , \; \underline{e}(j) = 0 = \underline{e}(j')} \atop {\underline{e}(j'') = 1  \;\quad \forall \;\; j < j'' < j'}}}
 \!\Big\}  \,{\textstyle \bigcup}\,   {\Big\{\! \big( \xi_1 \cdots \xi_n \! - \! 1 \big) \, \partial_j \Big\}}_{1 \leq j \leq n}   $$
   \indent   Again, this basis is homogeneous (for both the  $ \Z_n $--grading  and the  $ \Z_2 $--grading),  i.e.~$ \; B_{\tS(n)} = \bigcup_{[z] \in \Z_n} B_{\tS(n)\,;[z]} \; $  and  $ \; B_{\tS(n)} = \bigcup_{\overline{z} \in \Z_2} B_{\tS(n)\,;\,\overline{z}} \; $  where  $ \, B_{\tS(n)\,;[z]} := B_{\tS(n)} \cap {\tS(n)}_{[z]} \, $,  respectively  $ \, B_{\tS(n)\,;\,\overline{z}} := B_{\tS(n)} \cap {\tS(n)}_{\overline{z}} \; $,  is a basis of  $ {\tS(n)}_{[z]} \, $,  respectively of  $ {\tS(n)}_{\overline{z}} \; $,  for every  $ \, [z] \in \Z_n \, $,  $ \, \overline{z} \in \Z_2 \, $.

 \vskip4pt

   {\it (b)} \  $ {\tS(n)}_{[0]} \, $  is a Lie subalgebra of the even part  $ \, {\tS(n)}_\zero \, $  of  $ \, \tS(n) \, $;  as it coincides with  $ \, {S(n)}_0 \, $,  it is (again) isomorphic to  $ \rsl(n) \, $,  see \S \ref{def-S(n)};
 \vskip4pt
   {\it (c)} \  $ {\tS(n)}_{[-1]} \, $,  as a module for  $ \, {\tS(n)}_0 \cong \rsl(n) \, $,  is
 the dual of the standard module of  $ \rsl(n) \, $.
\end{free text}

\medskip

\begin{free text}  \label{Lie-struct_tildeS(n)}
 {\bf The Lie structure in  $ \tS(n) \, $.}  \  To describe the Lie (super)structure of  $ \tS(n) $  we can use explicit formulas for the Lie bracket and the  $ 2 $--operation  of elements in  $ B_{\tS(n)} \, $.  Part of this basis is a subset of the basis  $ B_{S(n)} $  of  $ S(n) $  in  \S \ref{def-S(n)},  thus for
 these elements we refer to formulas therein.
                                                      \par
   We look now at the remaining cases.  The first case
 is
  $$  \displaylines{
   \Big[ \big( \xi_1 \cdots \xi_n - 1 \big) \, \partial_i \; , \, \big( \xi_1 \cdots \xi_n - 1 \big) \, \partial_j \,\Big]  \,\; = \;\,  {(-1)}^j \, \xi_1 \cdots \widehat{\xi_j} \cdots \xi_n \, \partial_i \, + \, {(-1)}^i \, \xi_1 \cdots \widehat{\xi_i} \cdots \xi_n \, \partial_j  \,\; =   \hfill  \cr
   \hfill   = \;\,
   \begin{cases}
      \; {(-1)}^{i+j-1} \, \xi_1 \cdots \widehat{\xi_i} \cdots \widehat{\xi_j} \cdots \xi_n \, \big( \xi_i \, \partial_i - \xi_j \, \partial_j \big)   &  \quad \forall \;\;  i < j  \\
      \; 2 \, {(-1)}^j \, \xi_1 \cdots \widehat{\xi_j} \cdots \xi_n \, \partial_j   &  \quad \forall \;\;  i = j  \\
      \; {(-1)}^{i+j-1} \, \xi_1 \cdots \widehat{\xi_j} \cdots \widehat{\xi_i} \cdots \xi_n \, \big( \xi_j \, \partial_j - \xi_i \, \partial_i \big)   &  \quad \forall \;\;  i > j
   \end{cases}  }  $$
   \indent   The second case
 splits in turn into several subcases.  Namely, the first
two
 subcases are
  $$  \displaylines{
   \hfill   \Big[ \big( \xi_1 \cdots \xi_n - 1 \big) \, \partial_j \; , \, \underline{\xi}^{\underline{e}} \, \partial_i \,\Big]  \,\; = \;\,
   \begin{cases}
      \,\; \pm \, \underline{\xi}^{\underline{e} - \underline{e}_{\,j}} \, \partial_j   &  \quad \text{if} \quad  \underline{e}(j) = 1  \\
      \,\; 0   &  \quad \text{if} \quad  \underline{e}(j) = 0
   \end{cases}   \hfill  \text{for  $ \,\; |\underline{e}| > 1 \;\, $  with  $ \,\; \underline{e}(i) = 0 \; $,}  \cr
   \hfill   \Big[ \big( \xi_1 \cdots \xi_n - 1 \big) \, \partial_j \; , \, \xi_k \, \partial_i \,\Big]  \,\; = \;\,  \delta_{j,k} \, \big( \xi_1 \cdots \xi_n - 1 \big) \, \partial_i   \hfill  \text{for  $ \,\; |\underline{e}| = 1 \;\, $  with  $ \,\; k \not= i \; $.} \;\;  }  $$
The third subcase is
  $$  \displaylines{
   \Big[ \big( \xi_1 \cdots \xi_n - 1 \big) \, \partial_j \; , \, \underline{\xi}^{\underline{e}} \, \big( \xi_h \, \partial_h - \xi_k \, \partial_k \big) \,\Big]  \,\; =   \hfill  \cr
   \hfill   = \;\,
   \begin{cases}
      \,\; \pm \underline{\xi}^{\underline{e} - \underline{e}_{\,j}} \,
\big( \xi_h \, \partial_h - \xi_k \, \partial_k \big)  + \, {(-1)}^{|\underline{e}| + 1} \big( \delta_{j,h} - \delta_{j,k} \big) \, \underline{\xi}^{\underline{e}} \, \partial_j   &  \quad \text{if} \;\;  \underline{e}(j) = 1  \\
      \,\; {(-1)}^{|\underline{e}| + 1} \big( \delta_{j,h} - \delta_{j,k} \big) \, \underline{\xi}^{\underline{e}} \, \partial_j   &  \quad \text{if} \;\;  \underline{e}(j) = 0
   \end{cases}  }  $$
where  $ \, \underline{e}(h) = 0 = \underline{e}(k) \, $,  $ \, |\underline{e}| > 0 \; $  (with  $ \, k = h+1 \, $  if we want the second element to belong to  $ B_{S(n)} $   --- yet the formula above holds in general for any  $ h $  and  $ k \, $).  The fourth, last subcase is
  $$  \Big[ \big( \xi_1 \cdots \xi_n - 1 \big) \, \partial_j \; , \, \big( \xi_h \, \partial_h - \xi_k \, \partial_k \big) \,\Big]  \,\; = \;\,  \big( \delta_{j,h} - \delta_{j,k} \big) \, \big( \xi_1 \cdots \xi_n - 1 \big) \, \partial_j  $$

\vskip5pt

   Finally, for the  $ 2 $--operation  we have that (2.2) and the identity in  Definition \ref{def-Lie-salg}{\it (e)\/}  give
  $$  {\big( \big( \xi_1 \cdots \xi_n - 1 \big) \, \partial_j \big)}^{\langle 2 \rangle}  \, = \;  {(-1)}^j \, \xi_1 \cdots \widehat{\xi_j} \cdots \xi_n \, \partial_j   \eqno \forall \;\; j = 1, \dots , n \, .   \qquad   (2.12)  $$
\end{free text}

\medskip

\begin{free text}  \label{def-H(n)}
 {\bf Definition of  $ H(n) \, $.}  \ We retain again notations of  Definition \ref{def-W(n)}  above, with $ \, n \geq 4 \;\, $.
                                                        \par
   Let  $ \; \underline{\omega} := {\big( \omega_{i,j} \big)}_{i=1,\dots,n;}^{j=1,\dots,n;} \; $  be a symmetric, non-singular square matrix of order  $ n $  with entries in  $ \Lambda(n) \, $:  this defines canonically a symplectic form in  $ \Lambda(n) $  which we still denote by  $ \, \underline{\omega} \, $,  namely  $ \, \underline{\omega} := \sum_{i,j=1}^n \omega_{i,j} \, d\xi_i \circ d\xi_j \, $  --- cf.~\cite{ka}, \S 3.3.  For any such form and any  $ \; D := \sum_{i=1}^n P_i\big(\underline{\xi}\big) \, \partial_i \, \in W(n) \; $  the form  $ \, D \underline{\omega} \, $  is naturally defined, and we set
  $$  \tH(\underline{\omega})  \, := \,  \Big\{\, D \in W(n) \;\Big|\; D \underline{\omega} = 0 \,\Big\}  $$
This is a Lie subsuperalgebra of  $ W(n) \, $.  We define a special Lie subsuperalgebra of  $ \widetilde{H}(\underline{\omega}) \, $,  namely
  $$  H(\underline{\omega})  \; := \;  \big[ \tH(\underline{\omega}) \, , \, \tH(\underline{\omega}) \big]  $$
   \indent   All Lie superalgebras  $ \tH(\underline{\omega}) \, $,  for different forms  $ \underline{\omega} \, $,  are isomorphic with each other; the same holds for the various  $ H(\underline{\omega}) \, $.  Thus we can fix a specific form of the matrix  $ \, \underline{\omega} \, $:  we choose it to be
  $$  \underline{\omega}  \, := \,
   \begin{pmatrix}
      \; 0_r  &  I_r \;  \\
      \; I_r  &  0_r \;
   \end{pmatrix}  \quad  \text{if} \; n = 2 \, r \; ,  \quad \qquad
      \underline{\omega}  \, := \,
   \begin{pmatrix}
      \; 0_{r \times r}  &  I_{r \times r}  &  0_{r \times 1} \;  \\
      \; I_{r \times r}  &  0_{r \times r}  &  0_{r \times 1} \;  \\
       \;  0_{1 \times r}   &   0_{1 \times r}    &  1 \;
   \end{pmatrix}  \quad  \text{if} \; n = 2 \, r + 1   \eqno (2.13)  $$
(where  $ I_{r \times r} $  is the identity matrix of order  $ r \, $,  and so on), so that the corresponding form is
  $$  \underline{\omega} = {\textstyle \sum\limits_{i=1}^r} \big( d\xi_i \! \circ d\xi_{r+i} + d\xi_{r+i} \! \circ d\xi_i \big)  \; \text{\ if \ } n \! = \! 2 r \, ,  \;\;
      \underline{\omega} = {\textstyle \sum\limits_{i=1}^r} \big( d\xi_i \! \circ d\xi_{r+i} + d\xi_{r+i} \! \circ d\xi_i \big) + d\xi_n \! \circ d\xi_n \; \text{\ if \ }\! n \! = \! 2 r \! + 1 \, .  $$
   {\sl For this specific choice of form  $ \underline{\omega} \, $},  we use hereafter the notation  $ \; \tH(n) := \tH(\underline{\omega}) \; $,  $ \; H(n) := H(\underline{\omega}) \; $.
 \eject

   The natural  $ \Z $--filtration  on  $ W(n) $  induces a  $ \Z $--filtration  on  $ \tH(\underline{\omega}) $  and  $ H(\underline{\omega}) \, $,  for any  $ \underline{\omega} \, $.  Even more, on  $ \tH(n) $  and  $ H(n) $  the  $ \Z $--grading  on  $ W(n) $  induces  $ \Z $--gradings  as well.  Then the graded Lie superalgebra associated with  $ \tH(\underline{\omega}) \, $,  for any  $ \underline{\omega} \, $,  is isomorphic to  $ H(n) $  as a graded Lie superalgebra.  Several properties of these graded Lie superalgebras are recorded in  \cite{ka},  \S 3.  Here we just recall
  $$  \tH(n) = \! {\textstyle \bigoplus\limits_{z \in \Z}} {\tH(n)}_z \; ,
\;\;  {\tH(n)}_z = {W(n)}_z \,{\textstyle \bigcap}\, \tH(n) \; ,  \qquad
      H(n) = \! {\textstyle \bigoplus\limits_{z \in \Z}} {H(n)}_z \; ,
\;\;  {H(n)}_z = {W(n)}_z \,{\textstyle \bigcap}\, H(n)  $$
 \vskip-7pt
\noindent
 where  $ \; {\tH(n)}_z \not= \{0\} \; $  iff  $ \, -1 \leq z \leq n\!-\!2 \, $,  $ \; {H(n)}_z \not= \{0\} \; $  iff  $ \, -1 \leq z \leq n\!-\!3 \, $.  Moreover  $ \; {\tH(n)}_z = {H(n)}_z \; $  for  $ \, -1 \leq z \leq n\!-\!3 \, $  and  $ \, \text{\it dim}\big({\tH(n)}_{n-2}\big) = 1 \, $:  in particular,  $ \; \tH(n) = H(n) \oplus {\tH(n)}_{n-2} \; $.
                                                         \par
   Like for  $ W(n) $  and  $ S(n) \, $,  this  $ \Z $--grading  yields also gradings by cyclic groups, namely  $ \,\; \tH(n) = \! {\textstyle \bigoplus\limits_{[z] \in \Z_n}} \hskip-5pt {\tH(n)}_{[z]} \;\, $
and  $ \,\; H(n) = \! {\textstyle \bigoplus\limits_{[z] \in \Z_{n-1}}} \hskip-5pt {H(n)}_{[z]} \;\, $
with  $ \; {\tH(n)}_{[z]} \, := \hskip-7pt {\textstyle \bigoplus\limits_{\zeta \, \equiv \, z \!\! \mod n}} \hskip-11pt {\tH(n)}_\zeta
 \phantom{\Big|}
 \; $  for all  $ \, [z] \in \Z_n \, $
and  $ \; {H(n)}_{[z]} \, := \hskip-7pt {\textstyle \bigoplus\limits_{\zeta \, \equiv \, z \!\! \mod (n-1)}} \hskip-11pt {H(n)}_\zeta
 \phantom{\Big|}
 \; $  for all  $ \, [z] \in \Z_{n-1} \, $  (the integers modulo  $ \, n\!-\!1 \, $).
 Again, in both cases the  $ \Z $--grading  yields a  $ \Z $--filtration,  coinciding with the one induced by  $ W(n) \, $,  whose associated graded Lie superalgebra is isomorphic to  $ H(n) $  and  $ \tH(n) $  respectively.  Finally (as for  $ W $,  $ \tS $  and  $ S \, $)  the  $ \Z $--grading  is consistent with the  $ \Z_2 $--grading  both in  $ \tH(n) $  and in  $ H(n) \, $,
%
 in the obvious sense.

\smallskip

   To describe  $ \tH(n) \, $,  $ H(n) $  and their graded summands, we exploit a different realization of them.

\smallskip

  For any given closed differential form  $ \underline{\omega} $  as above, consider on  $ \, V_n := \text{\it Span}\big(\xi_1, \dots, \xi_n\big) \, $  the bilinear form corresponding to  $ \underline{\omega} \, $,  and take on the vector space  $ \Lambda(n) $  the structure of Clifford algebra associated to  $ V_n $  with such a form.  Then the supercommutator on  $ \Lambda(n) $  reads
  $$  \{f,g\}  \; := \;  {(-1)}^{p(f)} \, {\textstyle \sum_{i,j=1}^n} \, \dot{\omega}_{i,j} \, \partial_i(f) \, \partial_j(g)   \eqno (2.14)  $$
(we use braces instead of square brackets for psychological reasons) where  $ \, \dot{\underline{\omega}} = {\big( \dot{\omega}_{i,j} \big)}_{i=1,\dots,n;}^{j=1,\dots,n;} = {\underline{\omega}}^{-1} \, $  is the inverse of the matrix  $ \, \underline{\omega} \, $.  If we consider on  $ \Lambda(n) $  its natural associative product and the Lie superbracket in (2.14), it is a (supercommutative) Poisson superalgebra, which we denote by  $ P_{\underline{\omega}}(n) \, $.
 By the analog to Poincar{\'e}'s lemma,
there exists a Lie superalgebra epimorphism
  $$  \phi : P_{\underline{\omega}}(n) \longrightarrow \tH(\underline{\omega}) \; ,  \quad  f \mapsto D_f := {\textstyle \sum_{i,j=1}^n} \, \dot{\omega}_{i,j} \, \partial_i(f) \, \partial_j   \eqno (2.15)  $$
which shifts the  $ \Z $--grading  by  $ -2 \, $,  \, i.e.~$ \, \phi\big( {P_{\underline{\omega}}(n)}_z \big) = {\tH(\underline{\omega})}_{z-2} \, $  for all  $ \, z \, $  (so the induced  $ \Z_2 $--gra-ding is preserved) and has kernel the  $ \KK $--span  of  $ 1 \, $;  so  $ \; P_{\underline{\omega}}(n) \! \Big/ \KK \cdot 1_{\scriptscriptstyle P_{\underline{\omega}}(n)} \cong \tH(\underline{\omega}) \; $  via an isomorphism induced by  $ \phi \, $.  Moreover, the restriction of  $ \phi $  to  $ \, \big\{\, f \in P_{\underline{\omega}}(n) = \Lambda(n) \,\big|\, \epsilon(f) = 0 \,\big\} \, $,  where  $ \epsilon(f) $  is the constant term in  $ f $  (thought of as a skew-polynomial in the  $ \xi_i $'s),  is a bijection: thus we have
  $$  \tH(\underline{\omega})  \; = \;  \big\{ D_f \,\big|\, \epsilon(f) = 0 \big\}  \qquad  \text{and}  \quad  \big[ D_f \, , \, D_g \big]  \, = \,  D_{\{f,g\}}   \eqno (2.16)  $$
   The outcome is that we can describe  $ \tH(\underline{\omega}) $  via the isomorphism of it with  $ \, P_{\underline{\omega}}(n) \! \Big/ \KK \cdot 1_{\scriptscriptstyle P_{\underline{\omega}}(n)} \, $,  for which we can compute the Lie superbracket using (2.14).  We do it now for the canonical  $ \underline{\omega} \, $.

\vskip7pt

   Let  $ \underline{\omega} $  be the canonical matrix chosen as in (2.13).  Then we write  $ \, P(n) := P_{\underline{\omega}}(n) \, $  for the corresponding Poisson superalgebra.
 In this case (2.14) and (2.15) take the simpler form
  $$  \displaylines{
   \hfill   \{f,g\}  \; := \;  {(-1)}^{p(f)} \Big( {\textstyle \sum_{s=1}^r} \, \big( \partial_s(f) \, \partial_{r+s}(g) + \partial_{r+s}(f) \, \partial_s(g) \big) + \, \delta_{n \in (2\,\N+1)} \, \partial_{2r+1}(f) \, \partial_{2r+1}(g) \Big)   \hfill (2.17)  \cr
   \hfill   f  \,\; \mapsto \;\,  D_f \, := \, {\textstyle \sum_{s=1}^r} \, \big( \partial_s(f) \, \partial_{r+s} + \partial_{r+s}(f) \, \partial_s \big) \, + \; \delta_{n \in (2\,\N+1)} \, \partial_{2r+1}(f) \, \partial_{2r+1}   \hfill (2.18)  }  $$
where  $ \; \delta_{n \in (2\,\N+1)} := 1 \; $  if  $ n $  is odd  (written as  $ \, n = 2 \, r +1 \, $)  and  $ \; \delta_{n \in (2\,\N+1)} := 0 \; $  otherwise.

\vskip5pt

   For each  $ \, z \in \Z \, $  the set
 $ \; B_{P(n)\,;\,z} \, := \, \big\{\, \underline{\xi}^{\underline{e}} \;\big|\; \underline{e} \! \in \! {\{0,1\}}^n , \; |\underline{e}| = z \,\big\} \; $
is a  $ \KK $--basis  of  $ {P(n)}_z \, $,  and for each  $ \, \overline{z} \in \Z_2 \, $  the set
 $ \; B_{P(n)\,;\,\overline{z}} \, :=  \hskip-7pt  {\textstyle \bigcup\limits_{(z \; \text{mod} \, 2) \, = \, \overline{z}}}  \hskip-9pt  B_{P(n)\,;\,z}
 \; $  is a  $ \KK $--basis  of  $ {P(n)}_{\overline{z}} \;\, $.  It follows that
 $ \; B_{P(n)} := {\textstyle \bigcup\limits_{z \in \Z}} B_{P(n)\,;\,z} \; $
%
is a  $ \KK $--basis  ($ \Z $--homogeneous  and  $ \Z_2 $--homogeneous)  of  $ P(n) \, $. Applying  $ \phi \, $,  we get bases for  $ \tH(n) \, $,  $ H(n) $  and their graded summands.  Focusing on  $ H(n) \, $,  we find:
 \vskip5pt
   {\it (a)} \  a basis of the  $ \KK $--vector  space  $ H(n) $  is given by
  $$  B_{H(n)}  \; := \;  \Big\{\, D_{\underline{\xi}^{\underline{e}}} \;\Big|\; \underline{e} \in {\{0,1\}}^n , \, 0 < |\underline{e}| < n \,\Big\}  $$
 \eject

\noindent
 This basis is homogeneous (for the  $ \Z $--grading  and the  $ \Z_2 $--grading),  i.e.~$ \; B_{H(n)} = \bigcup_{z \in \Z} B_{H(n)\,;\,z} \; $  and  $ \; B_{H(n)} = \bigcup_{\overline{z} \in \Z_2} B_{H(n)\,;\,\overline{z}} \; $  where  $ \, B_{H(n)\,;\,z} \! := B_{H(n)} \cap {H(n)}_z \, $  is a basis of  $ {H(n)}_z \, $  and  $ \, B_{H(n)\,;\,\overline{z}} := B_{H(n)} \cap {H(n)}_{\overline{z}} \; $  is a basis of  $ {H(n)}_{\overline{z}} \; $,  \, for every  $ \, z \in \Z \, $  and  $ \, \overline{z} \in \Z_2 \; $.
 \vskip4pt
   {\it (b)} \  $ {H(n)}_0 \, $  is a Lie subalgebra of the even part  $ \, {H(n)}_\zero \, $  of  $ \, H(n) \, $,  isomorphic to  $ \rso(n) \, $,  the latter being considered with respect to the canonical form  $ \underline{\omega} \, $;  an isomorphism is given by
  $$  \displaylines{
   \hfill   D_{\xi_h \xi_k} \, = \; \xi_k \, \partial_{r+h} - \xi_h \, \partial_{r+k}  \,\; \mapsto \;\,  \text{e}_{k,r+h} - \text{e}_{h,r+k}   \hfill  \forall \;\;\; 1 \leq h < k \leq r  \cr
   \hfill   D_{\xi_h \xi_{r+k}} \, = \; \xi_{r+k} \, \partial_{r+h} - \xi_h \, \partial_k  \,\; \mapsto \;\, \text{e}_{r+k,r+h} - \text{e}_{h,k}   \hfill  \forall \;\;\; 1 \leq h \leq r \, , \; 1 \leq k \leq r  \cr
   \hfill   D_{\xi_{r+h} \xi_{r+k}} \, = \; \xi_{r+k} \, \partial_h - \xi_{r+h} \, \partial_k \,\; \mapsto \;\,  \text{e}_{r+k,h} - \text{e}_{r+h,k}   \hfill  \forall \;\;\; 1 \leq h < k \leq r  \cr
   \hfill   D_{\xi_t \xi_{2r+1}} \, = \; \xi_{2r+1} \, \partial_{r+t} - \xi_t \, \partial_{2r+1}  \,\; \mapsto \;\,  \text{e}_{2r+1,r+t} - \text{e}_{t,2r+1}   \hfill  \forall \;\;\; 1 \leq t \leq r \, ,  \;\; n = 2 \, r + 1  \cr
   \hfill   D_{\xi_{r+t} \xi_{2r+1}} \, = \; \xi_{2r+1} \, \partial_t - \xi_{r+t} \, \partial_{2r+1}  \,\; \mapsto \;\,  \text{e}_{2r+1,r+t} - \text{e}_{t,2r+1}   \hfill  \forall \;\;\; 1 \leq t \leq r \, ,  \;\; n = 2 \, r + 1  }  $$
(with notation as in Definition \ref{def-W(n)}),  the last two formulas being in use only for odd  $ \, n = 2 \, r + 1 \, $.
 \vskip4pt
   {\it (c)} \  $ {H(n)}_{-1} \, $,  as a module for  $ \, {H(n)}_0 \! \cong \rso(n) \, $,  \, is
 the dual of the standard module of  $ \rso(n) \, $.

%
\end{free text}

\medskip

\begin{free text}  \label{Lie-struct_H(n)}
 {\bf The Lie structure in  $ H(n) \, $.}  \  We describe now the Lie (super)structure of  $ H(n) $
in terms of its basis  $ B_{H(n)} \, $.  We make use of the isomorphism  $ \; P(n) \! \Big/ \KK \cdot 1_{\scriptscriptstyle P(n)} \cong \tH(n) \; $  along with formulas (2.16--17) and the fact that  $ \; \tH(n) = H(n) \oplus {\tH(n)}_{n-2} \; $  with  $ \; {\tH(n)}_{n-2} = \KK \cdot D_{\xi_1 \xi_2 \cdots \xi_n} \; $.  In short, we have to compute the brackets  $ \; \big\{\, \underline{\xi}^{\underline{a}} \; , \, \underline{\xi}^{\underline{b}} \,\big\} \; $  in  $ P(n) $  for all  $ \, \underline{a} \, , \underline{b} \in {\{0,1\}}^n \, $  such that  $ \, 0 < |\underline{a}| \, , |\underline{b}| < n \; $.

\medskip

   By (2.17) we have
  $$  \Big\{\, \underline{\xi}^{\underline{a}} \; , \, \underline{\xi}^{\underline{b}} \,\Big\}  \; = \;
  {(-1)}^{|\underline{a}|} \Bigg(\, {\textstyle \sum\limits_{s=1}^r} \, \Big( \partial_s\big( \underline{\xi}^{\underline{a}} \big) \, \partial_{r+s}\big( \underline{\xi}^{\underline{b}} \big) + \partial_{r+s}\big( \underline{\xi}^{\underline{a}} \big) \, \partial_s\big( \underline{\xi}^{\underline{b}} \big) \Big) + \, \delta_{n \in (2\,\N+1)} \, \partial_{2r+1}\big( \underline{\xi}^{\underline{a}} \big) \, \partial_{2r+1}\big( \underline{\xi}^{\underline{b}} \big) \!\Bigg)  $$
 With a detailed (yet elementary) analysis, one finds only two possibilities.  The first one is
  $$  \exists \; s \, : \; \underline{a}(s) = 1 = \underline{b}(r \! + s) \, , \, \underline{a}(r \! + s) = 1 = \underline{b}(s)  \quad \Longrightarrow \quad  \Big\{\, \underline{\xi}^{\underline{a}} \; , \, \underline{\xi}^{\underline{b}} \,\Big\} \; = \; 0   \eqno (2.19)  $$
   \indent   On the other hand, the second possibility is either
  $$  \hskip5pt  \begin{matrix}
   \nexists \; s \, : \; \underline{a}(s) = 1 = \underline{b}(r \! + \! s) \, , \, \underline{a}(r \! + \! s) = 1 = \underline{b}(s)  \\
   \hfill   \big(\, \underline{a}(2\,r\!+\!1) \, , \, \underline{b}(2\,r\!+\!1) \big) \; \not= \; (1,1)  \quad
      \end{matrix}
   \hskip3pt \Bigg\}  \hskip5pt \Longrightarrow \hskip4pt  \Big\{\, \underline{\xi}^{\underline{a}} \; , \, \underline{\xi}^{\underline{b}} \,\Big\}  \, = \,  {\textstyle \sum\limits_{k=1}^r} \, \eta_k \, \underline{\xi}^{\underline{a} + \underline{b} - \underline{e}_{\,k} - \underline{e}_{\,r+k}}   \eqno (2.20)  $$
for some  $ \, \eta_k \in \{ +1, 0, -1 \} \, $,  \, or (only possible if  $ n $  is odd, written as  $ \, n = 2 \,r + 1 \, $)
 \vskip-9pt
  $$  \begin{matrix}
   \nexists \; s \, : \; \underline{a}(s) = 1 = \underline{b}(r \! + \! s) \, , \, \underline{a}(r \! + \! s) = 1 = \underline{b}(s)  \\
   \hfill   \big(\, \underline{a}(2\,r\!+\!1) \, , \, \underline{b}(2\,r\!+\!1) \big) \; = \; (1,1)
      \end{matrix}
   \hskip3pt \Bigg\}  \hskip5pt \Longrightarrow \hskip4pt
     \Big\{\, \underline{\xi}^{\underline{a}} \; , \, \underline{\xi}^{\underline{b}} \,\Big\}  \, = \,  \eta_{2r+1} \, \underline{\xi}^{\underline{a} + \underline{b} - 2 \underline{e}_{\,2r+1}}  \quad   \eqno (2.21)  $$
%
%
for some  $ \, \eta_{2r+1} \in \{ +1, 0, -1 \} \, $.  For later use, we also record the following fact
  $$  \begin{matrix}
   \underline{a}(s) = \underline{a}(r \! + \! s) \, , \; \underline{b}(s) = \underline{b}(r \! + \! s)  \;\;\; \forall \, s  \\
   \hfill   \underline{a}(2\,r\!+\!1) \, = \, 0 \, = \, \underline{b}(2\,r\!+\!1) \big)  \quad \qquad
      \end{matrix}
   \hskip3pt \Bigg\}  \hskip9pt \Longrightarrow \quad  \Big\{\, \underline{\xi}^{\underline{a}} \; , \, \underline{\xi}^{\underline{b}} \,\Big\}  \, = \,  0   \eqno (2.22)  $$
 also proved by straightforward inspection.
   Similar results, still proved by direct analysis, are
  $$  \displaylines{
   \hfill   \hskip21pt  \Big\{\, \underline{\xi}^{\underline{a}} \; , \, \Big\{\, \underline{\xi}^{\underline{a}} \; , \, \underline{\xi}^{\underline{b}} \,\Big\} \!\Big\}  \, = \,  0   \hskip55pt   \forall \;\; \underline{a} \, , \, \underline{b} \, \in \, {\{0,1\}}^n \; : \; |\underline{a}| > 3   \hfill (2.23)  \cr
   \hfill   \hskip21pt  \big\{ \xi_{2r+1} \, , \, \xi_{2r+1} \big\} \, = \, -1  \;\; ,  \qquad  \Big\{\, \underline{\xi}^{\underline{a}} \; , \, \underline{\xi}^{\underline{a}} \,\Big\} \, = \, 0   \hskip35pt  \forall \;\; \underline{a} \in {\big\{0,1\big\}}^n \setminus \big\{ \underline{e}_{\,2r+1} \big\}   \hfill (2.24)  }  $$

\vskip5pt

   All the formulas above yield also the Lie brackets among elements of  $ B_{H(n)} \, $,  via the identity  $ \; \Big[ D_{\underline{\xi}^{\underline{a}}} \; , \, D_{\underline{\xi}^{\underline{b}}} \Big] \, = \, D_{[\underline{\xi}^{\underline{a}} \, , \, \underline{\xi}^{\underline{b}}]} \; $   --- see formula (2.16).  Similarly, from these formulas and from the identity in  Definition \ref{def-Lie-salg}{\it (e)},  taking also (2.24) into account, we get for the  $ 2 $--operation  the formulas
  $$  {D_{\underline{\xi}^{\underline{a}}}}^{\langle 2 \rangle}  \, = \;  0   \eqno \forall \;\; \underline{a} \in {\{0,1\}}^n   \qquad \qquad \qquad   (2.25)  $$

\end{free text}

\vskip5pt

   \centerline{\sl  \dbend \;  From now on,  $ \, \fg $  will be a Lie superalgebra of Cartan type:  $ \, W(n) \, $,  $ S(n) \, $,  $ \tS(n) $  or  $ H(n) \, $.  \,\dbend  }
 \eject

\begin{free text}  \label{def_Cartan-subalg_roots_etc}
 {\bf Cartan subalgebras, roots, root spaces.}  Let  $ \fg $  be a Lie superalgebra of Cartan type.  We call  {\it Cartan subalgebras of\/  $ \fg $}  the Cartan subalgebras of the reduc\-tive Lie algebra  $ \, \fg_0 \, $,  which is  $ \rgl(n) $,  $ \rsl(n) $  or  $ \rso(n) $  respectively if  $ \fg $  is  $ W(n) \, $,  $ S(n) $  or  $ \tS(n) \, $,  or  $ H(n) \, $.  We fix one of them (the ``standard'' one), namely
  $ \, \fh \, := \, \text{\it Span}_{\,\KK}\big(\{\, \xi_k \, \partial_k \,\}_{1 \leq k \leq n}\big) \, $ {\sl for case  $ W(n) \, $},
  $ \, \fh \, := \, \text{\it Span}_{\,\KK}\big(\{ (\xi_k \, \partial_k - \xi_{k+1} \, \partial_{k+1}) \,\}_{1 \leq k \leq n-1}\big) \, $  {\sl for cases  $ S(n) $  and  $ \tS(n) \, $},  and
  $ \, \fh \, := \, \text{\it Span}_{\,\KK}\big(\{ (\xi_k \, \partial_k - \xi_{r+k} \, \partial_{r+k}) \,\}_{1 \leq k \leq r}\big) \, $  {\sl for case  $ H(n) \, $,  with  $ \, r := \big[n/2\big] \, $}.
 In all cases, the spanning set we considered in  $ \fh $  is actually a  $ \KK $--basis.

\smallskip

   Now consider the element  $ \, \cE := \sum_{i=1}^n \xi_i \, \partial_i \in {W(n)}_0 \, $:  we set  $ \, \overline{\fh} := \fh \, $,  $ \, \overline{\fg}_0 := \fg_0 \, $,  $ \, \overline{\fg} := \fg \, $  when  $ \fg $  is of type  $ W $  or  $ \tS $,  and  $ \, \overline{\fh} := \fh + \KK \, \cE \, $,  $ \, \overline{\fg}_0 := \fg_0 + \KK \, \cE \, $,  $ \, \overline{\fg} := \fg + \KK \, \cE \, $  when  $ \fg $  is of type  $ S $  or  $ H \, $.
  In this way  $ \, \cE \in \overline{\fg} \, $  iff  $ \fg $  is  $ \Z $--graded,
 with
 $ \; \big[ \cE , X \big] = z \, X \; $  for  $ \, X \in \fg_z \, $  (cf.~\cite{ka}, \S 1.2.12 and \S 4.1.2).

\smallskip

   The Cartan subalgebra  $ \overline{\fh} $  acts by adjoint action on  $ \overline{\fg} \, $,  and  $ \fg $  itself is an  $ \overline{\fh} $--submodule.  Thus we have a decomposition of  $ \fg $  into weight spaces for the  $ \overline{\fh} $--action,  namely (cf.~\cite{se},  \S 4)
  $$  \fg  \; = \;  {\textstyle \bigoplus_{\alpha \in \overline{\fh}^{\,*}}} \, \fg_\alpha  \;\; ,  \qquad  \fg_\alpha \, := \, \big\{\, w \in \fg \;\big|\, [h,w] = \alpha(h) \, w \, , \,\; \forall \; h \in \overline{\fh} \,\big\}  $$
called  {\sl root (space) decomposition\/}  of  $ \fg \, $,  where  $ \, \fg_{\alpha=0} = \fh \, $.
 The terminology for such a context is standard:
 $ \; \Delta := \big\{\, \alpha \in \overline{\fh}^{\,*} \setminus \{0\} \;\big|\, \fg_\alpha \not= \{0\} \big\} \, $  is the  {\it root system\/}  of  $ \fg \, $,  its elements are called  {\it roots},  we have  {\it root spaces\/}  (each  $ \, \fg_\alpha \, $  for  $ \, \alpha \not= 0 \, $),  {\it root vectors},  etc.  For every root  $ \alpha $  one has either  $ \, \fg_\alpha \subseteq \fg_\zero \, $  or  $ \, \fg_\alpha \subseteq \fg_\uno \, $:  accordingly, we call  $ \alpha $  {\it even\/}  or  {\it odd\/};  we set  $ \; \Delta_{\overline{z}} := \big\{\, \alpha \in \Delta \,\big|\, \fg_\alpha \subseteq \fg_{\overline{z}} \,\big\} \; $  for  $ \, \overline{z} \in \big\{\, \overline{0} \, , \, \overline{1} \,\big\} \, $.

\smallskip

   We call a root  $ \alpha $  {\it essential\/}  if  $ \, -\alpha \in \Delta \, $,  and  {\it nonessential\/}  if  $ \, -\alpha \not\in \Delta \, $.

\smallskip

   Finally, the  {\it multiplicity\/}  of a root  $ \alpha \, $,  by definition, is the non-negative integer  $ \, \mu(\alpha) := \text{\it dim}\,(\fg_\alpha) \, $;  \, then we call the root  $ \alpha $  respectively  {\it thin\/}  or  {\it thick\/}  if  $ \, \mu(\alpha) = 1 \, $  or  $ \, \mu(\alpha) > 1 \, $.

\vskip7pt

  We denote by  $ Q $  the  $ \Z $--lattice  $ \, Q := \Z.\Delta \, $  spanned by  $ \Delta $  inside  $ \overline{\fh}^{\,*} \, $.  The root space decomposition is a  $ Q $--grading  of  $ \fg \, $  (as a Lie superalgebra).  This  $ Q $--grading  is compatible with the  $ \Z $--grading  $ \; \fg = {\textstyle \bigoplus}_{z \in \Z} \fg_z \; $  (only as vector space for  $ \fg $  of type  $ \tS \, $:  cf.~the  $ \underline{\text{\sl Remark}} \/ $  in  \S \ref{def-tildeS(n)}),  in the following sense: one has  $ \, \fh \subseteq \fg_0 \, $  and for each root  $ \alpha \, $  also  $ \, \fg_\alpha \subseteq \fg_{\text{\it ht}(\alpha)} \, $  for a unique integer  $ \, \text{\it ht}(\alpha) \in \Z \; $,  so that
  $$  {\ } \hskip-17pt   \fg_0  \, =  {\textstyle \bigoplus\limits_{\alpha \,:\, \text{\it ht}(\alpha) = 0}} \hskip-5pt \fg_\alpha \,\; {\textstyle \bigoplus} \;\, \fh \;\; ,  \; \qquad  \fg_z  \, =  {\textstyle \bigoplus\limits_{\alpha \,:\, \text{\it ht}(\alpha) = z}} \hskip-5pt \fg_\alpha  \qquad \big(\, \forall \; z \in \Z \setminus \{0\} \,\big)  $$
The unique integer  $ \, \text{\it ht}(\alpha) \in \Z \, $  thus associated with every root  $ \, \alpha \, $  is called the  {\it height\/}  of  $ \alpha \, $.  As another consequence, we can also partition the set of roots according to the height, namely,
 $ \; \Delta \, = \, {\textstyle \coprod}_{z \in \Z} \, \Delta_z \; $  with  $ \; \Delta_z := \big\{\, \alpha \in \Delta \,\big|\, \text{\it ht}\,(\alpha) = z \,\big\} \; $;
 then  $ \; \Delta_{\overline{0}} = {\textstyle \coprod}_{z \in 2\Z} \, \Delta_z \; $  and  $ \; \Delta_{\overline{1}} = {\textstyle \coprod}_{z \in (2\Z+1)} \, \Delta_z \; $.

\vskip3pt

   We set  $ \; \Delta_{\zero^{\,\uparrow}} := \Delta_\zero \setminus \Delta_0 \, $,  $ \; \Delta_{\uno^{\,\uparrow}} := \Delta_\uno \setminus \Delta_{-1} \; $  and  $ \; \tDelta := {\big\{ (\alpha,j) \big\}}^{\alpha \in \Delta}_{1 \leq j \leq \mu(\alpha)} \; $;  we denote  $ \, \pi : \tDelta \relbar\joinrel\twoheadrightarrow \Delta \, $  the map  $ \, \talpha = (\alpha,k) \;{\buildrel \pi \over \mapsto}\; \alpha \; $,  and  $ \, \tDelta_{\hat{z}} := \pi^{-1}\big(\Delta_{\hat{z}}\big) \, $  for  $ \, {\hat{z}} \in \big\{ \zero \, , \uno \, , \zero^\uparrow , \uno^\uparrow \big\} \cup \{-1,0,1,\dots,n\} \, $.
%
%

\vskip5pt

   We conclude introducing (or recalling) the notion of  {\it coroot\/}  adfssociated with a classical root.

\vskip5pt

   When  $ \fg $  is of type  $ W $,  $ S $  or  $ H \, $,  the Lie algebra  $ \overline{\fg}_0 $  is reductive, of the form  $ \; \overline{\fg}_0 \, = \, \overline{\fg}_0^{\,\text{\it ss}} \oplus \KK \, \cE \; $  where  $ \overline{\fg}_0^{\,\text{\it ss}} $  is its semisimple part (actually simple) and  $ \; \KK \, \cE \; $  is its radical (actually the centre); similarly  $ \overline{\fh} $  splits as  $ \; \overline{\fh} = \overline{\fh}_{\text{\it ss}} \oplus \KK \, \cE \, $  with  $ \; \overline{\fh}_{\text{\it ss}} := \overline{\fh} \, \cap \, \overline{\fg}_0^{\,\text{\it ss}} \, $:  explicitly,  $ \overline{\fh}_{\text{\it ss}} $  is the (standard) Cartan subalgebra of  $ \, \overline{\fg}_0^{\,\text{\it ss}} \cong \rsl_n \, $   --- if  $ \fg $  is of type  $ W(n) $  or  $ S(n) $  ---   or of  $ \, \fg_0 \cong \rso_n \, $  --- if $ \fg $  is of type  $ H(n) \, $.
                                                                        \par
   When  $ \fg $  is of type  $ \tS(n) $  the situation is simpler: the Lie algebra  $ \, \overline{\fg}_0 = \fg_0 $  is (semi)simple, isomorphic to  $ \rsl_n \, $.  In order to unify notation, we write then  $ \, \overline{\fg}_0^{\,\text{\it ss}} := \overline{\fg}_0 = \fg_0 \, $  and  $ \; \overline{\fh}_{\text{\it ss}} := \overline{\fh} = \fh \, $:  explicitly,  $ \, \overline{\fh}_{\text{\it ss}} = \fh \, $  is the (standard) Cartan subalgebra of  $ \, \overline{\fg}_0 = \fg_0 \cong \rsl_n \, $.

\vskip2pt

   As in any semisimple Lie algebra, the Killing form induce a  $ \KK $--linear  isomorphism  $ \, {\big( \overline{\fh}_{\text{\it ss}} \big)}^{\!*} \! {\buildrel \cong \over {\lhook\joinrel\relbar\joinrel\twoheadrightarrow}} \, \overline{\fh}_{\text{\it ss}} \, $,  denoted by  $ \, \gamma \mapsto t_\gamma \; $.  When  $ \fg $  is of type  $ \tS(n) $  we use this isomorphism to define (as usual:  cf.~\cite{hu})  the  {\it coroot}  $ \, H_\alpha \in \overline{\fh}_{\text{\it ss}} = \overline{\fh} \, $  associated with any  {\sl classical\/}  root  $ \, \alpha \in \Delta_0 \, \big(\! \subset {\big( \overline{\fh}_{\text{\it ss}} \big)}^* \! = {\overline{\fh}}^{\,*} \,\big) \; $.
                                                                \par
   When  $ \fg $  is of type  $ W $,  $ S $  or  $ H $  instead, note that every linear functional  $ \phi $  on  $ \overline{\fh}_{\text{\it ss}} $  uniquely extends to a linear functional on  $ \overline{\fh} \, $  such that  $ \, \phi(\cE) = 0 \, $:  this yields an embedding of  $ {\big( \overline{\fh}_{\text{\it ss}} \big)}^* $  into  $ {\overline{\fh}}^{\,*} \, $,  so every root (in classical sense) of the simple Lie algebra  $ \overline{\fg}_0^{\,\text{\it ss}} $  identifies with an element of  $ {\overline{\fh}}^{\,*} $.  Conversely, let  $ \, \delta \in {\overline{\fh}}^{\,*} \, $  be the unique  $ \KK $--linear  functional on  $ \, \overline{\fh} = \overline{\fh}_{\text{\it ss}} \oplus \KK \, \cE \, $  such that  $ \, \delta(\cE) = 1 \, $  and  $ \, \delta\big(\overline{\fh}_{\text{\it ss}}\big) = \{0\} \, $:  coupled with the embedding  $ \, {\big( \overline{\fh}_{\text{\it ss}} \big)}^* \lhook\joinrel\longrightarrow {\overline{\fh}}^{\,*} \, $,  this yields  $ \, {\overline{\fh}}^{\,*} = {\big( \overline{\fh}_{\text{\it ss}} \big)}^* \oplus \KK \, \delta \, $.  Now, using this last description of  $ {\overline{\fh}}^{\,*} $  and the isomorphism  $ \, {\big( \overline{\fh}_{\text{\it ss}} \big)}^{\!*} \! {\buildrel \cong \over {\lhook\joinrel\relbar\joinrel\twoheadrightarrow}} \, \overline{\fh}_{\text{\it ss}} \, $  induced by the Killing form, we extend the latter to an isomorphism  $ \, {\overline{\fh}}^{\,*} \! \lhook\joinrel\relbar\joinrel\twoheadrightarrow \overline{\fh} \, $,  $ \, \big(\, \gamma \mapsto t_\gamma \, , \, \cE \mapsto \delta \,\big) \; $:  via this, we define again the  {\it coroot}  $ \, H_\alpha \in \overline{\fh}_{\text{\it ss}} \subsetneqq \overline{\fh} \, $  associated with every  {\sl classical\/}  root  $ \, \alpha \in \Delta_0 \, \big(\! \subset {\big( \overline{\fh}_{\text{\it ss}} \big)}^* \! \subsetneqq {\overline{\fh}}^{\,*} \,\big) \; $.
 \eject

   We describe now in detail all these objects; we distinguish cases  $ W $,  $ S \, $,  $ \tS $  and  $ H \, $  (cf.~\cite{se}).

\vskip11pt

   {\sl  $ \, \underline{\text{Case \ }  \, \fg = W(n)} \; $}:  \; Let  $ \, \fg := W(n) \, $.  Then  $ \, \fg_0 \cong \rgl_n \, $,  and we fix the Cartan subalgebra  $ \, \fh = \overline{\fh} \, $  as above.  Let  $ \phantom{\Big|} \{ \varepsilon_1, \varepsilon_2, \dots, \varepsilon_n \} \, $  be the standard basis of  $ \fh^* \, $:  then the root system of  $ \fg $  is given by
 \vskip-4pt
  $$  \Delta  \; = \;  \big\{\, \varepsilon_{i_1} \! + \cdots + \varepsilon_{i_k} \! - \varepsilon_j \;\big|\; 1 \! \leq \! i_1 \! < \! i_2 \! < \! \cdots \! < \! i_k \! \leq \! n \, , \; j = 1, \dots, n \,\big\} \setminus \{0\}  $$
%
%
\vskip2pt

   From this description, one sees at once that every root  $ \, \alpha \in \Delta \, $  can be written as  $ \pm $  the sum of ``simple roots'' chosen in a ``simple root system'', just like for simple Lie superalgebras of  {\sl basic\/}  type.  For instance, one can take as ``simple root system'' the set  $ \, \Pi := \{\, \varepsilon_1 - \varepsilon_2 \, , \, \dots \, , \, \varepsilon_{n-1} - \varepsilon_n \, , \, \varepsilon_n \,\} \, $,  which is even ``distinguished'' (i.e., it contains only one odd root, in this case  $ \varepsilon_n \, $).

\vskip4pt

  If  $ \, \alpha = \sum_{s=1}^k \, \varepsilon_{i_s} \! - \varepsilon_j \, $  with  $ \, j \not\in \{i_1,\dots,i_k\} \, $,  \, then  $ \, \fg_{\alpha = \sum_{s=1}^k \varepsilon_{i_s} \! - \varepsilon_j} \, $  has  $ \KK $--basis  the singleton  $ \, \big\{ \xi_{i_1} \! \cdots \xi_{i_k} \partial_j \big\} \, $,  so that  $ \; \mu(\alpha) = 1 \; $;  \, if instead  $ \, \alpha = \sum_{r=1}^h \, \varepsilon_{i_r} \, $,  \, then  $ \, \fg_{\alpha = \sum_{r=1}^h \varepsilon_{i_r}} \, $  has  $ \KK $--basis  $ \, \big\{\, \xi_{i_1} \! \cdots \xi_{i_h} \, \xi_\ell \, \partial_\ell \;\big|\; \ell \in \{1,\dots,n\} \setminus \{ i_1, \dots, i_h \} \big\} \, $,  so  $ \; \mu(\alpha) = n-h \; $.  Thus the  {\sl thick\/}  roots are those of  $ \, \big\{ \sum_{s=1}^h \varepsilon_{i_s} \,\big|\; h < n\!-\!1 \big\} \, $  and the  {\sl thin\/}  ones those of  $ \, \big\{ \sum_{s=1}^k \varepsilon_{i_s} \! - \varepsilon_j \,\big|\, j \not\in \{i_1,\dots,i_k\} \big\} \,\bigcup\, \big\{ \sum_{s=1}^{n-1} \varepsilon_{i_s} \big\} \, $.

\vskip7pt

   Now for any  $ \, \alpha = \varepsilon_{i_1} \! + \cdots + \varepsilon_{i_k} \! - \varepsilon_j \in \Delta \, $,  its  {\sl height\/}  is easy to read off: it is given by  $ \, \text{\it ht}(\alpha) = k-1 \, $.

\vskip5pt

   If a root  $ \, \alpha \, $  is  {\sl even essential},  then  $ \; \fg_\alpha \, , \fg_{-\alpha} \subseteq \fg_0 \; $,  the multiplicity of both  $ \alpha $  and  $ -\alpha $  is 1 and  $ \fg_\alpha \, $,  $ \fg_{-\alpha} $  together generate a Lie subalgebra isomorphic to  $ \rsl(2) $  inside  $ \fg_0 \, $.  The set of even essential roots is  $ \, \big\{ \varepsilon_i \! - \! \varepsilon_j \big\}_{i,j=1,\dots,n;}^{i\not=j} \, $,  the corresponding root spaces being  $ \, \fg_{\varepsilon_i - \varepsilon_j} \! = \! \text{\it Span}_\KK\big(\{ \xi_i \, \partial_j \}\big) \! = \! \KK \, \xi_i \, \partial_j \; $.
                                                                 \par
   If instead  $ \, \alpha \, $  is  {\sl odd essential},  then either  $ \; \fg_\alpha \subseteq \fg_{-1} \, $,  $ \; \mu(\alpha) = 1 \, $,  $ \; \mu(-\alpha) = n-1 \, $,  \,
 or  $ \; \fg_{-\alpha} \subseteq \fg_{-1} \, $,  $ \; \mu(-\alpha) = 1 \, $,  $ \; \mu(\alpha) = n-1 \; $.
   In the first case,  $ \, \alpha \in \big\{\! -\!\varepsilon_j \big\}_{1 \leq j \leq n} \, $  is thin while  $ \, -\alpha \in \big\{ \varepsilon_j \big\}_{1 \leq j \leq n} \, $  is thick, and for each  $ j $  the root spaces  $ \, \fg_{\alpha = -\varepsilon_j} \, $  and  $ \, \fg_{-\alpha = \varepsilon_j} \, $  have  $ \KK $--basis  $ \, \big\{ \partial_j \big\} \, $  and  $ \, \big\{\, \xi_j \, \xi_\ell \, \partial_\ell \;\big|\; 1 \leq \ell \leq n \, , \; \ell \not= j \,\big\} \, $  respectively (see above).  In the second case the converse holds.

\vskip3pt

   Finally the  {\sl nonessential\/}  roots  $ \, \alpha \, $  are all those such that  $ \, \text{\it ht}\,(\alpha) > 1 \, $  together with all those of the form  $ \; \alpha = \varepsilon_{i_1} \! + \varepsilon_{i_2} \! - \varepsilon_j \; $  (hence  $ \, \text{\it ht}\,(\alpha) = 1 \, $)  with  $ \, \, j \not= i_1, i_2 \, $.   \hfill  $ \diamondsuit $

\vskip9pt

   {\sl  $ \, \underline{\text{Case \ }  \, \fg = S(n)} \; $}:  \; Let  $ \, \fg := S(n) \, $.  Now  $ \, \fg_0 \cong \rsl_n \, $,  we fix the Cartan subalgebra  $ \fh $  and set  $ \, \overline{\fh} := \fh \oplus \KK \, \cE \, $  as above.  Clearly the root system of  $ \, \fg := S(n) \, $  is a subset of that of  $ W(n) \, $,  namely
 \vskip-4pt
  $$  \Delta  \; = \;  \big\{\, \varepsilon_{i_1} \! + \cdots + \varepsilon_{i_k} \! - \varepsilon_j \;\big|\; 1 \! \leq \! i_1 \! < \! i_2 \! < \! \cdots \! < \! i_k \! \leq \! n \, , \; k \! < \! n \, , \; j \! = \! 1, \dots, n \,\big\} \setminus \{0\}  $$
in short, the roots of  $ S(n) $  are those of  $ W(n) $  whose height is less than  $ \, n\!-\!1 \, $.  In particular, the characterization of the height of a root of  $ S(n) $  is exactly the same as for  $ W(n) \, $.

\vskip4pt

   By construction, root spaces of  $ S(n) $  or  $ W(n) $  enjoy the relation  $ \, {S(n)}_\alpha = {W(n)}_\alpha \bigcap S(n) \, $.  The explicit description given for case  $ W $  implies that  $ \, {S(n)}_\alpha = {W(n)}_\alpha \, $  when  $ \alpha $  is thin for  $ W(n) \, $,  so that it is  {\sl thin\/}  for  $ S(n) $  as well (the multiplicity being 1 in both cases).  Instead, if a root  $ \alpha $  is of the form $ \, \alpha = \sum_{s=1}^h \, \varepsilon_{i_s} \, $,  with  $ \, h < n \! - \! 1 \, $  (so it is thick for  $ W(n) \, $),  then the space  $ {S(n)}_\alpha $  has  $ \KK $--basis
 $ \; \big\{\, \underline{\xi}^{\underline{i}} \; \big( \xi_j \, \partial_j - \xi_{j+1} \, \partial_{j+1} \big) \;\big|\; 1 \leq j \leq n\!-\!1 \, , \; \underline{i}(j) = 0 = \underline{i}(j\!+\!1) \,\big\} \; $
with  $ \, \underline{\xi}^{\underline{i}} := \xi_{i_1} \! \cdots \xi_{i_h} \, $;  so  $ \alpha $  in  $ S(n) $  has multiplicity  $ \; \mu(\alpha) = n\!-\!h\!-\!1 \; $,  hence  $ \alpha $  is  {\sl thick\/}  for  $ S(n) $  if  $ \, h < n\!-\!2 \, $,  and it is  {\sl thin\/}  if  $ \, h = n\!-\!2 \, $.

\smallskip

  Finally, it is clear that roots of  $ S(n) $  have a certain degree (for the  $ \Z_2 $--grading  or the  $ \Z $--grading),  and they are essential or non-essential, exactly as they have or they are for  $ W(n) \, $.  \hfill  $ \diamondsuit $

\vskip11pt

   {\sl  $ \, \underline{\text{Case \ }  \, \fg = \tS(n)} \; $}:  \; Let  $ \, \fg := \tS(n) \, $.  Like for  $ S(n) \, $,  we have  $ \, \fg_0 \cong \rsl_n \, $  and we fix  $ \, \overline{\fh} := \fh \, $  with  $ \fh $  as above.  By the analysis in  \S\S \ref{def-tildeS(n)}, \ref{Lie-struct_tildeS(n)},  we see that the root system of  $ \tS(n) $ is
 \vskip-4pt
  $$  \Delta  \; = \;  \big\{\, \varepsilon_{i_1} \! + \cdots + \varepsilon_{i_k} \! - \varepsilon_j \;\big|\; 1 \! \leq \! i_1 \! < \! i_2 \! < \! \cdots \! < \! i_k \! \leq \! n \, , \; k \! < \! n \, , \; j \! = \! 1, \dots, n \,\big\} \setminus \{0\}  $$
like for  $ S(n) \, $,
 but now  $ \; \varepsilon_1 + \cdots + \varepsilon_n = 0 \; $  in  $ \, {\overline{\fh}}^{\,*} \!\! = \fh^* \, $.
Also, for root spaces we have the following.
                                                      \par
   For every root  $ \alpha $  of  $ S(n) $  having non-negative height we have  $ \; {\tS(n)}_\alpha = {S(n)}_\alpha \; $.
 Instead, for the roots of height $ \, -1 \, $  (which are  $ \, -\varepsilon_1 \, $,  $ \dots \, $,  $ \, -\varepsilon_n \, $)  we have  $ \; {\tS(n)}_{-\varepsilon_j} = \text{\it Span}_{\,\KK}\big(\! (\xi_1 \cdots \xi_n - 1) \, \partial_j \big) \; $  for all  $ \, j=1, \dots, n \, $,  with  $ \, \big\{ (\xi_1 \cdots \xi_n - 1) \, \partial_j \big\} \, $  being a $ \KK $--basis  of any such root space.
                                                      \par
    It is worth stressing that the roots of the form  $ \, -\varepsilon_j \, $  are the only  $ \, \alpha \in \Delta \, $  such that  $ \, 2\,\alpha \in \Delta \, $.  Indeed,
 $ \, 2\,(-\varepsilon_j) = (\,\varepsilon_1 + \cdots + \widehat{\,\varepsilon_j\,} + \cdots + \varepsilon_n) - \varepsilon_j \, \in \, \Delta \, $  (for all  $ \, j = 1, \dots, n \, $),  using the identity  $ \; \varepsilon_1 + \cdots + \varepsilon_n = 0 \; $  (in  $ \, \fh^* $);
instead,
direct analysis shows that  $ \, 2\,\alpha \not\in \Delta \, $  for all  $ \, \alpha \not\in {\{-\varepsilon_j\}}_{j=1,\dots,n;} \, $.

\smallskip

   Finally, we can say that the roots of  $ \tS(n) $  have a certain degree (for the  $ \Z_2 $--grading  or the  $ \Z_n $--grading),  they are essential or non-essential, much like we did for  $ W(n) $  and  $ S(n) \, $.   \hfill  $ \diamondsuit $

\vskip11pt

   {\sl  $ \, \underline{\text{Case \ }  \, \fg = H(n)} \; $}:  \; Let  $ \, \fg := H(n) \, $.  Now  $ \, \fg_0 \cong \rso_n \, $,  we fix the Cartan subalgebra  $ \fh $  as above, and then  $ \phantom{\Big|}  \overline{\fh} := \fh + \KK \, \cE \, \supsetneqq \, \fh \; $.  To describe the root system and the root spaces in this case, we point out the explicit form of the action of  $ \cE $  on the basis vectors of  $ \, \fg := H(n) \, $  considered in  \S \ref{def-H(n)}.  Let  $ \, D_{\underline{\xi}^{\underline{a}}} \, $  be any one of these elements, with  $ \, \underline{a} \in {\{0,1\}}^n \, $,  with  $ \, 0 < |\underline{a}| < n \, $.  The formulas in  \S \ref{Lie-struct_W(n)}  give
 \vskip-5pt
  $$  \Big[\, \cE \, , D_{\underline{\xi}^{\underline{a}}} \,\Big]  \; = \;  \big( |\underline{a}| - 2 \big) \, D_{\underline{\xi}^{\underline{a}}}   \eqno \forall \;\; \underline{a} \in {\{0,1\}}^n \, : \, 0 < |\underline{a}| < n   \qquad (2.26)  $$
   \indent   As before, we write  $ \, n = 2 \, r \, $  or  $ \, n = 2 \, r + 1 \, $,  withe  $ \, r := \big[ n/2 \big] \; $.  Let now  $ \, \{ \varepsilon_1, \dots, \varepsilon_r \} \, $  be the stan\-dard basis in the weight space of  $ \fg_0 = \rso_n \, $:  adding  $ \delta $  we get a basis of  $ \, {\overline{\fh}}^{\,*} \, $.  The root system is
  $$  \displaylines{
   \Delta  \, = \,  \big\{ \pm \varepsilon_{i_1} \! \pm \cdots \pm \varepsilon_{i_k} \! + m \, \delta \;\big|\; 1 \! \leq \! i_1 \! < \! \cdots \! < \! i_k \! \leq \! r , \, k - \! 2 \leq \! m \! \leq n - \! 2 , \, m \! \geq \! -1 , \, m\!-\!k \in 2\,\Z \big\}   \hfill \quad \text{if \ }  n = 2 \, r  \cr
   \qquad   \Delta  \, = \,  \big\{ \pm \varepsilon_{i_1} \! \pm \cdots \pm \varepsilon_{i_k} \! + m \, \delta \;\big|\; 1 \! \leq \! i_1 \! < \! \cdots \! < \! i_k \! \leq \! r \, , \; k - \! 2 \leq \! m \! \leq n - \! 2 \, , \; m \! \geq \! -1 \,\big\}   \hfill \quad \text{if \ } \;  n = 2 \, r + 1  }  $$
   \indent   As to root spaces, consider any root  $ \; \alpha = \pm \varepsilon_{i_1} \! \pm \cdots \pm \varepsilon_{i_k} \! + m \, \delta \; $  written as  $ \; \alpha = \sum_{j=1}^r d_j \, \varepsilon_j + m \, \delta \; $  with  $ \, d_j \in \{+1,0,-1\} \, $  for all $ j \, $.  Then the formulas in  \S \ref{Lie-struct_H(n)}  along with (2.26) lead to find that for every root  $ \; \alpha = \sum_{j=1}^r d_j \, \varepsilon_j + m \, \delta \; $  the root space  $ \, \fg_{\alpha = \sum_{j=1}^r d_j \, \varepsilon_j + m \, \delta} \, $  has  $ \KK $--basis  the set
 \vskip-4pt
  $$  \Big\{\, D_{\underline{\xi}^{\underline{a}}} \,\;\Big|\;\, \underline{a} \in {\{0,1\}}^n \, : \; |\underline{a}| \! - \! 2 = m \, , \;\, \underline{a}(j) - \underline{a}(r\!+\!j) = d_j \;\; \forall \, j \,\Big\}  $$
   \indent   The  {\sl height\/}  of  $ \; \alpha = \pm \varepsilon_{i_1} \! \pm \cdots \pm \varepsilon_{i_k} \! + m \, \delta \; $  is  $ \, \text{\it ht}(\alpha) = m \, $;  in particular, if $ \, D_{\underline{\xi}^{\underline{a}}} \in \fg_\alpha \, $  then  $ \, |\underline{a}| = \text{\it ht}(\alpha) + 2 \; $;  the parity of  $ \alpha $  is the same as  $ m \, $,  and its multiplicity is  $ \; \mu(\alpha) = \Big( {r \atop {[(m-k)/\,2]}} \Big) \, $,  where
 $ \big[ (m-k)/2 \,\big] $  is the integral part of  $ \, (m-k)/2 \; $.

\vskip7pt

   Finally, note that the roots  $ \, \alpha = m \, \delta \, $  are the only ones whose double might be a root too.   \hfill  $ \diamondsuit $
\end{free text}

\vskip9pt

\begin{free text}  \label{fin-root-vects}
 {\bf Finiteness properties of roots and root vector action.}
  As we saw in  \S \ref{def_Cartan-subalg_roots_etc},  the root space decomposition yields a  $ Q $--grading  of  $ \fg \, $.  As a consequence, each root vector acts nilpotently.
 Actually, we can make this result more precise.  We begin with some easy properties of roots:
\end{free text}

\vskip-9pt

\begin{lemma}  \label{root-props}
 Let  $ \, \alpha \, , \beta \in \Delta \, $.  Then
 $ \, (t \, \alpha + \beta) \not\in \! \Delta \, $  for all  $ \, t > 2 \, $.  Moreover, if  $ \, 2 \, \alpha \in \! \Delta \, $  or  $ \, 3 \, \alpha \in \! \Delta \, $,  then  $ \, \fg = H(n) \, $,  $ \, \alpha \in \big\{\, m \, \delta \,\big|\, m \in \Z \,\big\} \, $,  or (only for the first case)  $ \, \fg = \tS(n) \, $,  $ \, \alpha \in {\{-\varepsilon_i\}}_{i=1,\dots,n;} \; $.
\end{lemma}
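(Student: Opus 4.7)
The plan is a case-by-case analysis according to the four Cartan types, exploiting the explicit root system descriptions tabulated in~\S \ref{def_Cartan-subalg_roots_etc}. The unifying observation is that in every type, a root written in terms of the standard weights $ \varepsilon_1, \dots, \varepsilon_n $ (together with $ \delta $ in the $H$ case) has all $ \varepsilon $--coefficients confined to $ \{-1, 0, 1\} $, with this bound holding modulo the single relation $ \sum_i \varepsilon_i = 0 $ in the $ \tS $ case. For a nonzero $ \alpha \in \Delta $ one therefore locates an index $ i $ where the $ \varepsilon_i $--coefficient is $ \pm 1 $, and at that coordinate the vector $ t\,\alpha + \beta $ carries absolute value at least $ t - 1 \geq 2 $ whenever $ t \geq 3 $, which is incompatible with being a root.

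For $ \fg = W(n) $ or $ S(n) $ this principle yields both assertions at once: the coefficient bound rules out $ t\,\alpha + \beta \in \Delta $ for $ t \geq 3 $, and doubling or tripling a $ \pm 1 $ coefficient already escapes $ \{-1, 0, 1\} $, so $ 2\,\alpha, 3\,\alpha \notin \Delta $. For $ \fg = H(n) $ I would split $ \alpha = \alpha_\varepsilon + m\,\delta $: when $ \alpha_\varepsilon \neq 0 $ the coefficient argument goes through verbatim on the $ \varepsilon $--coordinates, while $ \alpha_\varepsilon = 0 $ is precisely the exception $ \alpha = m\,\delta $ recorded by the lemma (so that $ 2\,m\,\delta $ or $ 3\,m\,\delta $ might legitimately lie in $ \Delta $ when the height $ t\,m $ falls in the admissible range).

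The case $ \fg = \tS(n) $ is where genuine care is needed, because weights live in the $ (n-1) $--dimensional quotient $ \bigoplus_i \KK\,\varepsilon_i \big/ \langle \sum_i \varepsilon_i \rangle $, so every root admits two natural representatives in $ \bigoplus_i \KK\,\varepsilon_i $ differing by $ \pm \sum_i \varepsilon_i $. I would enumerate these representatives for each shape of root and reapply the coefficient bound on each, eventually isolating the single accident $ \, 2\,(-\varepsilon_j) = \sum_{i \not= j} \varepsilon_i - \varepsilon_j \in \Delta \, $, which is the admissible alternative representative of $ -2\,\varepsilon_j $ (a root of height $ n - 2 $); no such accident occurs for tripling, nor for $ t\,\alpha + \beta $ with $ t \geq 3 $, since adding one further shift by $ \sum_i \varepsilon_i $ still leaves some coordinate of absolute value $ \geq 2 $. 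This quotient bookkeeping is the main technical obstacle; everything else reduces to elementary numerical checks against the explicit root lists of \S \ref{def_Cartan-subalg_roots_etc}.
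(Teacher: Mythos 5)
Your overall strategy --- reading the assertions off the explicit root lists of \S\ref{def_Cartan-subalg_roots_etc} via coefficient bounds --- is the right one; in fact the paper offers no proof of Lemma~\ref{root-props} at all (it is presented as an ``easy property'' to be checked by direct inspection), so a careful case-by-case verification of exactly this kind is what is called for. Your treatment of $W(n)$, of $S(n)$, and of $H(n)$ when the $\varepsilon$--part of $\alpha$ is nonzero is correct.

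There is, however, a genuine gap, and it sits precisely where you claim the bookkeeping closes up. Your final sentence for $\tS(n)$ --- that no accident occurs for $t\,\alpha+\beta$ with $t\ge 3$ because ``adding one further shift by $\sum_i\varepsilon_i$ still leaves some coordinate of absolute value $\ge 2$'' --- is false. Take $\alpha=-\varepsilon_j$ and $\beta=\varepsilon_j$ (both are roots of $\tS(n)$): then $3\,\alpha+\beta=-2\,\varepsilon_j$, and the shift by $\sum_i\varepsilon_i$ yields $\sum_{i\ne j}\varepsilon_i-\varepsilon_j$, all of whose coordinates are $\pm 1$; this is exactly the root $2\,(-\varepsilon_j)$ that the paper itself exhibits. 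More generally, whenever $2\,\alpha\in\Delta$ and $-\alpha\in\Delta$, the choice $\beta=-\alpha$, $t=3$ gives $t\,\alpha+\beta=2\,\alpha\in\Delta$, so the exceptions of the ``Moreover'' clause unavoidably propagate to the first assertion. The same phenomenon occurs in the case you leave silent, namely $H(n)$ with $\alpha=m\,\delta$, for which you except only the second assertion: e.g.\ for $n=2\,r+1$ with $r\ge 2$ one has $\delta,\,-\delta,\,2\,\delta\in\Delta$, whence $3\,\delta+(-\delta)=2\,\delta\in\Delta$. Consequently the first assertion cannot be established as literally stated for these two families; a correct write-up must carry the exceptions $\alpha\in\Z\,\delta$ (type $H$) and $\alpha\in\{-\varepsilon_i\}_i$ (type $\tS$) into the first assertion as well --- which is harmless for the paper, since in the proof of Proposition~\ref{ad_square/cube} those exceptional roots are dealt with by separate arguments. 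Apart from this, your reduction to coefficient checks is sound.
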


\vskip5pt

   Here now are the finiteness properties of the root vector action we need:

\vskip9pt

\begin{proposition}  \label{ad_square/cube}
 The following hold:
 \vskip3pt
   {\it (a)} \,  Let  $ \, \alpha \! \in \! \Delta \, $,  $ \, \alpha \! \not= \! -\varepsilon_i \, $  if  $ \; \fg \! = \! \tS(n) \, $  or  $ \, \alpha \! \not= \! (2\,\N+1) \, \delta \, $  if  $ \, \fg \! = \! H(2\,r\!+\!1) \; $.  Then  $ \; \big[ \fg_\alpha \, , \fg_\alpha \big] = \{0\} \; $.
 \vskip3pt
   {\it (b)} \, Let  $ \, \alpha \in \Delta_\zero \, $,  $ \, x_\alpha \in \fg_\alpha \, $.  Then  $ \; \ad(x_\alpha)^{\,3} = 0 \; $;  if  $ \, \alpha \not\in \Delta_0 \, $,  then  $ \; \ad(x_\alpha)^{\,2} = 0 \; $.
\end{proposition}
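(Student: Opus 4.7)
For (a), the $Q$-grading gives $[\fg_\alpha,\fg_\alpha]\subseteq\fg_{2\alpha}$, and since $\alpha\neq 0$ it suffices to show $2\alpha\notin\Delta$. By Lemma \ref{root-props}, the only cases where $2\alpha\in\Delta$ are $\fg=\tS(n)$ with $\alpha=-\varepsilon_i$ (excluded by hypothesis) and $\fg=H(n)$ with $\alpha=m\,\delta$. In the latter case, root vectors are $D_{\underline{\xi}^{\underline{a}}}$ with $\underline{a}(j)=\underline{a}(r+j)$ for all $j\leq r$; when $n=2r$, or when $n=2r+1$ and $m$ is even, a parity count on $|\underline{a}|=m+2$ forces $\underline{a}(2r+1)=0$ as well, so formula (2.22) applies to give $\{\underline{\xi}^{\underline{a}},\underline{\xi}^{\underline{b}}\}=0$, and hence $[\fg_\alpha,\fg_\alpha]=\{0\}$ through the Poisson realization $\phi$ of \S\ref{Lie-struct_H(n)}. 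The only remaining subcase, $\fg=H(2r+1)$ with $m$ odd, is exactly the one excluded.

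For (b), observe first that for any $\alpha\in\Delta_\zero$ and any $h\in\fh$ one has $\ad(x_\alpha)(h)=-\alpha(h)\,x_\alpha$, while $[x_\alpha,x_\alpha]=0$ since $x_\alpha\in\fg_\zero$ (Definition \ref{def-Lie-salg}(a)); so $\ad(x_\alpha)^2(\fh)=\{0\}$, and a fortiori $\ad(x_\alpha)^3(\fh)=\{0\}$. It therefore only remains to control the action on root spaces $\fg_\beta$.

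Assume $\alpha\in\Delta_\zero\setminus\Delta_0$. Then $\ad(x_\alpha)^2(\fg_\beta)\subseteq\fg_{\beta+2\alpha}$ for every root $\beta$. When $\fg=H(n)$, the hypothesis forces the height $m$ of $\alpha$ to be an even integer $\geq 2$, so root vectors $D_{\underline{\xi}^{\underline{a}}}$ satisfy $|\underline{a}|=m+2\geq 4$; then identity (2.23) directly yields $\ad(x_\alpha)^2=0$ via $\phi$. When $\fg\in\{W(n),S(n),\tS(n)\}$, such an $\alpha$ has height $\geq 2$, hence at least three of its $\varepsilon$-coefficients equal $+1$; for $\beta+2\alpha$ to be a root, $\beta$ would then need at least three $\varepsilon$-coefficients equal to $-1$, which is impossible in these types (at most one negative $\varepsilon$-coefficient, even after reducing via $\sum_k\varepsilon_k=0$ in the $\tS$ case). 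Hence $\fg_{\beta+2\alpha}=\{0\}$.

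Assume finally $\alpha\in\Delta_0$. Then $\ad(x_\alpha)^3(\fg_\beta)\subseteq\fg_{\beta+3\alpha}$, and Lemma \ref{root-props} gives $\beta+3\alpha\notin\Delta$; the only possible obstruction is $\beta+3\alpha=0$, i.e.\ $-3\alpha\in\Delta$. But $\alpha\in\Delta_0$ is a classical root of the reductive algebra $\overline{\fg}_0^{\,\text{\it ss}}$, whose root system is closed under negation, so $-\alpha\in\Delta_0\subseteq\Delta$; Lemma \ref{root-props} applied to $-\alpha$ then forces $\fg=H(n)$ and $-\alpha=m\,\delta$, whence $m=\text{\it ht}(-\alpha)=0$ and $\alpha=0$, a contradiction. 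Thus $\fg_{\beta+3\alpha}=\{0\}$ and $\ad(x_\alpha)^3=0$. The main technical ingredients are the explicit Poisson identities (2.22) and (2.23) in the $H(n)$-case, which do precisely the work in (a) and in the first part of (b) that the coarse $Q$-grading argument cannot finish.
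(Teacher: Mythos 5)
Your argument follows essentially the same route as the paper's: the $Q$--grading together with Lemma \ref{root-props} dispose of the generic cases, coefficient inspection handles types $W$, $S$, $\tS$, and the explicit Poisson identities (2.22)--(2.23) do the remaining work in type $H$. Your part (a) is fine --- in fact the parity count forcing $\underline{a}(2r+1)=0$ when $m$ is even, so that (2.22) applies, is spelled out more explicitly than in the paper --- and your handling of $\ad(x_\alpha)^3$ (killing the $\fh$--component via $[x_\alpha,x_\alpha]=0$ and excluding $3\alpha+\beta=0$ by applying the lemma to $-\alpha\in\Delta_0$) is correct. A small quibble: a root of $W(n)$ or $S(n)$ of height $\geq 2$ is only guaranteed \emph{two} coefficients equal to $+1$ (e.g.\ $\varepsilon_{i_1}+\varepsilon_{i_2}$ arising from $k=3$ with $j\in\{i_1,i_2,i_3\}$), not three; but two $+1$'s against the at most one $-1$ available in $\beta$ already gives the contradiction, so the conclusion stands.

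The one step you genuinely gloss over is in part (b) for $\fg=H(n)$, $\alpha\in\Delta_\zero\setminus\Delta_0$. Identity (2.23) controls $\ad\big(D_{\underline{\xi}^{\underline{a}}}\big)^2$ for a \emph{single} basis monomial, whereas a general $x_\alpha\in\fg_\alpha$ is a linear combination $\sum_k c_k\,D_{\underline{\xi}^{\underline{a}_k}}$ (these root spaces are typically thick), so $\ad(x_\alpha)^2$ contains cross terms $\ad\big(D_{\underline{\xi}^{\underline{a}_{k'}}}\big)\ad\big(D_{\underline{\xi}^{\underline{a}_{k''}}}\big)$ with $k'\neq k''$ that (2.23) says nothing about. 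The paper devotes a full paragraph to exactly this reduction: it first shows the summands $D_{\underline{\xi}^{\underline{a}_k}}$ pairwise commute --- via $\fg_{2\alpha}=\{0\}$ when $\alpha\notin\Z\,\delta$, and via (2.22) when $\alpha=m\,\delta$ with $m$ even --- and only then reduces $\ad(x_\alpha)^2$ to the diagonal terms $\ad\big(D_{\underline{\xi}^{\underline{a}_k}}\big)^2$, to which (2.23) applies. You need to insert that reduction before invoking (2.23); writing ``directly yields via $\phi$'' skips it.
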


\begin{proof}
 {\it (a)} \,  If  $ \fg $  is of type  $ W $,  $ S $  or  $ \tS $,  then direct inspection shows  $ \, 2 \, \alpha \not\in \! \Delta \, $  for  $ \, \alpha \in \! \Delta \, $,  with  $ \, \alpha \not= -\varepsilon_i \, $  in type  $ \tS \, $.  If instead  $ \fg $  is of type  $ H(n) $  then  $ \, 2 \, \alpha \not\in \Delta \, $  whenever  $ \, \alpha \not\in \Z\,\delta \, $  (as direct inspection shows).  Then in all these cases one has  $ \; \big[ \fg_\alpha \, , \fg_\alpha \big] \subseteq \fg_{2\alpha} = \{0\} \; $.  Finally, assume  $ \fg $  is of type  $ H(n) $  and  $ \, \alpha \in \Z\,\delta \, $,  say  $ \, \alpha = k \, \delta \, $.  The root space  $ \fg_\alpha $  has  $ \KK $--basis  the set  $ \, \big\{\, D_{\underline{\xi}^{\underline{a}}} \;\big|\; \underline{a}(s) = \underline{a}(r\!+\!s) \;\; \forall \; 1 \leq s \leq r \,\big\} \, $,  and by formulas (2.19--21) we may have  $ \, \Big[\, D_{\underline{\xi}^{\underline{a}}} \; , \, D_{\underline{\xi}^{\underline{b}}} \,\Big] = D_{\{\, \underline{\xi}^{\underline{a}} \; , \; \underline{\xi}^{\underline{b}} \,\}} \not= 0 \, $  only if  $ n $  is odd  $ \, \big(\! = 2\,r + 1 \big) \, $.
 \vskip3pt
   {\it (b)} \,  First of all, assume  $ \, \alpha \in \Delta_\zero \, \big(\! \setminus \Delta_0 \big) \, $,  and consider any other root  $ \, \beta \in \Delta \, $.  By  Lemma \ref{root-props}  one has  $ \, 3\,\alpha \not\in \Delta \, $  and  $ \, 3\,\alpha + \beta \not\in \Delta \, $:
 this easily
implies  $ \; \ad(x_\alpha)^{\,3} = 0 \; $.
 \vskip3pt
%
%
%
   Now let  $ \fg $  be of type  $ W $,  $ S $  or  $ \tS \, $.  Direct inspection shows that, given  $ \, \alpha \in \Delta_\zero \, $  and  $ \, \beta \in \Delta \, $,  one has  $ \; 2 \, \alpha + \beta \in \Delta \; $  {\sl only if}  $ \, \alpha \in \Delta_0 \, $.  Then one can argue as above for the action of  $ \ad(x_\alpha)^{\,2} $  onto  $ \fh $  and onto root spaces  $ \fg_\beta \, $,  and eventually prove the second part of the claim for types  $ W $,  $ S $  and  $ \tS \, $.

\vskip4pt

   Let now  $ \fg $  be of type  $ H $,  say  $ \, \fg = H(n) \; $.  From  \S \ref{def_Cartan-subalg_roots_etc} we know that the root vector  $ \, x_\alpha \, $  is a linear combination of root vectors of the form  $ \, D_{\underline{\xi}^{\underline{a}}} \, $,  say  $ \; x_\alpha = \sum_k c_k \, D_{\underline{\xi}^{\underline{a}_k}} \; $ for some  $ \, c_k \in \KK \, $.
                                                                  \par
   First assume  $ \; \alpha \not\in \Z\,\delta \; $.  Then  $ \, 2\,\alpha \not\in \Delta \, $,  by  Lemma \ref{root-props},  thus  $ \, \fg_{2\alpha} = \{0\} \, $  and for any pair of summands in the expansion of  $ x_\alpha \, $  we have  $ \; \Big\{ D_{\underline{\xi}^{\underline{a}_{k'}}} \, , \, D_{\underline{\xi}^{\underline{a}_{k''}}} \!\Big\} \in \big\{ \fg_\alpha \, , \, \fg_\alpha \big\} \subseteq \fg_{2\alpha} = \{0\} \; $.
 \eject

   Now let  $ \; \alpha \in \Z\,\delta \, $,  say  $ \, \alpha = m \, \delta \, $.  Then  $ m $  is even because $ \, \alpha \in \Delta_\zero \, $;  but  $ \, |\underline{a}_k| = \text{\it ht}(\alpha) + 2 = m + 2 \, $  for each  $ \underline{a}_k $  occurring in the expansion of  $ x_\alpha \, $,  so in particular  $ |\underline{a}_k| $  is (independent of  $ k $  and) even, thus in the end  $ \underline{a}_k $  has the property that  $ \, \underline{a}_k(s) = \underline{a}_k(\ell \! + \! s) \, $  for all  $ \, 1 \leq s \leq \ell \, $.  Now (2.22) applies, yielding  $ \; \Big\{ \underline{\xi}^{\underline{a}_{k'}} \, , \, \underline{\xi}^{\underline{a}_{k''}} \!\Big\} = 0 \; $  and so in the end  $ \; \Big\{ D_{\underline{\xi}^{\underline{a}_{k'}}} \, , \, D_{\underline{\xi}^{\underline{a}_{k''}}} \!\Big\} = D_{\{ \underline{\xi}^{\underline{a}_{k'}} , \, \underline{\xi}^{\underline{a}_{k''}} \,\}} = D_0 = 0 \; $.
                                                                 \par
   In any case, we found that all summands in the expansion of  $ x_\alpha $  do commute with each other.  It follows that
%
%
 $ \; \ad(x_\alpha)^{\,2} =
%
%
\sum_k c_k^{\;2} \, \ad\big( D_{\underline{\xi}^{\underline{a}_k}} \big)^2 \; $;  so it is enough to prove the claim for  $ \, x_\alpha = D_{\underline{\xi}^{\underline{a}}} \; $.
 \vskip3pt
   Again,  $ \, x_\alpha = D_{\underline{\xi}^{\underline{a}}} \, $  implies  $ \, |\underline{a}| = \text{\it ht}(\alpha) + 2 \, $:  as  $ \, \alpha \in \Delta_\zero \setminus \Delta_0 \, $  implies  $ \, \text{\it ht}(\alpha) \geq 2 \, $,  this yields  $ \, |\underline{a}| \geq 4 \, $.  Then (2.23) gives  $ \; \ad\big( \underline{\xi}^{\underline{a}} \big)^2 \big(\underline{\xi}^{\underline{b}}\big) = \Big\{ \underline{\xi}^{\underline{a}} \, , \big\{ \underline{\xi}^{\underline{a}} \, , \underline{\xi}^{\underline{b}} \big\} \Big\} = 0 \, $  for  $ \, \underline{b} \in {\{0,1\}}^n \, $:  thus $ \; \ad\big( D_{\underline{\xi}^{\underline{a}}} \big)^2(\fg_\beta) = \{0\} \; $  for all  $ \, \beta \in \Delta \; $.  Similarly  $ \; \ad\big( D_{\underline{\xi}^{\underline{a}}} \big)^2(h) = \big[ D_{\underline{\xi}^{\underline{a}}} \, , [ D_{\underline{\xi}^{\underline{a}}} \, , h \,] \big] = -\alpha(h) \, \big[ D_{\underline{\xi}^{\underline{a}}} \, , D_{\underline{\xi}^{\underline{a}}} \,\big] = 0 \; $  for any  $ \, h \in \fh \, $,  as  $ \, D_{\underline{\xi}^{\underline{a}}} \in \fg_\zero \; $:  thus  $ \; \ad\big( D_{\underline{\xi}^{\underline{a}}} \big)^2(\fh) = \{0\} \; $  too.  Then arguing as above we get  $ \; \ad\big( D_{\underline{\xi}^{\underline{a}}} \big)^2 = 0 \; $.
\end{proof}

\vskip11pt

\begin{free text}  \label{triang-dec_Borel-subs_Lie-sub-objs}
 {\bf Triangular decompositions, Borel subalgebras and special sub-objects.}  Since  $ \, \text{\sl char}(\KK) = 0 \, $,  we can identify hereafter the fundamental subfield of $ \KK $  with  $ \mathbb{Q} \, $.
 \vskip3pt
   Let  $ \; \fh_{\mathbb{Q}} := \mathbb{Q} \otimes_\Z \big\{ H \in \fh \,\big|\, \alpha(H) \in \Z \, , \; \forall \; \alpha \in \Delta \big\} \; $;  one sees easily that  $ \fh_{\mathbb{Q}} $  is an integral  $ {\mathbb{Q}} $--form  of  $ \fh \, $,  and  $ \, \alpha(h) \in \mathbb{Q} \, $  for all $ \, h \in \fh_{\mathbb{Q}} \, $.
   We call  $ \, h \in \fh_{\mathbb{Q}} \, $  {\it regular\/}  if  $ \, \alpha(h) \not= 0 \, $  for all  $ \, \alpha \in \Delta \, $.  Any regular  $ \, h \in \fh \, $  defines a partition  $ \; \Delta = \Delta^+ \coprod \Delta^- \; $  where  $ \, \Delta^+ := \big\{ \alpha \in \Delta \,\big|\, \alpha(h) > 0 \,\big\} \, $  and  $ \, \Delta^- := \big\{ \alpha \in \Delta \,\big|\, \alpha(h) < 0 \,\big\} \, $:  the roots in  $ \Delta^+ $  are said to be  {\it positive},  those in  $ \Delta^- $  {\it negative}.
All this defines a  {\it triangular decomposition}
%
 $ \; \fg \, = \, \fg^+ \oplus \, \fh \, \oplus \, \fg^- \; $  with  $ \; \fg^\pm \, := \, {\textstyle \bigoplus_{\alpha \in \Delta^\pm}} \, \fg_\alpha \, $,
 as well as  {\it Borel subalgebras}  $ \; \fb^\pm := \fh \oplus \fg^\pm \; $.  From now on, we fix a specific Borel subalgebra of  $ \, \fg_0 \, $,  denoted  $ \fb_0 \, $,  and we restrict ourselves to consider those Borel subalgebras of  $ \fg $  containing  $ \fb_0 \, $.  Among these,
 when  $ \, \fg \not\cong \tS(n) \, $
there is a  {\sl maximal\/}  one,  $ \; \fb_{\max} := \fb_0 \oplus \Big(\, {\scriptstyle \bigoplus\limits_{i>0}} \, \fg_i \Big) \; $,  \, and a  {\sl minimal\/}  one,  $ \; \fb_{\min} := \fb_0 \oplus \fg_{-1} \; $.
 \vskip5pt
   For later use, we introduce notation (consistent with  \S \ref{def_Cartan-subalg_roots_etc})
 $ \; \Delta_{\zero^{\,\uparrow}} \! := \Delta_\zero \setminus \Delta_0 \; $,
$ \; \Delta_{\uno^{\,\uparrow}} \! := \Delta_\uno \setminus \Delta_{-1} \; $,
 $ \; \Delta_\bullet^\pm := \Delta_\bullet \cap \Delta^\pm \, $,  $ \; \tDelta_\bullet^\pm := \tDelta_\bullet \cap \big( \Delta_\bullet^\pm \! \times \N_+ \big) \; $
where  $ \, \Delta_\bullet \in \big\{ \Delta \, , \, \Delta_0 \, , \, \Delta_\zero \, , \, \Delta_\uno \, , \, \Delta_{\zero^{\,\uparrow}} \, , \, \Delta_{\uno^{\,\uparrow}} \big\} \, $.
\end{free text}

\vskip3pt

   Starting from the root decomposition of  \S \ref{def_Cartan-subalg_roots_etc},  we can introduce special ``sub-objects'' of  $ \, \fg \, $:

\vskip11pt

\begin{definition}  \label{def_Lie-sub-objs}
   Basing on the root decomposition in  \S \ref{def_Cartan-subalg_roots_etc},  we have
 $ \; \fg_\zero \, = \, \fh \oplus \Big( {\textstyle \bigoplus_{\alpha \in \Delta_\zero \,}} \fg_\alpha \Big) \, $,  $ \; \fg_\uno \, = \, {\textstyle \bigoplus_{\gamma \in \Delta_\uno \,}} \fg_\gamma \; $  and  $ \; \fg_0 \, = \, \fh \oplus \Big( {\textstyle \bigoplus_{\alpha \in \Delta_0 \,}} \fg_\alpha \Big) \, $.  Then set
  $ \; \fg_{\zero^{\,\uparrow}}  :=  {\textstyle \bigoplus_{\alpha \in \Delta_{\zero^{\,\uparrow}}}} \fg_\alpha  \; $,
  $ \; \fg_{\uno^{\,\uparrow}}  :=  {\textstyle \bigoplus_{\gamma \in \Delta_{\uno^{\,\uparrow}}}} \fg_\gamma \; $,
and  $ \; \fg_{t^\uparrow} = \oplus_{z > t \,} \fg_z \; $  for all  $ \, t \geq -1 \, $.  Note then that  $ \; \fg_{-1^\uparrow} = \fg_\zero \oplus \fg_{\uno^{\,\uparrow}} \; $  and  $ \; \fg_{0^\uparrow} = \fg_{\zero^{\,\uparrow}} \! \oplus \fg_{\uno^{\,\uparrow}} \; $;  note also that  $ \; {\big( \fg_{t^\uparrow} \!\big)}_\zero = \fg_\zero \cap \fg_{t^\uparrow} = \oplus_{\textstyle \!{{z > t} \atop {z \in 2\Z}}\,} \fg_z \; $,  for all  $ \, t \geq -1 \, $.  For  $ \, \fg \not\cong \tS(n) \, $  consider also  $ \; \fg_{-1,0} := \fg_{-1} \oplus \fg_\zero \; $.
\end{definition}

\vskip5pt

\begin{remark}  \label{rem_sub-objs}
  Note that  $ \fg_0 \, $,  $ \, \fg_{\zero^{\,\uparrow}} \, $  and  $ \fg_\zero $  are Lie subalgebras of  $ \fg \, $,  while  $ \, \fg_{\zero^{\,\uparrow}} \, $  is a Lie ideal of  $ \fg_\zero \, $,  and we have a Lie algebra splitting  $ \; \fg_\zero = \fg_0 \ltimes \fg_{\zero^{\,\uparrow}} $  (semidirect product of Lie algebras),  with  $ \fg_0 $  reductive and  $ \fg_{\zero^{\,\uparrow}} $  nilpotent.  Similarly,  $ {\big( \fg_{t^\uparrow} \!\big)}_\zero \, $  is a Lie subalgebra of  $ \, \fg_\zero \, $,  for all  $ \, t \geq -1 \, $;  when  $ \, q > p \, $,  $ \, {\big( \fg_{q^\uparrow} \!\big)}_\zero \, $  is a Lie ideal of  $ \, {\big( \fg_{p^\uparrow} \!\big)}_\zero \, $.  On the other hand,  $ \, \fg_{-1^\uparrow} \, $  and  $ \, \fg_{0^\uparrow} \, $  are Lie supersubalgebras of  $ \fg \, $,  with  $ \, \fg_{-1^\uparrow} \supseteq \fg_{0^\uparrow} \, $.  Moreover,  $ \, \fg_{0^\uparrow} \, $  is a nilpotent Lie superalgebra, and a Lie ideal of  $ \fg_{-1^\uparrow} \, $:  then  $ \; \fg_{-1^\uparrow} = \fg_0 \ltimes \fg_{0^\uparrow} \, $  (semidirect product); similarly, for  $ \, \fg \not\cong \tS(n) \, $  we have  $ \; \fg_{-1,0} = \fg_{-1} \rtimes \fg_0 \; $.
\end{remark}

\vskip19pt

  \subsection{Basics on  $ \fg $--modules}  \label{g-modules}

\smallskip

   {\ } \quad   Later on we shall work with  $ \fg $--modules  and  $ \overline{\fg} $--modules,  so we specify now a few definitions.

\vskip9pt

\begin{definition}  \label{def_weight-modules-etc}
 Let  $ \fh \, $,  resp.~$ \overline{\fh} \, $,  be a fixed Cartan subalgebra of  $ \fg \, $,  resp.~of  $ \overline{\fg} \, $,  as in  \S \ref{def_Cartan-subalg_roots_etc}.
 \vskip2pt
   {\it (a)} \,  Any  $ \fg $--module  $ V $  is said to be a  {\it weight module\/}  if  $ \; V = \bigoplus_{\lambda \in \fh^*} V_\lambda \; $  where we set  $ \; V_\lambda := \big\{\, v \in \! V \;\big|\; h.v = \lambda(h)\,v \; , \,\; \forall \; h \in \fh \,\big\} \; $  for all  $ \, \lambda \in \fh^* \, $.  In this case, every  $ V_\lambda $  is called a  {\it weight space\/}  of  $ V $,  and every  $ \, \lambda \in \fh^* \, $  such that  $ \, V_\lambda \not= \{0\} \, $  is called a  {\it weight\/}  of  $ V $.
 \vskip4pt
   Let now  $ V $  be a weight module (for  $ \, \fg $),  and set  $ \, \text{\it Supp}\,(V) := \big\{ \lambda \in \fh^* \,\big|\, V_\lambda \not= \{0\} \big\} \; $:
 \vskip2pt
   {\it (b)} \,  for every  $ \, \lambda \in \fh^* \, $  we call  {\it multiplicity\/}  of  $ \lambda $  the dimension  $ \, \text{\it mult}_V(\lambda) := \text{\it dim}\big(V_\lambda\big) \, $;
 \vskip2pt
   {\it (c)} \,  we call  $ V $  {\it integrable\/}  if all root vectors of  $ \fg $  act locally nilpotently on it;
 \vskip2pt
   {\it (d)} \,  if a splitting of roots into positive and negative ones has been fixed as in  \S \ref{triang-dec_Borel-subs_Lie-sub-objs},  we call  {\it highest weight\/}  of  $ V $  any  $ \, \lambda \in \text{\it Supp}(V) \, $  such that  $ \, \fg_\alpha.V_\lambda = \{0\} \, $,  i.e.~$ \, \lambda + \alpha \not\in \text{\it Supp}(V) \, $,  for all  $ \, \alpha \in \Delta^+ \, $;  then we call  {\it highest weight vector (of weight  $ \lambda $)\/}  any  $ \, v_\lambda \in V_\lambda \setminus \{0\} \, $.
 \vskip5pt
   {\sl We adopt similar definitions and terminology when  $ \overline{\fg} $  and  $ \overline{\fh} $  replace  $ \fg $  and  $ \fh $  respectively}.
 \vskip3pt
   Also, notice that any  $ \fg $--module  bears a structure of  $ \overline{\fg} $--module  too, with  $ \cE $  acting semisimply: moreover, if the  $ \fg $--module  is in fact a weight module, then its structure of  $ \overline{\fg} $--module  can also be chosen so that it is still a weight module for  $ \overline{\fg} $  as well (all this is standard, see  e.g.~\cite{gr}  or  \cite{se}).
\end{definition}

\vskip9pt

\begin{remarks}  \label{rems_weight-mods}  {\ }
 \vskip2pt
   {\it (a)} \,  by standard theory of reductive Lie algebras, every finite dimensional  $ \fg $--module  on which the element  $ \cE $  acts semisimply is automatically a weight module;
 \vskip2pt
   {\it (b)} \,  if  $ \; V = {\textstyle \bigoplus_{\lambda \in \fh^*}} V_\lambda \; $  is any weight module, then  $ \; x_\alpha.V_\lambda \subseteq V_{\lambda + \alpha} \; $  for every root vector  $ \, x_\alpha \in \fg_\alpha \, $  of  $ \, \fg \, $  ($ \, \alpha \in \Delta \, $),  by elementary calculations; it follows that every finite dimensional  $ \fg $--module   --- which is a weight module by  {\it (a)}  ---   is automatically integrable;
 \vskip2pt
   {\it (c)} \,  $ \fg \, $  itself is an integrable weight  $ \overline{\fg} $--module  for the adjoint representation: the set of weights is  $ \, \Delta \cup \{0\} \, $,  and weight spaces are the root spaces and  $ \fh \, $;  similarly (up to details) for  $ \fg $  as a  $ \fg $--module.
 \vskip2pt
   {\it (d)} \,  if  $ \; V = {\textstyle \bigoplus_{\lambda \in \fh^*}} V_\lambda \; $  is any integrable (weight) module, then for each root vector  $ \, x_\alpha \in \fg_\alpha \, $  ($ \, \alpha \in \Delta \, $)  the formal infinite sum  $ \; \exp(x_\alpha) := \sum\limits_{n \in \N} x_\alpha^n \big/ n! \; $  gives a well-defined operator in  $ \rGL(V) \, $.
\end{remarks}

\bigskip

\section{Integral structures}  \label{integr-struct}

\smallskip

   {\ } \quad   In this section we introduce the first, fundamental results we shall build upon to construct our ``Chevalley type'' supergroups associated with  $ \, \fg \, $.  We keep notation and terminology as before.

\medskip

  \subsection{Chevalley bases and Chevalley superalgebras}  \label{che-bas_alg}

\smallskip

  {\ } \quad   In this subsection we extend a classical result: the notion of ``Chevalley basis'' for (semi)simple Lie agebras.  A similar notion was introduced in  \cite{fg2}  for simple Lie superalgebras of  {\sl classical\/}  type, and used to construct affine algebraic supergroups.  We now do the same for the Cartan type case.

\medskip

\begin{definition}  \label{def_che-bas}
 We call  $ \, r := \text{\it rk}\,(\fg) \, $  the  {\it rank of}  $ \, \fg \, $:  by definition, it is the rank of the reductive Lie algebra  $ \fg_0 \, $,  so  $ \; \text{\it rk}\,\big(W(n)\big) = n \, $,  $ \; \text{\it rk}\,\big(S(n)\big) = \text{\it rk}\,\big(\tS(n)\big) = n-1 \; $  and  $ \; \text{\it rk}\,\big(H(n)\big) = \big[n/2\big] \; $.
                                                       \par
   We call  {\sl Chevalley basis\/}  of  $ \, \fg \, $  any  $ \KK $--basis  $ \,\; B \, = \, {\big\{ H_i \big\}}_{i=1, \dots, r;} \, \coprod \, \Big( \coprod_{\alpha \in \Delta} {\big\{ X_{\alpha,k} \big\}}_{k=1, \dots, \mu(\alpha);} \Big) \, = \, {\big\{ H_i \big\}}_{i=1, \dots, r;} \, \coprod \, {\big\{ X_{\talpha} \big\}}_{\talpha \in \tDelta} \;\, $  of  $ \fg $  which is  {\sl homogeneous\/}  (for the cyclic grading  of  $ \fg \, $,  cf.~subsec.~\ref{Lie-superalg_Cartan-type})  and enjoying the following properties (with notation of  \S \ref{def_Cartan-subalg_roots_etc}  for coroots):
 \vskip7pt
   \text{\it (a)}  \hskip5pt   $ \big\{ H_1 , \dots , H_r \big\} \, $  is a  $ \KK $--basis  of  $ \fh \, $,  \, such that  $ \; \beta(H_1), \dots , \beta(H_r) \in \Z \; $  for all  $ \, \beta \in \Delta \, $  and  $ \; H_\alpha \in \fh_\Z \, := \, \text{\it Span}_{\,\Z} \big( H_1 , \dots , H_r \big) \; $  for all  $ \, \alpha \in \Delta_0 \, $;
in particular,  $ \; \big[ H_i \, , H_j \big] = 0 \; $  for all  $ \, 1 \leq i, j \leq r \, $;
 \vskip9pt
   \text{\it (b)}  \hskip4pt   $ {\big\{ X_{\alpha,k} \big\}}_{k=1, \dots, \mu(\alpha);} \; $  is a  $ \KK $--basis  of  $ \, \fg_\alpha \, $,  for all  $ \, \alpha \! \in \! \Delta \, $;  thus  $ \, \big[ H_i \, , X_{\alpha,k} \big] = \alpha(H_i) \, X_{\alpha,k} \;\, \forall\; i \, , \, k \, $;
 \vskip9pt
   \text{\it (c)}  \hskip7pt   $ \big[ X_{\alpha,k} \, , X_{\alpha,k} \big] = 0   \hskip25pt  \forall \;\, \alpha \in \Delta_\zero \, $,  $ \, k \in \{1, \dots, \mu(\alpha)\} \, $;
 \vskip9pt
   \text{\it (d)}  \hskip7pt   $ \big[ X_{\alpha,1} \, , \, X_{-\alpha,1} \big]  \, = \,  H_\alpha  \hskip29pt  \forall \;\; \alpha \in \Delta_0 \; $,
 \vskip5pt
   \indent   \phantom{\text{\it (d)}}  \hskip7pt   $ \big[ X_{\gamma,1} \, , \, X_{-\gamma,k} \big]  \, = \,  \pm H_{\sigma_\gamma(k)}  \hskip25pt  \forall \;\; \gamma \in \Delta_{-1} \, $,  $ \; k = 1, \dots, \mu(-\gamma) \, $,  \;\;  for some embeddings  $ \,\; \sigma_\gamma \! : \big\{ 1, \dots, \mu(-\gamma) \big\} \! \lhook\joinrel\relbar\joinrel\rightarrow \! \big\{ 1, \dots, r \big\} \;\, $  such that  $ \,\; {\big\{\! \pm H_{\sigma_\gamma(k)} \big\}}_{\gamma \in \Delta_{-1} \, , \, k = 1, \dots, \mu(-\gamma)} = \, \big\{ \pm H_1 , \dots , \pm H_r \big\} \; $;
 \vskip7pt
   \text{\it (e)}  \quad  $ \big[ X_{\alpha,k} \, , \, X_{\beta,h} \big]  \, = \,  0  \hskip27pt   \forall \;\, \alpha , \beta \in \Delta \, : \, \alpha + \beta \not\in \big( \Delta \cup \{0\} \big) \, $;
 \vskip11pt
   \text{\it (f)}  \quad  $ \big[ X_{\alpha,k} \, , \, X_{\beta,h} \big]  \, = \, \sum_{t=1}^{\mu(\alpha+\beta)} c_{\alpha,k}^{\,\beta,h}(t) \, X_{\alpha + \beta, t}  \hskip27pt   \forall \;\; \alpha , \beta \in \Delta \, : \, \alpha + \beta \in \Delta \; $,
 \vskip3pt
\noindent
 for some  $ \; c_{\alpha,k}^{\,\beta,h}(t) \in \big\{\, 0 \, , \pm 1 \, , \pm 2 \,\big\} \; $  such that
 \eject

   \hskip11pt   \text{\it (f.1)}  \ if  $ \; \alpha \, , \beta \in \Delta_0 \, $  then  $ \; c_{\alpha,1}^{\,\beta,1}(1) = \pm (p+\!1) \; $  where  $ \, p \in \N \, $  is such that  $ \, \beta - p \, \alpha \in \Delta \, $  and  $ \, \beta - (p+\!1) \, \alpha \not\in \Delta \; $  (note that in this case  $ \, \mu(\alpha) = \mu(\beta) = \mu(\alpha+\beta) = 1 $),
 \vskip4pt
   \hskip11pt   \text{\it (f.2)}  \ if  $ \; (\alpha \, , \beta) \in \big( \Delta_{-1} \times \Delta \big) \cup \big( \Delta \times \Delta_{-1} \big) \; $,  \, then there is one and only one index  $ t' $  such that  $ \;\; c_{\alpha,k}^{\,\beta,h}\big(t'\big) = \pm 1 \; $,  $ \,\; c_{\alpha,k}^{\,\beta,h}(t) = 0  \quad  \forall \;\; t' \not= t \;\; $,
 \vskip3pt
   \hskip11pt   \text{\it (f.3)}  \ if  $ \; \alpha = \beta \; $  (hence  $ \, 2 \,\alpha \in \Delta \, $;  this occurs only for  $ \, \fg = \tS(n) \, $,  $ \, \alpha = -\epsilon_i \, $,  or  $ \, \fg = H(2\,r\!+\!1) \, $,  $ \, \alpha = m \, \delta \, $  with  $ m $ odd),  there is at most one  $ t' $  such that  $ \; c_{\alpha,k}^{\,\beta,h}\big(t'\big) \not= 0 \; $,  \, and then  $ \,\; c_{\alpha,k}^{\,\beta,h}\big(t'\big) = \pm 2 \;\; $;
 \vskip9pt
   \text{\it (g)}  \quad  if  $ \, \alpha \! \in \! \Delta_0 \, $  and  $ \, 2 \, \alpha + \beta \in \Delta \; $,  \, there exists a unique  $ \, t' \in \{1,\dots,\mu(2\,\alpha+\beta)\} \, $  such that
 \vskip3pt
   \centerline{ $ \,\; \big[\, X_{\alpha,1} \, , \big[\, X_{\alpha,1} \, , X_{\beta,h} \,\big] \big] = \pm \, 2 \, X_{2\alpha+\beta,t'} \;\; $; }
 \vskip9pt
   \text{\it (h)}  \quad  for all  $ \; \gamma \! \in \! \Delta_\uno \, $,  we have (with notation as in  {\it (f.3.)\/}  above)
 \vskip3pt
   \centerline{ $ X_{\gamma,k}^{\langle 2 \rangle} = \, 0 \quad $  if all $ c_{\gamma,k}^{\,\gamma,k}(t) $'s  are zero,  \quad  and  \qquad  $ X_{\gamma,k}^{\langle 2 \rangle} = \, 2^{-1} c_{\gamma,k}^{\,\gamma,k}\big(t'\big) \, X_{2\gamma,t'} \;\, $  otherwise. }
 \vskip11pt
   If  $ B $  is any Chevalley basis of  $ \fg \, $,  we set  $ \; \fg^\Z := \Z\text{\sl --span of}~B \, $,  and we call it  {\it Chevalley superalgebra}.
\end{definition}

\smallskip

\begin{remarks}   \label{rem-Chev_1}  {\ }
 \vskip2pt
   {\it (a)} \,  The above definition extends to the Lie superalgebra  $ \fg $  the notion of Chevalley basis for (semi)simple Lie algebras: in particular, if  $ B $  is a Chevalley basis of  $ \fg $  then  $ \, B \cap \fg_0 \, $  is a Chevalley basis   --- in a standard sense, extended to reductive Lie algebras in case  $ W $  ---   of  $ \, \fg_0 \, $.
                                                             \par
   In the present formulation, the conditions we give are clearly redundant, and may be simplified.
 \vskip2pt
   {\it (b)} \,  By its very definition, the Chevalley superalgebra  $ \fg^\Z $  is a Lie superalgebra over  $ \Z \, $.
 \vskip2pt
   {\it (c)} \,  When  $ \fg $  is not of type  $ W $,  so that  $ \, \fg \subsetneqq \overline{\fg} \, $,  if  $ B $  is any Chevalley basis for  $ \fg $  we can as well consider  $ \, \overline{B} := B \coprod \{H_\delta\} \, $:  this plays the role of a ``Chevalley basis'' for  $ \overline{\fg} \, $,  and we can develop all the theory which follows hereafter with  $ \overline{\fg} $  and  $ \overline{\fh} $  replacing  $ \fg $  and  $ \fh $  respectively.

 \vskip2pt
   {\it (d)} \,  For notational convenience, in the following I shall also use the notation  $ \, X_{\eta,k} := 0 \, $  when  $ \, k \in \N_+ \, $  and  $ \eta $  belongs to  $ Q $  (the  $ \Z $--span  of  $ \Delta $)  but  $ \, \eta \not\in \Delta \; $.
\end{remarks}

\vskip7pt

   We prove the  {\sl existence\/}  of Chevalley bases of  $ \, \fg \, $  by providing explicit examples, as follows:

\vskip11pt

\begin{examples}  \label{examples_che-bas}
  \, {\it Explicit examples of Chevalley bases.}
 \vskip10pt
   {\it (a)} \,  {\it  $ \underline{\text{Case \ }  W(n)} \; $,  first example:} \,  Let  $ \, \fg := W(n) \, $,  and take the subset
 \vskip-4pt
  $$  B'  \,\; \equiv \;\,  B_{W(n)}  \,\; := \;\,  {\big\{ H_i \big\}}_{i=1,\dots,n;} \; {\textstyle \coprod} \; \Big( {\textstyle \coprod_{\alpha \in \Delta}} {\big\{ X_{\alpha,k} \big\}}_{k=1,\dots,\mu(\alpha);} \Big)  $$
 \vskip-2pt
\noindent
 considered in  \S \ref{def-W(n)},  but now written in different notation, namely
 \vskip7pt
   $ \; H_i  \, := \;  \xi_i \, \partial_i \, $   \hfill  for all  $ \; i=1,\dots, r \, (=n) \, $,  \hskip175pt {\ }
 \vskip5pt
\noindent
 $ \quad X_{\alpha,1}  \, := \;  \xi_{i_1} \cdots \xi_{i_s} \, \partial_j \; $   \hfill  for every root of the form  \hskip7pt  $ \alpha = \varepsilon_{i_1} \! + \cdots + \varepsilon_{i_s} \! - \varepsilon_j $   \hskip5pt  ($ \, j \not\in \{i_1,\dots,i_s\} \, $)\,,
 \vskip5pt
\noindent
 $ \quad X_{\alpha,k} \, := \, \xi_{i_1} \! \cdots \xi_{i_s} \, \xi_{j_k} \partial_{j_k} \; $   \hfill  for every root of the form  \hskip7pt  $ \alpha = \varepsilon_{i_1} \! + \cdots + \varepsilon_{i_s} \; $,  \hskip7pt  where  $ j_k $  is the
                                                                         \par
   \hfill   $ k $--th  index in  $ \, \big\{1,\dots,n\big\} \setminus \big\{i_1,\dots,i_s\big\} \, $, \, for  $ \, k = 1, \dots, \mu(\alpha) \, $.
 \vskip7pt
\noindent
 By the results in  \S \ref{Lie-struct_W(n)}  and  \S \ref{def_Cartan-subalg_roots_etc}  one checks by direct analysis that  $ B' $  is a Chevalley basis.
 \vskip4pt
   Indeed, this specific Chevalley basis has even stronger properties than the prescribed ones.  Namely, the  $ \, c_{\alpha,k}^{\beta,h}(t) \, $  occurring in part  {\it (f)\/}  of  Definition \ref{def_che-bas}  satisfy, besides  {\it (f.1)\/}  and  {\it (f.2)},  the following (stronger) properties:
 $ \quad c_{\alpha,k}^{\beta,h}(t) \in \big\{ 0 \, , \pm 1 \big\} \, $,  \, and there exist  {\sl at most two indices}  $ t_1 $  and  $ t_2 $  such that  $ \; c_{\alpha,k}^{\beta,h}(t_1) = \pm 1 \, $,  $ \; c_{\alpha,k}^{\beta,h}(t_2) = \pm 1 \, $,  and  $ \; c_{\alpha,k}^{\beta,h}(t') = 0 \, $  for all  $ \; t' \not\in \big\{ t_1 \, , t_2 \big\} \; $.
 \vskip10pt
   {\it (b)} \,  {\it  $ \underline{\text{Case \ }  S(n)} \; $:} \,  Let  $ \, \fg := S(n) \, $,  and take the subset
 \vskip-5pt
  $$  B  \,\; \equiv \;\,  B_{S(n)}  \,\; := \;\,  {\big\{ H_i \big\}}_{i=1,\dots,n-1;} \; {\textstyle \coprod} \; \Big( {\textstyle \coprod_{\alpha \in \Delta}} {\big\{ X_{\alpha,k} \big\}}_{k=1,\dots,\mu(\alpha);} \Big)  $$
 \vskip-2pt
\noindent
 considered in  \S \ref{def-S(n)},  but now written with another notation, namely
 \vskip6pt
\noindent
 $ \quad H_i  \, := \;  \xi_i \, \partial_i - \xi_{i+1} \, \partial_{i+1} \, $   \hfill  for all  $ \; i=1,\dots, r \; (=n\!-\!1) \, $,  \hskip75pt {\ }
 \vskip5pt
\noindent
 $ \quad X_{\alpha,1}  \, := \;  \xi_{i_1} \cdots \xi_{i_s} \, \partial_j \; $   \hskip17pt  for every root of the form  \hskip5pt  $ \alpha = \varepsilon_{i_1} \! + \cdots + \varepsilon_{i_s} \! - \varepsilon_j \, $,  $ \, j \not\in \{i_1,\dots,i_s\} \; $,
 \vskip5pt
\noindent
 $ \quad X_{\alpha,k} \, := \; \xi_{i_1} \! \cdots \xi_{i_s} \big( \xi_{j_k} \partial_{j_k} \! - \xi_{j_{k+1}} \partial_{j_{k+1}} \!\big) \; $   \hfill  for every root of the form  \hskip5pt  $ \alpha = \varepsilon_{i_1} \! + \cdots + \varepsilon_{i_s} \, $,  \hskip3pt  where
                                                                           \par
   \hfill   $ \; \big\{ j_1 , \dots , j_{\mu(\alpha)} , j_{\mu(\alpha)+1} \big\} := \big\{1,\dots,n\big\} \setminus \big\{ i_1 , \dots , i_s \big\} \; $  with  $ \; j_1 < \cdots < j_{\mu(\alpha)} < j_{\mu(\alpha)+1} \; $.
 \vskip6pt
   Here again, direct analysis based on  \S \ref{Lie-struct_S(n)}  and  \S \ref{def_Cartan-subalg_roots_etc}  shows that  $ B $  is a Chevalley basis of  $ \, S(n) \; $.
 \vskip10pt
   {\it (c)} \,  {\it  $ \underline{\text{Case \ }  \tS(n)} \; $:} \,  Let  $ \, \fg := \tS(n) \, $,  and take the subset
 \vskip-5pt
  $$  B  \,\; \equiv \;\,  B_{\tS(n)}  \,\; := \;\,  {\big\{ H_i \big\}}_{i=1,\dots,n-1;} \; {\textstyle \coprod} \; \Big( {\textstyle \coprod_{\alpha \in \Delta}} {\big\{ X_{\alpha,k} \big\}}_{k=1,\dots,\mu(\alpha);} \Big)  $$
 \vskip-2pt
\noindent
 introduced in  \S \ref{def-tildeS(n)}  and now written with a different notation, namely
 \vskip6pt
\noindent
 $ \quad H_i  \, := \;  \xi_i \, \partial_i - \xi_{i+1} \, \partial_{i+1} \, $   \hfill  for all  $ \; i=1,\dots, r \; (=n\!-\!1) \, $,  \hskip75pt {\ }
 \vskip5pt
\noindent
 $ \quad X_{\alpha,1}  := \,  \xi_{i_1} \cdots \xi_{i_s} \, \partial_j \; $   \hskip5pt  for each root of the form  \hskip5pt  $ \alpha = \varepsilon_{i_1} \! + \cdots + \varepsilon_{i_s} \! - \varepsilon_j \, $,  $ \, j \not\in \{i_1,\dots,i_s\} \, $,  $ \, s > 0 \; $,
 \vskip5pt
\noindent
 $ \quad X_{\alpha,1}  := \,  \big( \xi_1 \cdots \xi_n - 1 \big) \, \partial_j \; $   \hfill  for every root of the form  \hskip5pt  $ \alpha = - \varepsilon_j \;\, (\,j=1, \dots, n) \, $,   \hfill   {\ }
 \vskip5pt
\noindent
 $ \quad X_{\alpha,k} \, := \, \xi_{i_1} \! \cdots \xi_{i_s} \big( \xi_{j_k} \partial_{j_k} \! - \xi_{j_{k+1}} \partial_{j_{k+1}} \big) \; $   \hfill  for every root of the form  \hskip7pt  $ \alpha = \varepsilon_{i_1} \! + \cdots + \varepsilon_{i_s} \, $,  \hskip5pt  where
                                                                           \par
   \hfill   $ \; \big\{ j_1 , \dots , j_{\mu(\alpha)} , j_{\mu(\alpha)+1} \big\} := \big\{1,\dots,n\big\} \setminus \big\{ i_1 , \dots , i_s \big\} \; $  with  $ \; j_1 < \cdots < j_{\mu(\alpha)} < j_{\mu(\alpha)+1} \; $.
 \vskip6pt
   Again, direct analysis (via  \S \ref{Lie-struct_tildeS(n)}  \S \ref{def_Cartan-subalg_roots_etc})  shows that this  $ B $  is indeed a Chevalley basis of  $ \, \tS(n) \; $.
 \vskip10pt
   {\it (d)} \,  {\it  $ \underline{\text{Case \ }  W(n)} \; $,  second example:} \,  Let again  $ \, \fg := W(n) \, $.  For any  $ \, i \in \{1,\dots,n\} \, $  and any  $ \, j_\alpha \in \big\{ 1, \dots, \mu(\alpha) \big\} \, $  for  $ \, \big\{ \alpha \in \Delta : \text{\it ht}(\alpha) \geq 1 \big\} \, $   ---  $ \mu(\alpha) $  being  {\sl the multiplicity in}  $ W(n) $  ---   consider
 \vskip-5pt
  $$  B''  \,\; := \;\,  B_{S(n)} \; {\textstyle \coprod} \; \big\{ \xi_i \, \partial_i \big\} \; {\textstyle \coprod} \; \big\{ X_{\alpha,j_\alpha} := \xi_{a_1} \! \cdots \xi_{a_s} \, \xi_{j_\alpha} \partial_{j_\alpha} \,\big|\, \alpha \in \Delta : \text{\it ht}(\alpha) > 1 \big\}  $$
 \vskip-1pt
\noindent
 where we wrote  $ \, \alpha = \varepsilon_{a_1} \! + \cdots + \varepsilon_{a_s} \, $  for every root  $ \alpha $  with  $ \, \text{\it ht}(\alpha) > 1 \, $  (so that the string  $ \, (a_1, \dots, a_s) \, $  depends on  $ \alpha $  itself).
 Yet another direct check shows that  $ B'' $  is a Chevalley basis of  $ \, W(n) \; $.
%
%
 \vskip11pt
   {\it (e)} \,  {\it  $ \underline{\text{Case \ }  H(n)} \; $:} \,  Let  $ \, \fg := H(n) \, $,  and  $ \; B_{H(n)} \, := \, \Big\{\, D_{\underline{\xi}^{\underline{e}}} \;\Big|\; \underline{e} \in {\{0,1\}}^n , \, 0 < |\underline{e}| < n \,\Big\} \; $.
 Set
 \vskip5pt
   $ \; H_i  \, := \,  -D_{\xi_i \, \xi_{r+i}}  \, = \,  \xi_i \, \partial_i \, - \, \xi_{r+i} \, \partial_{r+i} \, $   \hfill  for all  $ \; i=1,\dots, r \; \big(\! = [n/2] \big) \, $;  \hskip75pt {\ }
 \vskip5pt
\noindent
 then for any root  $ \; \alpha = \varepsilon_{i_1} \! + \cdots + \varepsilon_{i_p} \! - \varepsilon_{j_1} \! - \cdots - \varepsilon_{j_q} \! + m \, \delta \, $ set  $ \; s := \big[ (m-p-q)/2 \big] \; $  and pick the root vector (with  $ \phi $  as in  \S \ref{def-H(n)})
 \vskip4pt
\noindent
 $ \quad  X_{\alpha,k}  \, := \;  \pm \, D_{\xi_{i_1} \cdots \xi_{i_p} \, \xi_{r+j_1} \cdots \xi_{r+j_q} \, \xi_{t_1} \cdots \xi_{t_s} \, \xi_{r+t_1} \cdots \xi_{r+t_s}}
 \; $   \hfill   if  $ \, (m\!-\!p\!-\!q) \, $  is  {\sl even},
 \vskip4pt
\noindent
 $ \quad  X_{\alpha,k}  \, := \;  \pm \, \sqrt{2} \; D_{\xi_{i_1} \cdots \xi_{i_p} \, \xi_{r+j_1} \cdots \xi_{r+j_q} \, \xi_{t_1} \cdots \xi_{t_s} \, \xi_{r+t_1} \cdots \xi_{r+t_s} \, \xi_{2r+1}}
 \; $   \hfill   if  $ \, (m\!-\!p\!-\!q) \, $  is  {\sl odd},
 \vskip5pt
\noindent
%
%
 for every choice of  $ \;\; t_1, \dots, t_s \in \{1,\dots,r\} \setminus \big( \{i_1,\dots,i_p\} \cup \{j_1,\dots,j_q\} \big) \; $  with  $ \; t_1 < \cdots < t_s \, $,  \,
 where  $ \, k \in \{1, \dots, \mu(\alpha)\} \, $  is used to order the possible choices of ordered subset of indices  $ \, \{t_1,\dots,t_s\} \, $.
 \vskip3pt
   {\sl N.B.:\/}  the root vectors of second type have to be considered only when  $ n $  itself is odd.
 \vskip7pt
   Now, using the formulas and results in  \S \ref{Lie-struct_H(n)}  and  \S \ref{def_Cartan-subalg_roots_etc}, one checks
 that the set of all  $ H_i $'s  and all  $ X_{\alpha,k} $'s  defined above (for suitable choice of signs) is indeed a Chevalley basis of  $ \, H(n) \; $.
 \hfill  $ \diamondsuit $
\end{examples}

\medskip

  \subsection{The Kostant superalgebra}  \label{kost-form}

\smallskip

   {\ } \quad   For any  $ \KK $--algebra  $ A \, $,  given  $ \, m \in \N \, $  and  $ \, y \in A \, $  we define the  {\sl  $ m $--th  binomial coefficient}  $ \, {\Big(\! {y \atop m} \!\Big)} \, $  and the  {\sl  $ m $--th  divided power}  $ \, y^{(m)} \, $  by
  $ \; \Big(\! {y \atop m} \!\Big) := {\frac{\, y \, (y-1) \cdots (y-m+1) \,}{m!}} \; $,  $ \; y^{(m)} := y^m \!\big/ m! \; $.
 \vskip5pt
%
   We start with a (standard) classical result, concerning  $ \Z $--valued  polynomials:

\medskip

\begin{lemma}  \label{int-polynomials}
   (cf.~\cite{hu},  \S 26.1)  Let  $ \, \KK\big[\,\underline{y}\,\big] := \KK[\,y_1, \dots, y_t] \, $  be the\/  $ \KK $--algebra  of polynomials in
 $ y_1 \, $, $ \dots \, $,  $ y_t \, $,  and
 $ \; \text{\it Int}_{\,\Z} \big( \KK\big[\,\underline{y}\,\big] \big) \, := \, \big\{\, f \! \in \KK\big[\,\underline{y}\,\big] \;\big|\, f(z_1, \dots, z_t) \in \Z \;\; \forall \, z_1, \dots, z_t \in \! \Z \,\big\} \; $.
Then  $ \, \text{\it Int}_{\,\Z} \big( \KK\big[\,\underline{y}\,\big] \big) \, $  is a  $ \Z $--subalgebra  of  $ \, \KK\big[\,\underline{y}\,\big] \, $,
 free as a  $ \Z $--(sub)module,  with
 basis  $ \; \big\{\, {\textstyle \prod_{i=1}^t} \, \big({y_i \atop n_i}\big) \;\big|\; n_1, \dots, n_t \in \N \,\big\} \; $.
\end{lemma}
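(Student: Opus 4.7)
The plan is to argue by induction on the number $t$ of variables, reducing to the classical one-variable statement. For the base case $t=1$, I would first observe that $\bigl\{ \binom{y}{n} \bigr\}_{n \in \N}$ is a $\KK$-basis of $\KK[y]$, since these are polynomials of strictly increasing degree. Hence any $f \in \KK[y]$ of degree $d$ has a unique expansion $f(y) = \sum_{n=0}^{d} c_n \binom{y}{n}$ with $c_n \in \KK$. The implication $c_n \in \Z \; \forall n \Rightarrow f \in \text{\it Int}_{\,\Z}(\KK[y])$ is immediate from $\binom{z}{n} \in \Z$ for $z \in \Z$.

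For the converse in the one-variable case, I would use the forward difference operator $(\Delta f)(y) := f(y+1) - f(y)$. A direct calculation with Pascal's rule yields $\Delta \binom{y}{n} = \binom{y}{n-1}$, so that $\Delta^{k} \binom{y}{n}\big|_{y=0} = \delta_{k,n}$. Applied to $f = \sum_n c_n \binom{y}{n}$, this gives $c_k = (\Delta^{k} f)(0)$, which is a $\Z$-linear combination of values $f(0), f(1), \dots, f(k)$; if $f$ takes integer values on $\Z$, all $c_k$ are integers. This settles both the basis property and the fact that $\text{\it Int}_{\,\Z}(\KK[y])$ is closed under multiplication (use $\binom{y}{m}\binom{y}{n} \in \text{\it Int}_{\,\Z}(\KK[y])$ together with the basis expansion).

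For the inductive step, view $\KK[y_1,\dots,y_t] = \KK[y_1,\dots,y_{t-1}][y_t]$ and write, uniquely,
\[
f(y_1,\dots,y_t) \; = \; \sum_{n_t = 0}^{d} g_{n_t}(y_1,\dots,y_{t-1}) \, \binom{y_t}{n_t}
\]
with $g_{n_t} \in \KK[y_1,\dots,y_{t-1}]$. Fixing any $(z_1,\dots,z_{t-1}) \in \Z^{t-1}$ and applying the $t=1$ case to the one-variable polynomial $y_t \mapsto f(z_1,\dots,z_{t-1},y_t)$, I obtain $g_{n_t}(z_1,\dots,z_{t-1}) \in \Z$ for every $n_t$. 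Thus each $g_{n_t}$ lies in $\text{\it Int}_{\,\Z}\bigl(\KK[y_1,\dots,y_{t-1}]\bigr)$, and by the inductive hypothesis it is a $\Z$-linear combination of products $\prod_{i=1}^{t-1} \binom{y_i}{n_i}$, which gives the claimed basis for $f$. Linear $\KK$-independence of the proposed basis is automatic since these are products of polynomials of prescribed multi-degrees.

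The proof is essentially bookkeeping once the one-variable case is in hand, and the only real technical point is the forward-difference identity; there is no serious obstacle. The subalgebra property of $\text{\it Int}_{\,\Z}$ follows formally from the basis description, since stability under multiplication reduces to checking that a product of two binomials $\binom{y_i}{m}\binom{y_i}{n}$ re-expands with integer coefficients, which is a corollary of the one-variable case applied to that product.
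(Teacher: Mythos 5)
Your proof is correct; the paper itself gives no argument for this lemma, simply citing Humphreys \S 26.1, and your two-step argument (the one-variable case via the forward-difference operator $\Delta$, then induction on the number of variables by expanding in $\binom{y_t}{n_t}$ and evaluating the coefficient polynomials at integer points) is precisely the classical proof that citation refers to. The only minor simplification available is that the subalgebra property needs no basis computation at all: pointwise sums and products of $\Z$-valued functions on $\Z^t$ are $\Z$-valued, so $\text{\it Int}_{\,\Z}\big(\KK[\,\underline{y}\,]\big)$ is closed under the ring operations by definition.
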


\medskip

   Let  $ U(\fg) $  be the universal enveloping superalgebra of  $ \, \fg \, $.  We recall that this can be realized as the quotient of the tensor superalgebra  $ \, T(\fg) \, $  by the two-sided homogeneous ideal generated by  $ \; \Big\{\, x \otimes y - {(-1)}^{p(x) p(y)} y \otimes x - [x,y] \; , \; z \otimes z - z^{\langle 2 \rangle} \;\Big|\; x, y \in \fg_\zero \cup \fg_\uno \, , \, z \in \fg_\uno \,\Big\} \; $.
                                                              \par
   Fix a Chevalley basis  $ \; B = {\big\{ H_i \big\}}_{i=1,\dots,r;} \coprod {\big\{ X_{\alpha,k} \big\}}_{\alpha \in \Delta}^{k=1,\dots,\mu(\alpha);} \, = \, {\big\{ H_i \big\}}_{i=1,\dots,r;} \, \coprod \, {\big\{ X_{\talpha} \big\}}_{\talpha \in \tDelta} \; $  of  $ \fg $  as in  Definition \ref{def_che-bas},  and let  $ \fh_\Z $  be the free  $ \Z $--module  with basis  $ \, \big\{ H_1, \dots, H_r \big\} $.  For  $ \, h \in \fh_\Z \, $,  we denote by  $ \, h(H_1,\dots,H_r) \, $  the expression of  $ h $  as a function of the  $ H_i $'s.  From  Lemma \ref{int-polynomials}  we have:

\medskip

\begin{corollary}  \label{int-polynom_kost} {\ }
 $ \mathbb{H}_\Z := \big\{\, h \! \in \! U(\fh) \;\big|\; h\big(z_1,\dots,z_r\big) \in \Z \, , \,\; \forall \, z_1, \dots, z_r \in \Z \,\big\} \, $  is a  {\sl free}  $ \Z $--sub\-module  of  $ \, U(\fh) \, $,  with basis  $ \, B_{U(\fh)} := \Big\{ {\textstyle \prod_{i=1}^r \! \Big(\! {H_i \atop m_i} \!\Big)} \,\Big|\, m_1, \dots, m_n \! \in \! \N \Big\} \, $.  Moreover, it coincides with the  $ \Z $--subalgebra  of  $ \, U(\fg) $  generated by all the elements  $ \, \Big(\! {{H - z} \atop m} \!\Big) \, $  with  $ \, H \in \fh_\Z \, $,  $ \, z \in \Z \, $,  $ \, m \in \N \, $.
\end{corollary}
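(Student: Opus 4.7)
The plan is to reduce the statement to Lemma \ref{int-polynomials} by exploiting the fact that $\fh$ is abelian. Indeed, by part \textit{(a)} of Definition \ref{def_che-bas}, the $H_i$ commute pairwise, so the PBW theorem applied to the abelian Lie (sub)algebra $\fh$ yields a canonical identification of $U(\fh)$ with the polynomial algebra $\KK[y_1,\dots,y_r]$ via $H_i \mapsto y_i$. Under this identification the evaluation map $h \mapsto h(z_1,\dots,z_r)$ on $U(\fh)$ becomes ordinary polynomial evaluation, so $\mathbb{H}_\Z$ corresponds exactly to $\text{\it Int}_\Z\big(\KK[\underline{y}]\big)$. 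Applying Lemma \ref{int-polynomials} then gives the first assertion: $\mathbb{H}_\Z$ is a free $\Z$-module with the prescribed basis $B_{U(\fh)}$.

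For the second assertion, let $\mathcal{A}$ denote the $\Z$-subalgebra of $U(\fg)$ generated by the elements $\binom{H-z}{m}$ with $H\in \fh_\Z$, $z\in \Z$, $m\in \N$. The inclusion $\mathcal{A}\subseteq \mathbb{H}_\Z$ is immediate: for any $H = \sum_i c_i H_i \in \fh_\Z$ (with $c_i\in\Z$) and any integers $z_1,\dots,z_r\in\Z$, the value $\big(H(z_1,\dots,z_r)-z\big) = \sum_i c_i z_i - z$ is an integer, hence $\binom{H-z}{m}$ evaluated at any integer point lies in $\Z$; products and $\Z$-linear combinations of such elements preserve this property. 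For the reverse inclusion $\mathbb{H}_\Z\subseteq\mathcal{A}$, it suffices to show that every basis element $\prod_{i=1}^r \binom{H_i}{m_i}$ lies in $\mathcal{A}$, which is obvious since each factor $\binom{H_i}{m_i}$ is one of the prescribed generators (take $H = H_i \in \fh_\Z$, $z = 0$).

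I do not expect any real obstacle here: the whole content reduces to the observation that $\fh$ is abelian, so that $U(\fh)$ is a polynomial ring and Lemma \ref{int-polynomials} applies verbatim. The only mild subtlety is to check that the divided-power and binomial expressions one forms inside $U(\fg)$ really do coincide with the corresponding polynomial expressions inside $U(\fh)$, but this is automatic because $\fh$ is a commutative Lie subalgebra of $\fg$ and its enveloping algebra embeds in $U(\fg)$ by PBW.
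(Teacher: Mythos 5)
Your proof is correct and follows exactly the route the paper intends: the corollary is presented as an immediate consequence of Lemma \ref{int-polynomials}, via the identification of $U(\fh)$ with the polynomial ring $\KK[y_1,\dots,y_r]$ coming from the commutativity of the $H_i$ (Definition \ref{def_che-bas}\textit{(a)}), and your two-inclusion argument for the second assertion is the standard completion of that reduction. Nothing is missing.
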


\vskip7pt

   We are now ready to define the Kostant superalgebra.  Like in  \cite{fg2},  we mimic the classical construction, but making a suitable distinction between the roles of even and odd root vectors.

\vskip13pt

\begin{definition}  \label{def-kost-superalgebra}
   We call  {\sl Kostant superalgebra\/}  of  $ U(\fg) $  the unital  $ \Z $--subsuperalgebra  $ \kzg $  of  $ U(\fg) $  generated by all elements
 $ \; \Big(\! {H_i \atop m} \!\Big) \; $,  $ \; X_{\talpha}^{\,(m)} \; $,  $ \; X_{\tgamma} \;\, $
for  $ \; m \in \N \, $,  $ \, 1 \leq i \leq r \, $,  $ \; \talpha \in \tDelta_\zero \; $,  $ \; \tgamma \in \tDelta_\uno \; $.
\end{definition}

\vskip7pt

\begin{remarks}
   {\it (a)} \,  The classical notion   --- suitably adapted to the reductive Lie algebra  $ \rgl_n $  when  $ \, \fg = W(n) \, $  ---   defines the Kostant's  $ \Z $--form  of  $ \, U(\fg_0) \, $,  call it  $ \, \kzgz \, $,  as the unital  $ \Z $--subalgebra  of  $ U(\fg_0) $  generated by the elements  $ \; X_{\talpha}^{(m)} \, $,  $ \, \Big(\! {H_i \atop m} \!\Big) \; $  with  $ \, \talpha \in \tDelta_0 \, $  and  $ \, m \in \N \; $.
 Then  $ \; \kzg \supseteq \kzgz \; $.
 \vskip2pt
   {\it (b)} \,  As a matter of notation, we shall always read  $ \; X_{\talpha}^{(m)} := \delta_{m,0} \; $  if  $ \, \talpha \not\in \tDelta \, $,  for any  $ \, m \in \N \, $.
\end{remarks}

\vskip7pt

   We shall use the following result, proved by induction  (cf.~also  \cite{hu},  \S 26.2, for part  {\it (a)\/}):

\vskip15pt

\begin{lemma}  \label{comm_div-pow}
  Let  $ \, \mathfrak{l} \, $  be a Lie  $ \KK $--algebra,  and  $ \, \ell, m \in \N \, $,  $ \, \ell \wedge m := \min(\ell,m) \, $.
 \vskip4pt
   (a) \,  Let  $ \, E \, , F \in \mathfrak{l} \, $,  $ \, H := [E,F\,] \in \mathfrak{l} \, $,  and assume that  $ \, [H,E\,] = +2 \, E \, $,  $ \, [H,F\,] = -2 \, F \, $.  Then
 \vskip-5pt
  $$  E^{(\ell\,)} \, F^{(m)}  \,\; = \;\,  {\textstyle \sum_{s=0}^{\ell \wedge m}} \; F^{(m-s)} \, {\textstyle \Big({{H - m - \ell + 2 \, s} \atop s}\Big)} \, E^{(\ell-s)}   \eqno  \text{inside  $ \, U(\mathfrak{l}) \; $.}  \quad  $$
 \vskip4pt
   (b) \,  Let  $ \, A \, , B \in \mathfrak{l} \, $,  $ \, C := [A,B\,] \in \mathfrak{l} \, $,  and assume also that  $ \, [A,C\,] = 0 \, $,  $ \, [B,C\,] = 0 \, $.  Then
 \vskip-5pt
  $$  A^{(\ell\,)} \, B^{(m)}  \,\; = \;\,  {\textstyle \sum_{q=0}^{\ell \wedge m}} \; B^{(m-q)} \, C^{(q)} \, A^{(\ell-q)}   \eqno  \text{inside  $ \, U(\mathfrak{l}) \; $.}  \quad  $$
 \vskip4pt
   (c) \,  Let  $ \, L \, , M \in \mathfrak{l} \, $,  $ \, N := [L,M\,] \, $,  $ \, 2\,T := [L,N] \in \mathfrak{l} \, $.  Assume also that  $ \, [M,N\,] = [L,T\,] = 0 \, $  (then  $ \, [M,T\,] = [N,T\,] = 0 \, $  as well).  Then
 \vskip-5pt
  $$  L^{(\ell\,)} \, M^{(m)}  \,\; = \;\,  {\textstyle \sum_{s=0}^{\ell \wedge m}} \; M^{(m-s)} \, {\textstyle \sum_{t+q=s}} \; T^{(t)} \, N^{(q)} \, L^{(\ell-2t-q)}   \eqno  \text{inside  $ \, U(\mathfrak{l}) \; $.}  \quad  $$
 \vskip4pt
   (d) \,  Let  $ \, X, Y \in \mathfrak{l} \, $,  and assume that  $ \, [X,Y\,] = 0 \, $.  Then
 \vskip-5pt
  $$  {\big( X + Y \big)}^{(m)}  \; = \;\,  {\textstyle \sum_{u=0}^m} \; X^{(m-u)} \, Y^{(u)}  \; = \;\,  {\textstyle \sum_{v=0}^m} \; Y^{(m-v)} \, X^{(v)}   \eqno  \text{inside  $ \, U(\mathfrak{l}) \; $.}  \quad  $$
\end{lemma}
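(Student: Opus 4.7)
Each of the four identities is a purely algebraic statement inside $U(\mathfrak{l})$, and I would prove them all by induction on $\ell$, starting from the easier parts. Part (d) is the binomial theorem with divided powers: since $X$ and $Y$ commute, $(X+Y)^m = \sum_u \binom{m}{u} X^{m-u} Y^u$, and dividing by $m!$ gives the claim (the symmetric form with $X$ and $Y$ swapped follows by symmetry). Part (a) is the classical Kostant identity associated with an $\mathfrak{sl}_2$-triple; as indicated in the statement, this is exactly the induction carried out in Humphreys \S 26.2, whose key auxiliary ingredient is the relation $[E, F^{(m)}] = F^{(m-1)}(H - m + 1)$, itself proved by a short induction on $m$.

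For part (b), the centrality of $C$ collapses the analysis considerably. A short induction on $m$ first gives the basic commutation $A \cdot B^{(m)} = B^{(m)} A + C \cdot B^{(m-1)}$. Inducting then on $\ell$, I multiply the formula for $\ell-1$ on the left by $A$, push the extra $A$ past each $B^{(m-q)}$ using this commutation, and absorb the generated $C$ into the existing $C^{(q)}$ via $C \cdot C^{(q)} = (q+1)\, C^{(q+1)}$. After reindexing the sum, the coefficient of $B^{(m-q)} C^{(q)} A^{(\ell-q)}$ is $(\ell - q) + q = \ell$, which cancels the prefactor $1/\ell$ coming from $A^{(\ell)} = A \cdot A^{(\ell-1)}/\ell$ and delivers the claim.

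Part (c) is the main obstacle, because the second-order correction $[L,N] = 2T$ means that pushing $L$ to the right produces both a first-order and a second-order term that have to be tracked at every step. My plan is to prove in succession the three auxiliary commutations $L \cdot M^{(m)} = M^{(m)} L + M^{(m-1)} N$, $\;L \cdot N^{(q)} = N^{(q)} L + 2\, T \cdot N^{(q-1)}\,$ and $L \cdot T^{(t)} = T^{(t)} L$, all immediate from the hypotheses together with the mutual commutativity of $M$, $N$, $T$ (which is forced by Jacobi). Combining these three and using $T^{(t)} \cdot T = (t+1)\, T^{(t+1)}$ and $N^{(q)} \cdot N = (q+1)\, N^{(q+1)}$, one computes the action of a single $L$ on a generic monomial $M^{(m-s)} T^{(t)} N^{(q)} L^{(\ell-1-2t-q)}$ as three contributions: one at the same triple $(s,t,q)$ (from leaving $L$ in place), one at $(s, t+1, q-1)$ (from pushing past an $N$ and producing a $T$), and one at $(s+1, t, q+1)$ (from pushing past an $M$). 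In the inductive step from $\ell-1$ to $\ell$, the contributions at a fixed final triple $(s,t,q)$ then sum to $(\ell - 2t - q) + 2t + q = \ell$, cancelling the $1/\ell$ exactly as in part (b) and producing the claimed formula.

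Throughout, nothing from the supergeometric content of the paper is needed: the whole lemma lives inside the enveloping algebra of a three- or four-dimensional Lie algebra, and the only genuine difficulty is the careful bookkeeping of the second-order correction in part (c).
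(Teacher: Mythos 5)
Your proof is correct and follows exactly the route the paper intends: the paper states only that the lemma is ``proved by induction (cf.\ also \cite{hu}, \S 26.2, for part (a))'', and your argument is that induction carried out in full, with the right auxiliary commutation identities and the correct coefficient bookkeeping $(\ell-2t-q)+2t+q=\ell$ in part (c). Nothing to add.
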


\medskip

  \subsection{Commutation rules and Kostant's PBW theorem}  \label{comm-rul_kost}

\smallskip

   {\ } \quad   In the classical setup, a description of  $ \kzgz $  comes from a ``PBW-like'' theorem: namely,  $ \kzgz $  is a free  $ \Z $--module  with  $ \Z $--basis  the set of ordered monomials (w.~r.~to any total order) whose factors are divided powers in the  $ X_{\talpha} \, \big(\, \talpha \in \tDelta_0 \big) \, $  or binomial coefficients in the  $ H_i $  ($ \, i = 1, \dots, n \, $).
                                        \par
   We shall prove a similar result for  $ \, \fg \, $,  our Lie superalgebra of Cartan type.  Like for (semi)sim\-ple Lie algebras   --- and also for simple Lie superalgebras of  {\sl classical\/}  type,  cf.~\cite{fg2},  \S 4 ---   this follows from a direct analysis of commutation rules among the generators of  $ \kzg \, $.  To this end, we list hereafter all such rules, and also some slightly more general relations.
 We split the list into two sections:  {\it (1)\/}  relations involving only  {\sl even\/}  generators;  {\it (2)\/}  relations involving also  {\sl odd\/}  generators.
                                        \par
   The relevant feature is that all coefficients in these relations are in  $ \Z \, $.
 \eject
\noindent
 {\bf (1) Even generators only  {\rm  (that is  $ \, \Big(\! {H_i \atop m} \!\Big) $'s  and  $ \, X_{\talpha}^{(m)} $'s  only,  $ \, \talpha \in \tDelta_\zero \, $)}:}
 \vskip-7pt
  $$  \displaylines{
   \hskip85pt \hfill   {\textstyle \Big({H_i \atop \ell}\Big)} \, {\textstyle \Big({H_j \atop m}\Big)}  \; = \;  {\textstyle \Big({H_j \atop m}\Big)} \, {\textstyle \Big({H_i \atop \ell}\Big)}   \hskip25pt
\phantom{{}_{\big|}} \forall \;\; i,j \in \{1,\dots,r\} \, , \;\;\; \forall \;\; \ell, m \in \N   \qquad    \hfill (3.1)  \cr
   \hskip25pt \hfill   X_{\talpha}^{(m)} \, f(H)  \; = \;  f\big(H - m \; \pi\big(\talpha\big)(H)\big) \, X_{\talpha}^{(m)}   \hskip15pt
 \phantom{{}_{\big|}} \forall \;\; \talpha \in \! \tDelta_\zero \, , \; H \! \in \fh \, , \; m \in \N \, , \; f(T) \in \! \KK[T]   \quad   \hfill (3.2)  \cr
   \hfill   X_{\talpha}^{(\ell\,)} \, X_{\talpha}^{(m)}  \; = \;  {\textstyle \Big(\! {{\ell \, + \, m} \atop m} \!\Big)} \, X_{\talpha}^{(\ell+m)}   \quad \qquad  \forall \; \talpha \in \tDelta_\zero \, , \;\; \forall \;\; \ell, m \in \N  \phantom{{}_{\big|}}   \hfill (3.3)  \cr
   \hfill \quad   X_{\talpha}^{(\ell\,)} \, X_{\tbeta}^{(m)}  = \,  X_{\tbeta}^{(m)} \, X_{\talpha}^{(\ell\,)}   \hskip35pt  \forall \;\; \talpha, \tbeta \! \in \! \tDelta_\zero \,:\, \pi\big(\talpha\big) \! + \! \pi\big(\tbeta\,\big) \not\in \! \big( \Delta \! \cup \! \{0\} \big) \, , \,\;\; \forall \;\; \ell, m \in \N  \phantom{{}_{\big|}}   \hfill (3.4)  \cr
   \hfill   X_{\alpha,1}^{(m)} \, X_{-\alpha,1}^{(\ell)}  \,\; = \;\,  {\textstyle \sum\limits_{s=0}^{\ell \wedge m}} \; X_{-\alpha,1}^{(\ell-s)} \, {\textstyle \Big( {{H_\alpha \, - \, m \, - \, \ell \, + \, 2 \, s} \atop s} \Big)} \, X_{\alpha,1}^{(m-s)}   \qquad   \hfill (3.5)  \cr
   \hfill   \forall \;\; \alpha \in \Delta_0 \, , \;\; \forall \;\; \ell, m \in \N \; ,  \quad
\text{with}  \quad  \ell \wedge m := \min(\ell,m)  \phantom{{}_{\big|}}   \quad \qquad  \cr
%
   \hfill   X_{\alpha,k}^{(\ell\,)} \, X_{\beta,h}^{(m)}  \; = \,  {\textstyle \sum\limits_{q=0}^{\ell \wedge m}} \, X_{\beta,h}^{(m-q)} \, \bigg(\, {\textstyle \sum\limits_{\sum q_t = q}} \!\! {\textstyle \prod\limits_{t=1}^{\mu(\alpha+\beta)}} \! {\big( c_{\alpha,k}^{\,\beta,h}(t) \big)}^{q_t} \, X_{\alpha + \beta, \, t}^{\,(q_t)} \,\bigg) \, X_{\alpha,k}^{(\ell-q)}   \hfill (3.6)  \cr
   \hfill   \forall \;\; \alpha \in \Delta_\zero \, , \, \beta \in \Delta_\zero \,:\, \alpha \! + \! \beta \in \Delta \, , \, 2 \, \alpha \! + \! \beta \not\in \Delta \, , \, \alpha \! + \! 2 \, \beta \not\in \Delta \, ,  \quad \forall \;\; \ell, m \in \N  \phantom{{}_{\big|}}  \quad \qquad  \cr
%
%
   \hfill   X_{\alpha,1}^{(\ell\,)} \, X_{\beta,h}^{(m)}  =  \! {\textstyle \sum\limits_{s=0}^{\ell \wedge m}} X_{\beta,h}^{(m-s)} \hskip-10pt {\textstyle \sum\limits_{\hskip11pt p+q=s}} \hskip-8pt \epsilon^p X_{2\alpha+\beta,t'}^{(p)}
 \bigg(\, {\textstyle \sum\limits_{\sum q_t = q}} \!\! {\textstyle \prod\limits_{t=1}^{\mu(\alpha+\beta)}} \!\! {\big( c_{\alpha,1}^{\,\beta,h}(t) \big)}^{q_t} X_{\alpha + \beta, \, t}^{\,(q_t)} \bigg) \,
 X_{\alpha,1}^{(\ell-2p-q)}   \! \phantom{{}_{\big|}}   \hfill (3.7)  \cr
   \hfill   \forall \;\; \alpha \in \Delta_0 \, , \; \beta \in \Delta_\zero \setminus \Delta_0 \,:\, \alpha \! + \! \beta \, , \, 2 \, \alpha \! + \! \beta \in \Delta \, ,  \quad \forall \;\; \ell, m \in \N  \phantom{{}_{\big|}}  \quad \qquad  \cr
   \hfill \qquad \qquad   X_{\alpha,1}^{(\ell\,)} \, X_{\beta,1}^{(m)}  \; = \;  X_{\beta,1}^{(m)} \, X_{\alpha,1}^{(\ell\,)} \, + \, \textit{l.h.t}   \quad \qquad \qquad  \forall \; \alpha, \beta \in \Delta_0 \, , \;\; \forall \;\; \ell, m \in \N   \hfill (3.8)  }  $$
\vskip3pt

\noindent
 where  $ \; c_{\alpha,1}^{\,\beta,h}(t) \; $  and  $ \; X_{\alpha + \beta, \, t} \; $  are as in Definition \ref{def_che-bas}{\it (f)},  while  $ \; \epsilon = \pm 1 \; $  and the index  $ t' $  are such that  $ \; \big[ X_{\alpha,1} , \big[ X_{\alpha,1} , X_{\beta,h} \,\big] \big] = \epsilon \, 2 \, X_{2\alpha+\beta,t'} \; $  as in Definition \ref{def_che-bas}{\it (g)},  and  {\it l.h.t.}~(={\it ``lower height terms''})  stands for a  $ \Z $--linear  combinations of monomials in the  $ X_{\teta}^{(q)} $'s  and in the $ \Big(\! {H_i \atop c} \!\Big) $'s  whose ``height''   --- i.e., the sum of all ``exponents''  $ q $ occurring in such a monomial ---   is less than  $ \, \ell + m \, $.
 \vskip5pt

\begin{proof}
 Relations (3.1), (3.2), (3.3) and (3.5) hold by definitions, along with  Lemma \ref{comm_div-pow}{\it (a)}.
 \vskip4pt
   If  $ \, \talpha, \tbeta \in \tDelta \, $  and  $ \, \pi\big(\talpha\big) \! + \! \pi\big(\tbeta\big) \not\in \big( \Delta \cup \{0\} \big) \, $,  then we get  $ \; \big[ X_{\talpha} \, , \, X_{\tbeta} \big] =  0 \; $  by  Definition \ref{def_che-bas}{\it (e)},  so  $ X_{\talpha} $  and  $ X_{\tbeta} $  commute with each other: this implies (3.4).
 \vskip4pt
   Relations (3.6) follow as an application of  Lemma \ref{comm_div-pow}{\it (b)\/}  to  $ \, \mathfrak{l} := \fg \, $,  $ \, A := X_{\alpha,k} \, $  and  $ \, B := X_{\beta,h} \, $,  taking  Definition \ref{def_che-bas}{\it (f)\/}  into account.  Indeed, in this case  Definition \ref{def_che-bas}{\it (f)\/}  gives
  $$  C  \; := \;  [A,B]  \; = \;  \big[\, X_{\alpha,k} \, , X_{\beta,h} \,\big]  \, = \;  {\textstyle \sum_{t=1}^{\mu(\alpha+\beta)}} \, c_{\alpha,k}^{\,\beta,h}(t) \, X_{\alpha + \beta, \, t}  $$
Moreover, the assumptions $ \; (2 \, \alpha \! + \! \beta) \, , (\alpha \! + \! 2 \, \beta) \not\in \Delta \; $  imply  $ \, [A,C\,] = 0 = [B,C\,] \; $.  Thus we can apply  Lemma \ref{comm_div-pow}{\it (b)\/}  to expand  $ \; A^{(\ell)} \, B^{(m)} = X_{\alpha,k}^{(\ell)} \, X_{\beta,h}^{(m)} \; $;  also, in the expansion we find we can still expand each divided power
  $ \, C^{(q)} = {\Big( {\textstyle \sum_{t=1}^{\mu(\alpha+\beta)}} \, c_{\alpha,k}^{\,\beta,h}(t) \, X_{\alpha + \beta, \, t} \Big)}^{\!(q)} \, $
 via the formula in  Lemma \ref{comm_div-pow}{\it (d)},  which applies as  $ \; \Big[ c_{\alpha,k}^{\,\beta,h}(t') \, X_{\alpha + \beta, \, t'} \, , \, c_{\alpha,k}^{\,\beta,h}(t'') \, X_{\alpha + \beta, \, t''} \Big] = 0 \; $  for all  $ \, t', t''
 \, $,  by  Proposition \ref{ad_square/cube}{\it (a)}.
 \vskip4pt
   Relations (3.7) follow as an application of  Lemma \ref{comm_div-pow}{\it (c)}.  Indeed, in the present case we can apply  Lemma \ref{comm_div-pow}{\it (c)\/}  to  $ \, \mathfrak{l} := \fg_\zero \, $,  $ \, L := X_{\alpha,1} \, $,  $ \, M := X_{\beta,h} \, $,  \, so that
%
%
 \vskip-10pt
  $$  N  \; := \;  [L,M]  \; = \;  \big[ X_{\alpha,1} , X_{\beta,h} \big]  \, = \;  {\textstyle \sum_{t=1}^{\mu(\alpha+\beta)}} \, c_{\alpha,1}^{\,\beta,h}(t) \, X_{\alpha + \beta, \, t}  \;\; ,  \qquad
      T  \; := \;  2^{-1} \, [L,N]  \; = \;  \epsilon \, X_{2\alpha+\beta,k}  $$
 \vskip-2pt
\noindent
 Then in the formula of  Lemma \ref{comm_div-pow}{\it (c)\/}  we still have to expand  $ \; N^{(q)} = {\Big( \sum_{t=1}^{\mu(\alpha+\beta)} \, c_{\alpha,1}^{\,\beta,h}(t) \, X_{\alpha + \beta, \, t} \Big)}^{(q)} \; $  using  Lemma \ref{comm_div-pow}{\it (d)},  which again applies   --- all summands commute with each other ---   by the same arguments as above.  In particular,  $ \, [M,N] = 0 \, $  and  $ \, [L,T] = 0 \, $  because of  Proposition \ref{ad_square/cube}.
 \vskip4pt
   Finally, relations (3.8)   --- concerning roots of  $ \fg_0 $  ---   are well-known by the classical theory.
\end{proof}
 \vskip15pt

\noindent
 {\bf (2) Odd and even generators  {\rm  (also involving the  $ X_{\tgamma} $,  $ \, \tgamma \in \tDelta_\uno \, $)}:}
 \vskip-11pt
  $$  \displaylines{
   \hfill \hskip65pt   X_{\tgamma} \, f(H)  \; = \;  f\big(H - \pi\big(\tgamma\big)(H)\big) \, X_{\tgamma}   \quad \qquad  \forall \;\; \tgamma \in \tDelta_\uno \, , \; h \in \fh \, , \; f(T) \in \KK[T]   \phantom{{}_{\big|}} \quad  \hfill (3.9)  \cr
   \hfill \hskip0pt   X_{\gamma,k}^{\;\,2} \, = \; 0  \quad  \text{if all  $ c_{\gamma,k}^{\,\gamma,k}(t) $  are zero},   \qquad   X_{\gamma,k}^{\;\,2} \, = \; 2^{-1} c_{\gamma,k}^{\,\gamma,k}\big(t'\big) \, X_{2\gamma,t'}  \quad  \text{otherwise}   \hfill (3.10)  \cr
   \phantom{{}_{\big|}} \text{(with notation as in  Definition \ref{def_che-bas}{\it (f.3.)\/})} \phantom{{}_{\big|}}  \cr
   \hfill   X_{\gamma,1} \, X_{-\gamma,k}  \; = \;  - X_{-\gamma,k} \, X_{\gamma,1} \, + \, H_{\sigma_\gamma(k)}   \hskip45pt  \forall \;\, \gamma \in \Delta_{-1}   \qquad  \hfill  (3.11)  \cr
   \quad   \text{with}  \quad  H_{\sigma_\gamma(k)} = \big[ X_{\gamma,1} \, , \, X_{-\gamma,k} \big] \, \in \, \fh_\Z  \quad  \text{as in  Definition \ref{def_che-bas}{\it (d)}}  \phantom{{}_{\big|}}  \cr
   \hfill   X_{\tgamma} \; X_{\teta}  \; = \;  - X_{\teta} \, X_{\tgamma}   \hskip35pt  \forall \;\; \tgamma \, , \, \teta \in \tDelta_\uno \,\; : \;\, \pi\big(\tgamma\big) \! + \pi\big(\,\teta\,\big) \not\in \! \tDelta \, , \,\; \forall \; \ell \in \N   \phantom{{}_{\big|}}  \qquad   (3.12)  \cr
   \hfill   X_{\gamma,k} \; X_{\eta,h}  \, = \,  - X_{\eta,h} \, X_{\gamma,k}  \, +  {\textstyle \sum_{t=1}^{\mu(\alpha+\gamma)}} c_{\gamma,k}^{\,\eta,h}(t) \, X_{\gamma + \eta, \, t}   \hskip21pt   \forall \;\, \gamma \, , \, \eta \in \Delta_\uno \; : \; \alpha + \gamma \in \! \Delta \, , \,\; \forall \; \ell \in \N   \phantom{{}_{\big|}}  \hskip3pt   (3.13)  \cr
   \hfill   X_{\talpha}^{(\ell\,)} \; X_{\tgamma}  \; = \;  X_{\tgamma} \, X_{\talpha}^{(\ell\,)}   \hskip37pt   \forall \;\; \talpha \in \tDelta_\zero \; , \; \tgamma \in \tDelta_\uno \; : \; \pi\big(\talpha\big) \! + \pi\big(\tgamma\big) \not\in \! \tDelta \, , \,\; \forall \; \ell \in \N   \qquad  \phantom{{}_{\big|}}   (3.14)  \cr
   \hfill   X_{\alpha,k}^{(\ell\,)} \; X_{\gamma,h}  \; = \;  X_{\gamma,h} \, X_{\alpha,k}^{(\ell\,)} \, + \, \Big( {\textstyle \sum_{t=1}^{\mu(\alpha+\gamma)}} \, c_{\alpha,k}^{\,\gamma,h}(t) \, X_{\alpha + \gamma, \, t} \Big) \, X_{\alpha,k}^{(\ell-1)}   \hskip115pt  \hfill (3.15)  \cr
   \hfill   \phantom{{}_{\big|}}   \forall \;\; \alpha \in \Delta_\zero \; , \; \gamma \in \Delta_\uno \; : \; \alpha \! + \gamma \in \Delta \, , \, 2 \, \alpha \! + \gamma \not\in \Delta \; ,  \quad \forall \;\; \ell \in \N  \qquad  \cr
   \hfill   X_{\alpha,1}^{(\ell\,)} \, X_{\gamma,h}  \; = \;\,  X_{\gamma,h} \, X_{\alpha,1}^{(\ell\,)} \, + \, \Big( {\textstyle \sum_{t=1}^{\mu(\alpha+\gamma)}} \, c_{\alpha,1}^{\,\gamma,h}(t) \, X_{\alpha + \gamma, \, t} \Big) \, X_{\alpha,1}^{(\ell-1)} + \, \epsilon \, X_{2\alpha+\gamma,t'} \, X_{\alpha,1}^{(\ell-2)}   \qquad   \hfill  (3.16)  \cr
   \hfill   \forall \;\; \alpha \in \Delta_0 \, , \; \gamma \in \Delta_\uno \,:\, \alpha \! + \! \gamma \, , \, 2 \, \alpha \! + \! \gamma \in \Delta \, ,  \quad \forall \;\; \ell \, , m \in \N  \qquad  }  $$
 \vskip3pt
\noindent
 where  $ \; c_{\gamma,k}^{\,\eta,h}(t) \, $,  $ \, c_{\alpha,k}^{\,\gamma,h}(t) \, $,  $ \, X_{\gamma + \eta, \, t} \, $,  $ \, X_{\alpha + \gamma, \, t} \, $,  $ \; \epsilon = \pm 1 \; $  and the index  $ t' $  (namely, the one such that  $ \; \big[ X_{\alpha,1} , \big[ X_{\alpha,1} , X_{\gamma,1} \,\big] \big] = \epsilon \, 2 \, X_{2\alpha+\gamma,t'} \; $)  are given again as in Definition \ref{def_che-bas}{\it (f--g)}.
 \vskip7pt

\begin{proof}
   Almost all of these relations are proved much like those among even generators only.
                                                                 \par
   A first exception is (3.10), which holds by  Definition \ref{def_che-bas}{\it (h)},  taking into account that in the universal enveloping superalgebra one has  $ \, X^2 = X^{\langle 2 \rangle} \, $  for every  $ \, X \in \fg_\uno \; $.  Another exception is (3.11), which is just another way of rewriting what is expressed in  Definition \ref{def_che-bas}{\it (d)}.
 \vskip4pt
   As to the rest, relations (3.9), (3.12), (3.14) directly follow from definitions.  Finally, relations (3.13), (3.15), (3.16) are proved, like relations (3.6) and (3.7), via induction like for  Lemma \ref{comm_div-pow}.
\end{proof}

\vskip5pt

   Here now is our (super-version of) Kostant's theorem for  $ \kzg \, $:

\vskip13pt

\begin{theorem}  \label{PBW-Kost}
 The Kostant superalgebra  $ \, \kzg \, $  is a free\/  $ \Z $--module.
%
%
   More precisely, for any given total order  $ \, \preceq \, $  of the set  $ \; \tDelta \coprod \big\{ 1, \dots, n \big\} \; $
%
%
 a  $ \, \Z $--basis  of  $ \kzg $  is the set  $ {\mathcal B} $  of ordered ``PBW-like monomials'', i.e.~all products (without repeti\-tions) of factors of type  $ X_{\talpha}^{(\ell_{\talpha})} $,  $ \Big(\! {H_i \atop n_i} \!\Big) $  and  $ \, X_{\tgamma} $   --- with  $ \, \talpha \in \tDelta_\zero \, $,  $ \, i \in \big\{ 1, \dots, n \big\} \, $,  $ \, \tgamma \in \tDelta_\uno \, $,  and  $ \, \ell_{\talpha} \, $,  $ n_i  \in \N $  ---   taken in the right order with respect to  $ \, \preceq \; $.
\end{theorem}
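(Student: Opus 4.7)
\smallskip

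\noindent
\textbf{Proof plan.} \ The strategy is the classical two-step Kostant argument: first establish that $\mathcal{B}$ spans $\kzg$ as a $\Z$-module, then establish $\Z$-linear independence by reducing to the classical PBW theorem in $U(\fg)$.

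For the spanning step, I would argue by induction. By definition, $\kzg$ is generated as a unital $\Z$-algebra by the elements $\binom{H_i}{m}$, $X_{\talpha}^{(m)}$ (with $\talpha \in \tDelta_\zero$) and $X_{\tgamma}$ (with $\tgamma \in \tDelta_\uno$), so every element of $\kzg$ is a $\Z$-linear combination of monomials in these generators. The commutation relations (3.1)--(3.16) allow us to swap adjacent out-of-order factors at the price of producing a $\Z$-linear combination of monomials of the same or strictly smaller ``complexity''. Here I would take complexity to be the pair (total height of the monomial, number of inversions relative to $\preceq$) ordered lexicographically: the height is preserved when straightening two factors via the ``leading'' term (the first summand on the right-hand side of (3.6), (3.7), (3.13), (3.15), (3.16)), which simply trades order; the additional ``lower height terms'' drop the first coordinate. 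Two further reductions are needed within a single variable: relation (3.3) collapses two divided powers of the same $X_{\talpha}$ into a single one (with integer Vandermonde coefficient), relation (3.10) handles $X_{\gamma,k}^2$ by turning it into either $0$ or a divided power of $X_{2\gamma,t'}$, and the classical Vandermonde identity $\binom{H_i}{m}\binom{H_i}{n} = \sum_k \binom{m+n-k}{m-k,n-k,k}\binom{H_i}{m+n-k}$ collapses repeated binomials in the same $H_i$. The crucial point, which is \emph{all} that needs checking, is that every coefficient appearing in (3.1)--(3.16) lies in $\Z$; this has been verified in the preceding two sections as a consequence of $c_{\alpha,k}^{\beta,h}(t) \in \{0, \pm 1, \pm 2\}$ and of Proposition~\ref{ad_square/cube}. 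Induction on complexity therefore terminates and produces an expression as a $\Z$-linear combination of elements of $\mathcal{B}$.

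For linear independence, I would invoke the PBW theorem for the Lie superalgebra $\fg$ over $\KK$: $U(\fg)$ has as $\KK$-basis the ordered (with respect to $\preceq$) monomials
\[
\prod X_{\talpha}^{\,\ell_{\talpha}} \cdot \prod H_i^{\,n_i} \cdot \prod X_{\tgamma}^{\,\varepsilon_{\tgamma}}, \qquad \ell_{\talpha}, n_i \in \N, \quad \varepsilon_{\tgamma} \in \{0,1\}
\]
(where the bound $\varepsilon_{\tgamma} \le 1$ comes from the fact that $X_{\tgamma}^{\,2} = X_{\tgamma}^{\langle 2 \rangle}$ in $U(\fg)$, which is already an even basis element of $\fg$, so repetitions collapse into the even part). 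Now every element of $\mathcal{B}$ is, inside $U(\fg)$, obtained from such a classical PBW monomial by dividing by a nonzero scalar (the factorial $\ell_{\talpha}!$ for each divided power, and the $n_i!$ hidden in $\binom{H_i}{n_i} = H_i(H_i-1)\cdots(H_i-n_i+1)/n_i!$), and the transition matrix between $\{H_i^{n_i}\}$ and $\{\binom{H_i}{n_i}\}$ in $U(\fh)$ is unitriangular (in fact $\Z$-invertible) by Corollary~\ref{int-polynom_kost}. Hence the images of the elements of $\mathcal{B}$ in $U(\fg)$ are $\KK$-linearly independent, \emph{a fortiori} $\Z$-linearly independent. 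Combined with the spanning result, this proves that $\mathcal{B}$ is a $\Z$-basis of $\kzg$.

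The main obstacle is really bookkeeping: ensuring that at each application of a commutation rule the induction hypothesis can be invoked on every summand that is produced. The delicate cases are (3.7) and (3.16) (where a cube of the root vector intervenes through the $\epsilon X_{2\alpha+\beta,t'}$ term) and the straightening of $X_{\gamma,k}^{\,2}$ via (3.10) when $2\gamma \in \Delta$ (occurring only for $\fg = \tS(n)$ with $\gamma = -\varepsilon_i$, or $\fg = H(2r\!+\!1)$ with $\gamma = m\,\delta$, $m$ odd, as recorded in Lemma~\ref{root-props}); one must check that in these exceptional situations the resulting divided power $X_{2\gamma,t'}^{(\cdot)}$ is again a legitimate generator of $\kzg$ (which it is, as $2\gamma \in \Delta_\zero$), so the induction goes through.
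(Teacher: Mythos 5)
Your proposal is correct and follows essentially the same route as the paper: a straightening algorithm on monomials in the generators, terminating because all structure constants in the commutation rules (3.1)--(3.16) are integral, followed by linear independence via the super-PBW theorem for $U(\fg)$ over $\KK$ together with the integrality/unitriangularity of the binomial basis of $\mathbb{H}_\Z$ (Corollary~\ref{int-polynom_kost}). The only substantive difference is bookkeeping: the paper's termination argument tracks the triple (height, number of factors, number of inversions) rather than your lexicographic pair, precisely so that the merging steps (3.3) and (3.10) --- which preserve both the height and the inversion count --- also strictly decrease the measure.
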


\begin{proof}  Let us call ``monomial'' any product
 of several  $ X_{\talpha}^{(\ell_{\talpha})} \, $,  several  $ \Big(\! {{H_i - z_i} \atop s_i} \!\Big) $   --- with  $ z_i \! \in \! \Z $  ---   and several  $ X_{\tgamma}^{m_{\tgamma}} \, $.  For any such monomial, say  $ \cM \, $,  we consider the following three numbers:
 \vskip4pt
%
%
%
 \vskip3pt
   --- its ``height''  $ \, \textit{ht}(\cM) \, $,  \, i.e.~the sum of all  $ \ell_{\talpha} $  and  $ m_{\tgamma} $  in  $ \cM \, $;
 \vskip3pt
   --- its ``factor number''  $ \, \text{\it fac}(\cM) \, $,  \, defined to be the total number of factors (namely  $ X_{\talpha}^{(\ell_{\talpha})} $,  $ \Big(\! {{H_i - z_i} \atop n_i} \!\Big) $  or  $ X_{\tgamma}^{m_{\tgamma}} \, $)  within  $ \cM $  itself;
 \vskip3pt
   --- its ``inversion number''  $ \, \text{\it inv}(\cM) \, $,  \, which is the sum of all inversions of the order  $ \, \preceq \, $  among the indices of factors in  $ \cM $  when read from left to right.
 \vskip5pt
   We can now act upon any such  $ \cM $  with any of the following operations:
 \vskip4pt
   {\it --\,(1)} \,  we move all factors  $ \Big(\! {H_i - z_i \atop s} \!\Big) $  to the leftmost position, by repeated use of (3.2) and (3.9): this produces a new monomial  $ \cM' \, $,  multiplied on the left by several (new) factors  $ \Big(\! {{H_i - \check{z}_i} \atop s_i} \!\Big) \, $;
 \vskip4pt
   {\it --\,(2)} \,  whenever two  {\sl consecutive\/}  factors  $ \, X_{\talpha}^{(\ell_{\talpha})} \, $  and  $ \, X_{\talpha}^{(\ell^{\,\prime}_{\talpha})} \, $  occur in  $ \cM \, $,  we replace their product in  $ \cM $  with an integral coefficient times a single factor, using (3.3); and similarly, we replace any pair of  {\sl consecutive\/}  factors  $ \; X_{\tgamma}^{m_{\tgamma}} \, X_{\tgamma}^{m'_{\tgamma}} \; $  by the single factor  $ \, X_{\tgamma}^{m_{\tgamma} + m'_{\tgamma}} \, $;
 \vskip4pt
   {\it --\,(3)} \,  we replace any power  $ X_{\tgamma}^{m_{\tgamma}} $  of an odd root vector with  $ 0 $  or  $ \, \pm X_{2\gamma,t'} \, $,  for  $ \, \tgamma = (\gamma,k) \, $,  whenever  $ \; m_{\gamma,k} \! > \! 1 \, $,  applying (3.10);
 \vskip4pt
   {\it --\,(4)} \,  whenever two factors within  $ \cM $  occur side by side  {\sl in the wrong order w.~r.~to\/}  $ \preceq \; $,  i.e.~we have any one of the following situations
 \vskip-17pt
  $$  \displaylines{
   \cM  \; = \;  \cdots \, X_{\talpha}^{(\ell_{\talpha})} \, X_{\tbeta}^{(\ell_{\tbeta})} \cdots \quad ,
 \qquad  \cM  \; = \;   \cdots \, X_{\talpha}^{(\ell_{\talpha})} \, X_{\tgamma}^{m_{\tgamma}} \cdots  \cr
   \cM  \; = \;  \cdots \, X_{\teta}^{m_{\teta}} \, X_{\talpha}^{(\ell_{\talpha})} \cdots \quad ,
 \qquad  \cM  \; = \;  \cdots \, X_{\tgamma}^{\,m_{\tgamma}} \, X_{\teta}^{m_{\teta}} \cdots  }  $$
 \vskip-5pt
\noindent
 with  $ \, \talpha \succneqq \tbeta \, $,  $ \, \talpha \succneqq \tgamma \, $,  $ \, \teta \succneqq \talpha \, $  and  $ \, \tgamma \succneqq \teta \, $  respectively, we can use all relations (3.4--8) and (3.11--16) to re-write this product of two distinguished factors, so that all of  $ \cM $  expands into a  $ \Z $--linear  combination of new monomials.  In some cases one has to read these relations the other way round: for instance, one can use (3.7) to re-write  $ \, X_{\beta,h}^{(m)} \, X_{\alpha,1}^{(\ell\,)} \, $  as  $ \; X_{\beta,h}^{(m)} \, X_{\alpha,1}^{(\ell\,)} \, = \, \cdots \;\, $.
 \vskip7pt
   By definition,  $ \kzg $  is  $ \Z $--spanned  by all (unordered) monomials in the  $ X_{\talpha}^{(\ell_{\talpha})} $,  the  $ \Big( {{H_i} \atop n_i} \Big) $  and the  $ X_{\tgamma}^{\,m_{\tgamma}} $.  Let  $ \cM $  be any one of these monomials: it is PBW-like, i.e.~in  $ {\mathcal B} \, $,  if and only if no one of steps  {\it (2)\/}  to  {\it (4)\/}  may be applied; but if not, we now see what is the effect of applying such steps.
 \vskip4pt
   Applying step  {\it (1)\/}  gives  $ \; \cM \, = \, {\mathcal H} \, \cM' \; $  where  $ {\mathcal H} $  is some product of  $ \Big(\! {H_i - \check{z}_i \atop s_i} \Big) \, $,  and  $ \cM' $  is a new monomial such that  $ \; \textit{ht}\big(\cM'\big) = \textit{ht}\big(\cM\big) \; $,  $ \; \textit{fac}\big(\cM'\big) \leq \textit{fac}\big(\cM\big) \; $,  $ \; \textit{inv}\big(\cM'\big) \leq \textit{inv}\big(\cM\big) \, $,  \, and the strict inequality in the middle holds if and only if  $ \; {\mathcal H} \not= 1 \, $,  i.e.~step  {\it (1)\/}  is non-trivial.  Actually, this is clear at once when one realizes that  $ \cM' $  is nothing but  ``$ \cM $  with all factors  $ \Big(\! {{H_i - z_i} \atop s_i} \!\Big) $  removed.''
 \vskip4pt
   Then we apply any one of steps  {\it (2)},  {\it (3)\/}  or  {\it (4)\/}  to  $ \cM' \, $.
 \vskip4pt
   Step  {\it (2)}, if non-trivial, yields  $ \, \cM' = z \, \cM^\vee $,  \, for some  $ \, z \in \Z \, $  and some monomial  $ \cM^\vee \, $  such that  $ \, \text{\it ht}\big(\cM^\vee\big) \! = \text{\it ht}\big(\cM'\big) \, $,  $ \, \text{\it fac}\big(\cM^\vee\big) \lneqq \text{\it fac}\big(\cM'\big) \, $.
%
%
 Instead, step  {\it (3)},  still if non-trivial, gives  $ \; \cM' \, = \, 0 \; $.
 \vskip4pt
   Finally, step  {\it (4)\/}  gives  $ \; \cM' = \, \cM^\curlyvee \! + \sum_k z_k \, \cM_k \; $,  where  $ \, z_k \in \Z \, $  (for all  $ k \, $)  and  $ \cM^\curlyvee $  and the  $ \cM_k $  are monomials such that  $ \; \text{\it ht}\big(\cM_k\big) \lneqq \text{\it ht}\big(\cM_k\big) \;\; \forall \;\, k \; $,  $ \; \text{\it ht}\big(\cM^\curlyvee\big) = \text{\it ht}\big(\cM'\big) \; $,  $ \; \text{\it inv}\big(\cM^\curlyvee\big) \lneqq \text{\it inv}\big(\cM'\big) \; $.
 \vskip5pt
   In short, through either step  {\it (2)},  or  {\it (3)},  or  {\it (4)},  we achieve an expansion
 \vskip-5pt
  $$  \cM'  \; = \;  {\textstyle \sum_h} \; z'_h \, {\mathcal H} \, \cM'_h \;\; ,  \qquad  z'_h \in \Z  \quad  \forall \; h   \eqno (3.17)  $$
 \vskip-1pt
\noindent
 (the sum in right-hand side possibly being void, hence equal to zero) where   --- unless the step is trivial, for then we get all equalities ---   we have
 \vskip-4pt
  $$  \Big( \text{\it ht}\big(\cM'_h\big) \! \lneqq \text{\it ht}\big(\cM'\big) \!\Big) \vee \Big( \text{\it fac}\big(\cM'_h\big) \! \lneqq \text{\it fac}\big(\cM'\big) \!\Big) \vee \Big( \text{\it inv}\big(\cM'_h\big) \! \lneqq \text{\it inv}\big(\cM'\big) \!\Big)   \eqno (3.18)  $$
   \indent   Now we repeat, applying step  {\it (1)\/}  and then step  {\it (2)\/}  or  {\it (3)\/}  or  {\it (4)\/}  to every monomial  $ \cM'_h $  in (3.17).  Then we iterate, until we get only monomials whose inversion number is zero: by (3.18), this is possible indeed, and it is achieved after finitely many iterations.  The outcome reads
 \vskip-5pt
  $$  \cM'  \; = \;  {\textstyle \sum_j} \; \dot{z}_j \, {\mathcal H''_j} \, \cM''_j \;\; ,  \qquad  \dot{z}_j \in \Z  \quad  \forall \; j   \eqno (3.19)  $$
 \vskip-3pt
\noindent
 where  $ \; \text{\it inv}\big(\cM''_j\big) = 0 \; $  for every index  $ j \, $,  i.e.~all monomials  $ \cM''_j $  are ordered and without repetitions, that is they belong to  $ {\mathcal B} \, $.  Now each  $ {\mathcal H''_j} $  belongs to  $ \mathbb{H}_\Z $  (notation of  Corollary \ref{int-polynom_kost}),  just by construction.  Then  Corollary \ref{int-polynom_kost}  ensures that each  $ {\mathcal H''_j} $  expand into a  $ \Z $--linear  combination of ordered monomials in the  $ \Big(\! {H_i \atop n_i} \!\Big) $.  Therefore (3.19) yields
 \vskip-5pt
  $$  \cM  \; = \;  {\textstyle \sum_s} \;\, \hat{z}_s \; {\mathcal H^\wedge_s} \, \cM^\wedge_s \;\; ,  \qquad  \hat{z}_s \in \Z  \quad  \forall \; s   \eqno (3.20)  $$
 \vskip-3pt
\noindent
 where every  $ {\mathcal H^\wedge_s} $  is an ordered monomial, without repetitions, in the  $ \Big(\! {H_i \atop n_i} \!\Big) $,  while for each  index  $ s $  we have  $ \, \cM^\wedge_s = \cM''_j \, $  for some  $ j \, $   --- those in (3.19).
                                            \par
   Using again relations (3.2) and (3.9), we can switch positions among the factors  $ \Big(\! {H_i \atop n_i} \!\Big) \, $  in  $ {\mathcal H^\wedge_s} $  and the factors in  $ \cM^\wedge_s $  (for each  $ s $),  so to get a new monomial  $ \cM^\circ_s $  which is ordered, without repetitions, but might have factors of type  $ \Big(\! {{H_i - z_i} \atop m_i} \!\Big) $  with  $ \, z_i \in \Z \setminus \{0\} \, $   --- so that  $ \, \cM^\circ_s \not\in {\mathcal B} \, $.  But then  $ \, \Big(\! {{H_i - z_i} \atop m_i} \!\Big) \in \mathbb{H}_\Z \, $,  hence again  by  Corollary \ref{int-polynom_kost}  that factor expands into a  $ \Z $--linear  combination of ordered monomials, without repetitions, in the  $ \Big(\! {H_i \atop \ell} \!\Big) $.  Plugging every such expansion inside each monomial  $ \cM^\wedge_s $  instead of each factor  $ \Big(\! {{H_i - z_i} \atop m_i} \!\Big) $   ---  $ \, i = 1, \dots, r \, $  ---   we eventually find
  $$  \cM  \; = \;  {\textstyle \sum_q} \, z_q \, \cM^{\,!}_q \;\; ,  \qquad  z_q \in \Z  \quad  \forall \; q  $$
where now every  $ \cM^!_q $  is a PBW-like monomial,  i.e.~$ \, \cM^{\,!}_q \in {\mathcal B} \, $  for every  $ q \, $.
 \vskip5pt
   As  $ \kzg \, $,  by definition, is spanned over  $ \Z $  by all monomials in the  $ X_{\talpha}^{(\ell_{\talpha})} $,  the  $ \Big( {{H_i} \atop n_i} \Big) $  and the  $ X_{\tgamma}^{\,m_{\tgamma}} $,  our analysis yields  $ \; \kzg \subseteq \text{\it Span}_{\,\Z}({\mathcal B}) \, $.  On the other hand, by definition  $ \text{\it Span}_{\,\Z}({\mathcal B}) $  in turn is contained in  $ \kzg \, $.  Therefore  $ \; \kzg = \text{\it Span}_{\,\Z}({\mathcal B}) \, $,  \, i.e.~$ {\mathcal B} $  spans  $ \kzg $  over  $ \Z \, $.
 \vskip5pt
   At last, the PBW theorem for Lie superalgebras over fields ensures that  $ {\mathcal B} $  is a  $ \KK $--basis  for  $ U(\fg) \, $,  as  $ \; B := \big\{ H_1 , \dots , H_r \big\} \, {\textstyle \coprod} \, \big\{ X_{\talpha} \,\big|\, \talpha \in \tDelta \big\} \; $  is a  $ \KK $--basis  of  $ \fg \, $  (cf.~\cite{vsv}).  So  $ {\mathcal B} $  is linearly indepen\-dent over  $ \KK \, $,  hence over  $ \Z \, $.  Therefore  $ {\mathcal B} $  is a  $ \Z $--basis  for  $ \kzg \, $,  and the latter is a free  $ \Z $--module.
\end{proof}

\medskip

\begin{free text}  \label{kost-form_sub-objs}
 {\bf Kostant superalgebras for special sub-objects.}  \  We can consider  $ \Z $--integral  forms for the sub-objects, as follows.  First fix a Chevalley basis  $ B $  and  $ \fg^\Z $  as in  Definition \ref{def_che-bas}.  Second, for  $ \, \alpha \in \Delta \, $,  $ \, q \geq -1 \, $,  we set  $ \; B_\alpha := B \cap \fg_\alpha \; $,  $ \; \fg_\alpha^\Z := \text{\it Span}_{\,\Z}(B_\alpha) \; $,  $ \; \fg_q^\Z := \text{\it Span}_{\,\Z}\big( \coprod_{\text{\it ht}\,(\alpha) = q} B_\alpha \big) \; $,  \, and
 \vskip-13pt
  $$  \displaylines{
   \fg^\Z_\zero  \; := \, \fh_\Z \oplus \Big(\, {\textstyle \bigoplus_{\alpha \in \Delta_\zero \,}} \fg^\Z_\alpha \Big)  \quad ,  \qquad  \fg^\Z_\uno  \; := \,  {\textstyle \bigoplus_{\gamma \in \Delta_\uno \,}} \fg^\Z_\gamma  \quad ,  \qquad  \fg^\Z_0  \; := \, \fh_\Z \oplus \Big(\, {\textstyle \bigoplus_{\alpha \in \Delta_0 \,}} \fg^\Z_\alpha \Big)  \cr
   \fg^\Z_{\zero^{\,\uparrow}}  :=  {\textstyle \bigoplus_{\alpha \in \Delta_{\zero^{\,\uparrow}}}} \fg^\Z_\alpha  \;\, ,
    \;\quad  \fg^\Z_{\uno^{\,\uparrow}}  :=  {\textstyle \bigoplus_{\gamma \in \Delta_{\uno^{\,\uparrow}}}} \fg^\Z_\gamma  \;\, ,
    \;\quad  \fg^\Z_{t^\uparrow}  := \,  {\textstyle \bigoplus_{q>t\,}} \fg^\Z_q  \;\; \big(\, t \geq -1 \big)  \;\, ,
    \;\quad  \fg^\Z_{-1,0}  :=  \fg^\Z_{-1} {\textstyle \bigoplus} \, \fg^\Z_0  }  $$
 \vskip-5pt
\noindent
 (the last only for  $ \fg \not\cong \tS(n) $;  notation of  \S \ref{triang-dec_Borel-subs_Lie-sub-objs})  with  $ \, \fh_\Z := \text{\it Span}_{\,\Z} \big( H_1 , \dots , H_r \big) \, $,  cf.~Definition \ref{def_che-bas}{\it (a)}.
\end{free text}

\vskip11pt

\begin{definition}  \label{def-kost-superalgebra_sub-objs}
 We define  $ K_\Z\big(\fg_{-1^\uparrow}\!\big) \, $  as the unital  $ \Z $--subsuperalgebra  of  $ \, U(\fg) \, $  generated by  $ \, \Big(\! {H_i \atop m} \!\Big) \, $,  $ \, X_{\talpha}^{\,(m)} $  and  $ \, X_{\tgamma} \, $  for all  $ \; m \in \N \, $,  $ \, 1 \! \leq \! i \! \leq \! r \, $,  $ \; \talpha \! \in \! \tDelta_\zero \, $,  $ \; \tgamma \in \tDelta_{\uno^{\,\uparrow}} \; $.  In a similar way, we define the unital  $ \Z $--subsuperalgebras  $ \, K_\Z\big(\fg_\zero\big) \, $,  $ \, K_\Z\big(\fg_0\big) \, $,  $ \, K_\Z\big(\fg_{\zero^{\,\uparrow}}\!\big) \, $,  $ \, K_\Z\big(\fg_{t^\uparrow}\!\big) \, $   --- $ \, t \geq 0 \, $ ---   and  $ \, K_\Z\big(\fg_{-1,0}\big) $   --- for  $ \, \fg \not\cong \tS(n) \, $  ---   as the ones generated by the binomial coefficients, divided powers of even root vectors, and odd root vectors involved in the very definition of  $ \fg_\zero \, $,  $ \fg_0 \, $,  $ \fg_{\zero^{\,\uparrow}} \, $,  $ \fg_{t^\uparrow} $  and  $ \fg_{-1,0} \; $.
 \vskip3pt
   Also, we denote  $ \, \bigwedge \fg_\uno^\Z \, $,  resp.~by  $ \, \bigwedge
\fg_{\uno^{\,\uparrow}}^\Z \, $,  the (unital) exterior\/  $ \Z $--algebra  over  $ \fg_\uno^\Z \, $,
resp.~over  $ \fg_{\uno^{\,\uparrow}}^\Z \; $.
\end{definition}

\vskip9pt

   All these objects are related by the following consequence of  Theorem \ref{PBW-Kost}  (in particular, the first isomorphism is an integral version of the factorization  $ \; U(\fg) \, \cong \, U(\fg_0) \otimes_{\KK} \bigwedge \fg_\uno \, $,  see  \cite{vsv}),  whose proof follows from the arguments used for  Theorem \ref{PBW-Kost},  or as a direct consequence of it:

\vskip15pt

\begin{corollary}  \label{kost_tens-splitting}
 There exist isomorphisms of  $ \, \Z $--modules
  $$  \kzg  \; \cong \, K_\Z\big(\fg_\zero\big) \otimes_\Z {\textstyle \bigwedge} \, \fg_\uno^\Z \;\; ,
   \quad  K_\Z\big(\fg_{-1^\uparrow}\!\big)  \; \cong \,  K_\Z\big(\fg_\zero\big) \otimes_\Z {\textstyle \bigwedge} \, \fg_{\uno^{\,\uparrow}}^\Z \;\; ,
   \quad  K_\Z\big(\fg_{0^\uparrow}\!\big)  \; \cong \,  K_\Z\big(\fg_{\zero^{\,\uparrow}}\big) \otimes_\Z {\textstyle \bigwedge} \, \fg_{\uno^{\,\uparrow}}^\Z  $$
and of  $ \, \Z $--superalgebras
 $ \,\; K_\Z\big(\fg_\zero\big) \, \cong \, K_\Z\big(\fg_0\big) \otimes_\Z K_\Z\big(\fg_{\zero^{\,\uparrow}}\!\big) \; $,
 $ \,\; K_\Z\big(\fg_{-1,0}\big) \, \cong \, K_\Z\big(\fg_0\big) \otimes_\Z {\textstyle \bigwedge} \, \fg_{-1}^\Z \; $.
\end{corollary}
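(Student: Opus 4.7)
My plan is to deduce all five isomorphisms from Theorem \ref{PBW-Kost} together with suitable analogues for the six listed sub-superalgebras, by choosing appropriate total orderings on the indexing set $\tDelta \coprod \{1,\ldots,r\}$.

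First I would extend Theorem \ref{PBW-Kost} to each of the six sub-superalgebras $K_\Z(\fg_\zero)$, $K_\Z(\fg_0)$, $K_\Z(\fg_{\zero^{\,\uparrow}})$, $K_\Z(\fg_{-1^\uparrow})$, $K_\Z(\fg_{0^\uparrow})$ and $K_\Z(\fg_{-1,0})$. The proof of Theorem \ref{PBW-Kost} is a pure rewriting procedure using the commutation relations (3.1)--(3.16) together with Corollary \ref{int-polynom_kost}; the key point is that each such relation \emph{respects} any of the listed sub-superalgebras, in the sense that if the left-hand side lies in $K_\Z(\fg_\bullet)$ then so does every summand on the right. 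This is because heights are additive under the Lie bracket (so e.g.\ for $\talpha,\tbeta \in \tDelta_{\zero^{\,\uparrow}}$ the only roots $\alpha+\beta$ that can appear are again of positive even height), $H_{\sigma_\gamma(k)}$ in (3.11) lies in $\fh_\Z$, and $X_{2\gamma,t'}$ in (3.10) is even --- all of which is built into Definition \ref{def_che-bas}. Hence each $K_\Z(\fg_\bullet)$ is a free $\Z$-module with PBW-type $\Z$-basis indexed by ordered monomials in its own generators.

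For the three $\Z$-module isomorphisms I would then fix, for each one, a total order $\preceq$ on $\tDelta \coprod \{1,\ldots,r\}$ placing the generators of the left factor before those of the right: for $\kzg \cong K_\Z(\fg_\zero) \otimes_\Z {\textstyle \bigwedge} \, \fg_\uno^\Z$, order $\tDelta_\zero \coprod \{1,\ldots,r\}$ before $\tDelta_\uno$; for $K_\Z(\fg_{-1^\uparrow}) \cong K_\Z(\fg_\zero) \otimes_\Z {\textstyle \bigwedge} \, \fg_{\uno^{\,\uparrow}}^\Z$, place $\tDelta_\zero \coprod \{1,\ldots,r\}$ before $\tDelta_{\uno^{\,\uparrow}}$; for $K_\Z(\fg_{0^\uparrow}) \cong K_\Z(\fg_{\zero^{\,\uparrow}}) \otimes_\Z {\textstyle \bigwedge} \, \fg_{\uno^{\,\uparrow}}^\Z$, place $\tDelta_{\zero^{\,\uparrow}}$ before $\tDelta_{\uno^{\,\uparrow}}$. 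Each PBW monomial then factors uniquely as $M_\zero \cdot M_\uno$, with $M_\zero$ a PBW monomial in the relevant even Kostant subalgebra and $M_\uno$ an ordered product of pairwise distinct $X_\tgamma$'s --- i.e., a basis element of the corresponding exterior $\Z$-algebra (no repetitions occur, since any $X_\tgamma^2$ would have been absorbed into the even factor via (3.10)). The multiplication map $a \otimes b \mapsto a \cdot b$ matches these bases and is therefore the desired $\Z$-module isomorphism.

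For the two superalgebra isomorphisms the argument is parallel: order $\tDelta_0 \coprod \{1,\ldots,r\}$ before $\tDelta_{\zero^{\,\uparrow}}$ (respectively before $\tDelta_{-1}$) to obtain the underlying $\Z$-module splitting, and then transport the multiplicative structure of $K_\Z(\fg_\zero)$ (respectively $K_\Z(\fg_{-1,0})$) across the bijection, which equips the tensor product with the (twisted, smash-type) algebra structure encoding the action of $\fg_0$ on the second factor. For the second isomorphism I would additionally use that $\fg_{-1}$ is abelian as a Lie superalgebra whenever $\fg \not\cong \tS(n)$ --- this is verified by direct inspection of the bracket formulas in \S\ref{Lie-struct_W(n)}, \S\ref{Lie-struct_S(n)} and \S\ref{Lie-struct_H(n)} (and it is precisely the failure of this property that forces the exclusion of $\tS(n)$) ---   so that $\bigwedge \fg_{-1}^\Z$ genuinely coincides with the Kostant $\Z$-form of $U(\fg_{-1})$.

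The main technical obstacle is the closure check underlying the first step: one must verify, case by case, that the auxiliary root vectors $X_{\alpha+\beta,t}$ in (3.6)--(3.7), (3.13), (3.15)--(3.16), together with the \emph{lower height terms} in (3.8) and (3.16), always remain inside each prescribed sub-superalgebra. This boils down to a root-by-root analysis using the height additivity and the explicit descriptions from \S\ref{def_Cartan-subalg_roots_etc}. Once this closure is settled, the five isomorphisms fall out immediately from Theorem \ref{PBW-Kost} applied with the chosen orderings.
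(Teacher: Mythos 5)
Your proposal is correct and follows essentially the same route as the paper, which states that the corollary "follows from the arguments used for Theorem \ref{PBW-Kost}, or as a direct consequence of it": namely, one reruns the PBW rewriting with a total order placing the even (resp.\ first-factor) generators before the odd (resp.\ second-factor) ones, after checking that the commutation relations (3.1)--(3.16) are closed within each sub-object by height additivity. Your additional observations --- that $\fg_{-1}$ is abelian precisely when $\fg\not\cong\tS(n)$, and that repetitions of odd factors are eliminated via (3.10) --- are exactly the points the paper leaves implicit.
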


\vskip7pt

\begin{remark}  \label{super-dist}
 \, Following a classical pattern, one defines the  {\sl superalgebra of distributions\/}  $ {\mathcal D}\text{\it ist}\,(G) \, $  on any supergroup  $ G \, $,  by an obvious extension of the standard notion in the even setting; see  \cite{bk},  \S 4, for details.  If  $ G $  is any one of the algebraic supergroups (over  $ \bk $)  that we are going to construct, then  $ \, \Lie\,(G) = \fg \, $   --- with some more precisions: see  subsection \ref{cartan_LieTT}  later on.  Then one can check   --- like in  \cite{bk}, \S 4  ---   that  $ \; {\mathcal D}\text{\it ist}\,(G) = \, \bk \otimes_\Z \kzg \, =: \, K_\bk(\fg) \; $.  An entirely similar remark occurs when the supergroup  $ G $  is one of the ``Chevalley supergroups'' introduced in  \cite{fg2}.
                                               \par
   Any morphism  $ \, \varphi : G' \longrightarrow G'' \, $  between two supergroups induces (functorially) a morphism  $ \, D_\varphi : {\mathcal D}\text{\it ist}\,\big(G'\big) \longrightarrow {\mathcal D}\text{\it ist}\,\big(G''\big) \, $,  which is injective whenever  $ \varphi $  is injective.  If in addition the supergroups  $ G'$  and  $ G'' $  are of the type mentioned above, then  $ \, {\mathcal D}\text{\it ist}\,\big(G'\big) = K_\bk(\fg'\big) \, $  and  $ \, {\mathcal D}\text{\it ist}\,\big(G''\big) = K_\bk(\fg''\big) \, $,  so that  $ \, D_\varphi \! : K_\bk(\fg'\big) \! \rightarrow K_\bk\big(\fg''\big) \, $,  which is an embedding if  $ G' $  is a subsupergroup of  $ G'' $.
\end{remark}

\medskip

  \subsection{Admissible lattices for  $ \fg $--modules}  \label{adm-lat}

\smallskip

   {\ } \quad   The tools of our construction of algebraic supergroups are the Lie superalgebra  $ \, \fg \, $  together with an integrable  $ \fg $--module.  As we need an integral version of  $ \fg $   --- and even more, of  $ U(\fg) $,  namely the Kostant superalgebra ---   we also need a suitable integral form of any integrable  $ \fg $--module.
 \vskip4pt
   Let a Chevalley basis  $ B $  and Kostant algebra  $ \kzg $  be given, as before.  If  $ V $  a  $ \KK $--vector  space we call a subset  $ \, M \subseteq V \, $  a  $ \Z $--{\it form\/}  of  $ V $  if (as usual)  $ \, M \! = \text{\it Span}_\Z(\mathcal{B}) \, $  for some  $ \KK $--basis  $ \mathcal{B} $  of  $ V \, $.

\vskip11pt

\begin{definition}  \label{def_adm-latt}
 Let  $ V $  be a  $ \fg $--module.  We retain terminology and notation of  Definition \ref{def_weight-modules-etc}.
 \vskip3pt
      {\it (a)} \,  We call  $ V $  {\it rational\/}  if  $ \, \fh_\Z := \text{\it Span}_\Z \big( H_1, \dots, H_r \big) \, $  acts diagonally on  $ V $  with eigenvalues in  $ \Z \, $;  in other words,  $ \, V \! = \! \bigoplus_{\mu \in \fh^*} \! V_\mu \, $  is a weight  $ \fg $--module  {\sl and}   $ \, \mu(H_i) \in \Z \, $  for all  $ i $  and all  $ \, \mu \in \text{\it Supp}(V) \, $.
 \vskip2pt
   {\it (b)} \,  Any  $ \Z $--lattice  $ M $  of  $ V $  is said to be  {\it admissible\/}  if it is a  $ \kzg $--stable  $ \Z $--form  of  $ V $.
 \vskip3pt
   {\sl Note\/}  that   --- by the last remark in  Definition \ref{def_weight-modules-etc}  ---   the  $ \fg $--action on any rational  $ \fg $--module  $ V $  can be extended to a  $ \overline{\fg} $--action  so that  $ \cE $  acts diagonally (=semisimply) on  $ V $  with eigenvalues in  $ \Z \, $;  in short,  $ V $  itself is also a  {\it rational  $ \, \overline{\fg} $--module},  with  $ \overline{\fh} $--weight  space decomposition  $ \, V \! = \bigoplus_{\nu \in {\overline{\fh}}^*} V_\nu \; $.
\end{definition}

\vskip5pt

   The first property of admissible lattices is natural (its proof being classical,  cf.~\cite{st},  \S 2, Cor.~1):

\vskip11pt

\begin{proposition}  \label{split_adm-latt}
  Let  $ \, V $  be a weight  $ \fg $--module.  Then any admissible lattice  $ M $  of  $ \, V $  is the direct sum of its  $ \fh $--weight  components,  i.e.~$ \, M = \bigoplus_{\lambda \in \fh^*} \! \big( M \cap V_\lambda \big) \, $,  and similarly for  $ \overline{\fh} $--weights.
\end{proposition}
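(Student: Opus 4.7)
The plan is to reduce the statement to a one-variable integer interpolation problem, using the fact that $\mathbb{H}_\Z$ (the $\Z$-subalgebra of $U(\fh)$ of integer-valued polynomials in $H_1,\dots,H_r$) is contained in $\kzg$ by Corollary \ref{int-polynom_kost} and Definition \ref{def-kost-superalgebra}. Given $m\in M$, decompose it inside $V$ according to the weight decomposition, writing $m=\sum_{j=1}^{k}v_{\lambda_j}$ with pairwise distinct weights $\lambda_j\in\fh^{*}$ and $v_{\lambda_j}\in V_{\lambda_j}\setminus\{0\}$; the target is to show each $v_{\lambda_j}$ lies in $M$, from which the claimed splitting follows at once. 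I shall proceed by induction on $k$, the case $k=1$ being trivial.

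For the inductive step, I first observe that $V$ being a weight module with an admissible lattice forces integrality of weights on $\fh_\Z$: for any fixed integer $i$ and $n\in\N$, the operator $\binom{H_i}{n}$ preserves $M$ and acts on $v_{\lambda_j}$ by the scalar $\binom{\lambda_j(H_i)}{n}$, so the $r$-tuples $\underline{n}_j:=(\lambda_j(H_1),\dots,\lambda_j(H_r))\in\KK^{r}$ are in fact in $\Z^{r}$ (they are certainly distinct, since the $\lambda_j$ are distinct and $\{H_1,\dots,H_r\}$ spans $\fh$). I then choose $h=c_1H_1+\cdots+c_rH_r\in\fh_\Z$ such that the scalars $n_j:=\lambda_j(h)\in\Z$ are pairwise distinct; this is possible since $\Z^{r}$ is not a finite union of proper subgroups, so the complement of the finitely many hyperplanes $\{\underline{c}\cdot(\underline{n}_i-\underline{n}_j)=0\}$ in $\Z^{r}$ is nonempty.

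The key step is to produce, for a chosen index (say $j_0=1$), an integer-valued polynomial $P(y)$ with $P(n_1)=1$ and $P(n_j)=0$ for all $j\ne 1$. Writing $n_{\min}:=\min_j n_j$ and $n_{\max}:=\max_j n_j$, I take
\[
P(y) \;:=\; \binom{y-n_{\min}}{\,n_1-n_{\min}\,}\cdot\binom{n_{\max}-y}{\,n_{\max}-n_1\,}
\]
(with the appropriate single-binomial shortcut when $n_1$ is $n_{\min}$ or $n_{\max}$). Each factor is integer-valued on $\Z$, equals $1$ at $y=n_1$, and vanishes on the other $n_j$ because then either the upper entry is a nonnegative integer strictly smaller than the lower one, or the corresponding factor is zero by extremality. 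By Lemma \ref{int-polynomials}, $P$ lies in $\mathbb{H}_\Z^{(1)}$, so $P(h)\in\mathbb{H}_\Z\subseteq\kzg$; applying it yields $P(h)\cdot m=\sum_j P(n_j)\,v_{\lambda_j}=v_{\lambda_1}\in M$. Subtracting from $m$ and invoking the inductive hypothesis gives $v_{\lambda_j}\in M$ for every $j$. The splitting $M=\bigoplus_{\lambda\in\fh^{*}}(M\cap V_{\lambda})$ follows.

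For the $\overline{\fh}$-weight version, the argument is essentially identical: when $\fg\ne\overline{\fg}$, the element $\cE$ acts semisimply with integer eigenvalues on any rational module (by the $\Z$-grading of $\fg$), and the same interpolation strategy applies inside each already-established summand $M\cap V_\lambda$, now using $\binom{\cE}{n}$ to separate the $\cE$-eigencomponents. The main delicate point throughout is the integrality of the interpolating polynomial; the explicit product-of-binomials formula circumvents any need for a general abstract result on integer-valued polynomials on finite subsets of $\Z^{r}$.
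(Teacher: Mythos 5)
Your argument is essentially the classical one that the paper invokes here (it gives no proof of its own, citing Steinberg, \S 2, Cor.~1): one projects onto a single weight component by an integer-valued polynomial in the Cartan elements, which lies in $\mathbb{H}_\Z \subseteq \kzg$ (Corollary \ref{int-polynom_kost}) and therefore preserves $M$. Your reduction to one variable via a separating $h \in \fh_\Z$, followed by the explicit interpolating product $\binom{y-n_{\min}}{n_1-n_{\min}}\binom{n_{\max}-y}{n_{\max}-n_1}$, is a clean variant of the usual multivariate interpolation: I checked that this $P$ is integer-valued on $\Z$, equals $1$ at $n_1$, and vanishes at every other $n_j$ (for $n_j<n_1$ the first factor has nonnegative upper entry strictly smaller than its lower entry, and symmetrically for $n_j>n_1$), so the induction on the number of weight components goes through.

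The one step that does not hold up as written is your derivation of the integrality of the tuples $\underline{n}_j$. You argue that since $\binom{H_i}{n}$ preserves $M$ and acts on $v_{\lambda_j}$ by the scalar $\binom{\lambda_j(H_i)}{n}$, these scalars must be integers --- but the vectors $v_{\lambda_j}$ are not yet known to lie in $M$ (that is precisely the conclusion of the proposition), so nothing about integrality can be read off from the action on them; the deduction is circular. Extracting integrality of the eigenvalues from lattice-stability alone is genuinely delicate (one only gets directly that they are algebraic integers, and forcing them into $\Z$ requires real work). The painless and intended fix is to assume the module is \emph{rational} in the sense of Definition \ref{def_adm-latt}\,{\it (a)}, i.e.\ $\mu(H_i)\in\Z$ for all weights $\mu$: this is the standing hypothesis everywhere the proposition is applied, and it is what Steinberg gets for free from $\mathfrak{sl}_2$-theory in his finite-dimensional semisimple setting. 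With that hypothesis in place your proof is complete; the same remark applies to the $\overline{\fh}$-statement, where the stability of $M$ under $\binom{\cE}{n}$ should be taken from the extended ($\overline{\fg}$) Kostant-form setup of Remark \ref{rem-Chev_1}\,{\it (c)} rather than rederived.
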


\vskip3pt

   Next property instead is an existence result, under mild conditions:

\vskip11pt

\begin{proposition}  \label{exist_adm-latt}
  Let  $ \, V $  be a finite dimensional, completely reducible  $ \fg $--module.  Then  $ \, V $  is a weight module.  If it is also rational, then there exists an admissible lattice  $ M $  of it.
\end{proposition}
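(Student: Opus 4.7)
My plan is to treat the two assertions in order, the first being essentially standard and the second forming the bulk of the work. For the first, I reduce by complete reducibility to the case of a simple $\fg$-module and show that $\overline{\fh}$ acts semisimply on $V$: restricting to the semisimple part $\overline{\fg}_0^{\,\text{\it ss}}$ of the reductive Lie algebra $\overline{\fg}_0$, Weyl's theorem yields complete reducibility and hence semisimple action of $\overline{\fh}_{\text{\it ss}}$ on $V$; in the cases ($\fg$ of type $S$ or $H$) where $\overline{\fh} = \overline{\fh}_{\text{\it ss}} \oplus \KK\,\cE$ strictly, semisimplicity of the action of $\cE$ on each simple $\fg$-summand follows from Schur's lemma together with the compatibility of the $\Z$-grading of $\fg$ with the simple-module structure.

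For the second assertion, I decompose $V = \bigoplus_i V^{(i)}$ into simple rational $\fg$-submodules by complete reducibility; since admissibility is preserved under direct sums, it suffices to build an admissible lattice $M^{(i)}$ in each $V^{(i)}$ and set $M := \bigoplus_i M^{(i)}$. For a fixed simple summand $V^{(i)}$, I pick a triangular decomposition $\fg = \fg^- \oplus \fh \oplus \fg^+$ with a compatible Borel subalgebra as in \S\ref{triang-dec_Borel-subs_Lie-sub-objs}; since $V^{(i)}$ is a finite-dimensional weight module, it possesses a highest weight vector $v_\lambda \in V^{(i)}_\lambda \setminus \{0\}$, killed by all positive root vectors, and by simplicity $V^{(i)} = U(\fg) \cdot v_\lambda$. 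I then set $M^{(i)} := \kzg \cdot v_\lambda$, which is $\kzg$-stable by construction. Applying Theorem \ref{PBW-Kost} with a total order that places the indices of negative roots first, then $\{1,\dots,r\}$, then the indices of positive roots, every element of $\kzg$ expands as a $\Z$-linear combination of ordered PBW monomials; on $v_\lambda$ the rightmost (positive) factors $X_{\talpha}^{(m)}$ and $X_{\tgamma}$ annihilate $v_\lambda$ whenever they appear with positive exponent, while the Cartan binomials $\binom{H_i}{m_i}$ act by $\binom{\lambda(H_i)}{m_i} \in \Z$ (by rationality). Hence $M^{(i)} = K_\Z(\fg^-) \cdot v_\lambda$ as a $\Z$-module.

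The main obstacle is to verify that $M^{(i)}$ is a genuine $\Z$-lattice of $V^{(i)}$, i.e., free of $\Z$-rank equal to $\dim_\KK V^{(i)}$. Torsion-freeness is automatic since $M^{(i)} \subseteq V^{(i)}$, and $\KK \cdot M^{(i)} = V^{(i)}$ because $\KK \otimes_\Z K_\Z(\fg^-) = U(\fg^-)$ and $U(\fg^-) \cdot v_\lambda = V^{(i)}$. The decisive point is finite generation over $\Z$ of each weight component $M^{(i)} \cap V^{(i)}_\mu$: finite-dimensionality of $V^{(i)}$ forces each negative even root vector $X_{-\alpha,k}$ to act nilpotently on $v_\lambda$, so the divided powers $X_{-\alpha,k}^{(n)}$ vanish on $v_\lambda$ for $n \gg 0$; the negative odd root vectors contribute only exterior-algebra-like finitely many monomials; thus only finitely many nonzero PBW vectors in $K_\Z(\fg^-) \cdot v_\lambda$ have any given weight $\mu - \lambda$. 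Each $M^{(i)} \cap V^{(i)}_\mu$ is then a finitely generated, torsion-free $\Z$-module $\KK$-spanning $V^{(i)}_\mu$, hence free of rank $\dim_\KK V^{(i)}_\mu$ by the structure theorem for finitely generated modules over $\Z$. Using $M^{(i)} = \bigoplus_\mu \big(M^{(i)} \cap V^{(i)}_\mu\big)$ (immediate from the fact that $M^{(i)}$ is $\Z$-spanned by weight vectors) and summing over $\mu$ yields a $\Z$-basis of $M^{(i)}$ of cardinality $\dim_\KK V^{(i)}$, finishing the construction.
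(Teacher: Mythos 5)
Your overall route is the same as the paper's: reduce to the irreducible case, pick a highest weight vector $v_\lambda$, set $M := \kzg.v_\lambda$, and verify admissibility — the paper compresses the verification into a citation of Steinberg (\S 2, Corollary 1), whereas you unpack it. Most of what you unpack is right: the reduction of $\kzg.v_\lambda$ to $K_\Z(\fg^-).v_\lambda$ via Theorem \ref{PBW-Kost} with a suitable ordering, the integrality of the Cartan binomials on $v_\lambda$ from rationality, and the finite generation of each weight component because only finitely many PBW monomials in $K_\Z(\fg^-)$ act nontrivially on $v_\lambda$. (A small slip in the first part: for type $W$ one has $\cE \in \fh$ itself, so the semisimplicity of $\cE$'s action is needed there too, not only in types $S$ and $H$; and ``Schur's lemma'' must be applied to the nilpotent part of the Jordan decomposition of $\rho(\cE)$, which commutes with all of $\rho(\fg)$ by the grading relation $[\cE,X]=zX$ — the operator $\rho(\cE)$ itself does not.)

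The genuine gap is the very last step: from ``$M_\mu := M \cap V_\mu$ is finitely generated, torsion-free, and $\KK$-spans $V_\mu$'' you conclude ``free of rank $\dim_\KK V_\mu$ by the structure theorem.'' The structure theorem gives freeness and the inequality $\mathrm{rk}_{\,\Z}(M_\mu) \geq \dim_\KK V_\mu$, but not the reverse: a finitely generated torsion-free $\Z$-submodule spanning a $\KK$-vector space can have rank strictly larger than the $\KK$-dimension (e.g.\ $\Z + \Z\sqrt{2}$ inside a one-dimensional $\mathbb{C}$-space). Ruling this out is precisely the nontrivial content of Steinberg's Corollary: one first shows $M \cap V_\lambda = \Z\, v_\lambda$ (every nontrivial PBW monomial of $K_\Z(\fg^-)$ strictly lowers the weight), and then bounds $\mathrm{rk}_{\,\Z}(M_\mu)$ from above, classically by identifying $\mathbb{Q} \otimes_\Z M$ with the irreducible highest-weight module of weight $\lambda$ over the prime field and invoking the field-independence of its weight multiplicities (equivalently, by a contravariant-form or $K_\Z(\fg^+)$-pairing argument into $M \cap V_\lambda = \Z\, v_\lambda$). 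Without some such argument your $M^{(i)}$ is only known to be a finitely generated $\kzg$-stable module spanning $V^{(i)}$, not a $\Z$-form in the sense of Definition \ref{def_adm-latt}.
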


\begin{proof}
 First of all, by  Remark \ref{rems_weight-mods}  $ V $  is a weight module.  Now assume it is also rational.  Then by the complete reducibility assumption we can reduce to assume  $ V $  irreducible.  In that case, like for  \cite{se},  Theorem 3.1,  we find that  $ V $  is  {\sl cyclic},  i.e.~it can be generated by a single vector, and the latter can be taken to be a highest weight vector  (cf.~Definition \ref{def_weight-modules-etc}).  Letting  $ v $  be such a highest weight vector, set  $ \; M := \kzg.v \;\, $:  then one can repeat the classical proof   --- like in  \cite{st},  \S 2, Corollary 1 ---   and eventually show that such an  $ M $  is indeed an admissible lattice of  $ V $  as required.
\end{proof}

\smallskip

   We can also describe the stabilizer of an admissible lattice:

\smallskip

\begin{proposition}  \label{stabilizer}
   Let  $ V $  be a faithful, rational, finite dimensional\/  $ \fg $--module,  $ M $  an admissible lattice of  $ \, V $,  and  $ \; \fg_V = \big\{ X \! \in \! \fg \,\big|\, X.M \subseteq M \big\} \; $  its stabilizer.  Then,
%
 letting  $ \; \fh_V \, := \, \big\{ H \in \fh \;\big|\; \mu(H) \in \Z \, , \; \forall \; \mu \in \Lambda \big\} \; $,  \, where  $ \Lambda $  is the set of weights of  $ \, V $,  we have  $ \; \fg_V  \, = \,  \fh_V \, {\textstyle \bigoplus} \, \Big(\! {\textstyle \bigoplus}_{\talpha \in \tDelta} \, \Z \, X_{\talpha} \Big) \; $.
 \vskip3pt
   In particular,  $ \fg_V $  is a lattice in  $ \fg \, $,  independent of the choice of the admissible lattice  $ M \, $.
%
\end{proposition}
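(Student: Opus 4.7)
The inclusion $\fh_V \oplus \bigoplus_{\talpha \in \tDelta}\Z X_\talpha \subseteq \fg_V$ is routine: every $X_\talpha$ is a generator of $\kzg$ and hence stabilizes $M$ by admissibility, while each $H \in \fh_V$ acts on the weight component $M \cap V_\mu$ by the scalar $\mu(H) \in \Z$ (using the splitting of $M$ from Proposition \ref{split_adm-latt}) and so preserves $M$. For the reverse inclusion, I take $X \in \fg_V$ and decompose it uniquely, using $\fg = \fh \oplus \bigoplus_{\alpha \in \Delta}\fg_\alpha$ and the basis $\{X_{\alpha,k}\}_k$ of $\fg_\alpha$, as $X = H_0 + \sum_{\alpha \in \Delta} Y_\alpha$ with $Y_\alpha = \sum_k c_{\alpha,k} X_{\alpha,k}$ and $c_{\alpha,k} \in \KK$. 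For $v_\mu \in M \cap V_\mu$ the element $H_0.v_\mu = \mu(H_0)\,v_\mu$ lies in $V_\mu$ while $Y_\alpha.v_\mu \in V_{\mu+\alpha}$, so separating $X.v_\mu \in M$ by weight via Proposition \ref{split_adm-latt} yields $\mu(H_0) \in \Z$ for every $\mu \in \Lambda$ (that is, $H_0 \in \fh_V$) together with $Y_\alpha.M \subseteq M$ for every $\alpha \in \Delta$.

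The main remaining task, and the main obstacle, is to show that if $Y = \sum_k c_k X_{\alpha,k} \in \fg_\alpha$ satisfies $Y.M \subseteq M$ then each $c_k \in \Z$. Since each $X_{\alpha,k}$ lies in $\kzg$, its action on $M$ is by an integer matrix in any $\Z$-basis of $M$; by faithfulness of $V$, the operators $X_{\alpha,1},\ldots,X_{\alpha,\mu(\alpha)}$ are $\KK$-linearly independent on $V$. My plan is to exhibit, using the explicit Chevalley bases in Examples \ref{examples_che-bas}, a $\mu(\alpha)$-tuple of matrix positions $(i_k,j_k)$ such that the matrix $\bigl(\mathrm{entry}_{(i_{k'},j_{k'})}(X_{\alpha,k})\bigr)_{k,k'}$ has determinant $\pm 1$; the condition that $\sum_k c_k X_{\alpha,k}$ have integer entries at those positions then forces $(c_k) \in \Z^{\mu(\alpha)}$. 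To produce such positions I will pick a weight $\mu \in \Lambda$ and a $\Z$-basis vector $v_\mu$ of $M \cap V_\mu$ for which $X_{\alpha,1}.v_\mu,\ldots,X_{\alpha,\mu(\alpha)}.v_\mu$ are $\KK$-linearly independent in $V_{\mu+\alpha}$ (possible by faithfulness of $V$ combined with the weight-space structure of Proposition \ref{split_adm-latt}), and then exploit the explicit permutation/triangular form of the Chevalley basis vectors listed in Examples \ref{examples_che-bas} to extract a $\Z$-basis adaptation with the required unimodularity. This integrality step is automatic in the classical or basic super case, where every root space is one-dimensional, but requires genuine work in Cartan type precisely because thick roots exist.

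Once both inclusions are established, the equality $\fg_V = \fh_V \oplus \bigoplus_\talpha \Z X_\talpha$ exhibits $\fg_V$ as a free $\Z$-module of rank $\dim_\KK \fg$ — that is, a $\Z$-lattice in $\fg$ — and its description depends only on the weight set $\Lambda$ of $V$ together with the fixed Chevalley basis, not on the particular choice of admissible lattice $M$, giving the final assertion.
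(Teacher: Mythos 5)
Your reduction to the per--root-space problem is sound and is essentially the Steinberg argument the paper also invokes: splitting $M$ into weight components via Proposition \ref{split_adm-latt} does give $H_0 \in \fh_V$ and $Y_\alpha.M \subseteq M$ for each $\alpha$. The gap is in the crucial integrality step for thick roots. Your plan hinges on finding a single weight vector $v_\mu$ in $M$ for which $X_{\alpha,1}.v_\mu,\dots,X_{\alpha,\mu(\alpha)}.v_\mu$ are $\KK$--linearly independent, ``by faithfulness''; but these images all lie in the single weight space $V_{\mu+\alpha}$, whose dimension can be strictly smaller than $\mu(\alpha)$ — already for $\fg = W(n)$ acting on $\Lambda(n)$ every weight space is one-dimensional while thick roots have multiplicity up to $n-1$, so no such $v_\mu$ exists. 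Faithfulness only gives, for each nonzero tuple $(c_k)$, \emph{some} vector not killed by $\sum_k c_k X_{\alpha,k}$, not a uniform one. The fallback of choosing $\mu(\alpha)$ matrix positions with a unimodular minor is in principle the right shape of statement, but you give no reason why such a minor exists for an \emph{arbitrary} faithful rational module $V$ and admissible lattice $M$: the explicit formulas in Examples \ref{examples_che-bas} describe $\fg$ itself, not its action on a general $V$, so they cannot be ``exploited'' directly there.

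The paper closes this step by a purely Lie-theoretic argument that is uniform in $V$. One first knows (Steinberg again) that $\fg_V \cap \KK X_{\talpha}$ is cyclic, spanned by some $\tfrac{1}{n_{\talpha}}X_{\talpha}$; for $\talpha \in \tDelta_0$ the classical $\rsl_2$ argument gives $n_{\talpha}=1$. For the remaining $\talpha$ one brackets $\tfrac{1}{n_{\talpha}}X_{\talpha}$ against a suitable Chevalley basis vector $X_{\tbeta}$ with $\tbeta$ of height $-1$ (or lands in height $0$ directly when $\talpha \in \tDelta_{-1}$), using Definition \ref{def_che-bas}\,{\it (f.2)} to guarantee the structure constant is $\pm 1$: since $\fg^\Z \subseteq \fg_V$ and $\fg_V$ is closed under brackets, this transports the denominator $n_{\talpha}$ to a root of strictly smaller height, and induction on the height reduces everything to the classical case. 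If you want to salvage your approach, you would need to replace the faithfulness appeal by this bracketing/induction mechanism, or else prove the unimodular-minor claim for the specific lattices $M = \kzg.v$ produced in Proposition \ref{exist_adm-latt} — neither is present in your write-up.
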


\begin{proof} The classical proof in  \cite{st}, \S 2, Corollary 2, applies again, with some additional arguments to manage odd root spaces.  Indeed, the same arguments as in  [{\it loc.~cit.}]  prove that  $ \; \fg_V = \fh_V \bigoplus \Big(\! \bigoplus_{\talpha \in \tDelta} \big(\, \fg_V \cap \, \KK\,X_{\talpha} \big) \Big) \phantom{\bigg|} $;  then one still has to prove that  $ \; \fg_V \! \cap \KK\,X_{\talpha} = \, \Z \, X_{\talpha} \; $  for all  $ \, \talpha \in \tDelta \, $.  The arguments in  [{\it loc.~cit.}]  show that  $ \; \fg_V \! \cap \KK \, X_{\talpha} \; $  is a cyclic  $ \Z $--submodule  of  $ \fg_V \, $ which may be  $ \Z $--spanned  by some  $ \, \frac{\,1}{\,n_{\talpha}} \, X_{\talpha} \, $  with  $ \, n_{\talpha} \in \N_+ \, $  (for  $ \, \talpha \in \tDelta \, $).  What is left to prove is that  $ \, n_{\talpha} = 1 \, $.
 \vskip3pt
   For every  $ \, \talpha \in \tDelta_0 \, $  the same arguments as in  [{\it loc.~cit.}]  still yield  $ \, n_{\talpha} = 1 \, $.
%
%
 \vskip3pt
   For every  $ \, \talpha \in \tDelta_{-1} \, $,  by Definition \ref{def_che-bas}{\it (f.2)\/}  we can find  $ \, \tbeta \in \tDelta \, $  such that  $ \, \alpha + \beta \in \Delta_0 \, $,  with  $ \, \alpha := \pi\big(\talpha\big) \, $,  $ \, \beta := \pi\big(\tbeta\,\big) \, $,  and  $ \; \big[ X_{\talpha} \, , X_{\tbeta} \big] = \pm X_{(\alpha + \beta, \, t')} \; $  for some  $ \, t'\in \big\{1,\dots,\mu(\alpha+\beta)\!\big\} \, $.  This yields
%
%
  $ \; \pm \, {\textstyle \frac{\,1}{\,n_{\talpha}}} \, X_{(\alpha + \beta, \, t')} = \Big[ {\textstyle \frac{\,1}{\,n_{\talpha}}} \, X_{\talpha} \, , X_{\tbeta} \,\Big] \in \Big[ \fg_V , \fg^\Z \Big] \subseteq \big[ \fg_V , \fg_V \big] \subseteq \fg_V \; $
 because  $ \; \fg^\Z \subseteq \fg_V \; $  and  $ \, \fg_V \, $  is a Lie subsuperalgebra of  $ \fg \, $.  Therefore
 $ \; {\textstyle \frac{\,1}{\,n_{\talpha}}} \, X_{(\alpha + \beta, \, t')} \in \fg_V \cap \, \KK \, X_{(\alpha + \beta, \, t')} = \, \Z \, X_{(\alpha + \beta, \, t')} \; $
 because  $ \, X_{(\alpha + \beta, \, t')} \in \tDelta_0 \, $   --- and thanks to the previous step --- which eventually forces  $ \, n_{\talpha} = 1 \, $.
 \vskip3pt
   Finally, consider  $ \, \talpha \in \tDelta_z \, $  with  $ \, z \! \geq 1 \, $,  and let  $ \, \alpha := \pi\big(\talpha\big) \in \Delta_z \; $.  Then, by direct analysis, we see that there exists  $ \, \gamma \in \Delta_{-1} \, $  such that  $ \, (\alpha + \gamma) \in \Delta_{z-1} \, $;  therefore, for  $ \, \tgamma := (\gamma,1) \, $  we have again by  Definition \ref{def_che-bas}{\it (f.2)\/}  that  $ \; \big[ X_{\talpha} \, , X_{\tgamma} \big] \, = \, \pm \, X_{(\alpha+\gamma,\,t')} \; $.  Just like before, this yields
 $ \; \pm {\textstyle \frac{\,1}{\,n_{\talpha}}} \, X_{(\alpha+\gamma,\,t')}  \, = \,  \Big[ {\textstyle \frac{\,1}{\,n_{\talpha}}} \, X_{\talpha} \, , X_{\tgamma} \Big]  \, \in \,  \Big[ \fg_V , \fg^\Z \Big]  \, \subseteq \,  \big[ \fg_V , \fg_V \big]  \, \subseteq \,  \fg_V \; $
 hence  $ \, \frac{\,1}{\,n_{\talpha}} \, X_{(\alpha+\gamma,\,t')} \, \in \, \fg_V \cap \, \KK \, X_{(\alpha+\gamma,\,t')} \, $.
 Since  $ \alpha $  has height  $ \, |\alpha| = z \, $  and  $ |\alpha + \gamma| \! = \! z \! - \! 1 \, $,  by induction on  $ z $  we assume  $ \, \fg_V \cap \, \KK \, X_{(\alpha+\gamma,\,t')} = \Z X_{(\alpha+\gamma,\,t')} \, $:  the basis of induction is  $ \, z = 0 \, $  which corresponds to roots in  $ \Delta_0 \, $,  that we already disposed of.  Therefore  $ \; \frac{\,1}{\,n_{\talpha}} \, X_{(\alpha+\gamma,\,t')} \, \in \, \Z \, X_{(\alpha+\gamma,\,t')} \; $,  \, so that  $ \; n_{\talpha} = 1 \; $.
\end{proof}

\bigskip

\section{Algebraic supergroups  $ \bG_V $  of Cartan type}  \label{car-sgroups}

\smallskip

   {\ } \quad   Classically, Chevalley groups are defined as follows.  Fix a finite dimensio\-nal semisimple Lie algebra  $ \fg $  over an algebraically closed field  $ \KK $  of characteristic zero, a Chevalley basis of  $ \fg $  and the associated Kostant form  $ K_\Z(\fg) $  of  $ U(\fg) \, $.  Then any simple finite dimensional  $ \fg $--module  $ V $  contains a  $ \Z $--lattice  $ M $,  which is  $ K_\Z(\fg) $--stable,  so  $ K_\Z(\fg) $  acts on  $ M \, $.  Using this action and its extensions by scalars to any field  $ \bk \, $,  one defines one-parameter subgroups  $ x_\alpha(t) \, $,  for all roots  $ \alpha $  and  $ \, t \in \bk \, $,  within  $ \rGL(V_\bk) \, $, $ \, V_\bk := M \otimes \bk \, $:  the Chevalley group (associated with  $ \fg $  and  $ V \, $)  is
 the subgroup of  $ \rGL(V_\bk) $  generated by the  $ x_\alpha(t) $.  If one has to extend this construction to that of a genuine  $ \Z $--group  scheme,
 then slight variations are in order, e.g.~one has to ``add by hand''
 a maximal torus.

\smallskip

   This construction has been adapted to simple Lie superalgebras  $ \fg $  of  {\sl classical\/}  type in  \cite{fg2},  \cite{ga}.  We do now the same for all simple Lie superalgebras  $ \fg $  of Cartan type.

\bigskip

  \subsection{One-parameter supersubgroups}  \label{One-param-ssgroups}

\vskip3pt

   {\ } \quad   The supergroups we
 look for
 will be realized as subgroup-functors of some linear supergroup functors  $ \rGL(V) \, $,  generated by suitable subgroup functors: these are super-analogues of one-parameter subgroups in the classical theory, thus we call them ``one-parameter supersubgroups''.  Like in the classical setup, they will be of two types: multiplicative
 and additive:
 the latter ones then will split into two more types, according to the type (even or odd) of the roots involved.

\vskip7pt

   We retain the notation of  sections \ref{preliminaries}  and  \ref{integr-struct}.
 In particular,  $ V $  is a fixed faithful, rational, finite dimensional weight  $ \fg $--module  with an admissible lattice  $ M $  in it (e.g., if  $ V $  is completely reducible).

\vskip5pt

   Fix a commutative unital  $ \Z $--algebra  $ \bk \, $,  and set  $ \; \fg_{V,\bk} := \, \bk \otimes_\Z \fg_V \, $,  $ \, V_\bk := \, \bk \otimes_\Z M \, $,  $ \, U_\bk(\fg) := \, \bk \otimes_\Z \kzg \; $.  Then  $ \fg_{V,\bk} $  acts faithfully on  $ V_\bk \, $,  which yields a Lie superalgebra monomorphism  $ \; \fg_{V,\bk} \! \lhook\joinrel\longrightarrow \End(V_\bk) \; $  and a superalgebra morphism  $ \; U_\bk(\fg) \longrightarrow \End(V_\bk) \; $.  Now, for every  $ \, A \in \salg_\bk \, $  define  $ \; \fg_A := A \otimes_\bk \fg_{V,\bk} \; \big(\!= A \otimes_\Z \fg_V \big) \; $,  $ \; V_A := A \otimes_\bk M_\bk \; \big(\!= A \otimes_\Z M \,\big) \; $  and  $ \; U_{\!A}(\fg) := A \otimes_\bk U_\bk(\fg) \; \big(\!= A \otimes_\Z \kzg \big) \; $.  Then  $ \fg_A $  acts faithfully on  $ V_A \, $,  which yields morphisms  $ \; \fg_A \! \lhook\joinrel\longrightarrow \End(V_A) \; $  and  $ \; U_{\!A}(\fg) \longrightarrow \End(V_A) \; $. Moreover (as sketched in  \S \ref{Lie-salg_funct}) all these constructions are functorial in  $ A \, $.

\vskip7pt

   The splitting  $ \, V = {\oplus_{\mu \in \fh^*}} V_\mu \, $  of the  $ \overline{\fg} $--module  $ V $  into  $ \overline{\fh} $--weight  spaces yields, for any  $ \, A \in \salg_\bk \, $,  a similar split\-ting  $ \, V_\bk(A) = {\oplus_{\mu \in {\overline{\fh}}^*}} V_\mu(A) \, $   ---  using notation as in  Examples \ref{exs-supvecs}{\it (a)}.  Now fix any element  $ \, H \! \in \fh_\Z := \text{\it Span}_{\,\Z} \big( H_1 , \dots , H_r \big) \, $   --- see Definition \ref{def_che-bas}{\it (a)\/}:  then  $ \, \mu(H) \in \Z \, $  for any  $ \, \mu \in \text{\it Supp}(V) \, $,  as  $ V $  is rational.  Let  $ U(A_\zero) $  the group of invertible elements in  $ A_\zero \, $:  we set
  $$  h_H(u).v  \; := \;  u^{\mu(H)} \, v   \eqno \forall \;\; v \in V_\mu(A) \, , \; \mu \in \text{\it Supp}(V) \, , \; u \in U(A_\zero)  \quad  $$
which defines an operator  $ \; u^H := h_H(u) \, \in \, \rGL\big(V_\bk(A)\big) \; $  for all  $ \, u \in U(A_\zero) \, $.
                                                               \par
   Note that the formal identity  $ \; u^H \! = \! \sum\limits_{m=0}^{+\infty} \! {(u \! - \! 1)}^m \Big({H \atop m}\Big) \, $,  whose right-hand side becomes a finite sum if acting on a single weight space  $ V_\mu(A) \, $,  shows that the operator  $ \, u^H \! = h_H(u) \, $  is one of those given by the  $ U_{\!A}(\fg) $--action  on  $ V $.  Note also that  $ \, H = \sum\limits_{i=1}^r \! z_i H_i \, $  ($ z_i \in \Z $)  yields  $ \, h_H(u) := \! \prod\limits_{i=1}^r \! {h_{H_i}\!(u)}^{z_i} \, $.

\vskip7pt

\begin{definition}  \label{mult_one-param-ssgrs}
   For any  $ \, H \in \fh_\Z \, $,  we define the supergroup functor  $ h_H $   --- also referred to as a ``multiplicative one-parameter supersubgroup'' ---   from  $ \salg_\bk $  to  $ \grps $  as given on objects by
%
%
 $ \; h_H(A) \, := \, \big\{\, u^H := h_H(u) \;\big|\; u \in \! U(A_\zero) \big\} \; $
and given on morphisms in the obvious way.
                                                                  \par
   We also write  $ \; h_i := h_{H_i} \, $  for  $ \, i = 1, \dots, r \, $,  and  $ \; h_\alpha := h_{H_\alpha} \; $  for  $ \, \alpha \in \Delta \; $.
\end{definition}

\vskip4pt

   Let  $ \, \talpha \in \tDelta_\zero \, $,  $ \, \tbeta \in \tDelta_\uno \, $,  and let  $ X_{\talpha} \, $  and  $ X_{\tbeta} $  be associated root vectors in  $ B \, $.  Both  $ X_{\talpha} $  and  $ X_{\tbeta} $  act as nilpotent operators on  $ V \, $,  thus on  $ M \, $  and  $ V_\bk \, $;  the same holds for
  $$  t \, X_{\talpha} \; ,  \;\; \vartheta \, X_{\tbeta}  \;\; \in \;
\End\big(V_\bk(A)\big)   \eqno \forall \;\; t \in A_\zero \, ,  \; \vartheta \in A_\uno \;\, .   \qquad (4.1)  $$
   Since  $ \; \big( \vartheta \, X_{\tbeta} \big)^2 = - \vartheta^2 X_{\tbeta}^{\,2} = 0 \; $,  we have  $ \, Y^m \! \big/ m! \in \big(\kzg\big)(A) \, $  for any  $ \, m \in \N \, $,  $ \, Y \in \big\{ t \, X_{\talpha} \, , \, \vartheta \, X_{\tbeta} \big\} \, $  as in (4.1); moreover,  $ \, Y^m \! \big/ m! \, $  acts as zero for  $ \, m \gg 0 \, $,  by nilpotency, so  $ \; \exp(Y) \! := \sum_{m=0}^{+\infty} Y^m \! \big/ m! \in \rGL\big(V_\bk(A)\big) \, $  is well defined.  In particular,  $ \; \exp\big(\vartheta \, X_{\tbeta}\big) := \sum_{m=0}^{+\infty} \, \big( \vartheta \, X_{\beta,k} \big)^m \! \Big/ m! \, = \, 1 + \, \vartheta \, X_{\tbeta} \; $.

\vskip7pt

\begin{definition}  \label{add_one-param-ssgrs}
 Let  $ \, \talpha \in \tDelta_\zero \, $,  $ \, \tbeta \in \tDelta_\uno \, $,  and let  $ X_{\talpha} \, $  and  $ X_{\tbeta} $  be as above; then set  $ \; x_{\talpha}(t) := \exp\!\big( t X_{\talpha} \big) = 1 \! + t X_{\talpha} + t^2 X_{\talpha}^{\,(2)} + \cdots \; $,  \, for all  $ \, t \in A_\zero \, $,  \, and  $ \; x_{\tbeta}(\vartheta) := \exp\!\big( \vartheta \, X_{\tbeta} \big) = 1 +  \vartheta \, X_{\tbeta} \; $,  \, for all  $ \, \vartheta \in A_\uno \, $.  We define the supergroup functors  $ x_{\talpha} $  and  $ x_{\tbeta} $  from  $ \salg_\bk $  to  $ \grps $  setting them on objects as  $ \; x_{\talpha}(A) \, := \, \big\{ x_{\talpha}(t) \,\big|\; t \in \! A_\zero \,\big\} \; $,  $ \; x_{\tbeta}(A) \, := \, \big\{ x_{\tbeta}(\vartheta) \;\big|\; \vartheta \in A_\uno \,\big\} \; $  for all  $ \, A \in \salg_\bk \, $   --- the definition on morphisms then should be clear.
 \vskip2pt
   In order to unify the notation, we shall denote by  $ \, x_{\teta}(\bt) \, $,  for  $ \, \teta \in \tDelta \, $,  any one of the two possibilities above, so that  $ \, \bt \in A_\zero \cup A_\uno \, $.  Finally, for later convenience we shall also formally write  $ \, x_{\widetilde{\zeta}}(\bt) := 1 \, $  when  $ \pi\big(\widetilde{\zeta}\,\big)  $  belongs to the  $ \Z $--span  of  $ \Delta $  but  $ \, \pi\big(\widetilde{\zeta}\,\big) \not\in \Delta \; $.
\end{definition}

\smallskip

   As in the Lie supergroup setting (see subsection 2.3 in  \cite{fg2}),  one can easily prove the following:

\smallskip

\begin{proposition} \label{hopf-alg} {\ }
 The following hold:
 \vskip3pt
   {\it (a)} \,  Every supergroup functor  $ h_H $
%
%
is representable, so it is an  {\sl affine}  supergroup, of (super)\-dimension  $ 1\big|0 \, $.  Indeed,
  $ \, h_H(A) = \Hom\big(\,\bk\big[z,z^{-1}\big],A\big) \, $,  for  $ \, A \in \salg_\bk \, $,  with  $ \, \varDelta\big(z^{\pm 1}\big) = z^{\pm 1} \otimes z^{\pm 1} \, $.
 \vskip5pt
   {\it (b)} \,  The supergroup functors  $ x_{\talpha} $  and $ x_{\tbeta} $
%
%
are representable, so they are  {\sl affine}  supergroups, respectively of (super)dimension  $ \, 1\big|0 \, $  and  $ \, 0\big|1 \, $.  Indeed, for every  $ \, A \in \salg_\bk \, $  one has
  $ \; x_{\talpha}(A) \, = \, \Hom\big(\bk[x]\,,A\big) \; $  with  $ \; \varDelta(x) \, = \, x \otimes 1 + 1 \otimes x \; $  and
  $ \; x_{\tbeta}(A) \, = \, \Hom\big(\bk[\xi]\,,A\big) \; $  with  $ \; \varDelta(\xi) \, = \, \xi \otimes 1 + 1 \otimes \xi \; $.
 \vskip3pt
   ({\sl Remark:}  in both cases,  $ \varDelta \, $  denotes the comultiplication in the Hopf superalgebra under exam)
\end{proposition}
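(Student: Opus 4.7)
The strategy is uniform in the three cases: identify a natural bijection of $A$-points between the given one-parameter supersubgroup and the standard (multiplicative, even additive, or odd additive) affine group functor, verify it is a group homomorphism using the commutation relations of Section~3, and read off the expected Hopf structure on the representing superalgebra.

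For part (a), I would note that the definition of $h_H(u) = u^H$ gives a natural surjection $U(A_\zero) \twoheadrightarrow h_H(A)$, $u \mapsto u^H$. Multiplicativity follows by weight-space inspection: on $V_\mu(A)$, both sides of $h_H(u)h_H(v) = h_H(uv)$ act as multiplication by $u^{\mu(H)}v^{\mu(H)} = (uv)^{\mu(H)}$, using the rationality of $V$ (so that $\mu(H) \in \Z$). Injectivity comes from the faithfulness of $V$: the family of scalars $\{u^{\mu(H)}\}_{\mu \in \mathrm{Supp}(V)}$ determines $u$. The standard Hopf comultiplication $\varDelta(z) = z \otimes z$ on the purely even algebra $\bk[z,z^{-1}]$ then encodes precisely this group law, giving $h_H \cong \Hom_{\salg_\bk}(\bk[z,z^{-1}],-)$ of superdimension $1|0$.

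For part (b), I would run the even and odd cases in parallel. In the even case $\talpha \in \tDelta_\zero$, nilpotency of $X_{\talpha}$ on $V$ makes $x_{\talpha}(t) = \exp(t X_{\talpha}) = \sum_{m \geq 0} t^m X_{\talpha}^{(m)}$ a finite polynomial in $t \in A_\zero$, and relation (3.3) expanded in $(t+s)$-binomials yields $x_{\talpha}(t) x_{\talpha}(s) = x_{\talpha}(t+s)$; injectivity is immediate from $X_{\talpha} \neq 0$ on $V$, so $x_{\talpha} \cong \Hom(\bk[x],-)$ of superdimension $1|0$ with additive $\varDelta(x) = x\otimes 1 + 1\otimes x$. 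In the odd case $\tbeta \in \tDelta_\uno$, because $\vartheta^2 = 0$ for $\vartheta \in A_\uno$ one gets $(\vartheta X_{\tbeta})^2 = -\vartheta^2 X_{\tbeta}^2 = 0$, so $\exp(\vartheta X_{\tbeta})$ collapses to $1 + \vartheta X_{\tbeta}$ and $\vartheta \mapsto 1 + \vartheta X_{\tbeta}$ is a natural bijection $A_\uno \to x_{\tbeta}(A)$; injectivity again uses $X_{\tbeta} \neq 0$. This gives $x_{\tbeta} \cong \Hom(\bk[\xi],-)$ with $\xi$ odd, of superdimension $0|1$ and $\varDelta(\xi) = \xi \otimes 1 + 1 \otimes \xi$.

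The one genuinely delicate point I expect to encounter is compatibility with the group law in the odd case: expanding $(1 + \vartheta_1 X_{\tbeta})(1 + \vartheta_2 X_{\tbeta})$ produces a spurious term $-\vartheta_1 \vartheta_2 X_{\tbeta}^2$, which is automatically zero exactly when $2\beta \not\in \Delta$ (by relation (3.10), since then $X_{2\beta,t'} = 0$). In the exceptional cases identified by Lemma~\ref{root-props}, namely $\fg = H(2r+1)$ with $\beta \in (2\N+1)\,\delta$ and $\fg = \tS(n)$ with $\beta = -\varepsilon_i$, one has $X_{\tbeta}^2 \not= 0$ in general, and this extra summand must be disposed of separately; this will be the main obstacle, to be handled using the specific structure of the admissible lattice and of the Chevalley basis selected there. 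All remaining checks (functoriality in $A$, correct comultiplication, dimension count) are then routine.
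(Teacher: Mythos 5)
The paper offers no proof of this proposition at all --- it is introduced with a bare pointer to \cite{fg2}, \S 2.3 --- so there is nothing of the paper's to compare yours against line by line. Your part \textit{(a)} and the even half of part \textit{(b)} follow the expected route and are essentially fine, with two small caveats: injectivity of $ \, u \mapsto u^H \, $ does not follow from faithfulness of $ V $ alone but from the fact that the integers $ \mu(H) $, $ \, \mu \in \text{\it Supp}\,(V) \, $, generate $ \Z $ (for the adjoint module of $ \rsl_2 $ they generate $ 2\,\Z $ and the map has kernel $ \mu_2 $); here one gets this from faithfulness \emph{plus} the weight strings $ \mu, \mu - \varepsilon_i \in \text{\it Supp}\,(V) $ produced by the action of $ \partial_i \, $. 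Likewise, over a general ring $ \bk $ the injectivity of $ \, t \mapsto x_{\talpha}(t) \, $ uses that the matrix entries of $ X_{\talpha} $ on the admissible lattice generate the unit ideal (Proposition \ref{stabilizer}), not merely $ \, X_{\talpha} \neq 0 \, $.

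The genuine gap is the point you flag in your final paragraph and then leave open; moreover both your case analysis and your proposed remedy need correction. For $ \, \fg = H(2\,r\!+\!1) \, $ there is in fact no problem: by (2.25) every $ D_{\underline{\xi}^{\underline{a}}} $ has trivial $ 2 $--operation, hence so does every Chevalley basis vector $ \, X_{\gamma,k} = \pm \sqrt{2}\, D_{\cdots} \, $ (the nonvanishing of $ \big[\, \fg_\gamma \, , \fg_\gamma \big] $ for $ \, \gamma = m\,\delta \, $ comes from brackets of \emph{distinct} basis vectors of $ \fg_\gamma \, $, not from squares), so $ \, X_{\tbeta}^{\,2} = X_{\tbeta}^{\langle 2 \rangle} = 0 \, $ on $ V $ and the additive law holds. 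The genuinely problematic case is $ \, \fg = \tS(n) \, $, $ \, \beta = -\varepsilon_i \, $, where $ \, X_{\beta,1}^{\langle 2 \rangle} = \pm X_{2\beta,1} \neq 0 \, $ by (2.12) and (3.10). There the fix you propose --- exploiting ``the specific structure of the admissible lattice and of the Chevalley basis'' --- cannot work: the identity $ \, X_{\tbeta}^{\,2} = X_{\tbeta}^{\langle 2 \rangle} \, $ holds in $ U(\fg) $ and hence in every faithful representation, independently of any choice of basis or lattice, and since $ X_{2\beta,1} $ shifts weights by $ 2\,\beta $ while $ X_{\beta,1} $ shifts them by $ \beta \, $, the term $ \, -\vartheta_1 \vartheta_2 \, X_{\tbeta}^{\langle 2 \rangle} \, $ cannot be absorbed into one of the form $ \, \vartheta \, X_{\tbeta} \, $. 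Taking e.g.\ $ \, A = \bk[\vartheta_1,\vartheta_2] \, $ one sees that $ \, \big( 1 + \vartheta_1 X_{\tbeta} \big) \big( 1 + \vartheta_2 X_{\tbeta} \big) \notin x_{\tbeta}(A) \, $: the set $ x_{\tbeta}(A) $ of Definition \ref{add_one-param-ssgrs} is then not even closed under multiplication, so part \textit{(b)} with the primitive comultiplication cannot be established as stated for that root. A complete argument must treat $ \, x_{-\varepsilon_i} \, $ separately --- for instance by passing to the subgroup it generates jointly with $ x_{2(-\varepsilon_i)} $ and recording the corrected group law --- rather than deferring the point, since that is exactly where the content of the proposition lies.
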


\medskip

  \subsection{Construction of supergroups  $ \bG_V $  of Cartan type}  \label{const-car-sgroups}

\smallskip

   {\ } \quad   We now define our supergroups of Cartan type
 as suitable
subgroup functors   --- from  $ \salg_\bk $  to  $ \grps $  ---   of  $ \rGL\big(V_\bk\big) \, $.
 Further details about the formalism of (Grothendieck) topologies in categories and sheafification of functors can be found in  \cite{fg2},  Appendix, and in references therein.

\medskip

   Once and for all, we let  $ \fg $  and  $ V $  as above, and we fix also a partition  $ \; \Delta = \Delta^+ \coprod \Delta^- \; $  of the roots into positive and negative ones as in  \S \ref{triang-dec_Borel-subs_Lie-sub-objs}.

\medskip

\begin{definition}  \label{def_Cartan-sgroup_funct}
 \, We call  {\it Cartan (type) supergroup functor},  associated with  $ \fg $  and  $ V \, $,  the functor  $ \, G_V : \salg_\bk \! \longrightarrow \! \grps \, $  defined as follows.  Let  $ \, A \, , B \in \text{\it Ob}\big(\salg_\bk\big) \, $,  $ \, \phi \in \text{Hom}_{\salg_\bk}\!\big(A\,,B\big) \, $:  then
 \vskip5pt
   \noindent \quad   --- the object  $ \, G_V(A) \, $  is the subgroup of  $ \rGL\big(V_\bk(A)\big) $  generated by the subgroups  $ \, h_H(A) \, $  and  $ \, x_{\talpha}(A) \, $,  with  $ H \! \in \! \fh_\Z \, $,  $ \talpha \! \in \! \tDelta \, $,  i.e.~$ \; G_V(A) \, := \, {\Big\langle h_H(A) \, , \, x_{\talpha}(A) \Big\rangle}_{\! H \! \in \fh_\Z \, , \, \talpha \in \tDelta} \, = \, \Big\langle h_i(A) \, , \, x_{\talpha}(A) \Big\rangle_{\hskip-4pt {{\talpha \, \in \tDelta} \atop {i = 1, \dots, r ;}}} \; $;
 \vskip-1pt
   \noindent \;   --- the morphism  $ \, G_{V\!}(A) \,{\buildrel {G_{V_{\phantom{|}}}\!\!(\phi)} \over {\longrightarrow}}\, G_{V\!}(B) \, $  is the restriction of  $ \rGL\big(V_\bk(A)\big) \! \rightarrow \! \rGL\big(V_\bk(B)\big) $,  the morphism induced by  $ \phi $  by functoriality of  $ \rGL\big(V_\bk) \, $  (which maps the generators of  $ G_{V\!}(A) $  to those of  $ G_{V\!}(B) $).
\end{definition}

\vskip3pt

   For later use, we need to consider several other supergroup functors:

\vskip9pt

\begin{definition}  \label{def_Cartan-subgroup_funct}
 \, Let  $ G_V $  be as above.  We define the full subfunctors  $ T_V $,  $ G_0 $, $ G_0^\pm $,  $ G_\zero \, $,  $ G_\zero^\pm \, $,  $ G_{\zero^{\,\uparrow}} $,  $ G^\pm_{\zero^{\,\uparrow}} $
  and  $ G^\pm $  of  $ G_V $   --- still from $ \salg_\bk $  to  $ \grps $  ---   as given on objects,  for all  $ \, A \in \salg_\bk \, $,  by
  $$  \displaylines{
   T_V(A)  \; := \;  \big\langle\, h_H(A) \;\big|\; H \! \in \fh_\Z \,\big\rangle  \; = \;  \big\langle\, h_i(A) \;\big|\; i = 1, \dots, r \,\big\rangle_{\phantom{\big|}}  \cr
   G_0(A)  \; := \;  \Big\langle h_i(A) \, , \, x_{\talpha}(A) \Big\rangle_{\hskip-3pt {{\talpha \, \in \, \tDelta_0} \atop {i = 1, \dots, r;}}}  \quad ,  \qquad
  G_0^\pm(A)  \, := \,  \Big\langle h_i(A) \, , \, x_{\talpha}(A) \Big\rangle_{\hskip-3pt {{\talpha \, \in \, \tDelta_0^\pm} \atop {i = 1, \dots, r;}}}  \cr
 }  $$
 \eject
  $$  \displaylines{
   G_\zero(A)  \; := \;  \Big\langle h_i(A) \, , \, x_{\talpha}(A) \Big\rangle_{\hskip-3pt {{\talpha \, \in \, \tDelta_\zero} \atop {i = 1, \dots, r;}}}  \quad ,  \qquad
  G_\zero^\pm(A)  \; := \,  \Big\langle h_i(A) \, , \, x_{\talpha}(A) \Big\rangle_{\hskip-3pt {{\talpha \, \in \, \tDelta^\pm_\zero} \atop {i = 1, \dots, r;}}}  \cr
   \qquad   G_{\zero^{\,\uparrow}}(A)  \;\; := \;\,  \Big\langle \, x_{\talpha}(A) \,\;\Big|\; \talpha \in \tDelta_{\zero^{\,\uparrow}} \Big\rangle  \quad ,  \qquad
  G^\pm_{\zero^{\,\uparrow}}(A)  \;\; := \;\,  \Big\langle \, x_{\talpha}(A) \,\;\Big|\; \talpha \in \tDelta^\pm_{\zero^{\,\uparrow}} \Big\rangle  \qquad  \cr
   \qquad    G^\pm(A)  \; := \;  \Big\langle x_{\talpha}(A) \;\Big|\; \talpha \! \in \! \tDelta^\pm \Big\rangle  \; = \;  \big\langle G_\zero^\pm(A) \, , G_\uno^\pm(A) \big\rangle  \qquad  }  $$
\end{definition}

\smallskip

   By definition  $ T_V $,  $ G_0 $, $ G_0^\pm $,  $ G_\zero \, $,  $ G_\zero^\pm $,  $ G_{\zero^{\,\uparrow}} $,  $ G^\pm_{\zero^{\,\uparrow}} $,  $ G^\pm $,  $ G_V $  are subgroup functors of the functor  $ \rGL(V_\bk) \, $,  with obvious mutual inclusions.  As  $ \rGL(V_\bk) \, $  is a  {\sl sheaf\/}  (in the sense of category theory, cf.~\cite{fg2}, Appendix), these subfunctors are  {\sl presheaves}.  This implies that we can take their sheafification   --- with respect to the Zariski topology in  $ \salg_\bk $  ---   so next definition makes sense:

\vskip9pt

\begin{definition}  \label{def_Cartan-supergroup}
 \, Consider on  $ \, \salg_\bk \, $  the Zariski topology, with respect to which  $ \salg_\bk $  itself is a site.  We call  {\it Cartan (type) supergroup},  associated with  $ \fg $  and  $ V $,  the sheafification  $ \bG_V $  of  $ G_V $ (with respect to the Zariski topology).  In particular,  $ \bG_V \!\! : \! \salg_\bk \!\! \rightarrow \! \grps $  is a sheaf functor such that  $ \, \bG_V(A) \! = \! G_V(A) \, $  when  $ \, A \! \in \! \salg_\bk $  is  {\sl local}   --- i.e., it has a unique maximal homogeneous ideal.
                                                       \par
   Similarly, by  $ \bT_V $,  $ \bG_0 $, $ \bG_0^\pm $,  $ \bG_{\zero^{\,\uparrow}} $,  $ \bG^\pm_{\zero^{\,\uparrow}} $,  $ \bG_\zero \, $,  $ \bG_\zero^\pm $  and  $ \bG^\pm $  we shall denote the sheafification respectively of  $ T_V $,  $ G_0 $, $ G_0^\pm $,  $ G_{\zero^{\,\uparrow}} $,  $ G^\pm_{\zero^{\,\uparrow}} $,  $ G_\zero \, $,  $ G_\zero^\pm $  and  $ G^\pm $.
\end{definition}

\vskip5pt

\begin{remarks}  \label{rems-af-defs_Chev-sgrps}  {\ }
 \vskip4pt
   {\it (a)} \, The functors  $ \bG_V $  will eventually prove to be the ``affine algebraic supergroups of Cartan type'' which are our main object of interest.  Later on, we shall prove that they are indeed representable, so they are  {\sl affine supergroups},  and algebraic, with  $ \fg_V $  as tangent Lie superalgebra.
%
 \vskip3pt
   {\it (b)} \, By definition, the functors  $ T_V $,  $ G_0 $, $ G_0^\pm $,  $ G_\zero \, $,  $ G_\zero^\pm $,  $ G_{\zero^{\,\uparrow}} \, $,  $ G_{\zero^{\,\uparrow}}^\pm $  and their sheafifications are all supergroup functors which factor through  $ \, \alg = \alg_\bk \, $,  the category of commutative, unital  $ \bk $--algebras:  thus they pertain to the domain of ``classical'' (i.e.~``non super'') algebraic geometry.
 \vskip3pt
   {\it (c)} \, We shall see later  (cf.~Remak \ref{alt-def_G_V})  that the functor  $ \bG_V $  can also be defined by saying that  $ \bG_V(A) \, $,  for  $ \, A \in \salg_\bk \, $,  is the subgroup of  $ \rGL(V_\bk)(A) $  generated by  $ \bG_0(A) $  and the one-parameter subgroups  $ x_{\talpha}(A) $  with  $ \, \talpha \in \tDelta \setminus \tDelta_0 \, $.  A similar remark holds true for some of the subgroups of  $ \bG_V \, $.
 \vskip3pt
   {\it (d)} \, By definition  $ G_V $  and  $ \bG_V $   --- and all their supersubgroups considered above ---   are supersubgroups of  $ \rGL(V) \, $.  As the latter identifies with  $ \rGL(V_\bullet) $   --- cf.~Examples \ref{exs-supvecs}{\it (b)}  ---   we can also think of  $ G_V $  and  $ \bG_V $  (and their supersubgroups) as supersubgroups of  $ \rGL(V_\bullet) \, $.
%
\end{remarks}

\medskip

   In all our analysis hereafter, the key tool will be given by the commutation relations among the generators of our supergroups: these are detailed in the next lemma.  As a matter of notation, when  $ \varGamma $  is any group and  $ \, g, h \in \varGamma \, $  we denote by  $ \, (g,h) := g \, h \, g^{-1} \, h^{-1} \, $  their commutator in  $ \varGamma \, $.

\medskip
 \vskip9pt

\begin{lemma}  \label{comm_1-pssg} {\ }
 Let  $ \, A \in \salg_\bk \, $  be fixed.
 \vskip5pt
\noindent
 {\ }  (a) \,  Let  $ \, \alpha, \beta \in \Delta \, $  with  $ \, \alpha + \beta \not= 0 \, $;  set  $ \, \Delta_{\alpha,\beta} := \Delta \cap \big( \N_+ \alpha + \N_+ \beta \big) \, $.  Then, for all  $ \, 0 \leq i \leq \mu(\alpha) \, $,  $ \, 0 \leq j \leq \mu(\beta) \, $,  and  $ \, \gamma \in \Delta_{\alpha,\beta} \, $,  $ \, 0 \leq t \leq \mu(\gamma) \, $,  there exist  $ \; c_{\gamma;t}^{k,h} \in \Z \; $  such that
  $$  \big( x_{\alpha,k}(\mathbf{p}) \, , \, x_{\beta,h}(\mathbf{q}) \big)  \; = \;
{\textstyle \prod\limits_{{\gamma = r \alpha + s \beta \in \Delta_{\alpha,\beta}} \atop {0 \leq t \leq \mu(\gamma)}}}
 x_{\gamma,t}\big(c_{\gamma;t}^{k,h} \, \mathbf{p}^r \mathbf{q}^s\big)   \eqno (4.2)  $$
for any  $ \, \mathbf{p}, \mathbf{q} \in A_\zero \cup A_\uno \, $ (notation as in  Definition \ref{add_one-param-ssgrs}),  where the factors in right-hand side commu-te with one another.  In particular (notation of  Definition \ref{def_che-bas})  we have the following special cases:
%
%
 \vskip9pt
   (a.1) \,  assume  $ \; \alpha + \beta \not\in \big( \Delta \cup \{0\} \big) \, $,  and  $ \, \mathbf{p}, \mathbf{q} \in A_\zero \cup A_\uno \; $  (with suitable parity): then
  $$  \big( x_{\alpha,k}(\mathbf{p}) \, , \, x_{\beta,h}(\mathbf{q}) \big)  \,\; = \;\,  1  $$
 \vskip5pt
   (a.2) \,  assume  $ \; \alpha + \beta \in \Delta \, $,  $ \; \alpha \not\in \Delta_0 \, $,  $ \; \beta \not\in \Delta_0 \, $,  and  $ \, \mathbf{p}, \mathbf{q} \in A_\zero \cup A_\uno \; $  (with suitable parity):  then
  $$  \big( x_{\alpha,k}(\mathbf{p}) \, , \, x_{\beta,h}(\mathbf{q}) \big)  \,\; = \;\,  {\textstyle \prod_{\,t=1}^{\,\mu(\alpha+\beta)\,}} x_{\alpha + \beta, t} \Big(\! {(-1)}^{p(\mathbf{p}) p(\mathbf{q})} \, c_{\alpha,k}^{\,\beta,h}(t) \, \mathbf{p} \, \mathbf{q} \Big)  $$
where all factors in the right-hand product do commute with each other;
 \eject

   (a.3) \,  assume  $ \; \alpha + \beta \in \Delta \, $,  $ \; \alpha \in \Delta_0 \, $,  $ \; \beta \not\in \Delta_0 \, $,  and  $ \, \mathbf{p}, \mathbf{q} \in A_\zero \cup A_\uno \; $  (with suitable parity):  then
  $$  \big( x_{\alpha,k}(\mathbf{p}) \, , \, x_{\beta,h}(\mathbf{q}) \big)  \,\; = \;\,  x_{2 \alpha + \beta, \, t'} \big(\! \pm \mathbf{p}^2 \, \mathbf{q} \,\big) \cdot {\textstyle \prod_{\,t=1}^{\,\mu(\alpha + \beta)}} x_{\alpha + \beta, \, t} \big(\! \pm c_{\alpha,k}^{\,\beta,h}(t) \, \mathbf{p} \, \mathbf{q} \,\big)  $$
(for an index  $ t' $  given as in part  {\it (g)\/}  of  Definition \ref{def_che-bas}  if  $ \, 2\,\alpha \! + \! \beta \in \Delta \, $,  and  $ \, x_{2 \alpha + \beta, \, t'} \big(\! \pm \! \mathbf{p}^2 \mathbf{q} \big) \! := \! 1 \, $  if  $ \, 2\,\alpha \! + \! \beta \not\in \Delta \, $) where all factors on the right-hand side commute with each other.
 \vskip9pt
   (a.4) \,  assume  $ \; \alpha, \beta \in \Delta_0 \, $,  and  $ \, p, q \in A_\zero \; $:  then
  $$  \big( x_{\alpha,1}(p) \, , \, x_{\beta,1}(q) \big)  \,\; = \;\,
   {\textstyle \prod\limits_{{\gamma = r \alpha + s \beta \in \Delta_{\alpha,\beta}} \atop {0 \leq t \leq \mu(\gamma)}}} \!\!
   x_{\alpha + \beta, 1}\big( c_{\alpha,1}^{\,\beta,1}(1) \, p \, q \big)  \;\; \in \;\;  G_0(A)  $$
 \vskip5pt
\noindent
 {\ }  (b) \,  Let  $ \, \gamma \in \Delta_{-1} \, $,  let  $ \; 0 \leq j \leq \mu(-\gamma) = n \! - \! 1 \, $,  and let  $ \, \vartheta, \eta \in A_\uno \, $.  Then
  $$  \big( x_{\gamma,1}(\vartheta) \, , \, x_{-\gamma,j}(\eta) \big)  \,\; = \;\,  \big(\, 1 \! \mp \vartheta \, \eta \, H_{\sigma_\gamma(j)} \,\big)  \,\; = \;\,  h_{H_{\sigma_\gamma(j)}}\!\big( 1 \! \mp \vartheta \, \eta \big)  \;\; \in \;\;  G_0(A)  $$
 \vskip7pt
\noindent
 {\ }  (c)  Let  $ \; \alpha \in \Delta \, $,  $ \, 0 \leq i \leq \mu(\alpha) \, $,  $ \, H \in \fh_\Z \, $,  $ \, u \in U\big(A_\zero\big) \, $,  $ \, \bu \in \! A_\zero \cup A_\uno $  (with suitable parity).  Then
  $$  \hskip7pt   h_H(u) \; x_{\alpha,i}(\bu) \; {h_H(u)}^{-1}  \; = \;\,  x_{\alpha,i}\big( u^{\alpha(H)} \, \bu \big)  \;\; \in \;\;  G_{\!p(\alpha)}\!(A)  $$
where  $ \; p(\alpha) := \overline{s} \in \Z_2 \, $,  by definition, if and only if  $ \, \alpha \in \Delta_{\overline{s}} \; $.
\end{lemma}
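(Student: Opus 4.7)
The plan is to reduce every claim to an identity inside the associative superalgebra $\End(V_A)$, where each generator of $G_V(A)$ has a completely explicit description: $h_H(u)$ acts by $u^{\mu(H)}$ on each weight space $V_\mu(A)$; for even $\talpha$, $x_{\talpha}(t) = \sum_{m\ge 0} t^m X_{\talpha}^{(m)}$ is a finite sum in the Kostant superalgebra; and for odd $\tgamma$, $x_{\tgamma}(\vartheta) = 1 + \vartheta X_{\tgamma}$ by nilpotency. All the commutation identities then follow from the corresponding identities among divided powers already established in relations (3.1)--(3.16), by a direct ``expand and rearrange'' procedure exactly parallel to the classical Chevalley computation and its extension to classical simple Lie superalgebras carried out in \cite{fg2}.

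For part (a), I would first handle (a.1): the hypothesis $\alpha+\beta \notin \Delta \cup \{0\}$ combined with Definition \ref{def_che-bas}(e) gives $[X_{\alpha,k},X_{\beta,h}]=0$, so (3.4), (3.12), or (3.14) (according to parities) shows that every $X_{\alpha,k}^{(\ell)}$ supercommutes with every $X_{\beta,h}^{(m)}$, whence the two exponentials commute and the group commutator is $1$. For (a.2), (a.3) and the general formula (4.2), I would expand $x_{\alpha,k}(\mathbf{p})\, x_{\beta,h}(\mathbf{q})$ using its definition and apply relations (3.6), (3.7), (3.13), (3.15), (3.16) to move every $X_{\beta,h}^{(m)}$ past every $X_{\alpha,k}^{(\ell)}$; collecting coefficients homogeneously in $\mathbf{p}^r \mathbf{q}^s$ and recognising each homogeneous contribution as an exponential $x_{\gamma,t}(c_{\gamma;t}^{k,h}\,\mathbf{p}^r\mathbf{q}^s)$ with $\gamma = r\alpha+s\beta$ then yields (4.2). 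Commutativity of the factors on the right hand side is ensured by Lemma \ref{root-props} (which forces the roots $\gamma$ in the product to form a set on which no further positive $\Z$-combinations lie in $\Delta$) together with (a.1). In the restricted settings of (a.2) and (a.3), $\Delta_{\alpha,\beta}$ reduces to $\{\alpha+\beta\}$ (respectively $\{\alpha+\beta,\,2\alpha+\beta\}$), and the specific coefficients match those defined in Definition \ref{def_che-bas}(f)--(g). Case (a.4) is the classical Chevalley commutator formula and follows from (3.5)--(3.8) exactly as in \cite{st}.

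For (b), the odd generators give $x_{\gamma,1}(\vartheta)=1+\vartheta X_{\gamma,1}$ and $x_{-\gamma,j}(\eta)=1+\eta X_{-\gamma,j}$, with inverses $1-\vartheta X_{\gamma,1}$ and $1-\eta X_{-\gamma,j}$. Expanding their group commutator, using $\vartheta^2=\eta^2=0$ and the key identity (3.11) $X_{\gamma,1}X_{-\gamma,j} + X_{-\gamma,j}X_{\gamma,1} = H_{\sigma_\gamma(j)}$, the result collapses to $1 \mp \vartheta\eta\, H_{\sigma_\gamma(j)}$ after careful sign bookkeeping. This matches $h_{H_{\sigma_\gamma(j)}}(1\mp\vartheta\eta)$ because $\vartheta\eta$ is a nilpotent even element, so $u^H = \sum_{m\ge 0}(u-1)^m\binom{H}{m}$ truncates to $1 + (u-1)H$ after one step. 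Part (c) is immediate from the weight-space description of $h_H(u)$: since $X_{\alpha,i}$ maps $V_\mu$ into $V_{\mu+\alpha}$, the operator $h_H(u)\,X_{\alpha,i}\,h_H(u)^{-1}$ acts on $V_\mu$ as $u^{(\mu+\alpha)(H)-\mu(H)}X_{\alpha,i}=u^{\alpha(H)}X_{\alpha,i}$; exponentiating yields the stated formula.

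The main obstacle will be the sign bookkeeping in case (a), particularly in (a.3) where mixed-parity brackets, the sign factor $(-1)^{p(\mathbf{p})p(\mathbf{q})}$, and the identification of the iterated bracket $[X_{\alpha,1},[X_{\alpha,1},X_{\beta,h}]]$ with $\pm 2\, X_{2\alpha+\beta,t'}$ from Definition \ref{def_che-bas}(g) all interact; and in (b), where one must identify the precise sign $\pm$ in $\mp\vartheta\eta$ (coming from the ordering of the odd generators and from the appearance of $H_{\sigma_\gamma(j)}$ versus $-H_{\sigma_\gamma(j)}$ in Definition \ref{def_che-bas}(d)). Everything else is a routine propagation of the Kostant-algebra relations (3.1)--(3.16) through the exponential map.
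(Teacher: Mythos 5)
Your proposal is correct in substance, but for part \textit{(a)} it follows a genuinely different computational route from the paper. The paper does not expand both exponentials as double sums of divided powers and reorder via relations (3.1)--(3.16); instead it writes $X:=\mathbf{p}\,X_{\alpha,k}$, $Y:=\mathbf{q}\,X_{\beta,h}$ and uses the formal identity $\Ad\circ\exp=\exp\circ\,\ad$ together with Proposition \ref{ad_square/cube}\textit{(b)} (which gives $\ad(X)^r=0$ for $r>2$) to truncate the conjugate to the closed form $\exp\big(\ad(X)\big)(Y)=Y+[X,Y]+\big[X,[X,Y]\big]/2\,$; the commutator then collapses to $\exp\big(\big[X,[X,Y]\big]/2\big)\cdot\exp\big([X,Y]\big)$ once one checks the three summands pairwise commute, and the brackets are identified via Definition \ref{def_che-bas}\textit{(f)--(g)}. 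Your divided-power reordering would also work --- it is essentially the argument by which relations (3.6)--(3.7) themselves were derived, one level down --- but it forces you to track arbitrary powers $X_{\talpha}^{(\ell)}X_{\tbeta}^{(m)}$ and then re-assemble exponentials, whereas the paper's route gets the truncation for free from the nilpotency of $\ad(X)$. Your treatments of \textit{(b)} and \textit{(c)} coincide with the paper's.

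One point in your part \textit{(a)} needs repair: you justify the mutual commutativity of the factors $x_{\alpha+\beta,t}$ on the right-hand side by Lemma \ref{root-props} ``together with (a.1)''. That does not cover the case of two factors attached to the \emph{same} root $\gamma=\alpha+\beta$ with different multiplicity indices $t'\neq t''$, which is exactly where the issue lies when $2\gamma\in\Delta$. The paper's argument is: Proposition \ref{ad_square/cube}\textit{(a)} gives $\big[\fg_{\alpha+\beta},\fg_{\alpha+\beta}\big]=\{0\}$ in all cases \emph{except} $\fg=\tS(n)$ with $\alpha+\beta=-\varepsilon_i$ and $\fg=H(2r+1)$ with $\alpha+\beta\in(2\N+1)\delta\,$; in those exceptional cases $\alpha+\beta$ is odd, hence one of $\mathbf{p},\mathbf{q}$ is odd and $(\mathbf{p}\,\mathbf{q})^2=0$, so the rescaled elements $\mathbf{p}\,\mathbf{q}\,X_{\alpha+\beta,t}$ still commute. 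Without this case analysis the splitting of $\exp\big([X,Y]\big)$ into a product of one-parameter factors is not justified.
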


\begin{proof}
 {\it (a)} \,  The proof follows from a direct analysis, through formal computations, just like in the classical case of Chevalley groups, which is treated in  \cite{st},  \S 3, Lemma 15.  We shall carry it on by looking at the general case, and later specializing to the special ones.
 \vskip5pt
   First of all, fix notation  $ \, X := \mathbf{p} \, X_{\alpha,k} \, $,  $ \, Y := \mathbf{q} \, X_{\beta,h} \, $.  Recall that any (additive) one-parameter supersubgroup can be expressed by a formal exponential: so  $ \; x_{\alpha,k}(\mathbf{p}) \, := \, {\textstyle \sum_{m=0}^{+\infty}} {(\mathbf{p} X_{\alpha,k})}^m \! \big/ m! \, := \, {\textstyle \sum_{r=0}^{+\infty}} \, X^r \big/ r! \, = \, \exp(X) \; $,  and  $ \; x_{\beta,h}(\mathbf{q}) \, := \, {\textstyle \sum_{s=0}^{+\infty}} \, X^s \! \big/ s! \, = \, \exp(Y) \; $.  Now, formal calculation gives
  $$  \displaylines{
   \big( x_{\alpha,k}(\mathbf{p}) \, , x_{\beta,h}(\mathbf{q}) \big)  =
 \Ad\big(x_{\alpha,k}(\mathbf{p})\!\big) \bigg( {\textstyle \sum\limits_{s=0}^{+\infty}} \, Y^s \!\big/ s! \bigg) \cdot {x_{\beta,h}(\mathbf{q})}^{-1}  \! =  {\textstyle \sum\limits_{s=0}^{+\infty}} {\Big(\! \Ad\big(x_{\alpha,k}(\mathbf{p})\big)(Y) \!\Big)}^s \!\! \Big/ \! s!  \; \cdot \;  {x_{\beta,h}(\mathbf{q})}^{-1}  \! =  \cr
   \hfill   = \;\,  {\textstyle \sum_{s=0}^{+\infty}} {\Big( \Ad\big(\exp(X)\big)(Y) \Big)}^s \! \Big/ s!  \; \cdot \;  {x_{\beta,h}(\mathbf{q})}^{-1}  \,\; = \;\,  {\textstyle \sum_{s=0}^{+\infty}} {\Big( \exp\big( \ad(X) \big)(Y) \Big)}^s \! \Big/ s!  \; \cdot \;  {x_{\beta,h}(\mathbf{q})}^{-1}  \cr
  }  $$
where in the last step we used the (formal) identity  $ \; \Ad \circ \exp = \exp \circ \, \ad \; $.
%
%
   Now, moving on we get
  $$  \exp\big(\ad(X)\big)(Y)  \, = \,  {\textstyle \sum_{r=0}^{+\infty}} \, {\ad(X)}^r(Y) \big/ r!  \, = \,  Y \! + [X,Y] + \! \big[X,[X,Y]\big] \big/ 2   \eqno (4.3)  $$
because  $ \, {\ad(X)}^r = 0 \, $  for all  $ \, r > 2 \, $  by  Proposition \ref{ad_square/cube}{\it (b)}.
 \vskip7pt
   As a consequence, if  $ \, \alpha + \beta \not\in \big( \Delta \cup \{0\} \big) \, $  we have  $ \, [X,Y] \in \fg_{\alpha + \beta}(A) = \{0\} \, $,  hence (4.3) reads  $ \; \exp\big(\ad(X)\big)(Y) = Y \; $.  Then the above analysis proves  {\it (a.1)},  since it yields
  $$  \big( x_{\alpha,k}(\mathbf{p}) \, , \, x_{\beta,h}(\mathbf{q}) \big)  \; = \;  {\textstyle \sum_{s=0}^{+\infty}} \, Y^s \! \Big/ s!  \, \cdot \,  {x_{\beta,h}(\mathbf{q})}^{-1}  \; = \;  {x_{\beta,h}(\mathbf{q})} \cdot {x_{\beta,h}(\mathbf{q})}^{-1}  \; = \;  1  $$
 \vskip3pt
   Now assume  $ \, \alpha + \beta \in \Delta \, $  but  $ \, \alpha \, , \beta \not\in \Delta_0 \; $.  Then  $ \, \big[X,[X,Y]\big] = 0 \, $  by  Proposition \ref{ad_square/cube}{\it (b)\/}  if  $ \, \alpha \in \Delta_\zero \, $,  and by  $ \, \mathbf{p}^2 = 0 \, $ if  $ \, \alpha \in \Delta_\uno \, $;  thus (4.3) reads  $ \; \exp\big(\ad(X)\big)(Y) = Y + [X,Y] \; $.  Similarly  $ \, \big[Y,[X,Y]\big] = 0 \, $,  so the summands  $ \, Y \, $  and  $ \, [X,Y] \, $  commute with each other: thus our analysis gives
  $$  \displaylines{
   \big( x_{\alpha,k}(\mathbf{p}) \, , \, x_{\beta,h}(\mathbf{q}) \big)  \; = \;  {\textstyle \sum_{s=0}^{+\infty}} \, {\big( Y + [X,Y] \big)}^s \! \Big/ s!  \, \cdot \,  {x_{\beta,h}(\mathbf{q})}^{-1}  \; = \;  \exp\big( Y + [X,Y] \big) \cdot {x_{\beta,h}(\mathbf{q})}^{-1}  \; =   \hfill  \cr
   \hfill   = \;  \exp(Y) \cdot \exp\big([X,Y]\big) \cdot {x_{\beta,h}(\mathbf{q})}^{-1}  \; = \;  \exp\big([X,Y]\big) \cdot \exp(Y) \cdot {x_{\beta,h}(\mathbf{q})}^{-1}  \; = \;  \exp\big([X,Y]\big)  }  $$
since  $ \; x_{\beta,h}(\mathbf{q}) = \exp(Y) \; $.  Now  $ \; [X,Y]  =  \big[\, \mathbf{p} \, X_{\alpha,k} \, , \, \mathbf{q} \, X_{\beta,h} \big]  =  {(-1)}^{p(\mathbf{p}) p(\mathbf{q})} \, {\textstyle \sum\limits_{\,t=1}^{\,\mu(\alpha + \beta)}} \!\!\! \mathbf{p} \, \mathbf{q} \, c_{\alpha,k}^{\,\beta,h}(t) \, X_{\alpha + \beta, \, t} \; $  by  Definition \ref{def_che-bas}{\it (f)},  and the summands in the last term all commute with each other.
 Indeed, in all cases except for  $ \, \fg = \tS(n) \, $,  $ \, \alpha + \beta = -\varepsilon_i \, $,  or  $ \, \fg = H(2\,r+1) \, $,  $ \, \alpha + \beta \in (2\,\N + 1)\,\delta \, $,  this holds because  Proposition \ref{ad_square/cube}{\it (a)\/}  give
%
%
 $ \; \big[ X_{\alpha + \beta, \, t'} \, , \, X_{\alpha + \beta, \, t''} \big] \in \big[ \fg_{\alpha + \beta} , \fg_{\alpha + \beta} \big] = \{0\} \; $.
 In the remaining cases instead, the root  $ \alpha + \beta \, $  is  {\sl odd},  hence either  $ \alpha $  or  $ \beta $  is odd as well, thus  $ \, \mathbf{p} \in A_\uno \, $  or  $ \, \mathbf{q} \in A_\uno \; $:  therefore $ \; \big[\, \mathbf{p} \, \mathbf{q} \, X_{\alpha + \beta, \, t'} \, , \, \mathbf{p} \, \mathbf{q} \, X_{\alpha + \beta, \, t''} \big] = 0 \; $  just because  $ \, {(\mathbf{p} \, \mathbf{q})}^2 = \pm \mathbf{p}^2 \, \mathbf{q}^2 = 0 \; $.  The outcome is
 \eject

  $$  \displaylines{
   \big( x_{\alpha,k}(\mathbf{p}) \, , \, x_{\beta,h}(\mathbf{q}) \big)  \; = \;  \exp\big([X,Y]\big)  \; = \;  \exp\Big( {(-1)}^{p(\mathbf{p}) p(\mathbf{q})} \, \mathbf{p} \, \mathbf{q} \cdot {\textstyle \sum_{\,t=1}^{\,\mu(\alpha + \beta)}} \, c_{\alpha,k}^{\beta,h}(t) \, X_{\alpha + \beta, \, t} \Big)  \; =   \hfill  \cr
   \hfill   =  {\textstyle \prod_{t=1}^{\mu(\alpha + \beta)}} \exp \! \Big(\! {(-1)}^{p(\mathbf{p}) p(\mathbf{q})} \, \mathbf{p} \, \mathbf{q} \cdot c_{\alpha,k}^{\beta,h}(t) \, X_{\alpha + \beta, \, t} \Big)  =  {\textstyle \prod_{t=1}^{\mu(\alpha + \beta)}} x_{\alpha + \beta, \, t}\Big(\! {(-1)}^{p(\mathbf{p}) p(\mathbf{q})} \, \mathbf{p} \, \mathbf{q} \cdot c_{\alpha,k}^{\,\beta,h}(t) \Big)   }  $$
with all factors in the last product which commute among themselves.  This proves  {\it (a.2)}.

\vskip7pt

   Finally, assume that  $ \, \alpha + \beta \in \Delta \, $  and  $ \, \alpha \in \Delta_0 \, $,  $ \, \beta \not\in \Delta_0 \, $   ({\sl N.B.:}  the case  $ \, \alpha \not\in \Delta_0 \, $,  $ \, \beta \in \Delta_0 \, $  is symmetric, hence we drop it).  Just like before,
%
%
we find  $ \, \big[ Y , [Y,X] \big] = 0 \, $;  therefore  $ \, \big[ Y , [X,Y] \big] = \pm \big[ Y , [Y,X] \big] = 0 \, $,  then also
 $ \; \big[ Y , \big[X,[X,Y]\big] \big] \, = \, \pm \big[ [\,Y,X] \, , [\,Y,X] \big] \pm \big[ X , \big[\, Y, [\,Y,X] \big] \big] = 0 \; $
--- by the super-Leibnitz' rule, and taking into account the identity  $ \, \big[ [\,Y,X] \, , [\,Y,X] \big] = 0 \, $  inside  $ \, \fg(A) := \fg_\zero \otimes_\bk A_\zero \oplus \fg_\uno \otimes_\bk A_\uno \, $,  which is a  {\sl Lie algebra}  ---   and finally (again by super-Leibnitz' rule)
 $ \; \big[ [X,Y] , \big[X,[X,Y]\big] \big] \, = \, \pm \big[ \big[ X , \big[ X , [X,Y] \big] \big] \, , Y \big] \pm \big[ X , \big[ Y , \big[ X, [X,Y] \big] \big] \big] = \pm [\,0\,,Y] \pm [X,0] = 0 \; $
by  Proposition \ref{ad_square/cube}{\it (a)},  for the first summand,  and by  $ \, \big[ Y , \big[X,[X,Y]\big] \big] = 0 \, $  just proved, for the second.  This means that the three summands in right-hand side of (4.3) do commute with each other; thus
  $$  \displaylines{
   \big( x_{\alpha,k}(\mathbf{p}) \, , \, x_{\beta,h}(\mathbf{q}) \big)  \; = \;  \exp \Big( Y + [X,Y] + \big[ X, [X,Y] \big] \big/ 2 \Big) \cdot {x_{\beta,h}(\mathbf{q})}^{-1}  \; =   \hfill  \cr
   \hfill \qquad   = \;  \exp\Big( \big[X,[X,Y]\big] \big/ 2 \Big) \cdot \exp\big([X,Y]\big) \cdot \exp(Y) \cdot {x_{\beta,h}(\mathbf{q})}^{-1}  \; = \;  \exp\Big( \big[X,[X,Y]\big] \big/ 2 \Big) \cdot \exp\big([X,Y]\big)  }  $$
because  $ \; x_{\beta,h}(\mathbf{q}) = \exp(Y) \; $.  As before,
%
%
 $ \; [X,Y]  \, = \,  {(-1)}^{p(\mathbf{p}) p(\mathbf{q})} \, \mathbf{p} \, \mathbf{q} \cdot {\textstyle \sum_{\,t=1}^{\,\mu(\alpha + \beta)}} \, c_{\alpha,k}^{\,\beta,h}(t) \, X_{\alpha + \beta, \, t} \; $
with the summands in the last sum which commute with each other; similarly, we expand  $ \, \big[X,[X,Y]\big] \, $  as
%
%
 $ \; \big[X,[X,Y]\big]  \, = \,  \big[\, \mathbf{p} \, X_{\alpha,1} , [\, \mathbf{p} \, X_{\alpha,1} , \mathbf{q} \, Y_{\beta,h}] \big]  \, = \,  {(-1)}^{p(\mathbf{p}) (p(\mathbf{p}) + p(\mathbf{q}))} \, \mathbf{p}^2 \, \mathbf{q} \, \big[\, X_{\alpha,k} \, , [\, X_{\alpha,k} \, , \, Y_{\beta,h}] \big] \; $.
Now, if  $ \, \alpha \not\in \Delta_0 \, $  Proposition \ref{ad_square/cube}{\it (b)\/}  gives  $ \, \big[\, X_{\alpha,k} \, , [\, X_{\alpha,k} \, , \, Y_{\beta,h}] \big] = 0 \, $,  hence  $ \; \big[X,[X,Y]\big] = 0 \; $.  If instead  $ \, \alpha \in \Delta_0 \, $,  then  ($ \, k = 1 \, $  and)
%
%
 either  $ \; \big[X,[X,Y]\big]  \, = \,  \mathbf{p}^2 \, \mathbf{q} \, \big[\, X_{\alpha,1} , [\, X_{\alpha,1} , \, Y_{\beta,1}] \big]  \, = \,  \pm \, \mathbf{p}^2 \, \mathbf{q} \, 2 \, X_{2 \alpha + \beta, \, t'} \; $
or  $ \, \big[X,[X,Y]\big] \, = \, 0 \, $,  by  Definition \ref{def_che-bas}{\it (g)}   --- for some  $ t' $  as therein.  Note also that  $ \, \big[X,[X,Y]\big] \, $  commutes with each summand in the expansion
 $ \; [X,Y]  \, = \,  {(-1)}^{p(\mathbf{p}) p(\mathbf{q})} \, {\textstyle \sum_{\,t=1}^{\,\mu(\alpha + \beta)}} \, \mathbf{p} \, \mathbf{q} \cdot c_{\alpha,k}^{\,\beta,h}(t) \, X_{\alpha + \beta, \, t} \; $;
indeed, this occurs because  $ \; \big[ \big[X,[X,Y]\big] \, , X_{\alpha + \beta, \, t} \big] \in \fg_{3 \, \alpha + 2 \, \beta}(A) \; $,  and direct (straightforward) inspection shows that  $ \, 3 \, \alpha + 2 \, \beta \not\in \Delta \, $  (having  $ \, \alpha + \beta \in \Delta \, $  and  $ \, \alpha \in \Delta_0 \, $,  by assumption).  So we find
  $$  \displaylines{
   \big( x_{\alpha,1}(\mathbf{p}) \, , \, x_{\beta,h}(\mathbf{q}) \big)  \; = \;  \exp\Big( \big[X,[X,Y]\big] \big/ 2 \Big) \cdot \exp\big([X,Y]\big)  \; =   \hfill  \cr
   \, \hfill   =  \exp\big( \pm \mathbf{p}^2 \, \mathbf{q} \, X_{2 \alpha + \beta, \, t'} \big) {\textstyle \prod\limits_{\,t=1}^{\,\mu(\alpha + \beta)}} \hskip-9pt \exp\big( \mathbf{p} \, \mathbf{q} \, c_{\alpha,1}^{\,\beta,h}(t) \, X_{\alpha + \beta, \, t} \big)  =  x_{2 \alpha + \beta, \, t'} \! \big(\! \pm \mathbf{p}^2 \, \mathbf{q} \big) {\textstyle \prod\limits_{\,t=1}^{\,\mu(\alpha + \beta)}} \hskip-8pt x_{\alpha + \beta, \, t} \big( \pm c_{\alpha,1}^{\,\beta,h}(t) \, \mathbf{p} \, \mathbf{q} \big)  }  $$
(with  $ \, x_{2 \alpha + \beta, \, t'\!} \big(\! \pm \! \mathbf{p}^2 \mathbf{q} \big) \! := \! 1 \, $  if  $ \, 2\,\alpha \! + \! \beta \not\in \Delta \, $)  with all factors pairwise commuting, so  {\it (a.3)\/}  is proved.
 \vskip7pt
   The very last case to consider is  {\it (a.4)},  which is a classical result: see  \cite{st},  \S 3, Lemma 15.
 \vskip11pt
   {\it (b)} \,  The same arguments used for  {\it (a)\/}  give also
 $ \; \big( x_{\gamma,1}(\vartheta) \, , \, x_{-\gamma,j}(\eta) \big)  \; = \,  1 - \, \vartheta \, \eta \, \big[ X_{\gamma,1} \, , X_{-\gamma,j} \big] \; $.
 Then  Definition \ref{def_che-bas}{\it (d)\/}  gives  $ \; \big[ X_{\gamma,1} \, , X_{-\gamma,j} \big]  \, = \,  \pm H_{\sigma_\gamma(j)} \; $.  Plugging this into the previous formula, and noting that  $ \, {(\vartheta \, \eta)}^n = 0 \, $  for all  $ \, n > 1 \, $,  we get exactly  {\it (b)}.
 \vskip11pt
   {\it (c)} \, Let  $ \, v_\mu \in M_\mu := \big( M \cap V_\mu \big) \, $  be any weight vector in the admissible lattice  $ M $  of  $ V $  used to define  $ G_V \, $.  We show now that  $ \; h_H(u) \, x_{\alpha,i}(\bu) \, {h_H(u)}^{-1} \; $  and  $ \; x_{\alpha,i}\big( u^{\alpha(H)} \, \bu \big) \; $  acts on the same way on  $ v_\mu \, $:  taking  $ \mu $  and  $ v_\mu $  arbitrarily, this is enough to prove claim  {\it (c)}.  Direct computation gives
  $$  \displaylines{
   \big( h_H(u) \, x_{\alpha,i}(\bu) \, {h_H(u)}^{-1} \big)(v_\mu)  \,\; = \;\,  u^{\mu(H)} \cdot h_H(u) \big(\, {\textstyle \sum_{n=0}^{+\infty}}\, {\ad(\bu \, X_{\alpha,i})}^n(v_\mu) \big/ n! \,\big)  \,\; =  \hfill  \cr
   = \;\,  u^{-\mu(H)} \cdot {\textstyle \sum_{n=0}^{+\infty}} \, {\textstyle \frac{\,1}{\,n!}} \; h_H(u) \big( {\ad(\bu \, X_{\alpha,i})}^n(v_\mu) \big)  \; = \,  u^{-\mu(H)} {\textstyle \sum_{n=0}^{+\infty}} \, {\textstyle \frac{\,1\,}{\,n!\,}} \; u^{(\mu + n \alpha)(H)} {\ad(\bu \, X_{\alpha,i})}^n(v_\mu)  \, =   \hfill  \cr
   \hfill   = \,  u^{-\mu(H)} \, u^{\mu(H)} {\textstyle \sum_{n=0}^{+\infty}} \, {\textstyle \frac{\,1}{\,n!}} {\ad \big( u^{\alpha(H)} \bu \, X_{\alpha,i} \big)}^{\!n}(v_\mu)  \, = \,  \exp \! \big( {\ad}\big( u^{\alpha(H)} \bu \, X_{\alpha,i} \big) \big)(v_\mu)  \, = \,  x_{\alpha,i}\big( u^{\alpha(H)} \bu \big)(v_\mu)  }  $$
which is exactly what we needed.
\end{proof}

\medskip

  \subsection{The even part  $ \bG_\zero \, $  of  $ \bG_V $}  \label{even-part}

\smallskip

   {\ } \quad   Our definition of the supergroup  $ \bG_V $  does not imply (at first sight) that  $ \bG_V $  be  {\it representable}.
 In order to prove that, we need to know how the ``even part''  $ \bG_\zero $  of  $ \bG_V $  looks like.  
%
 \eject

\begin{proposition}  \label{bG_0-Ch_V}
 \; The functor  $ \, \bG_0 $  is representable, hence   --- as it factors through  $ \alg_\bk $  ---   it is an affine group-scheme; moreover, it is also algebraic.  More precisely, we have natural isomor\-phisms   $ \, \bG_0 \cong \mathbf{Ch}_V \, $,  where  $ \, \mathbf{Ch}_V : \alg_\bk \longrightarrow \grps \, $  is the standard (affine, algebraic) group functor associated with  $ \fg_0 $  and with the  $ \fg_0 $--module  $ V $  by the classical Chevalley-Demazure construction.
\end{proposition}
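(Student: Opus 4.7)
The plan is to identify $\bG_0$ with the classical Chevalley--Demazure group scheme attached to $\fg_0$ and $V$. By Remark \ref{rem-Chev_1}(a), the subset $B \cap \fg_0 = \{H_1,\dots,H_r\} \coprod \{X_{\alpha,1} : \alpha \in \Delta_0\}$ is a Chevalley basis of the reductive Lie algebra $\fg_0$ in the classical (or reductive-extended) sense. Restricting the $\fg$-action makes $V$ a faithful, rational $\fg_0$-module; since $\kzgz \subseteq \kzg$ by construction, the admissible lattice $M$ for $\kzg$ is automatically admissible for $\kzgz$ as well. This is precisely the starting data for the classical Chevalley--Demazure construction, which produces a representable, affine, algebraic group scheme $\mathbf{Ch}_V$ whose $A$-points (for $A \in \alg_\bk$) are the subgroup of $\rGL(V_A)$ generated by the multiplicative one-parameter subgroups $h_i(A)$ (for $i = 1,\dots,r$) and the additive one-parameter subgroups $x_{\alpha,1}(A)$ (for $\alpha \in \Delta_0$).

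Next, by Remark \ref{rems-af-defs_Chev-sgrps}(b), $G_0$ factors through $\alg_\bk$: every generator $h_H(u)$ uses only $u \in U(A_\zero)$, and every generator $x_{\alpha,1}(t)$ (for $\alpha \in \Delta_0$, $t \in A_\zero$) involves an even root vector, so it sits in the even part of $\rGL(V_A)$. Restricted to an ordinary $A \in \alg_\bk$, the generators of $G_0(A)$ as a subgroup of $\rGL(V_A)$ coincide one-for-one with the generators of $\mathbf{Ch}_V(A)$ prescribed by the classical construction, so $G_0 \cong \mathbf{Ch}_V$ as presheaves on $\alg_\bk$. Since $\mathbf{Ch}_V$ is representable, hence a Zariski sheaf, the sheafification of $G_0$ gives $\bG_0 \cong \mathbf{Ch}_V$ at once; representability, affineness and algebraicity of $\bG_0$ then follow from the classical theorem.

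The only point deserving care is the reductive (non-semisimple) case $\fg_0 \cong \rgl_n$ that arises when $\fg = W(n)$: here one must invoke the extension of Chevalley's construction to reductive $\Z$-group schemes (mentioned in the Introduction). This is compatible with our setup precisely because we defined $T_V$ using the full $\Z$-basis $\{h_1,\dots,h_r\}$ dual to $\fh_\Z$, and not merely the coroots of roots in $\Delta_0$; when $\fg_0$ is semisimple the two notions agree, while for $\fg_0 = \rgl_n$ our $T_V$ is the full $n$-dimensional torus, as needed to obtain $\rGL_n$ instead of $\rsl_n$. Apart from this bookkeeping, the proof is a direct identification of generators and presents no substantial obstacle.
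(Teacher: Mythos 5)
Your overall strategy is the same as the paper's: restrict to $\alg_\bk$, identify the generators of $G_0$ with the one-parameter subgroups of the classical Chevalley--Demazure construction, and conclude by sheafification. The bookkeeping about the Chevalley basis $B\cap\fg_0$ of $\fg_0$, the admissibility of $M$ for $\kzgz\subseteq\kzg$, and the reductive (non-semisimple) case $\fg_0\cong\rgl_n$ for $W(n)$ is all fine and consistent with Remark \ref{rem-Chev_1}{\it (a)}.

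However, there is a genuine gap in the middle step. You assert that ``the generators of $G_0(A)$ coincide one-for-one with the generators of $\mathbf{Ch}_V(A)$ prescribed by the classical construction, so $G_0\cong\mathbf{Ch}_V$ as presheaves on $\alg_\bk$.'' This is false for general $A\in\alg_\bk$: the Chevalley--Demazure group scheme $\mathbf{Ch}_V$ is a representable functor, and for a non-local ring $A$ its group of $A$-points is in general \emph{strictly larger} than the subgroup of $\rGL(V_A)$ generated by the $h_i(A)$ and the $x_{\alpha,1}(A)$ (the ``elementary subgroup''); this failure is precisely why the whole construction requires a sheafification at all, and why $G_0$ is a presheaf that is not already a sheaf. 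What is true --- and what the paper's proof uses, citing SGA\,3 (Demazure, Exp.~XXII, Cor.~5.7.6 and Exp.~XXIII, \S 3.5.3) --- is that the natural transformation $\beta: G_0\longrightarrow\mathbf{Ch}_V$ (inclusion of subfunctors of $\rGL(V)$ over $\alg_\bk$) is an \emph{isomorphism on local algebras}. Combined with the fact that $\mathbf{Ch}_V$ is a Zariski sheaf, the universal property of sheafification then yields $\bG_V\big|_0 \cong \mathbf{Ch}_V$, i.e.\ $\bG_0\cong\mathbf{Ch}_V$. So your conclusion is correct, but the step ``$G_0\cong\mathbf{Ch}_V$ as presheaves'' must be weakened to ``$\beta$ is an isomorphism on local objects,'' and the final inference must go through the universal property of sheafification rather than through the (vacuous) sheafification of a functor already assumed representable.
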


\begin{proof}
 This is just a consequence of the very definitions.  Indeed, in terms of category theory (cf.~for instance  \cite{vst}),  the category  $ \, \alg = \alg_\bk \, $ is a site and both the functors  $ \bG_0 $  and  $ \mathbf{Ch}_V $  are sheaves.  Moreover, by construction there exist natural transformations  $ \, G_0 \,{\buildrel \alpha \over \longrightarrow}\; \bG_0 \, $,  $ \, G_0 \,{\buildrel \beta \over \longrightarrow}\; \mathbf{Ch}_V \, $,  and  $ G_0 $  coincides with  $ \mathbf{Ch}_V $  via  $ \beta $  on local algebras, that is  $ \, \beta\big(G_0(R)\big) = \mathbf{Ch}_V(R) \, $  for any  $ \, R \in \text{\it Ob}\alg \, $  which is local (this follows from \S 3.5.3 in  \cite{de2}  and  Corollaire 5.7.6 in  \cite{de1}).  As  $ \bG_0 $  by definition is the sheafification of  $ G_0 \, $,  the universal property characterizing the sheafification yields  $ \, \bG_0 \, \cong \, \mathbf{Ch}_V \, $.
\end{proof}

\vskip7pt

   As a second step, we have the following result for some (classical) subgroup functors of  $ \bG_\zero \, $:

\vskip9pt

\begin{proposition}  \label{descr-G_zero}
 \; Fix any total order  $ \, \preceq \, $  in  $ \, \tDelta_{\zero^{\,\uparrow}} \, $.
%
%
 Then we have:
 \vskip7pt
   {\it (a)}  $ \;\; G_{\zero^{\,\uparrow}}(A) = \prod_{\talpha \in \tDelta_{\zero^{\,\uparrow}}} x_{\talpha}(A) \;\; $  for all  $ \, A \in \salg_\bk \, $,  the product being ordered according to  $ \preceq \, $;
 \vskip5pt
   {\it (b)}  \quad  $ G_{\zero^{\,\uparrow}} \! \trianglelefteq G_\zero \;\; $  and  $ \;\; \bG_{\zero^{\,\uparrow}} \! \trianglelefteq \bG_\zero \; $,  \,
 where  $ \, \trianglelefteq \, $  stands for ``normal subgroup functor'';
 \vskip5pt
   {\it (c)}  \quad  $ G_\zero = G_0 \cdot G_{\zero^{\,\uparrow}} = G_{\zero^{\,\uparrow}} \cdot G_0 \;\; $  and  $ \;\; \bG_\zero = \bG_0 \cdot \bG_{\zero^{\,\uparrow}} = \bG_{\zero^{\,\uparrow}} \cdot \bG_0 \; $.
   In particular,  $ \, \bG_\zero \, $  is a  {\sl closed}  subgroup of  $ \, \rGL(V) \, $,  hence it is in turn (on its own) an affine algebraic group.
 \vskip5pt
   {\it (d)}  \;  the group functors  $ \, G_{\zero^{\,\uparrow}} \, $  and  $ \, \bG_{\zero^{\,\uparrow}} \, $  are both unipotent.
\end{proposition}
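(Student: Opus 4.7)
The proof hinges on two structural facts: $\Delta_{\zero^{\,\uparrow}}$ consists entirely of even roots of height $\geq 2$ (even roots have even height by \S\ref{def_Cartan-subalg_roots_etc}, and the height-$0$ ones are precisely those of $\Delta_0$, which are excluded), and the height function on $\Delta$ is bounded above. For (a), the plan is to perform a ``bubble sort'' with respect to $\preceq$ on any word in the generators $x_{\talpha}(\mathbf{p})$: each time two adjacent factors $x_{\alpha,k}(p)\,x_{\beta,h}(q)$ appear in the wrong order, I apply Lemma~\ref{comm_1-pssg}\textit{(a.2)}--\textit{(a.3)} to swap them, generating corrections of the form $x_{\gamma,t}(\cdot)$ with $\gamma = r\alpha + s\beta$ and $\text{\it ht}(\gamma) = r\,\text{\it ht}(\alpha) + s\,\text{\it ht}(\beta) > \max\{\text{\it ht}(\alpha),\text{\it ht}(\beta)\}$. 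The height bound forces termination, yielding surjectivity of the ordered-product map $\prod_{\talpha \in \tDelta_{\zero^{\,\uparrow}}} x_{\talpha}(A) \twoheadrightarrow G_{\zero^{\,\uparrow}}(A)$. For injectivity I would use descending induction on height: acting on a highest-height weight vector of $V_A$ isolates the coefficients $\mathbf{p}_\talpha$ attached to the smallest $\text{\it ht}(\alpha)$ (since all other factors vanish on that vector), and peeling these off reduces to a smaller problem.

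For (b), I would check normality on generators. Conjugation of $x_{\alpha,i}(\bu)$ (for $\talpha \in \tDelta_{\zero^{\,\uparrow}}$) by $h_H(u)$ returns $x_{\alpha,i}(u^{\alpha(H)}\bu) \in G_{\zero^{\,\uparrow}}(A)$ by Lemma~\ref{comm_1-pssg}\textit{(c)}. Conjugation by $x_{\tbeta}(\mathbf{q})$ with $\tbeta \in \tDelta_\zero$ produces, via Lemma~\ref{comm_1-pssg}\textit{(a)}, corrections $x_{\gamma,t}(\cdot)$ with $\gamma = r\beta + s\alpha$, $s\geq 1$, hence $\text{\it ht}(\gamma) \geq \text{\it ht}(\alpha) \geq 2$ and $\gamma \in \Delta_{\zero^{\,\uparrow}}$; the analogous computation handles conjugation by generators of $G_{\zero^{\,\uparrow}}$ itself. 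Normality of $\bG_{\zero^{\,\uparrow}}$ in $\bG_\zero$ then follows from the universal property of sheafification. Claim (c) is a direct consequence: any word in the generators of $G_\zero$ can be reshuffled into the form $G_0(A)\cdot G_{\zero^{\,\uparrow}}(A)$ by pushing every $G_0$-generator leftward and absorbing the commutator corrections into $G_{\zero^{\,\uparrow}}$ (via (b)), giving $G_\zero = G_0 \cdot G_{\zero^{\,\uparrow}}$ and, symmetrically, $G_\zero = G_{\zero^{\,\uparrow}} \cdot G_0$; the same holds after sheafification. For representability of $\bG_\zero$, Proposition~\ref{bG_0-Ch_V} already supplies $\bG_0$ as an affine algebraic group, while part (a) identifies $\bG_{\zero^{\,\uparrow}}$ as a scheme with $\mathbb{A}^{|\tDelta_{\zero^{\,\uparrow}}|}$; assembling them via the conjugation action of (b) exhibits $\bG_\zero$ as the affine algebraic semidirect product $\bG_0 \ltimes \bG_{\zero^{\,\uparrow}}$, closed in $\rGL(V)$.

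For (d), I would exploit the weight-height filtration on the finite-dimensional module $V_A$: every $X_\talpha$ with $\talpha \in \tDelta_{\zero^{\,\uparrow}}$ strictly raises the height of weights, so it is nilpotent and $x_\talpha(t) = \exp(tX_\talpha)$ is unipotent. Since all these operators are simultaneously strictly upper triangular in a basis of $V_A$ adapted to the filtration, any finite product of them remains unipotent, proving that $G_{\zero^{\,\uparrow}}(A)$ and hence its sheafification $\bG_{\zero^{\,\uparrow}}$ are unipotent. The main obstacle is really the uniqueness part of (a): the termination argument for surjectivity is routine once heights are bounded, but injectivity of the ordered-product parametrization requires the descending induction on height sketched above, since the commutation relations only give us that the map is surjective set-theoretically. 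Once (a) is secured, the remaining parts (b)--(d) follow essentially by the bookkeeping of Lemma~\ref{comm_1-pssg} combined with the height filtration.
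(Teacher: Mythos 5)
Your proof is correct and follows essentially the same route as the paper: the surjectivity in \textit{(a)} is the same reordering argument driven by the strict height increase of the correction terms in Lemma \ref{comm_1-pssg} together with the boundedness of the height, and \textit{(b)}, \textit{(c)} are handled identically (normality checked on generators via Lemma \ref{comm_1-pssg}\textit{(c)} and \textit{(a.2)}--\textit{(a.3)}, then $G_\zero = \big\langle G_0 \, , G_{\zero^{\,\uparrow}} \big\rangle$ plus normality gives the product decomposition, and closedness follows from the classical fact that the product of a closed subgroup with a closed subgroup it normalizes is closed). Two remarks. First, part \textit{(a)} as stated only asserts the set-theoretic equality $ \, G_{\zero^{\,\uparrow}}(A) = \prod_{\talpha} x_{\talpha}(A) \, $; uniqueness of the factorization is not claimed here --- it is established later, in Lemma \ref{closed_roots} and Proposition \ref{factorization}, by the classical weight-component comparison of Steinberg. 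Your injectivity sketch is therefore superfluous, and as phrased it is the one shaky spot: a weight vector of maximal height is annihilated by every $X_{\talpha}$ with $\talpha \in \tDelta_{\zero^{\,\uparrow}}$, so acting on it gives no information; the workable version compares, for an arbitrary weight vector $v_\mu$, the components of $g.v_\mu$ in the spaces $V_{\mu+\alpha}$ with $\text{\it ht}(\alpha)$ minimal among the roots occurring. Second, for \textit{(d)} the paper simply invokes classical theory via the embeddings $ \, G_{\zero^{\,\uparrow}} \leq \bG_{\zero^{\,\uparrow}} \leq \rGL(V) \, $ and the nilpotency of $ \, \Lie\big(\bG_{\zero^{\,\uparrow}}\big) = \fg_{\zero^{\,\uparrow}} \, $; your direct argument (each $X_{\talpha}$ strictly raises the $\cE$--eigenvalue, hence all generators are simultaneously strictly upper triangular in a basis of $V$ adapted to the height filtration) is an equally valid and somewhat more self-contained alternative.
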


\begin{proof}
 {\it (a)} \,  The formulas for commutators in  Lemma \ref{comm_1-pssg}  imply that any (unordered) product of several factors  $ x_{\talpha}(t_{\talpha}) $  with  $ \, \talpha \in \tDelta_{\zero^{\,\uparrow}} \, $  can be reordered.  In fact, whenever we have a couple of consecutive unordered factors, say  $ \; x_{\talpha_1}\!(t_{\talpha_1}) \; x_{\talpha_2}\!(t_{\alpha_2}) \; $,  \, we can re-write their product as
 \vskip-7pt
  $$  x_{\talpha_1}\!(t_{\talpha_1}) \; x_{\talpha_2}\!(t_{\alpha_2})  \; = \;  \big( x_{\talpha_1}\!(t_{\talpha_1}) \, , \, x_{\talpha_2}\!(t_{\alpha_2}) \big) \cdot x_{\talpha_2}\!(t_{\talpha_2}) \; x_{\talpha_1}\!(t_{\alpha_1})  $$
Then formula (4.2) for  $ \, \big( x_{\talpha_1}\!(t_{\talpha_1}) \, , \, x_{\talpha_2}\!(t_{\alpha_2}) \big) \, $  tells us that the commutator is either 1, or a product of several  $ x_{\talpha}\!(t_{\talpha}) $  such that
 $ \, \text{\it ht} \big( \pi \big( \talpha \big) \big)  \, \gneqq \,  \text{\it ht} \big( \pi \big( \talpha_1 \big) \big) \, $,  $ \, \text{\it ht} \big( \pi \big( \talpha \big) \big)  \, \gneqq \,  \text{\it ht} \big( \pi \big( \talpha_2 \big) \big) \, $
(cf.~\S \ref{def_Cartan-subalg_roots_etc}).  Therefore, we can iterate this process in order to commute all unordered pairs of factors, up to (possibly) introducing new factors.  However, the above shows that these new factors, if any, will be attached to roots with greater height: as the height is bounded from above, we shall end up with trivial new factors, i.e.~after finitely many steps all pairs can be reordered without introducing new factors.
                                                                    \par
   As a consequence, the multiplication map  $ \; \bigtimes_{\;\talpha \in \tDelta_{\zero^{\,\uparrow}}} x_{\talpha}(A) \, \relbar\joinrel\longrightarrow \, G_V(A) \; $   --- the product on left-hand side being ordered ---   yields a surjection onto  $ \, G_{\zero^{\,\uparrow}}(A) \, $,  realized as  $ \; G_{\zero^{\,\uparrow}}(A) \, = \, \prod_{\talpha \in \tDelta_{\zero^{\,\uparrow}}} \! x_{\talpha}(A) \; $.

\smallskip

   {\it (b)} \,  Again  Lemma \ref{comm_1-pssg}  gives that  $ \, \prod_{\talpha \in \tDelta_{\zero^{\,\uparrow}}} x_{\talpha}(A) \, $  is normalized by  $ G_0 \, $;  then by  {\it (a)},  we deduce that  $ \; G_{\zero^{\,\uparrow}} \trianglelefteq G_\zero \; $,  \, whence  $ \; \bG_{\zero^{\,\uparrow}} \trianglelefteq \bG_\zero \; $  follows too.

\smallskip

   {\it (c)} \,  This follows easily by construction, namely from  $ \, G_0 \leq G_\zero \, $,  $ \, \bG_0 \leq \bG_\zero \, $  and  $ \; G_\zero = \big\langle \, G_0 \, , \, G_{\zero^{\,\uparrow}} \big\rangle \; $,  \, along with  {\it (b)}\,.  By classical theory of algebraic groups, as  $ \bG_0 $  and  $ \bG_{\zero^{\,\uparrow}} $  are closed subgroups of  $ \rGL(V) $  and  $ \bG_0 $  normalizes  $ \bG_{\zero^{\,\uparrow}} $  one argues that the last part of claim  {\it (c)\/}  holds too.

\smallskip

   {\it (d)} \,  This follows from the classical theory of affine group-schemes, because we have embeddings  $ \; G_{\zero^{\,\uparrow}} \! \leq \bG_{\zero^{\,\uparrow}} \! \leq \rGL(V) \, $,  \, and the tangent Lie algebra of  $ \, \bG_{\zero^{\,\uparrow}} \, $,  i.e.~$ \, \Lie\,\big(\bG_{\zero^{\,\uparrow}}\big) = \fg_{\zero^{\,\uparrow}} \, $,  \, is nilpotent.
\end{proof}

\vskip6pt

   The same (type of) arguments proves also the following:

\vskip9pt

\begin{proposition}  \label{descr-G_zero^pm}
 \; Fix any total order  $ \, \preceq \, $  in  $ \, \tDelta^\pm_{\zero^{\,\uparrow}} \, $.
%
%
 Then we have:
 \vskip7pt
   {\it (a)}  $ \;\; G^\pm_{\zero^{\,\uparrow}}(A) = \prod_{\talpha \in \tDelta^\pm_{\zero^{\,\uparrow}}} x_{\talpha}(A) \;\; $  for all  $ \, A \in \salg_\bk \; $,  the product being ordered according to  $ \preceq \, $;
 \vskip5pt
   {\it (b)}  \quad  $ G^\pm_{\zero^{\,\uparrow}} \! \trianglelefteq G^\pm_\zero \;\; $  and  $ \;\; \bG^\pm_{\zero^{\,\uparrow}} \! \trianglelefteq \bG^\pm_\zero \; $.
%
 \vskip5pt
   {\it (c)}  \quad  $ G^\pm_\zero = G^\pm_0 \cdot G^\pm_{\zero^{\,\uparrow}} \;\; $  and  $ \;\; \bG^\pm_\zero = \bG^\pm_0 \cdot \bG^\pm_{\zero^{\,\uparrow}} \; $.
%
 \vskip5pt
   {\it (d)}  \;  the group functors  $ \, G^\pm_{\zero^{\,\uparrow}} \, $  and  $ \, \bG^\pm_{\zero^{\,\uparrow}} \, $  are both unipotent.
\end{proposition}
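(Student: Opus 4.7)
The strategy is to mimic the argument used for Proposition \ref{descr-G_zero}, replacing everywhere $\tDelta_{\zero^{\,\uparrow}}$ by $\tDelta^\pm_{\zero^{\,\uparrow}}$ and $\tDelta_\zero$ by $\tDelta^\pm_\zero$, and to check that the inductive reordering stays inside the ``positive'' (or ``negative'') half. First I would note the two stability properties that make this possible: for any $\alpha,\beta \in \Delta^\pm$ with $\alpha+\beta\in\Delta$ one has $\alpha+\beta\in\Delta^\pm$ (since positivity is defined by $\alpha(h)\gtrless 0$ for a fixed regular $h$, cf.~\S \ref{triang-dec_Borel-subs_Lie-sub-objs}), and any $\Z$-linear combination with positive integer coefficients $r\alpha+s\beta$ that happens to lie in $\Delta$ is again in $\Delta^\pm$. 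Moreover, if both $\alpha,\beta$ lie in $\Delta_\zero$ then $r\alpha+s\beta$, when a root, is automatically in $\Delta_\zero$, and its height satisfies $|\text{\it ht}(r\alpha+s\beta)| > \max\{|\text{\it ht}(\alpha)|, |\text{\it ht}(\beta)|\}$ when $r,s\geq 1$ and both heights have the same sign (as is the case inside a single $\Delta^\pm_{\zero^{\,\uparrow}}$).

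For part \textit{(a)} I would fix the total order $\preceq$ and take any unordered product of factors $x_{\talpha}(t_{\talpha})$ with $\talpha\in\tDelta^\pm_{\zero^{\,\uparrow}}$. If two adjacent factors $x_{\talpha_1}(t_{\talpha_1})\,x_{\talpha_2}(t_{\talpha_2})$ appear in the wrong order, I would rewrite
\[
 x_{\talpha_1}(t_{\talpha_1})\,x_{\talpha_2}(t_{\talpha_2}) \;=\; \bigl(x_{\talpha_1}(t_{\talpha_1})\,,\,x_{\talpha_2}(t_{\talpha_2})\bigr)\cdot x_{\talpha_2}(t_{\talpha_2})\,x_{\talpha_1}(t_{\talpha_1})
\]
and expand the commutator via Lemma \ref{comm_1-pssg}\textit{(a)}: it is a product of terms $x_{\gamma,t}(\cdot)$ with $\gamma=r\alpha_1+s\alpha_2\in\Delta$, $r,s\geq 1$, and by the stability properties above these $\gamma$ still belong to $\Delta^\pm_{\zero^{\,\uparrow}}$ with strictly larger $|\text{\it ht}|$. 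Since $|\text{\it ht}|$ is bounded on $\Delta$, iterating this process must terminate after finitely many steps with a completely $\preceq$-ordered product, giving the claimed surjection from $\bigtimes_{\talpha\in\tDelta^\pm_{\zero^{\,\uparrow}}} x_{\talpha}(A)$ onto $G^\pm_{\zero^{\,\uparrow}}(A)$.

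For part \textit{(b)}, I would check normality on the two families of generators of $G^\pm_\zero$. Conjugation by $h_H(u)$ for $H\in\fh_\Z$ is handled by Lemma \ref{comm_1-pssg}\textit{(c)}, which sends $x_{\talpha}(\bu)$ to $x_{\talpha}(u^{\alpha(H)}\bu)\in G^\pm_{\zero^{\,\uparrow}}(A)$. Conjugation by $x_{\tbeta}(s)$ with $\tbeta\in\tDelta^\pm_\zero$ produces $x_{\talpha}(t)$ times the commutator $(x_{\tbeta}(s),x_{\talpha}(t))$, which by Lemma \ref{comm_1-pssg}\textit{(a)} expands as a product of $x_{\gamma,t'}(\cdot)$ with $\gamma=r\beta+s\alpha\in\Delta^\pm$; and since $s\geq 1$ and $\text{\it ht}(\alpha)\neq 0$ has definite sign, $\text{\it ht}(\gamma)$ has the same nonzero sign, so $\gamma\in\Delta^\pm_{\zero^{\,\uparrow}}$. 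Thus conjugation stays inside $G^\pm_{\zero^{\,\uparrow}}(A)$, and the sheafified statement $\bG^\pm_{\zero^{\,\uparrow}}\trianglelefteq\bG^\pm_\zero$ follows by sheafification. Part \textit{(c)} is then immediate: $G^\pm_\zero$ is generated by $G^\pm_0$ together with $G^\pm_{\zero^{\,\uparrow}}$, and since $G^\pm_{\zero^{\,\uparrow}}$ is normal the product $G^\pm_0\cdot G^\pm_{\zero^{\,\uparrow}}$ is already a subgroup containing all generators; the same after sheafification. Finally, for \textit{(d)}, I would invoke the classical theory as in Proposition \ref{descr-G_zero}\textit{(d)}: we have embeddings $G^\pm_{\zero^{\,\uparrow}}\leq\bG^\pm_{\zero^{\,\uparrow}}\leq\rGL(V)$, and $\Lie(\bG^\pm_{\zero^{\,\uparrow}})=\fg^\pm_{\zero^{\,\uparrow}}$ is nilpotent (as a subalgebra of the nilpotent $\fg_{\zero^{\,\uparrow}}$), so $\bG^\pm_{\zero^{\,\uparrow}}$ is unipotent. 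The only delicate point is the parity/height bookkeeping in part \textit{(a)}, which is where the argument genuinely depends on the choice of regular $h$ being compatible with the fixed Borel $\fb_0\subseteq\fg_0$ (as specified in \S \ref{triang-dec_Borel-subs_Lie-sub-objs}); everything else is a transcription of the proof of Proposition \ref{descr-G_zero}.
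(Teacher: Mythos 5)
Your proposal is correct and follows exactly the route the paper intends: the paper proves this proposition by simply noting that ``the same (type of) arguments'' as for Proposition \ref{descr-G_zero} apply, and your write-up supplies precisely the missing verification, namely that $\Delta^\pm$ is stable under taking sums $r\alpha+s\beta$ that are roots (since $\alpha(h),\beta(h)$ have a common sign) while the height argument forces termination of the reordering. The only cosmetic point is in part \textit{(b)}, where the sign of $\text{\it ht}(\gamma)$ governs membership in $\Delta_{\zero^{\,\uparrow}}$ but not in $\Delta^\pm$ (that is governed by $\gamma(h)$, which you already handled via the stability property), so the two conditions should not be conflated.
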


\vskip9pt

   Proposition \ref{descr-G_zero}  and  Proposition \ref{descr-G_zero^pm}  can also be improved as follows:
 \eject

\begin{proposition} \label{semi-direct-G_zero}
 The group product yields group-functor isomorphisms
%
%
  $$  G_\zero \, \cong \, G_0 \ltimes G_{\zero^{\,\uparrow}}  \; ,  \quad  G^\pm_\zero \, \cong \, G^\pm_0 \ltimes G^\pm_{\zero^{\,\uparrow}}
   \qquad  \text{and}  \qquad
      \bG_\zero \, \cong \, \bG_0 \ltimes \bG_{\zero^{\,\uparrow}}  \; ,  \quad  \bG^\pm_\zero \, \cong \, \bG^\pm_0 \ltimes \bG^\pm_{\zero^{\,\uparrow}} \; .  $$
\end{proposition}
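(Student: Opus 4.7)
The plan is to reduce everything to proving that $G_0(A) \cap G_{\zero^{\,\uparrow}}(A) = \{1\}$ (together with its $\pm$-analogues) for every $A \in \salg_\bk$, after which the semidirect product descriptions follow formally. Indeed, Propositions \ref{descr-G_zero} and \ref{descr-G_zero^pm} already provide the normality $G_{\zero^{\,\uparrow}} \trianglelefteq G_\zero$ (and its $\pm$-version) and the product decomposition $G_\zero = G_0 \cdot G_{\zero^{\,\uparrow}}$, so the multiplication map $G_0 \ltimes G_{\zero^{\,\uparrow}} \to G_\zero$ is a well-defined, surjective group-functor homomorphism; triviality of the intersection exactly supplies the missing injectivity. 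My plan is to exploit the ``height grading'' of $V$: when $\fg$ is of type $W$, $S$ or $H$ the element $\cE$ lies in $\overline{\fh}$ (cf.\ \S\ref{def_Cartan-subalg_roots_etc}), and rationality of $V$ as a $\overline{\fg}$-module (Definition \ref{def_weight-modules-etc}) endows $V$ with an integer $\cE$-eigenspace decomposition $V = \bigoplus_{k \in \Z} V^{(k)}$ that respects the admissible lattice $M$, hence splits $V_\bk(A) = \bigoplus_k V^{(k)}_\bk(A)$ for any $A$.

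Two observations are then decisive. First, every generator $h_H(u)$ or $x_{\talpha}(\bt)$ of $G_0(A)$ (with $H \in \fh_\Z$ and $\talpha \in \tDelta_0$) preserves each summand $V^{(k)}_\bk(A)$, because $\cE$ commutes with $\fh_\Z$ and with all root vectors $X_{\talpha}$ of height zero. Second, every generator $x_{\talpha}(\bt)$ of $G_{\zero^{\,\uparrow}}(A)$ (with $\talpha \in \tDelta_{\zero^{\,\uparrow}}$, so $\text{\it ht}(\alpha) > 0$) acts on each $V^{(k)}_\bk(A)$ as the identity plus an operator landing in $\bigoplus_{l > k} V^{(l)}_\bk(A)$; hence every element of $G_{\zero^{\,\uparrow}}(A)$ acts as the identity on the associated graded of the descending filtration $F^{\geq k} V_\bk(A) := \bigoplus_{l \geq k} V^{(l)}_\bk(A)$. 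If now $g \in G_0(A) \cap G_{\zero^{\,\uparrow}}(A)$, then $g$ preserves each $V^{(k)}_\bk(A)$ while acting as the identity on every subquotient $F^{\geq k}/F^{\geq k+1} \cong V^{(k)}_\bk(A)$; hence $g$ is the identity on all of $V_\bk(A)$, and faithfulness of $V$ forces $g = 1$. Restricting the same argument to positive or negative roots gives $G^\pm_0(A) \cap G^\pm_{\zero^{\,\uparrow}}(A) = \{1\}$, and sheafification is then formal: any element of $\bG_0(A) \cap \bG_{\zero^{\,\uparrow}}(A)$ lies, on some Zariski cover of $A$, in both $G_0$ and $G_{\zero^{\,\uparrow}}$, hence equals $1$ locally, hence equals $1$ globally by the sheaf property.

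The genuine technical obstacle is the case $\fg = \tS(n)$, for which $\cE \notin \overline{\fg} = \fg$ and the above $\cE$-eigenspace decomposition on $V$ is not directly available. My plan there is to substitute the $\Z$-filtration on $\fg$ induced from its embedding $\tS(n) \hookrightarrow W(n)$ (whose associated graded is $S(n)$, cf.\ \S\ref{def-tildeS(n)}), and transport it to a compatible descending filtration on $V$ via the action of $U(\fg)_{\leq \,\bullet\,}$ on a highest-weight generator. The two decisive observations then become: $\fg_0$-generators preserve each filtered piece (since $[\fg_0,\fg_z]\subseteq\fg_z$ modulo lower filtration), while every $\fg_{\zero^{\,\uparrow}}$-generator strictly raises the filtration degree; so the ``associated graded'' version of the argument above, run inside the graded module over $S(n) = \text{gr}\,\tS(n)$ where $\cE$ does act, still produces $g = 1$ after invoking faithfulness and lifting back. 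The delicate point is to verify that this filtration on $V$ is finite, non-trivial, and enjoys all the required degree compatibilities despite the absence of an honest $\Z$-grading; once that is in place, the remainder of the argument proceeds uniformly across all four Cartan types and yields the desired isomorphisms $G_\zero \cong G_0 \ltimes G_{\zero^{\,\uparrow}}$, $G^\pm_\zero \cong G^\pm_0 \ltimes G^\pm_{\zero^{\,\uparrow}}$, and their sheafified counterparts.
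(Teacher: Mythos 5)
Your reduction is the same as the paper's: normality and the product decomposition from Propositions \ref{descr-G_zero} and \ref{descr-G_zero^pm} leave only the triviality of $G_0(A)\cap G_{\zero^{\,\uparrow}}(A)$ to be proved, and the sheafification step is formal. For types $W$, $S$ and $H$ your argument via the $\cE$--eigenspace decomposition of $V$ is sound: height-zero generators commute with $\cE$ and hence preserve each eigenspace, while generators indexed by $\tDelta_{\zero^{\,\uparrow}}$ strictly raise the eigenvalue, so an element of the intersection must act trivially and faithfulness finishes. This is essentially a coarsened version of what the paper does.

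The genuine gap is the case $\fg=\tS(n)$, which you flag but do not close. Your proposed substitute --- a filtration on $V$ obtained from $U(\fg)_{\leq\bullet}$ acting on a highest-weight generator --- does not get off the ground as stated: $V$ is only assumed faithful, rational and finite dimensional, not cyclic or highest-weight, so there is no such generator in general; and even granting a filtration, you would still have to prove that $G_0$-generators preserve it and that $G_{\zero^{\,\uparrow}}$-generators strictly raise it, none of which is verified. The paper avoids this entirely by working with the \emph{full} $\fh$--weight decomposition $V=\oplus_\mu V_\mu$ (available uniformly for all four types, including $\tS(n)$, by Proposition \ref{split_adm-latt}) rather than the coarser $\cE$--grading: one observes that $g_0.v_\mu$ lies in $\oplus_{\beta\in\N\tDelta_0}V_{\mu+\beta}(A)$ while $g_\uparrow.v_\mu$ lies in $v_\mu+\oplus_{\gamma\in\N\tDelta_{\zero^{\,\uparrow}}\setminus\{0\}}V_{\mu+\gamma}(A)$, and the disjointness $\N\tDelta_0\cap\N\tDelta_{\zero^{\,\uparrow}}=\{0\}$ forces both to equal $v_\mu$, whence $g_0=e=g_\uparrow$ by faithfulness. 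Replacing your type-by-type grading argument with this weight-space comparison closes the $\tS(n)$ case and makes the whole proof uniform.
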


\begin{proof}
 The right-hand side pair of isomorphisms clearly follows from the left-hand side one.  As for the latter, we have to prove that  $ \; G_\zero(A) \, \cong \, G_0(A) \ltimes G_{\zero^{\,\uparrow}}\!(A) \; $  and  $ \; G^\pm_\zero(A) \, \cong \, G^\pm_0(A) \ltimes G^\pm_{\zero^{\,\uparrow}}\!(A) \; $  for every  $ \, A \in \salg_\bk \, $,  and also to show that these isomorphisms are functorial in  $ A \, $:  this second part will be trivial, so we cope just with the first one.  Actually, we prove  $ \; G_\zero(A) \, \cong \, G_0(A) \ltimes G_{\zero^{\,\uparrow}}\!(A) \; $  only, for the proof of  $ \; G^\pm_\zero(A) \, \cong \, G^\pm_0(A) \ltimes G^\pm_{\zero^{\,\uparrow}}\!(A) \; $  is quite the same.

\smallskip

   For any  $ \, A \in \salg_\bk \, $,  we know by  Proposition \ref{descr-G_zero}  that  $ \, G_0(A) \leq G_\zero(A) \, $,  $ \, G_{\zero^{\,\uparrow}}\!(A) \trianglelefteq G_0(A) \, $  and  $ \; G_\zero(A) \, = \, G_0(A) \cdot G_{\zero^{\,\uparrow}}\!(A) \; $.  Thus we are only left to prove  $ \; G_0(A) \, \bigcap \, G_{\zero^{\,\uparrow}}\!(A) \, = \big\{ e_{{}_{G_V}} \!\big\} \; $.

\medskip

   Let  $ \, A \in \salg_\bk \, $,  and let  $ \, g \in G_0(A) \, \bigcap \, G_{\zero^{\,\uparrow}}\!(A) \, $:  then  $ \, g = g_0 \in G_0(A) \, $  and  $ \, g = g_\uparrow \in G_{\zero^{\,\uparrow}}\!(A) \, $,  in particular  $ \, g_0 = g_\uparrow \, $.  Now let  $ V $  be the  $ \fg $--module  we use to define  $ G_V \, $,  $ \bG_V \, $,  etc., splitting as  $ \, V = \oplus_\mu V_\mu \, $  into direct sum of weight spaces.  All root vectors of  $ \fg $  map weight spaces into weight spaces, namely  $ \, X_{\teta}.V_\mu \subseteq V_{\mu + \eta} \, $  if  $ \, X_{\teta} \in \fg_\eta $  (for each root  $ \eta $  and every weight  $ \mu \, $).  This implies that, for all weights  $ \mu $  and  $ \, v_\mu \in V_\mu(A) \, $,  $ \, H \in \fh_\Z \, $,  $ \, \talpha \in \tDelta \, $  and  $ \, \alpha := \pi\big(\talpha\big) \, $,  one has (notation of \S \ref{def_Cartan-subalg_roots_etc})
  $$  \hskip7pt   h_H(A)\,.\,v_\mu \, \in \, A_\zero \, v_\mu \subseteq V_\mu(A)  \; ,  \qquad  x_{\talpha}(A)\,.\,v_\mu \, \in \, v_\mu + \big( {\textstyle \bigoplus}_{n \in \N_+} V_{\mu + n \alpha}(A) \big)   \eqno (4.4)  $$
%
   \indent
   Now, by definition,  $ G_0(A) $  is generated by all the  $ h_H(A) $  and all the  $ x_{\talpha}(A) $ with $ \, \talpha \in \tDelta_0 \, $;  similarly,  $ G_{\zero^{\,\uparrow}}(A) $  is generated by all the  $ x_{\talpha}(A) $  with $ \, \talpha \in \tDelta_{\zero^{\,\uparrow}} \, $.  This together with (4.4) implies
  $$  \hskip3pt   g_0\,.\,v_\mu \, \in \, {\textstyle \bigoplus_{\beta \in \N \tDelta_0}} V_{\mu + \beta}(A)  \;\; ,  \qquad  g_\uparrow\,.\,v_\mu \, \in \, v_\mu + \big( {\textstyle \bigoplus_{\gamma \in \N \tDelta_{\zero^{\,\uparrow}} \setminus \{0\}}} V_{\mu + \gamma}(A) \big)   \eqno (4.5)  $$
for any weight  $ \mu $  and any  $ \, v_\mu \in V_\mu \, $,  where  $ \, \N \tDelta_0 \, $  and  $ \, \N \tDelta_{\zero^{\,\uparrow}} \, $  are the  $ \N $--span  of  $ \tDelta_0 $  and of  $ \tDelta_{\zero^{\,\uparrow}} $  respectively.  Definitions give also  $ \, \N \tDelta_0 \! \cap \N \tDelta_{\zero^{\,\uparrow}} = \{0\} \, $:  therefore, from (4.5) and  $ \, g_0 = g_\uparrow \, $  we infer that  $ \; g_0\,.\,v_\mu = v_\mu = g_\uparrow\,.\,v_\mu  \; $.  Since  $ \mu $  and  $ \, v_\mu \in V_\mu(A) \, $  were arbitrarily chosen, and since  $ G_V(A) $  acts faithfully on  $ V(A) \, $,  we eventually conclude that  $ \; g_0 = e_{{}_{G_V}} \! = g_\uparrow \; $.
\end{proof}

\bigskip

   Like in the classical case of Chevalley groups, one has also the following auxiliary result:

\medskip

\begin{lemma}  \label{closed_roots}
 Let  $ \, \widetilde{\mathcal{S}} \subseteq \tDelta \, $  and  $ \, \mathcal{S} := \Big\{ \pi\big(\talpha\big) \,\Big|\, \talpha \in \widetilde{\mathcal{S}} \,\Big\}_{{}_{\phantom{|}}} $  (cf.~\S \ref{def_Cartan-subalg_roots_etc}).  Assume that  $ \mathcal{S} $  is  {\sl closed},  i.e.~$ \, \alpha, \beta \in \mathcal{S} \, $  and  $ \, \alpha + \beta \in \Delta \, $  imply that  $ \, \alpha + \beta \in \mathcal{S} \, $;  assume also that  $ \, \alpha \in \mathcal{S} \, $  implies  $ \, -\alpha \not\in \mathcal{S} \, $,  let  $ \, G_{\widetilde{\mathcal{S}}} := \Big\langle\, x_{\talpha} \,\Big|\, \talpha \in \widetilde{\mathcal{S}} \;\Big\rangle \, $  be the full subfunctor of  $ \, G_V $  generated by the one-parameter subgroups indexed by the elements in  $ \widetilde{\mathcal{S}} $,  and let  $ \, \bG_{\widetilde{\mathcal{S}}} \, $  be the sheafification of  $ \, G_{\widetilde{\mathcal{S}}} \, $.
                                                      \par
   For any total order in  $ \widetilde{\mathcal{S}} $,  the group product yields scheme isomorphisms
 $ \; \bigtimes_{\;\talpha \in \widetilde{\mathcal{S}}} \; x_{\talpha} \, \cong \, G_{\widetilde{\mathcal{S}}} \; $,
 $ \; \bigtimes_{\;\talpha \in \widetilde{\mathcal{S}}} \; x_{\talpha} \, \cong \, \bG_{\widetilde{\mathcal{S}}} \; $,  \,
where the direct products on the left-hand side are ordered ones.
                                                      \par
   In particular, one has  $ \; G_{\widetilde{\mathcal{S}}} \, = \bG_{\widetilde{\mathcal{S}}} \, \cong \mathbb{A}^{s_\zero | s_\uno} \, $  as superschemes, where  $ \; s_{\overline{z}} \, := \Big|\; \widetilde{\mathcal{S}} \,\bigcap\, \tDelta_{\overline{z}} \;\Big| \; $.
\end{lemma}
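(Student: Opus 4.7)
The plan is to establish the lemma in three stages: surjectivity of the ordered-product map, its injectivity, and representability (which then forces $G_{\widetilde{\mathcal{S}}} = \bG_{\widetilde{\mathcal{S}}}$).

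First, I would verify that the ordered-product map
\[
\mu \, : \, \bigtimes_{\;\talpha \in \widetilde{\mathcal{S}}} x_{\talpha} \; \longrightarrow \; G_{\widetilde{\mathcal{S}}}
\]
(with the factors taken in the chosen order $\preceq$) is surjective on $A$--points for every $A \in \salg_\bk \, $. By definition every element of $G_{\widetilde{\mathcal{S}}}(A)$ is an unordered product of generators $x_{\talpha}(\mathbf{t})$ with $\talpha \in \widetilde{\mathcal{S}} \, $. Lemma \ref{comm_1-pssg}(a) shows that when a consecutive pair $x_{\talpha}(\mathbf{p})\,x_{\tbeta}(\mathbf{q})$ is in the wrong order, their commutator equals a product of factors $x_{\tgamma}(\cdot)$ with $\pi(\tgamma) = r\pi(\talpha) + s\pi(\tbeta) \, $, $r,s \in \N_+ \, $. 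The closedness of $\mathcal{S}$ forces $\pi(\tgamma) \in \mathcal{S} \, $, so every new factor stays inside $\widetilde{\mathcal{S}} \, $. The hypothesis $\mathcal{S} \cap (-\mathcal{S}) = \emptyset$, combined with the finiteness of $\mathcal{S} \, $, ensures that $\mathcal{S}$ lies in an open half-space of ${\overline{\fh}}^{\,*}_{\mathbb{R}}$: hence a $\mathbb{Q}$--linear functional $\phi$ can be chosen that is strictly positive on $\mathcal{S} \, $. Assigning to each reorderable product the pair (sum of $\phi$--values of the factors, lexicographic position with respect to $\preceq$) produces a well-founded measure which strictly decreases under each swap; this guarantees termination in finitely many steps.

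Second, I would prove injectivity of $\mu$ on $A$--points. Write $\widetilde{\mathcal{S}} = \{\talpha_1 \prec \cdots \prec \talpha_N\}$ and suppose $\mu(\mathbf{t}_{\talpha_1}, \dots, \mathbf{t}_{\talpha_N}) = \mu(\mathbf{t}'_{\talpha_1}, \dots, \mathbf{t}'_{\talpha_N})$. Since $V$ is a faithful weight $\fg$--module (hence also a weight $\overline{\fg}$--module) and the roots in $\mathcal{S}$ all lie on one side of $\phi \, $, the action on a fixed weight vector $v_\mu \in V_\mu(A)$ produces, in each weight component $V_{\mu + \eta}(A)$ with $\eta \in \N \, \mathcal{S} \, $, a polynomial expression in the $\mathbf{t}_{\talpha}$'s whose ``lowest--$\phi$'' contribution comes from a single factor. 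Descending through $\preceq$ and matching the components $V_{\mu + \pi(\talpha_j)}(A)$ for suitable weights $\mu \, $, one recovers each $\mathbf{t}_{\talpha_j} \, $, forcing equality of the two tuples. Here one must separately handle the even and odd cases of Definition \ref{add_one-param-ssgrs}: for odd $\talpha_j$ the factor $x_{\talpha_j}(\vartheta) = 1 + \vartheta\,X_{\talpha_j}$ is linear in $\vartheta$ while for even $\talpha_j$ one has a full exponential, but in both situations $X_{\talpha_j}$ maps distinct weights to distinct weights and the linear independence of the $X_{\talpha}$ ($\talpha \in \tDelta$) within $\fg$ is what ultimately yields injectivity.

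Third and last, I would conclude: $\bigtimes_{\;\talpha \in \widetilde{\mathcal{S}}} x_{\talpha}$ is a direct product of affine superschemes of dimensions $1|0$ and $0|1$ (by Proposition \ref{hopf-alg}), so it represents the superscheme $\mathbb{A}^{s_\zero|s_\uno}_\bk$ with $s_{\overline{z}} = |\widetilde{\mathcal{S}} \cap \tDelta_{\overline{z}}| \, $. The bijection of set--valued functors $\mu$ then identifies $G_{\widetilde{\mathcal{S}}}$ with this representable functor; as every representable functor on $\salg_\bk$ is automatically a Zariski sheaf, $G_{\widetilde{\mathcal{S}}} = \bG_{\widetilde{\mathcal{S}}}$ and both coincide with $\mathbb{A}^{s_\zero|s_\uno}_\bk$ as superschemes.

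The main obstacle I anticipate is injectivity (Step 2): the termination in Step 1 is essentially combinatorial once the half-space geometry of $\mathcal{S}$ is exploited, and Step 3 is a formal consequence. The delicacy in Step 2 is super-theoretic: one cannot naively ``extract first-order terms'' because the base $A$ has no grading, and odd parameters interact non-trivially in products of adjacent odd factors (cf.\ Lemma \ref{comm_1-pssg}(b), where odd commutators fall into the torus). The careful, order-wise extraction of parameters using weight components is therefore the key technical point.
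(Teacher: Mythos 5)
Your overall strategy --- reorder via the commutator formulas of Lemma \ref{comm_1-pssg} for surjectivity, extract the parameters weight--component by weight--component for injectivity, then deduce representability and hence the sheaf property --- is exactly the route the paper takes: its proof reduces to the bijectivity of the ordered product map on $A$--points and then invokes ``the classical argument'' of \cite{st}, \S 3, with Lemma \ref{comm_1-pssg} as the only new ingredient (your Step~2 is essentially the mechanism later spelled out in Lemma \ref{pre-crucial}). So in outline your proposal and the paper's proof coincide; you are making explicit what the paper leaves to Steinberg.

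One step, however, is justified incorrectly as written: the claim that ``$\mathcal{S} \cap (-\mathcal{S}) = \emptyset$, combined with the finiteness of $\mathcal{S}$, ensures that $\mathcal{S}$ lies in an open half-space.'' That implication is false --- three vectors summing to zero, no two of which are opposite, form a finite asymmetric set contained in no open half-space. What you actually need is that a \emph{closed} asymmetric subset of $\Delta$ lies in an open half-space. For classical crystallographic root systems this is a theorem of Bourbaki, but its proof rests on the root-string criterion ``$(\alpha,\beta)<0 \Rightarrow \alpha+\beta \in \Delta \cup \{0\}$,'' which is not available for the root systems of Cartan type (where, e.g., $\varepsilon_1+\cdots+\varepsilon_n=0$ in type $\tS(n)$ and $2\alpha$ may again be a root). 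Since both the termination of your reordering in Step~1 and the ``lowest--$\phi$'' filtration in Step~2 hinge on the functional $\phi$, you must either prove the half-space property for these $\Delta$'s directly, using closedness in an essential way, or note that in every application made of the lemma ($\widetilde{\mathcal{S}}$ equal to $\tDelta_{\zero^{\,\uparrow}}$, $\tDelta^\pm$, $\tDelta_0^\pm$, and so on) such a positive functional is available by construction (the height, or evaluation at the regular element $h$ of \S \ref{triang-dec_Borel-subs_Lie-sub-objs}). A minor further point: the commutator formula (4.2) can produce factors $x_{\gamma,t}$ for \emph{every} $t \le \mu(\gamma)$, whereas closedness only guarantees $\gamma \in \mathcal{S}$, not that the corresponding $\tgamma$ lies in $\widetilde{\mathcal{S}}$; your argument (like the lemma itself) tacitly assumes $\widetilde{\mathcal{S}} = \pi^{-1}(\mathcal{S})$, which does hold in all the cases where the lemma is used.
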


\begin{proof}
 Each one-parameter supersubgroup  $ \, x_{\talpha} \, $  is a representable supergroup, so (as a superscheme) it is a sheaf.  Any direct product of sheaves is itself a sheaf, so the left-hand side isomorphisms in the claim, once proved, implies that  $ \, G_{\widetilde{\mathcal{S}}} \, $  is already a sheaf, so it coincides with  $ \, \bG_{\widetilde{\mathcal{S}}} \, $.  Also, the superscheme  $ \, x_{\teta} \, $  is isomorphic to  $ \, \mathbb{A}^{1|0} \, $  or  $ \, \mathbb{A}^{0|1} \, $  according to whether  $ \, \pi\big(\, \teta \,\big) \, $  is even or odd  (cf.~Proposition \ref{hopf-alg}{\it (b)\/}),  so  $ \; \bigtimes_{\;\talpha \in \widetilde{\mathcal{S}}} \; x_{\talpha} \, \cong \, \mathbb{A}^{s_\zero | s_\uno} \; $  is clear.  So we are left to prove the claim for  $ \, G_{\widetilde{\mathcal{S}}} \, $.
                                                                 \par
   Our task is to show that, for any  $ \, A \in \salg_\bk \, $,  the product map  $ \;\; \bigtimes_{\;\talpha \in \widetilde{\mathcal{S}}} \, x_{\talpha}(A)  \, \longrightarrow \, G_{\widetilde{\mathcal{S}}}(A) \;\; $  in  $ \, G_{\widetilde{\mathcal{S}}} \, $  is a bijection: i.e., every  $ \, g \in G_{\widetilde{\mathcal{S}}} \, $  admits a unique factorization as an ordered product  $ \; g = \prod_{\talpha \in \widetilde{\mathcal{S}}} \, x_{\talpha}(\bt_{\talpha}) \; $ for some  $ \, \bt_{\talpha} \in A_\zero \cup A_\uno \; $.  This result can be found via the classical argument   --- cf.~\cite{st},  \S 3, pp.~24--25 ---   which now works again using  Lemma \ref{comm_1-pssg}  as the basic ingredient.
\end{proof}

\medskip

   A direct application of the previous lemma is the following
($ \, \widetilde{\mathcal{S}} \in \Big\{\, \tDelta_{\zero^{\,\uparrow}} \, , \, \tDelta^\pm_\zero \, , \, \tDelta^\pm_0 \, , \, \tDelta^\pm_{\zero^{\,\uparrow}} \, , \, \tDelta^\pm \Big\} $):

\medskip

\begin{proposition}  \label{factorization}
 Fix any total order in  $ \tDelta_{\zero^{\,\uparrow}} \, $,  in  $ \tDelta_\zero^\pm \, $,  in  $ \tDelta_0^\pm \, $,  in  $ \tDelta_{\zero^{\,\uparrow}}^\pm \, $,  in  $ \tDelta^\pm \, $.  Then the group product yields scheme isomorphisms
  $$  \displaylines{
   \bigtimes_{\;\talpha \in \tDelta_{\zero^{\,\uparrow}}} x_{\talpha}  \; \cong \;  G_{\zero^{\,\uparrow}}   \; \cong \;  \bG_{\zero^{\,\uparrow}}
\;\, ,  \hskip17pt
   \bigtimes_{\;\talpha \in \tDelta^\pm_\zero} \, x_{\talpha}  \; \cong \;  G^\pm_\zero  \; \cong \;  \bG^\pm_\zero
\;\, ,  \hskip17pt
   \bigtimes_{\;\talpha \in \tDelta^\pm_0} x_{\talpha}  \; \cong \;  G^\pm_0  \; \cong \;  \bG^\pm_0  \cr
   \bigtimes_{\;\talpha \in \tDelta^\pm_{\zero^{\,\uparrow}}} x_{\talpha}  \; \cong \;  G^\pm_{\zero^{\,\uparrow}}  \; \cong \;  \bG^\pm_{\zero^{\,\uparrow}}
\;\, ,  \hskip37pt
   \bigtimes_{\;\talpha \in \tDelta^\pm} x_{\talpha}  \; \cong \;  G^\pm  \; \cong \;  \bG^\pm  }  $$
where the direct product on the left is always ordered according to the fixed total order.  In particular
%
%
%
  $$  \displaylines{
   G_{\zero^{\,\uparrow}} \, \cong \, \bG_{\zero^{\,\uparrow}} \, \cong \, \mathbb{A}^{N_{\zero^{\,\uparrow}}|\, 0} \;\; ,  \qquad
   G^\pm_\zero \, \cong \, \bG^\pm_\zero \, \cong \, \mathbb{A}^{N^\pm_\zero|\, 0} \;\; ,  \qquad
   G^\pm_0 \, \cong \, \bG^\pm_0 \, \cong \, \mathbb{A}^{N_0|\, 0}  \cr
   G^\pm_{\zero^{\,\uparrow}} \, \cong \, \bG^\pm_{\zero^{\,\uparrow}} \, \cong \, \mathbb{A}^{N^\pm_{\zero^{\,\uparrow}}|\, 0} \;\; ,  \qquad \qquad
   G^\pm \, \cong \, \bG^\pm \, \cong \, \mathbb{A}^{N^\pm|\, 0}  }  $$
as superschemes, where  $ \, N_{\zero^{\,\uparrow}} \! := \big| \tDelta_{\zero^{\,\uparrow}} \big| \, $,  $ \, N^\pm_\zero \! := \big| \tDelta^\pm_\zero \big| \, $,  $ \, N_0 \! := \big| \tDelta_0 \big| \, $,  $ \, N^\pm_{\zero^{\,\uparrow}} \! := \big| \tDelta^\pm_{\zero^{\,\uparrow}} \big| \, $,  $ \, N^\pm \! := \big| \tDelta^\pm \big| \; $.
\end{proposition}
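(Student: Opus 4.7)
The plan is to deduce the proposition directly from Lemma \ref{closed_roots}. For each of the five index sets $\widetilde{\mathcal{S}} \in \big\{ \tDelta_{\zero^{\,\uparrow}}, \tDelta^\pm_\zero, \tDelta^\pm_0, \tDelta^\pm_{\zero^{\,\uparrow}}, \tDelta^\pm \big\}$ appearing in the statement, I would verify the two hypotheses needed to invoke that lemma: namely that the projection $\mathcal{S} := \pi\big(\widetilde{\mathcal{S}}\big) \subseteq \Delta$ is closed under root addition, and that $\alpha \in \mathcal{S}$ implies $-\alpha \notin \mathcal{S}$. Once these hypotheses are in hand, Lemma \ref{closed_roots} gives both the scheme-level factorization $\bigtimes_{\talpha \in \widetilde{\mathcal{S}}} x_{\talpha} \cong G_{\widetilde{\mathcal{S}}} \cong \bG_{\widetilde{\mathcal{S}}}$ and the identification with an affine superspace of the appropriate dimension.

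The verifications are routine and rest on three observations from \S \ref{def_Cartan-subalg_roots_etc}: additivity of the height, i.e.~$\text{\it ht}(\alpha+\beta) = \text{\it ht}(\alpha) + \text{\it ht}(\beta)$ whenever $\alpha+\beta \in \Delta$; compatibility of the $\Z_2$--grading with the height; and the fact that, having fixed a regular $h \in \fh_\mathbb{Q}$, the sign of $\alpha(h)$ is additive on sums of roots. For $\widetilde{\mathcal{S}} = \tDelta_{\zero^{\,\uparrow}}$, if $\alpha, \beta \in \Delta_{\zero^{\,\uparrow}}$ and $\alpha+\beta \in \Delta$, then $\text{\it ht}(\alpha+\beta) = \text{\it ht}(\alpha)+\text{\it ht}(\beta) \geq 2 > 0$, while the sum is still even; moreover no $\alpha$ of positive height coexists with $-\alpha$ (of negative height). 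For $\widetilde{\mathcal{S}} = \tDelta^\pm$ the sign condition alone yields both closure and the absence of opposite pairs. The cases $\tDelta^\pm_\zero$, $\tDelta^\pm_0$ and $\tDelta^\pm_{\zero^{\,\uparrow}}$ combine the two types of argument: positivity (or negativity) is preserved by sums, and the further constraints (even parity, zero height, or positive height, respectively) are also preserved thanks to the additivity of the height.

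Applying Lemma \ref{closed_roots} to each set then yields the first batch of isomorphisms. For the identifications with affine (super)spaces on the right, one only needs to count the parities of the elements of each $\widetilde{\mathcal{S}}$: for $\tDelta_{\zero^{\,\uparrow}}$, $\tDelta^\pm_\zero$, $\tDelta^\pm_0$, $\tDelta^\pm_{\zero^{\,\uparrow}}$ every element projects to an even root, so the odd dimension in Lemma \ref{closed_roots} vanishes; for $\tDelta^\pm$ the count splits into the even and odd indices (here I read $\mathbb{A}^{N^\pm|0}$ in the statement as referring to the even part of the splitting $\tDelta^\pm = \tDelta^\pm_\zero \sqcup \tDelta^\pm_\uno$, consistent with the earlier formulas in the same display).

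There is no real obstacle in this proof; the main book-keeping point to watch is the last case $\tDelta^\pm$, whose elements are of mixed parity, so that the product decomposition mixes commuting copies of $\mathbb{A}^{1|0}$ and $\mathbb{A}^{0|1}$. The heart of the work has already been absorbed into Lemma \ref{closed_roots}, which in turn relies on the commutator formulas of Lemma \ref{comm_1-pssg}; the present statement is then essentially a catalogue of closed, one-sided subsystems of $\Delta$ to which that lemma applies.
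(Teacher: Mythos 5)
Your proof is correct and is precisely the argument the paper intends: the proposition is introduced there as ``a direct application of the previous lemma'' (Lemma \ref{closed_roots}) with no further proof supplied, so the whole content is exactly the case-by-case verification of closure and one-sidedness of each $\mathcal{S}$ that you carry out, together with the parity count for the affine-space identification. The only point worth flagging is that Definition \ref{def_Cartan-subgroup_funct} formally lists the $h_i(A)$ among the generators of $G^\pm_0$ and $G^\pm_\zero$, which sits uneasily with the purely ``unipotent'' factorizations asserted here (and with the exponent $N_0$ in place of $N_0^\pm$); this is an infelicity of the paper's statement rather than a gap in your argument.
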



\vskip19pt

  \subsection{The functors  $ \bG_V $  as affine algebraic supergroups}  \label{cartan-sgroup_aff-alg}

\smallskip

   {\ } \quad   In this subsection we shall show that the supergroup functors  $ \bG_V $  defined in  subsection \ref{const-car-sgroups}  are (the functors of points of) affine supergroups, and also algebraic.  We need some more definitions:

\vskip15pt

\begin{definition}  \label{subgrps}
 For any  $ \, A \in \salg_\bk \, $,  we define the subsets of  $ G(A) $
 \vskip-11pt
%
%
  $$  G_\uno(A)  \, := \,  {\Big\{\, {\textstyle \prod_{\,i=1}^{\,m}} \,
x_{\tgamma_i}(\vartheta_i) \,\Big\}}_{m \in \N \, , \, \tgamma_i \in \tDelta_\uno \, , \, \vartheta_i \in A_\uno} \; ,  \quad
   G_\uno^\pm(A)  \, := \,  {\Big\{\, {\textstyle \prod_{\,i=1}^{\,m}} \,
x_{\tgamma_i}(\vartheta_i) \,\Big\}}_{m \in \N \, , \, \tgamma_i \in \tDelta^\pm_\uno \, , \, \vartheta_i \in A_\uno}  $$
 \vskip-5pt
\noindent
 Let  $ \, N_\pm := \Big| \tDelta_\uno^\pm \Big| \; $  and  $ \, N := \Big| \tDelta_\uno \Big| = N_+ + N_- \; $,  and fix total orders  $ \, \preceq \, $  in  $ \, \tDelta_\uno^\pm \, $  and  $ \, \tDelta_\uno \; $:  \, we set
 \vskip-7pt
  $$  \displaylines{
   G_\uno^{\pm,<}(A)  \,\; := \;  \left\{\, {\textstyle \prod_{\,i=1}^{\,N_\pm}} \, x_{\tgamma_i}(\vartheta_i) \,\;\Big|\;\, \tgamma_1 \precneqq \cdots \precneqq \tgamma_{N_\pm} \in \tDelta^\pm_\uno \, , \; \vartheta_1, \dots, \vartheta_{N_\pm} \in A_\uno \,\right\}  \cr
   G_\uno^<(A)  \,\; := \;  \left\{\, {\textstyle \prod_{\,i=1}^N} \, x_{\tgamma_i}(\vartheta_i) \,\;\Big|\;\, \tgamma_1 \precneqq \cdots \precneqq \tgamma_{\scriptscriptstyle N} \in \tDelta_\uno \, , \; \vartheta_1, \dots, \vartheta_N \in A_\uno \,\right\}  }  $$
   \indent   We use also similar notations to denote the sheafifications  $ \, \bG_\uno \, $,
$ \, \bG^\pm_\uno \, $  and  $ \, \bG_\uno^{\pm,<} \, $.
\end{definition}

%

\vskip11pt

   Using once more  Lemma \ref{comm_1-pssg},  we obtain the following factorization result
for the functor  $ \, G_V \, $:

\vskip17pt

\begin{proposition} \label{g0g1}
 Let  $ \, A \in \salg_\bk \, $.  There exist set-theoretic factorizations
 \vskip-7pt
%
 $$  G_V(A)  \; = \;  G_\zero(A) \; G_\uno(A)  \; = \;  G_\uno(A) \; G_\zero(A)  \quad ,  \qquad
     G^\pm(A)  \; = \;  G^\pm_\zero(A) \; G^\pm_\uno(A)  \; = \;  G^\pm_\uno(A) \; G^\pm_\zero(A)  $$
\end{proposition}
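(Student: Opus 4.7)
The plan is to derive both factorizations from a single normalization claim: \emph{$G_\uno(A)$ is normalized by $G_\zero(A)$ inside $G_V(A)$.} Once this is granted, $G_\zero(A) \cdot G_\uno(A)$ is automatically a subgroup via the standard identities
\[
(g_\zero g_\uno)(g'_\zero g'_\uno) \;=\; (g_\zero g'_\zero) \cdot \bigl((g'_\zero)^{-1} g_\uno\, g'_\zero\bigr) \cdot g'_\uno\,, \qquad (g_\zero g_\uno)^{-1} \;=\; g_\zero^{-1} \cdot \bigl(g_\zero\, g_\uno^{-1}\, g_\zero^{-1}\bigr)\,,
\]
the parenthesized inner factors lying in $G_\uno(A)$ by the assumed normality. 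Since this subgroup contains every defining generator of $G_V(A)$---the $h_H(A)$ and the $x_{\talpha}(A)$ with $\talpha \in \tDelta_\zero$ sit in $G_\zero(A)$, while the $x_{\tgamma}(A)$ with $\tgamma \in \tDelta_\uno$ sit in $G_\uno(A)$---it must coincide with $G_V(A)$. The identity $G_V(A) = G_\uno(A) \cdot G_\zero(A)$ will then follow either by rewriting $g_\zero g_\uno = (g_\zero g_\uno g_\zero^{-1})\, g_\zero$ or simply by passing to inverses in the first factorization.

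To prove the normalization I would reduce, by induction on word length in the generators of $G_\zero(A)$, to checking that the conjugate of a single generator $x_{\tgamma}(\vartheta)$ of $G_\uno(A)$ by a single generator of $G_\zero(A)$ lies in $G_\uno(A)$. For conjugation by $h_H(u)$ this is immediate from Lemma~\ref{comm_1-pssg}(c), which yields the odd one-parameter element $x_{\tgamma}\bigl(u^{\gamma(H)}\vartheta\bigr)$. For conjugation by $x_{\talpha}(\bt)$ with $\talpha \in \tDelta_\zero$ I would start from
\[
x_{\talpha}(\bt)\, x_{\tgamma}(\vartheta)\, x_{\talpha}(\bt)^{-1} \;=\; \bigl(x_{\talpha}(\bt),\, x_{\tgamma}(\vartheta)\bigr) \cdot x_{\tgamma}(\vartheta)
\]
and then invoke formula (4.2) in Lemma~\ref{comm_1-pssg}(a): the commutator unfolds as an ordered product of factors $x_{r\alpha + s\gamma,\,t}\bigl(c\, \bt^r\, \vartheta^s\bigr)$ with $r, s \in \N_+$. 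The crucial observation is that $\vartheta \in A_\uno$ satisfies $\vartheta^2 = 0$, so only the $s = 1$ terms survive, and their indexing roots $r\alpha + \gamma$ are necessarily odd (sum of an even and an odd root); hence every surviving factor is an odd one-parameter element, and the entire commutator---therefore the entire conjugate---belongs to $G_\uno(A)$.

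The $G^\pm$ versions will follow from exactly the same argument applied with $\tDelta^\pm_\zero$ and $\tDelta^\pm_\uno$ in place of $\tDelta_\zero$ and $\tDelta_\uno$. The only additional verification is that if $\alpha, \gamma \in \Delta^\pm$ and $r\alpha + \gamma \in \Delta$ with $r \in \N_+$, then automatically $r\alpha + \gamma \in \Delta^\pm$; this is immediate from the definition in \S\ref{triang-dec_Borel-subs_Lie-sub-objs} of the splitting via a regular element $h \in \fh_{\mathbb{Q}}$, since positivity is preserved under any $\N_+$-linear combination. The hard point is really just the parity-tracking observation $\vartheta^2 = 0$ inside the commutator expansion; everything else is formal group theory and a direct reading of Lemma~\ref{comm_1-pssg}.
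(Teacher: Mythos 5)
Your proof is correct and follows essentially the same route as the paper's: both arguments reduce the factorization to pushing a single even generator past odd one-parameter factors, and both rest on the commutation relations of Lemma~\ref{comm_1-pssg} together with the observation that the correction terms $x_{r\alpha+\gamma,\,t}(c\,\bt^r\vartheta)$ carry odd roots and odd parameters (the $s\geq 2$ terms dying because $\vartheta^2=0$). Your packaging of this as ``$G_\zero(A)$ normalizes $G_\uno(A)$'' is a slightly tidier way of stating what the paper proves by induction on the number of odd factors, but it is the same computation.
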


\begin{proof}  The proof for  $ \, G_V(A) $  works for  $ G^\pm(A) $  too, so we stick to the former.
 \vskip5pt
%
   It is enough to prove either one of the equalities, say the first one.  Also, it is enough to show that  $ \; G_\zero(A) \, G_\uno(A) \; $ is closed by multiplication:  thus we must show that  $ \; g_\zero \, g_\uno \, \cdot \, g'_\zero \, g'_\uno \, \in \, G_\zero(A) \, G_\uno(A) \; $  for all  $ \, g_\zero \, , g_\zero' \in G_\zero(A) \, $  and  $ \, g_\uno \, , g'_\uno \in G_\uno(A) \; $.  By the very definitions, we need only to prove that
  $$  \big( 1 + \vartheta_1 X_{\tbeta_1} \big) \cdots \big( 1 + \vartheta_m X_{\tbeta_m} \big) \; x_{\talpha}(t) \; , \,\; \big( 1 + \vartheta_1 X_{\tbeta_1} \big) \cdots \big( 1 + \vartheta_m X_{\tbeta_m} \big) \; h_\eta(u) \,\; \in \; G_\zero(A) \, G_\uno(A)  $$
for all  $ \, m \in \N \, $,  $ \, \tbeta_1 , \dots , \tbeta_m \in \tDelta_\uno \, $,  $ \, \talpha \in \tDelta_\zero \, $,  $ \, \eta \in \Delta \, $,  $ \, \vartheta_1, \dots, \vartheta_m \in A_\uno \, $,  $ \, t \in A_\zero \, $  and  $ \, u \in U\big(A_\zero\big) \, $.  But this follows by an easy induction on  $ m \, $,  via the formulas in  Lemma \ref{comm_1-pssg}.
\end{proof}

\vskip11pt

   Carrying further on our analysis, we shall improve the above result by replacing the factor  $ \, G_\uno \, $  with a factor  $ \, G_\uno^< \; $.  As intermediate step, this requires the following technical result:

\vskip17pt

\begin{lemma} \label{pre-realcrucial}
   Let  $ \, A \in \salg_\bk \, $.  Then   --- with notation of  subsection \ref{first_preliminaries}  ---   we have
  $$  \displaylines{
   G_\uno(A)  \; \subseteq \;  G_\zero\big(A_\uno^{(2)}\big) \, G_\uno^<(A)  \quad ,
\qquad  G_\uno(A) \; \subseteq \;  G_\uno^<(A) \, G_\zero\big(A_\uno^{(2)}\big)  \phantom{\Big|}  \cr
   G_\uno^\pm(A)  \; \subseteq \;  G_\zero^\pm\big(A_\uno^{(2)}\big) \, G_\uno^{\pm,<}(A)  \quad ,  \qquad  G_\uno^\pm(A) \; \subseteq \;  G_\uno^{\pm,<}(A) \, G_\zero^\pm\big(A_\uno^{(2)}\big)  \phantom{\Big|}  }  $$
\end{lemma}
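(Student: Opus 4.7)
I prove the first inclusion $G_\uno(A) \subseteq G_\zero(A_\uno^{(2)}) \cdot G_\uno^<(A)$; the other three follow by the same argument, restricting to $\tDelta^\pm_\uno$ for the sign-variants (noting that a sum of two same-sign odd roots, when a root, lies in $\Delta^\pm_\zero$) and reversing the roles of left and right for the $G_\uno^< \cdot G_\zero$ factorizations. Given $g = \prod_{i=1}^m x_{\tgamma_i}(\vartheta_i)$ with $\tgamma_i \in \tDelta_\uno$ and $\vartheta_i \in A_\uno$, the strategy is a bubble-sort based on the commutator formulas of Lemma \ref{comm_1-pssg}, bringing the $\tgamma_i$ into the prescribed order $\prec$ while collecting the corrective even factors on the left.

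The procedure rests on three basic moves. First, a \emph{strict swap}: for adjacent $x_\tgamma(\vartheta) \, x_{\tgamma'}(\eta)$ with $\tgamma \ne \tgamma'$ and $\tgamma \succ \tgamma'$, Lemma \ref{comm_1-pssg}(a.1), (a.2), or (b) yields
$$x_\tgamma(\vartheta) \, x_{\tgamma'}(\eta) \;=\; c(\vartheta,\eta) \cdot x_{\tgamma'}(\eta) \, x_\tgamma(\vartheta),$$
where $c(\vartheta,\eta)$ is trivial, or a product of factors $x_{\gamma+\gamma',t}(\pm \vartheta\eta)$ with $\gamma+\gamma' \in \Delta_\zero$, or $h_{H_{\sigma_\gamma(j)}}(1 \mp \vartheta\eta)$. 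In every case $\vartheta\eta \in A_\uno^2 \subseteq A_\uno^{(2)}$ and $1 \mp \vartheta\eta$ is invertible in $(A_\uno^{(2)})_\zero$ with inverse $1 \pm \vartheta\eta$, so $c(\vartheta,\eta) \in G_\zero(A_\uno^{(2)})$. Second, a \emph{duplicate collapse}: for $x_\tgamma(\vartheta) \, x_\tgamma(\eta)$, expanding directly as $1 + (\vartheta + \eta) X_\tgamma + \vartheta\eta \, X_\tgamma^{\langle 2\rangle}$ and using Definition \ref{def_che-bas}(h) together with the supercommutative nilpotencies $\vartheta^2 = \eta^2 = 0$, one verifies
$$x_\tgamma(\vartheta) \, x_\tgamma(\eta) \;=\; x_{2\gamma,t'}(\kappa \, \vartheta\eta) \cdot x_\tgamma(\vartheta + \eta),$$
where the leftmost factor is $1$ when $X_\tgamma^{\langle 2\rangle} = 0$; again it lies in $G_\zero(A_\uno^{(2)})$. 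Third, \emph{leftward transport}: by Lemma \ref{comm_1-pssg}(c) and (a.2)--(a.3), conjugation of $x_\tgamma(\vartheta)$ by a generator of $G_\zero(A_\uno^{(2)})$ yields a product of odd factors, hence $G_\zero(A_\uno^{(2)})$ normalizes $G_\uno(A)$ and any even factor appearing mid-expression can be pushed to the far left at the cost of rewriting the trailing odd tail.

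Iterating these moves brings the indices $\tgamma_i$ into strictly increasing order and merges duplicates. The main induction runs on the lexicographic pair $\bigl(m(g), d(g)\bigr)$, where $m(g)$ is the number of odd factors in the current presentation and $d(g)$ counts inversions plus duplications in the index sequence: a duplicate collapse strictly decreases $m$, while a strict swap keeps $m$ fixed and decreases $d$ by one. The chief obstacle is that leftward transport can temporarily enlarge $m$: conjugation of $x_\tgamma(\vartheta)$ by $x_\talpha(t)$ with $t \in (A_\uno^{(2)})_\zero$ introduces extra odd factors $x_{\alpha+\gamma,\ast}(\pm t \vartheta)$ and, when $2\alpha + \gamma \in \Delta$, also $x_{2\alpha+\gamma,\ast}(\pm t^2 \vartheta)$. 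To close the induction I track the depth of parameters along the filtration $A_\uno \supseteq A_\uno \cdot A_\uno^{(2)} \supseteq A_\uno \cdot (A_\uno^{(2)})^2 \supseteq \cdots$: the parameters created by each transport step lie at strictly greater depth, and combined with the finiteness of $\tDelta_\uno$ (which bounds the possible index types that can ever appear) this yields termination of the bubble-sort and hence the desired factorization $g = z \cdot u$ with $z \in G_\zero(A_\uno^{(2)})$ and $u \in G_\uno^<(A)$.
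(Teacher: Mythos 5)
Your strategy is the same as the paper's: a bubble-sort on the odd factors driven by the commutator formulas of Lemma \ref{comm_1-pssg}, collecting the even corrections into $G_\zero\big(A_\uno^{(2)}\big)$ and controlling the newly created factors by the depth of their parameters. Your duplicate-collapse step is in fact more careful than the paper's, which simply writes $x_{\tgamma}(\vartheta)\,x_{\tgamma}(\eta)=x_{\tgamma}(\vartheta+\eta)$ and ignores the possible $X_{\tgamma}^{\langle 2\rangle}\neq 0$ correction that you rightly extract and place in $G_\zero\big(A_\uno^{(2)}\big)$.

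However, your termination argument has a genuine gap. First, the filtration you write, $A_\uno\supseteq A_\uno\cdot A_\uno^{(2)}\supseteq A_\uno\cdot\big(A_\uno^{(2)}\big)^2\supseteq\cdots$, is constant from the second term on, because $A_\uno^{(2)}$ is by definition a \emph{unital} subalgebra, so $\big(A_\uno^{(2)}\big)^k=A_\uno^{(2)}$ for all $k\geq 1$; the filtration that actually does the work is $A_\uno\supseteq A_\uno^{\,3}\supseteq A_\uno^{\,5}\supseteq\cdots$ (parameters lying in the $A_\zero$-span of products of at least $2k+1$ odd elements). Second, and more importantly, ``strictly greater depth at each step'' combined with the finiteness of $\tDelta_\uno$ does not by itself yield termination: finiteness of $\tDelta_\uno$ bounds the index types but neither the number of factors nor the stage at which the corrections die, and for a general $A$ the chain $\{A_\uno^{\,m}\}_{m}$ need not reach zero. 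The missing ingredient is the reduction the paper makes at the outset: only finitely many odd elements of $A$ occur as parameters in the given word, and every parameter produced later is an $A_\zero$-combination of products of these, so one may assume without loss of generality that $A_\uno$ is finitely generated as an $A_\zero$-module, whence $A_\uno^{\,m}=\{0\}$ for $m\gg 0$ and the deep corrections eventually vanish. With that observation inserted, your argument closes.
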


\begin{proof}
 We deal with the first identity, the other being similar.  Indeed, we prove the stronger result
  $$  \Big\langle G_\uno(A) \, , \, G_\zero\big(A_\uno^{(2)}\big) \Big\rangle  \; \subseteq \;  G_\zero\big(A_\uno^{(2)}\big) \, G_\uno^<(A)   \eqno (4.6)  $$
where  $ \, \Big\langle\! G_\uno(A) \, , \, G_\zero\big(A_\uno^{(2)}\big) \!\Big\rangle \, $  is the subgroup generated by  $ G_\uno(A) $  and  $ G_\zero\big(A_\uno^{(2)}\big) $.
                                                            \par
   Any element of  $ \Big\langle G_\uno(A) \, , \, G_\zero\big(A_\uno^{(2)}\big) \Big\rangle $  is a product  $ \; g \, = \, g_1 \, g_2 \cdots g_k \; $  in which each factor  $ g_i $  is either of type  $ \, h_{\eta_i}(u_i) \, $,  or  $ \, x_{\talpha_i}(t_i) \, $,  or  $ \, x_{\tgamma_i}(\vartheta_i) \, $,  with  $ \, \eta_i \in \Delta \, $,  $ \, \talpha_i \in \tDelta_\zero \, $, $ \, \tgamma_i \in \tDelta_\uno \, $  and  $ \, u_i \in U\big(A_\uno^{(2)}\big) \, $,  $ \, t_i \in A_\uno^2 \, $,  $ \, \vartheta_i \in A_\uno \, $.  Such a product belongs to  $ \, G_\zero\big(A_\uno^{(2)}\big) \, G_\uno^<(A) \, $  if and only if all factors indexed by the  $ \, \eta_i \in \Delta \, $  and by the  $ \, \talpha_j \in \tDelta_\zero \, $  are on the left of those indexed by the  $ \, \tgamma_\ell \, \in \Delta_\uno \, $,  and moreover the latter occur in the order prescribed by  $ \, \preceq \, $.  In this case, we say that the factors of  $ g $  are ordered.  We shall now re-write  $ g $  as a product of ordered factors, by repeatedly commuting the original factors, as well as new factors which come in along this process.
 \vskip5pt
   As we have only a finite number of odd coefficients in the expression for  $ g \, $,  we can assume without loss of generality that  $ A_\uno $  is finitely generated as an $ A_\zero \, $--module.  If  $ \, \overline{m} \, $  is the cardinality of any (finite) set of (odd) generators of  $ A_1 \, $,  this implies  $ \, A_\uno^{\,m} = \{0\} \, $  and  $ \, A_\uno^{\,(m)} \! = 0 \, $  when  $ \, m > \overline{m} \, $.
 \vskip5pt
   Let us consider two consecutive factors  $ \; g_i \, g_{i+1} \; $  in  $ g \, $.  If they are already ordered, we are done.  Otherwise, there are four possibilities:
 \vskip5pt
   {\it --- (1)} \,  $ \; g_i = x_{\tgamma_i}(\vartheta_i) \, $,  $ \, g_{i+1} = h_{\eta_i}(u_i) \, $.  \quad  In this case we rewrite
%
%
  $$  g_i \, g_{i+1}  \; = \;  x_{\tgamma_i}(\vartheta_i) \, h_{\eta_i}(u_i)  \;
= \; h_{\eta_i}(u_i) \, x_{\tgamma_i}(\vartheta'_i)  $$
with  $ \; \vartheta'_i \in A_\uno^{\,m_i} \, $  if  $ \, \vartheta_i \in A_\uno^{\,m_i} \, $,  thanks to  Lemma \ref{comm_1-pssg}{\it (c)}.  In particular we replace a pair of unordered factors with a new pair of ordered factors.  Even more, this shows that any factor of type  $ h_{\eta_i}\!(u_i) $  can be flushed to the left of our product so to give a new product of the same nature, but with all factors of type  $ h_{\eta_i}(u_i) $  on the left-hand side.

 \vskip5pt
   {\it --- (2)} \,  $ \, g_i = x_{\tgamma_i}(\vartheta_i) \, $,  $ \, g_{i+1} = x_{\talpha_{i+1}}(t_{i+1}) \, $.
\quad  In this case we rewrite
 \vskip-5pt
  $$  g_i \, g_{i+1}  \, = \,  g_{i+1} \, g_i \, g'_i  \;\quad  \text{with}  \;\quad  g'_i := \big(\, {g_i}^{\!-1} , \, {g_{i+1}}^{\!\!\!-1} \big) = \big(\, x_{\tgamma_i}(-\vartheta_i) \, , \, x_{\talpha_{i+1}}(-t_{i+1}) \big)  $$
so we replace a pair of (consecutive) unordered factors with a pair of ordered factors followed by another, new factor  $ g'_i \, $.  {\sl However},  letting  $ \, m_1, m_2 \in \N_+ \, $  be such that  $ \, \vartheta_i \in A_\uno^{\,m'} \, $,  $ \, t_{i+1} \in A_\uno^{\,m''} \, $,  by  Lemma \ref{comm_1-pssg}  {\sl this  $ \, g'_i \, $  is a product of new factors of type  $ \, x_{\tgamma_j}(\vartheta'_j) \, $  with  $ \, \vartheta'_j \in A_\uno^{\;m_j} \, $,  $ \, m_j \geq m' + m'' \, $}.
 \vskip5pt
   {\it --- (3)} \,  $ \, g_i = x_{\tgamma_i}(\vartheta_i) \, $,  $ \, g_{i+1} = x_{\tgamma_{i+1}}(\vartheta_{i+1}) \, $.  \quad  In this case we rewrite
 \vskip-5pt
  $$  g_i \, g_{i+1}  \, = \,  g_{i+1} \, g_i \, g'_i  \;\quad  \text{with}  \;\quad  g'_i := \big(\, {g_i}^{\!-1} , \, {g_{i+1}}^{\!\!\!-1} \big) = \big(\, x_{\tgamma_i}(-\vartheta_i) \, , \, x_{\tgamma_{i+1}}(-\vartheta_{i+1}) \big)  $$
so we replace a pair of unordered factors with a pair of ordered ones, followed by a new factor  $ g'_i $  which, again by  Lemma \ref{comm_1-pssg},  is a product of new factors of type  $ \, x_{\talpha_j}(t'_j) \, $  or  $ \, h_{\eta_j}(u'_j) \, $  with  $ \, t'_j \in A_\uno^{\;m_j} $,  $ \, u'_j \in U\big(A_\uno^{\,(m_j)}\big) \, $,  \, where  $ \, m_j \geq m' + m'' \, $  for  $ \, m', m'' \in \N_+ \, $  such that  $ \, \vartheta_i \in A_\uno^{\,m'} \, $,  $ \, \vartheta_{i+1} \in A_\uno^{\,m''} \, $.
 \vskip5pt
   {\it --- (4)} \,  $ \, g_i = x_{\tgamma}(\vartheta_i) \, $,  $ \, g_{i+1} = x_{\tgamma}(\vartheta_{i+1}) \; $.
%
%
 \quad  In this case we rewrite
 \vskip-5pt
  $$  g_i \, g_{i+1}  \; = \;  x_{\tgamma_i}(\vartheta_i) \, x_{\tgamma_{i+1}}(\vartheta_{i+1})  \; = \;
x_{\tgamma}(\vartheta_i) \, x_{\tgamma}(\vartheta_{i+1})  \; = \;  x_{\tgamma}(\vartheta_i \! + \! \vartheta_{i+1})  \; =: \;  g'_i  $$
so we replace a pair of unordered factors with a single factor.  In addition, each pair  $ \, g_{i-1} \, g'_i \, $  and  $ \, g'_i \, g_{i+2} \, $  respects or violates the ordering according to what the old pair  $ \, g_{i-1} \, g_i \, $  and  $ \, g_{i+1} \, , g_{i+2} \, $  did.
 \vskip6pt
   Now we iterate this process: whenever we have any unordered pair of
consecutive factors in the product we are working with,
we perform any one of steps  {\it (1)\/}  through  {\it (4)\/}  explained
above.  At each step, we substitute an unordered pair with a single factor
(step {\it (4)\/}), which does not form any more unordered pairs than
the ones we had before,  or with an ordered pair (steps {\it (1)--(3)\/}),
possibly introducing new additional factors.  However, any new factor is either
of type  $ \, x_{\talpha}(t) \, $,  with  $ \, t \in A_\uno^{\;m} \, $,  or
of type  $ \, x_{\tgamma}(\vartheta) \, $,  with  $ \, \vartheta \in A_\uno^{\;m} \, $,
or of type  $ \, h_\eta(u) \, $,  with  $ \, u \in U\big(A_\uno^{\,(m)}\big) \, $:  in all cases, the values of  $ m $  are (overall) strictly increasing after
each iteration of this procedure.  As  $ \, A_\uno^{\,m} = \{0\} \, $  for
$ \, m \gg 0 \, $,  after finitely many steps such new factors are trivial,
i.e.~eventually all unordered (consecutive) factors will commute with each
other and will be re-ordered without introducing any new factors.
Thus the process stops after finitely many steps, proving (4.6).
\end{proof}

\medskip

   A direct consequence of Proposition \ref{g0g1}  and  Lemma \ref{pre-realcrucial}  is the following ``factorization result'':
 \eject

\begin{proposition} \label{realcrucial}
   For every  $ \, A \in \salg_\bk \, $  we have
 \vskip-5pt
  $$  G_V(A)  \; = \;  G_\zero(A) \, G_\uno^<(A) \quad ,  \qquad
      G_V(A)  \; = \;  G_\uno^<(A) \, G_\zero(A)  $$
\end{proposition}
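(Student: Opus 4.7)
The plan is to deduce the statement almost immediately by combining Proposition \ref{g0g1} with Lemma \ref{pre-realcrucial}, noting that $A_\uno^{(2)}$ is a unital subsuperalgebra of $A$ and hence $G_\zero\big(A_\uno^{(2)}\big) \subseteq G_\zero(A)$ (since each generator $h_H(u)$ or $x_{\talpha}(t)$ of $G_\zero\big(A_\uno^{(2)}\big)$ is also a generator of $G_\zero(A)$).

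First I would establish the inclusion $G_V(A) \subseteq G_\zero(A) \, G_\uno^<(A)$. Starting from an arbitrary $g \in G_V(A)$, Proposition \ref{g0g1} gives a factorization $g = g_\zero \, g_\uno$ with $g_\zero \in G_\zero(A)$ and $g_\uno \in G_\uno(A)$. Applying the first inclusion of Lemma \ref{pre-realcrucial}, we rewrite $g_\uno = g'_\zero \, g^<_\uno$ with $g'_\zero \in G_\zero\big(A_\uno^{(2)}\big) \subseteq G_\zero(A)$ and $g^<_\uno \in G_\uno^<(A)$. Then $g = \big(g_\zero \, g'_\zero\big) g^<_\uno$ where the first factor lies in $G_\zero(A)$ (since $G_\zero(A)$ is a subgroup), giving the desired factorization. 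The reverse inclusion $G_\zero(A) \, G_\uno^<(A) \subseteq G_V(A)$ is tautological, as all factors involved are generators (or products thereof) of $G_V(A)$.

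The second identity $G_V(A) = G_\uno^<(A) \, G_\zero(A)$ follows by exactly the same argument, this time using the factorization $G_V(A) = G_\uno(A) \, G_\zero(A)$ from Proposition \ref{g0g1} together with the second inclusion of Lemma \ref{pre-realcrucial}, namely $G_\uno(A) \subseteq G_\uno^<(A) \, G_\zero\big(A_\uno^{(2)}\big)$.

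There is no real obstacle here: all the heavy lifting (namely, the normal form for odd factors obtained by iterated commutation and the nilpotency argument on $A_\uno$) has already been carried out in the proof of Lemma \ref{pre-realcrucial}, and the factorization of $G_V(A)$ into an even part and an odd part is supplied by Proposition \ref{g0g1}. The proposition is thus just the synthesis of these two results, using only the elementary observation that $G_\zero\big(A_\uno^{(2)}\big) \subseteq G_\zero(A)$ and that $G_\zero(A)$ is closed under the group product.
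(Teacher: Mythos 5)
Your proof is correct and is exactly the argument the paper intends: the paper states Proposition \ref{realcrucial} as ``a direct consequence of Proposition \ref{g0g1} and Lemma \ref{pre-realcrucial}'' without further detail, and your write-up simply makes that synthesis explicit (including the observation that $G_\zero\big(A_\uno^{(2)}\big)\subseteq G_\zero(A)$ and that $G_\zero(A)$ is a subgroup).
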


\vskip7pt

   We aim to show that the above decompositions are essentially unique.  We need another lemma:

\vskip14pt

\begin{lemma}  \label{red-to-classical}
  Let  $ A , B \! \in \! \salg_\bk \, $,  $ B $  a subsuperalgebra of  $ A $.  Then  $ \, G_V\!(B) $  is a subgroup of  $ \, G_V\!(A) \, $.
\end{lemma}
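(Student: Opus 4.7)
The plan is to prove that the group homomorphism $G_V(\iota) : G_V(B) \to G_V(A)$ induced functorially by the inclusion $\iota : B \hookrightarrow A$ is injective; then $G_V(B)$ is identified with its image, which is a subgroup of $G_V(A)$.

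First I will use the fact that $V_\bk = \bk \otimes_\Z M$ is a \emph{free} $\bk$-supermodule, since $M$ is by construction a $\Z$-lattice in $V$ and hence a free $\Z$-module. Consequently both $V_{\bk,\zero}$ and $V_{\bk,\uno}$ are flat $\bk$-modules, and tensoring them respectively against the $\bk$-module inclusions $B_\zero \hookrightarrow A_\zero$ and $B_\uno \hookrightarrow A_\uno$ yields an injection
$$ V_\bk(B) \, = \, B_\zero \otimes_\bk V_{\bk,\zero} \, \oplus \, B_\uno \otimes_\bk V_{\bk,\uno} \;\lhook\joinrel\longrightarrow\; V_\bk(A) \; .$$

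Next, the morphism $G_V(\iota)$ acts on generators by $h_H(u) \mapsto h_H(\iota(u))$ and $x_{\talpha}(\bt) \mapsto x_{\talpha}(\iota(\bt))$, i.e., it reinterprets the parameters of the one-parameter supersubgroups as elements of $A$. From the explicit formulas $h_H(u).v_\mu = u^{\mu(H)} v_\mu$ on weight vectors and $x_{\talpha}(\bt).v = \sum_{m \geq 0} \bt^m \, X_{\talpha}^{(m)}.v$, one sees that the action of $G_V(\iota)(g)$ on $V_\bk(A)$, restricted along the inclusion above, coincides with the action of $g$ on $V_\bk(B)$. In other words, the embedding $V_\bk(B) \hookrightarrow V_\bk(A)$ is $G_V(B)$-equivariant (via $G_V(\iota)$), which is enough because each generator on both sides comes from a single element of $U_\bk(\fg) \subseteq U_A(\fg)$ acting by base change.

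Finally, injectivity follows from the faithful action. If $G_V(\iota)(g)$ is the identity of $\rGL\big(V_\bk(A)\big)$, then it acts trivially on the submodule $V_\bk(B)$; by equivariance, $g$ itself acts trivially on $V_\bk(B)$, and since $G_V(B)$ is by definition embedded in $\rGL\big(V_\bk(B)\big)$, this forces $g = 1 \,$. Hence $G_V(\iota)$ is injective, as required. There is no real obstacle: the only point that demands attention is the opening step, where the injectivity of $V_\bk(B) \hookrightarrow V_\bk(A)$ is not automatic for arbitrary $\bk$-modules and relies crucially on the freeness of the admissible lattice $M$.
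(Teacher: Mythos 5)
Your argument is correct and is essentially the paper's own proof written out in functorial language: the paper simply observes that, since $M$ is a lattice, elements of $G_V(B)$ and $G_V(A)$ are matrices with entries in $B$ and $A$ respectively, and two matrices over $B$ coincide iff they coincide over $A$ — which is exactly your injectivity of $V_\bk(B)\hookrightarrow V_\bk(A)$ (via freeness of $M$) combined with faithfulness. Nothing further is needed.
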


\begin{proof}
 By definition  $ G_V $  is a subgroup of  $ \rGL(V) \, $,  so elements in  $ G_V(A) $  are realized as matrices with entries in  $ A \, $,  and similarly for  $ B $  replacing  $ A \, $.  Then it is clear that any matrix in  $ G_V(B) $  is in  $ G_V(A) \, $,  and two such matrices are equal in  $ G_V(B) $  if and only if they are equal in  $ G_V(A) $  too.
\end{proof}

\smallskip

   For the proof of the main result we need the following intermediate step:

\medskip

\begin{lemma} \label{pre-crucial}
 Let  $ \, A \in \salg_\bk \, $,  and  $ \, g_\pm \, , \, f_\pm \in G_\uno^{\pm,<}(A) \, $.
%
%
   If  $ \,\; g_- \, g_+ = f_- \, f_+ \;\, $,  \, then  $ \,\; g_\pm = f_\pm \;\, $.
\end{lemma}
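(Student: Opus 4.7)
The plan is to exploit triangularity on a weight-ordered basis of $V_\bk(A)$, reducing the claim to the elementary fact that a matrix simultaneously upper and lower unitriangular over $A$ must be the identity.

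First I would fix the regular element $h \in \fh_{\mathbb{Q}}$ defining the polarization $\Delta = \Delta^+ \coprod \Delta^-$ of \S\ref{triang-dec_Borel-subs_Lie-sub-objs}, and order a homogeneous weight basis of $V_\bk$ by strictly decreasing value of the weight on $h$, breaking ties arbitrarily within each weight space. Such a basis exists because $V_\bk$ is a rational weight module of finite rank with integer weights on $\fh_\Z$ (see Definition \ref{def_adm-latt} and Proposition \ref{split_adm-latt}). In this ordered basis, for each $\gamma \in \Delta^-_\uno$ the operator $X_\gamma$ maps $V_\mu$ into $V_{\mu+\gamma}$, whose basis vectors sit strictly later in the order, so the matrix of $X_\gamma$ is strictly lower triangular; consequently $x_\gamma(\vartheta) = 1 + \vartheta X_\gamma$ is lower unitriangular with entries in $A$. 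Symmetrically, $x_\gamma(\vartheta)$ is upper unitriangular for $\gamma \in \Delta^+_\uno$.

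Next I observe that the lower unitriangular matrices with entries in $A$ form a subgroup of $\rGL\big(V_\bk(A)\big)$: inverses exist via the terminating Neumann series $(1+N)^{-1} = \sum_{k \geq 0} (-N)^k$, with $N$ strictly lower triangular and hence nilpotent. Every element of $G_\uno^{-,<}(A)$, being an ordered product of lower unitriangular factors, is therefore itself lower unitriangular; symmetrically, $G_\uno^{+,<}(A)$ consists of upper unitriangular matrices. Rewriting the hypothesis as $f_-^{-1} g_- = f_+ g_+^{-1}$ produces a common value which is simultaneously lower unitriangular (as a product of two lower unitriangular matrices) and upper unitriangular. Any such matrix equals the identity, so $f_-^{-1} g_- = 1 = f_+ g_+^{-1}$ in $G_V(A)$, yielding $g_\pm = f_\pm$.

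No substantive obstacle arises: the argument is the super-geometric analogue of the uniqueness of the $LU$-decomposition. The one point worth verifying is that each $x_\gamma(\vartheta)$ is genuinely unitriangular rather than merely triangular, i.e.~has all diagonal entries equal to $1$; this is automatic because $X_\gamma$ shifts weight by $\gamma \neq 0$, hence $(X_\gamma)_{i,i} = 0$ for every weight basis vector $v_i$.
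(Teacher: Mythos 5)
Your argument is correct, and it proves the lemma by a genuinely different and more elementary route than the paper. The paper expands $f_-^{\,-1} g_-$ as a signed sum of monomials $\underline{X}_{\tgamma_{\underline{b}}} \underline{X}_{\tgamma_{\underline{d}}}$ in the odd root vectors, isolates its weight components, uses $\, \N\Delta_\uno^- \cap \N\Delta_\uno^+ = \{0\} \,$ to force all components off $V_\mu$ to vanish, and then runs an induction on the ``n-height'' of the roots, with an explicit sign-cancellation of paired monomials, to conclude that the parametrizing coefficients satisfy $\vartheta_p = \eta_p$ for every $p$. Your triangularity (LU-uniqueness) argument replaces all of this with the observation that $G_\uno^{-,<}(A)$ and $G_\uno^{+,<}(A)$ land in the lower and upper unitriangular subgroups for a weight-ordered basis (which exists by rationality of $V$ and Proposition \ref{split_adm-latt}, the polarization being cut out by a regular $h$), so that $f_-^{\,-1} g_- = f_+\, g_+^{\,-1}$ forces both sides to equal the identity. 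This is shorter and cleaner for the statement as written. The trade-off is that the paper's proof establishes strictly more than the statement: it shows that the odd coordinates $\vartheta_p$ in the ordered product expansion are themselves uniquely determined, and it is this coefficient-level uniqueness --- not merely the operator identity $g_\pm = f_\pm$ --- that is re-invoked in Proposition \ref{G_uno^pm-repres} (injectivity of the parametrization $\mathbb{A}^{0|N_\pm} \to G_\uno^{\pm,<}$) and in the inductive step of Theorem \ref{crucial} (where one works modulo $\big(A_\uno^{\,m}\big)$ and needs $\vartheta_d^\pm \equiv \eta_d^\pm$). Your proof would therefore suffice here, but you would still need a separate argument (essentially the paper's induction, or some other injectivity argument) to recover those later uses.
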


\begin{proof}
 To begin with, we write the element  $ g_- $  as an  {\sl ordered\/}  product
  $ \; g_- = \prod_{d=1}^{N_-} \, \big( 1 + \, \vartheta_d \, X_{\tgamma_d} \big) \; $,
for some  $ \, \vartheta_d \in A_\uno \, $,  where the  $ \, \gamma_d \in \tDelta_\uno^- \, $  are all the negative odd roots, ordered as in  Definition \ref{subgrps};  also, hereafter  $ \, N_\pm \! = \big|\tDelta_\uno^\pm\big| \, $.  Expanding the product on the right-hand side we get
  $$  g_-  \;\; = \;\,
   {\textstyle \, \sum_{\hskip-3pt{{0 \leq k \leq N_-} \atop {1 \, \leq \, d_1 < \cdots < \, d_k \leq \, N_-}}}}
    \hskip-7pt  {(-1)}^{k \choose 2} \, \vartheta_{d_1} \cdots \vartheta_{d_k} \, X_{\tgamma_{d_1}} \cdots X_{\tgamma_{d_k}}  $$
Similarly,
 $ \; f_- \, = \, \prod_{b=1}^{N_-} \, \big( 1 + \, \eta_b \, X_{\tgamma_b} \big) \; $,
 \, for some  $ \, \eta_d \in A_\uno \, $,  and then we have the expansion
 $ \; f_-^{\,-1} = \, \prod_{b = N_-}^1 \!\! {\big( 1 + \, \eta_b \, X_{\tgamma_b} \big)}^{-1} = \, \prod_{b = N_-}^1 \!\! \big( 1 - \, \eta_b \, X_{\tgamma_b} \big) \, =
 \, \sum_{\hskip-3pt {{0 \leq h \leq N_-} \atop {N_- \geq \, b_1 > \cdots > \, b_h \geq \, 1}}}
    \hskip-7pt  {(-1)}^{{h+1} \choose 2} \, \eta_{b_1} \cdots \eta_{b_h} \, X_{\tgamma_{b_1}} \cdots X_{\tgamma_{b_h}} \, $.

\medskip

   Now let  $ \, V = \oplus_\mu V_\mu \, $  be the splitting of  $ V $  into a direct sum of weight spaces.  Then  $ \, X_{\teta}.V_\mu \subseteq V_{\mu + \eta} \; $  if  $ \, X_{\teta} \in \fg_\eta \, $  (for each root  $ \eta $  and every weight  $ \mu \, $):  this and the previous expansions yield
 $ \; \big( f_-^{\,-1} \, g_- \big)\,.\,v_\mu \in \bigoplus_{\gamma^- \in \N \Delta_\uno^-} V_{\mu + \gamma^-} \; $
for all weights  $ \mu $  and  $ \, v_\mu \in V_\mu(A) \, $,  with  $ \, \N \Delta_\uno^- $  being the  $ \N $--span  of  $ \, \Delta_\uno^- \, $.  In a similar way   --- with parallel notation ---   we find also
 $ \;  \big( f_+ \, g_+^{\,-1} \big)\,.\,v_\mu \, \in \, {\textstyle \bigoplus_{\gamma^+ \in \N \Delta_\uno^+}} V_{\mu + \gamma^+} \; $.

\vskip5pt

   Now, the assumption  $ \, g_- \, g_+ = f_- \, f_+ \, $  implies  $ \, f_-^{\,-1} \, g_- = f_+ \, g_+^{\,-1} \, $.  Since  $ \, \N \Delta_\uno^- \! \cap \N \Delta_\uno^+ = \{0\} \, $,  by the previous analysis the only weight space in which both  $ \, \big( f_-^{\,-1} \, g_- \big)\,.\,v_\mu \, $  and  $ \, \big( f_+ \, g_+^{\,-1} \big)\,.\,v_\mu \, $  may have a non-trivial weight component is  $ V_\mu(A) $  itself.  In particular, letting  $ \, {\big( \big( f_-^{\,-1} \, g_- \big)\,.\,v_\mu \big)}_{\mu + \gamma^-} \, $  be the weight component of  $ \, \big( f_-^{\,-1} \, g_- \big)\,.\,v_\mu \, $  inside  $ V_{\mu + \gamma^-}(A) \, $,  we have  $ \; {\big( \big( f_-^{\,-1} \, g_- \big)\,.\,v_\mu \big)}_{\mu + \gamma^-} = 0 \; $  for any  $ \, \gamma^- \in \N \, \Delta_\uno^- \setminus \{0\} \, $.  We shall now describe these components, and deduce that  $ \; g_- = f_- \; $.

\smallskip

   From now on we use short-hand notation  $ \, \underline{\eta}_{\,\underline{b}} := \eta_{b_1} \cdots \eta_{b_h} \, $,  $ \, \underline{X}_{\tgamma_{\underline{b}}} := X_{\tgamma_{b_1}} \cdots X_{\tgamma_{b_h}} \, $,  and  $ \, \underline{\vartheta}_{\,\underline{d}} := \vartheta_{d_1} \cdots \vartheta_{d_k} \, $,  $ \, \underline{X}_{\,\tgamma_{\underline{d}}} := X_{\tgamma_{d_1}} \cdots X_{\tgamma_{d_k}} \, $,  for all ordered strings  $ \, \underline{b} := \big(\, b_1 \! > \! \cdots \! > \! b_h \,\big) \, $  and  $ \, \underline{d} := \big(\, d_1 \! > \! \cdots \! > \! d_k \,\big) \, $.  By  $ \, \big| \underline{b} \big| := h \, $  and  $ \, \big| \underline{d} \big| := k \, $  we denote the  {\sl length\/}  of  $ \underline{b} $  and  $ \underline{d} $  respectively.  Now, we have
  $$  \displaylines{
   f_-^{\,-1} g_-  \;\; = \;\;  {\textstyle \sum_{h, \, k \, = \, 0}^{N_-}}
   \; {\textstyle \sum_{{|\underline{b}| = h} \atop {|\underline{d}| = k}}} \;
   {(-1)}^{{{h+1} \choose 2} + {k \choose 2}} \, \underline{\eta}_{\,\underline{b}} \; \underline{X}_{\,\tgamma_{\underline{b}}} \; \underline{\vartheta}_{\,\underline{d}} \; \underline{X}_{\,\tgamma_{\underline{d}}}  \cr
   \hskip49pt   \;\; = \;\;  {\textstyle \sum_{h, \, k \, = \, 0}^{N_-}}
   \; {\textstyle \sum_{{|\underline{b}| = h} \atop {|\underline{d}| = k}}} \;
   {(-1)}^{{{h+1} \choose 2} + {k \choose 2} + h k} \, \underline{\eta}_{\,\underline{b}} \; \underline{\vartheta}_{\,\underline{d}} \; \underline{X}_{\,\tgamma_{\underline{b}}} \; \underline{X}_{\,\tgamma_{\underline{d}}}  }  $$
by the above expansions of  $ f_-^{\,-1} $  and  $ \, g_- \, $.  For every  $ \, \gamma^- \in \N \Delta_\uno^- \, $,  this last formula yields
  $$  {\big( \big( f_-^{\,-1} \, g_- \big)\,.\,v_\mu \big)}_{\mu + \gamma^-}  \; = \;\;  {\textstyle \sum_{h, \, k \, = \, 0}^{N_-}}
   \; {\textstyle \sum_{\hskip-9pt {{|\underline{b}| = h \, , \; |\underline{d}| = k \; \phantom{{}_|}} \atop {\pi(\tgamma_{\underline{b}}) \, + \, \pi(\tgamma_{\underline{d}}) \, = \; \gamma^- }}}} \hskip-3pt
   {(-1)}^{{{h+1} \choose 2} + {k \choose 2} + h k} \, \underline{\eta}_{\,\underline{b}} \; \underline{\vartheta}_{\,\underline{d}} \; \underline{X}_{\,\tgamma_{\underline{b}}} \; \underline{X}_{\,\tgamma_{\underline{d}}}\,.\,v_\mu  $$
where  $ \; \pi\big(\tgamma_{\underline{b}}\big) := \sum_{i=1}^{|\underline{b}|} \pi\big(\tgamma_{b_i}\big) \; $  and  $ \; \pi\big(\tgamma_{\underline{d}}\big) := \sum_{j=1}^{|\underline{d}|} \pi\big(\tgamma_{d_j}\big) \; $   --- notation of  \S \ref{def_Cartan-subalg_roots_etc}.
   In particular, for a  {\sl root\/}  $ \, \gamma^- := \gamma_q^-  \in \Delta_\uno^- \, $  we can single out the only two summands in the last formula indexed by a pair of strings whose lengths are one and zero: then the whole formula reads
  $$  {\big( \big( f_-^{\,-1} g_- \big)\,.\,v_\mu \big)}_{\mu + \gamma_q^-}  \,\;\; = \;\;\,  {\textstyle \sum^{N_-}_{\hskip-10pt {{\scriptstyle p=1} \atop {\hskip-5pt \pi\left(\tgamma_p\right) \, = \; \gamma_q^-}}}} \hskip-5pt
 \big( \vartheta_p - \eta_p \big) \, X_{\tgamma_p}\,.\,v_\mu  \,\; +   \hfill \hskip91pt \qquad  $$
 \vskip-19pt
  $$  {}   \eqno (4.7)  $$
 \vskip-19pt
  $$  + \;\;\,  {\textstyle \sum^{N_-}_{\hskip-7pt {{\scriptstyle h , \, k \, = \, 0} \atop {{(h,k) \not= (0,1)} \atop {(h,k) \not= (1,0)}}}}} \;
   {\textstyle \sum_{\hskip-5pt {{\hskip-3pt |\underline{b}| = h \, , \; |\underline{d}| = k \; \phantom{{}_|}} \atop {\pi(\tgamma_{\underline{b}}) \, + \, \pi(\tgamma_{\underline{d}}) \, = \; \gamma_q^- }}}} \hskip-9pt
   {(-1)}^{{{h+1} \choose 2} + {k \choose 2} + h k} \, \underline{\eta}_{\,\underline{b}} \; \underline{\vartheta}_{\,\underline{d}} \; \underline{X}_{\,\tgamma_{\underline{b}}} \; \underline{X}_{\,\tgamma_{\underline{d}}}\,.\,v_\mu   \hskip13pt  $$
 \eject

   For every  $ \, \gamma^- \in \Delta_\uno^- \, $,  we call  {\sl n-height\/}  of  $ \gamma $  the highest number  $ |\gamma^-| $  such that  $ \gamma^- $  itself is the sum of exactly  $ |\gamma| $  negative roots.  Looking at (4.7), we see that all roots  $ \pi\big(\tgamma_{b_i}\big) $  or  $ \pi\big(\tgamma_{d_j}\big) $  involved in the strings  $ \, \underline{b} \, $  or  $ \, \underline{d} \, $  occurring in the last, double sum necessarily satisfy
 $ \; \big| \pi\big(\tgamma_{b_i}\big) \big| \, \lneqq \, | \gamma_q^- | \; $,
 $ \; \big| \pi\big(\tgamma_{d_j}\big) \big| \, \lneqq \, | \gamma_q^- | \; $.
 Now fix any  $ \, \gamma_q^- \in \Delta_\uno^- \, $  such that  $ \, |\gamma_q^-| = 1 \, $:  then our last remark implies that (4.7) reduces to
 $ \;\; {\big( \big( f_-^{\,-1} g_- \big)\,.\,v_\mu \big)}_{\mu + \gamma_q^-}  = \,
  \sum^{N_-}_{\hskip-9pt {{\hskip-7pt \scriptstyle p=1} \atop {\pi\left(\tgamma_p\right) \, = \; \gamma_q^-}} } \hskip-15pt  \big( \vartheta_p \! - \eta_p \big) \, X_{\tgamma_p}\,.\,v_\mu \;\; $;  \,
thus from  $ \;\; {\big( \big( f_-^{\,-1} g_- \big)\,.\,v_\mu \big)}_{\mu + \gamma_q^-} = \, 0 \;\; $  we eventually get
 $ \;\; \sum^{N_-}_{\hskip-9pt {{\hskip-7pt \scriptstyle p=1} \atop {\pi(\tgamma_p) \, = \; \gamma_q^-}} } \hskip-15pt \big( \vartheta_p \! - \eta_p \big) \, X_{\tgamma_p}\,.\,v_\mu = \, 0 \;\; $.
 As  $ \fg $  acts faithfully on  $ V \, $,  hence  $ \fg_A $  does the same on  $ V_A \, $,  we get
 $ \;\; \sum^{N_-}_{\hskip-9pt {{\hskip-7pt \scriptstyle p=1} \atop {\pi(\tgamma_p) \, = \; \gamma_q^-}} } \hskip-15pt \big( \vartheta_p \! - \eta_p \big) \, X_{\tgamma_p} = \, 0 \;\; $
inside  $ \fg_A \, $.  Since the  $ X_{\tgamma_p} $  are part of a (Chevalley) basis of  $ \fg \, $,  we conclude that  {\it  $ \;\; \vartheta_p = \, \eta_p \; $  for all  $ \, p \in \{1,\dots,N_-\} \, $ such that  $ \, \pi(\tgamma_p) \, $  has n-height 1\,}.
 \vskip5pt
   We shall now extend this result to all root vectors  $ X_{\tgamma_p} \, $,  by induction on the n-height of  $ \, \pi\left(\tgamma^-_p\right) \, $.
 \vskip5pt
   Take in general any  $ \, \gamma_q^- \in \Delta_\uno^- \, $  with  $ \, |\gamma_q^-| > 1 \, $:  as induction hypothesis, we assume that for all  $ \, \tgamma_{p'} \in \tDelta_\uno^- \, $  such that  $ \, \big| \pi\big(\tgamma_{p'}\big) \big| \lneqq \big| \gamma_q^- \big| \, $  we have  $ \; \vartheta_{p'} = \, \eta_{p'} \; $.  Consider the last, double sum in (4.7): any ``monomial'' in the root vectors occurring there is of the form
  $$  \underline{X}_{\,\tgamma_{\underline{b}}} \, \underline{X}_{\,\tgamma_{\underline{d}}}
\; = \;  X_{\tgamma_{b_1}} \cdots X_{\tgamma^-_{b_h}} \, X_{\tgamma_{d_1}} \cdots
X_{\tgamma_{d_k}}   \eqno (4.8)  $$
with  $ \; b_1 > \cdots > b_h \; $  and  $ \; d_1 < \cdots < d_k \; $.  Moreover, by construction we can assume also that  $ \; b_i \not= d_j \; $  for all  $ i $  and  $ j \, $:  indeed, if  $ \; b_i = d_j \; $ in some  $ \underline{b}' $  and  $ \underline{d}' \, $,   then the inductive assumption gives  $ \; \eta_{b_i} = \vartheta_{d_j} \, $,  \, hence  $ \; \underline{\eta}_{\,\underline{b}'} \, \underline{\vartheta}_{\,\underline{d}'} = 0 \; $  and so  $ \; {(-1)}^{{{h+1} \choose 2} + {k \choose 2} + h k} \, \underline{\eta}_{\,\underline{b}'} \; \underline{\vartheta}_{\,\underline{d}'} \; \underline{X}_{\,\tgamma_{\underline{b}'}} \; \underline{X}_{\,\tgamma_{\underline{d}'}}\,.\,v_\mu \, = \, 0 \; $.
                                          \par
   Now, the monomial in (4.8) will occur a second time in the same sum as follows.  A first case is when  $ \, b_h > d_1 \; $:  then  $ \; \underline{X}_{\,\tgamma_{\underline{b}}} \, \underline{X}_{\,\tgamma_{\underline{d}}} \, = \, \underline{X}_{\,\tgamma_{\underline{b}'}} \, \underline{X}_{\,\tgamma_{\underline{d}'}} \; $  for  $ \; \underline{b}' := \big(\, b_1 \, , \dots \, , b_h \, , d_1 \big) \; $  and  $ \; \underline{d}' := \big( d_2 \, , \dots \, , d_k \big) \; $;  this includes also the case  $ \, h = 0 \, $.  The second case is  $ \, b_h < d_1 \; $:  then  $ \; \underline{X}_{\,\tgamma_{\underline{b}}} \, \underline{X}_{\,\tgamma_{\underline{d}}} \, = \, \underline{X}_{\,\tgamma_{\underline{b}'}} \, \underline{X}_{\,\tgamma_{\underline{d}'}} \; $  for  $ \; \underline{b}' := \big(\, b_1 \, , \dots \, , b_{h-1} \big) \; $  and  $ \; \underline{d}' := \big(\, b_h \, , d_1 \, , \dots \, , d_k \big) \; $;  this makes sense for  $ \, k = 0 \, $  as well.
                                                 \par
   In both cases, the new strings  $ \underline{b}' $  and  $ \underline{d}' $  enjoy the property
 $ \; \big| \underline{b}' \big| = \big| \underline{b} \big| \pm 1 \; $  and  $ \; \big| \underline{d}' \big| = \big| \underline{d} \big| \mp 1 \; $.
 \vskip5pt
   Now, whenever we consider any such pair of monomials
 $ \; \underline{X}_{\,\tgamma_{\underline{d}}} \, \underline{X}_{\,\tgamma_{\underline{d}}} \; $
and
 $ \; \underline{X}_{\,\tgamma_{\underline{b}'}} \, \underline{X}_{\,\tgamma_{\underline{d}'}} \; $
occurring in the last (double) sum in (4.7) and such that
 $ \; \underline{X}_{\,\tgamma_{\underline{d}}} \, \underline{X}_{\,\tgamma_{\underline{d}}} \, = \, \underline{X}_{\,\tgamma_{\underline{b}'}} \, \underline{X}_{\,\tgamma_{\underline{d}'}} \; $,  by induction we have
 $ \; \underline{\eta}_{\,\underline{b}} \, \underline{\vartheta}_{\,\underline{d}} \, =
\, \underline{\eta}_{\,\underline{b}'} \, \underline{\vartheta}_{\,\underline{d}'} \; $.
 \; Even more, a direct check shows that for the signs involved one has  $ \; {(-1)}^{{{h+1} \choose 2} + {k \choose 2} + h k} \, + {(-1)}^{{{h'+1} \choose 2} + {k' \choose 2} + h' k'} \, = \, 0 \; $  where  $ \; h = \big| \underline{b} \big| \, $,  $ \, k = \big| \underline{d} \big| \, $,  $ \, h' = \big| \underline{b}' \big| \, $,  $ \, k' = \big| \underline{d}' \big| \; $.  Thus the two (identical!) monomials
 $ \; \underline{X}_{\,\tgamma_{\underline{d}}} \, \underline{X}_{\,\tgamma_{\underline{d}}} \; $
and
 $ \; \underline{X}_{\,\tgamma_{\underline{b}'}} \, \underline{X}_{\,\tgamma_{\underline{d}'}} \; $
in that sum  {\sl cancel out each other}.

\medskip

  The outcome is that the last, double sum in (4.7) is actually  {\sl zero\/}:  thus (4.7) itself reduces to $ \;\; {\big( \big( f_-^{\,-1} g_- \big)\,.\,v_\mu \big)}_{\mu + \gamma_q^-}  = \,
  {\textstyle \sum^{N_-}_{\hskip-9pt {{\hskip-7pt \scriptstyle p=1} \atop {\pi(\tgamma_p) \, = \; \gamma_q^-}} }} \hskip-9pt
 \big( \vartheta_p \! - \eta_p \big) \, X_{\tgamma_p}\,.\,v_\mu \;\; $:
and, as before, one deduces  $ \; \vartheta_p = \, \eta_p \; $  for all  $ p \, $.

\medskip

   Thus the above induction argument yields  $ \, \vartheta_p = \eta_p \, $  for all  $ \, p = 1, \dots, N_- \; $,  hence  $ \; g_- = f_- \; $.

\medskip

   An entirely similar analysis shows that  $ \; g_+ = f_+ \; $,  \, whence the claim is proved.
\end{proof}

\bigskip

   At last, we are ready for our main result:

\bigskip

\begin{theorem} \label{crucial}
 For any  $ \, A \in \salg_\bk \, $,  the group product yields a bijection
  $$  G_\zero(A) \times G_\uno^{-,<}(A) \times G_\uno^{+,<}(A) \,
\lhook\joinrel\relbar\joinrel\relbar\joinrel\relbar\joinrel\twoheadrightarrow
\, G_V(A)  $$
and all the similar bijections obtained by permuting the factors  $ G_\uno^{\pm,<}(A) $  and/or switching the factor  $ G_\zero(A) $  to the right.
\end{theorem}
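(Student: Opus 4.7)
\smallskip

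The plan is to split the statement into surjectivity and injectivity, treating only the factorisation $G_\zero(A) \cdot G_\uno^{-,<}(A) \cdot G_\uno^{+,<}(A)$; the other permutations will follow either from the second identity in Proposition \ref{realcrucial}, or from repeatedly swapping adjacent factors via the commutation formulas of Lemma \ref{comm_1-pssg}.

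For surjectivity, I would fix once and for all a total order $\preceq$ on $\tDelta_\uno$ placing every element of $\tDelta_\uno^-$ before every element of $\tDelta_\uno^+$. With this choice, every element of $G_\uno^<(A)$ factors automatically as $u_- u_+$ with $u_\pm \in G_\uno^{\pm,<}(A)$, and Proposition \ref{realcrucial} then gives $G_V(A) = G_\zero(A) \cdot G_\uno^{-,<}(A) \cdot G_\uno^{+,<}(A)$.

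For injectivity I would argue as follows. Suppose $g_\zero g_- g_+ = f_\zero f_- f_+$ with $g_\zero, f_\zero \in G_\zero(A)$ and $g_\pm, f_\pm \in G_\uno^{\pm,<}(A)$; set $h := f_\zero^{-1} g_\zero \in G_\zero(A)$ so that $h = f_- f_+ g_+^{-1} g_-^{-1}$. The goal is to show $h = e$, after which $g_- g_+ = f_- f_+$ and Lemma \ref{pre-crucial} closes the argument. To get $h = e$ I would adapt the weight-space analysis used in the proof of Lemma \ref{pre-crucial}. Namely: for a weight vector $v_\mu \in V_\mu(A)$, the vector $h \cdot v_\mu$ lies in $\bigoplus_{\beta \in \Z\Delta_\zero} V_{\mu + \beta}(A)$, while expanding $f_- f_+ g_+^{-1} g_-^{-1} \cdot v_\mu$ in terms of ordered monomials in root vectors $X_{\tgamma}$ (using $x_{\tgamma}(\vartheta) = 1 + \vartheta X_{\tgamma}$ and its inverse $1 - \vartheta X_{\tgamma}$) produces only weight shifts in $\N\Delta_\uno^+ + (-\N\Delta_\uno^+) + (-\N\Delta_\uno^-) + \N\Delta_\uno^-$. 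Comparing the components in $V_{\mu + \gamma}(A)$ for $\gamma \in \Delta_\uno$ of smallest $n$-height as in Lemma \ref{pre-crucial}, and using the sign-cancellation trick there (pairs of strings $(\underline{b},\underline{d})$ and $(\underline{b}',\underline{d}')$ yielding the same monomial with opposite signs), one gets by induction on $n$-height that the $\vartheta$-coefficient of every $X_{\tgamma_p}$ in $f_\pm$ coincides with the one in $g_\pm$. This already forces $f_\pm = g_\pm$ and hence $h = e$, whence $g_\zero = f_\zero$.

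The main obstacle is the new feature, absent in Lemma \ref{pre-crucial}, that $h \in G_\zero(A)$ does not fix the weight $\mu$: elements of $G_\zero(A)$ shift weights along the even-root lattice $\Z\Delta_\zero$, so one must carefully separate \emph{even-root} shifts coming from $h$ from \emph{odd-root} shifts coming from $f_\pm, g_\pm$. Concretely, one must check that the $\gamma$-weight component of $h \cdot v_\mu$ for $\gamma$ a \emph{nontrivial} sum of odd roots with smallest $n$-height cannot be produced by a single $\Z\Delta_\zero$-shift of $v_\mu$ inside $h \cdot v_\mu$; this amounts to verifying, case by case for $W(n), S(n), \tS(n), H(n)$ via the explicit root descriptions in \S\ref{def_Cartan-subalg_roots_etc}, that $h$ never produces \emph{non-zero} components in weight subspaces reachable only by an honest odd shift. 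The super-$\Z_2$-grading of $V$, which $G_\zero(A)$ preserves but the parity-reversing products of odd root vectors do not, serves as an additional robust invariant to enforce this separation, and should make the case analysis a routine — if slightly tedious — verification.
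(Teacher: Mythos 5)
Your surjectivity argument is exactly the paper's: choose the order with $\tDelta_\uno^- \preceq \tDelta_\uno^+$, so $G_\uno^<(A) = G_\uno^{-,<}(A)\,G_\uno^{+,<}(A)$, and invoke Proposition \ref{realcrucial}. The problem is the injectivity half, where your reduction to the single identity $h := f_\zero^{-1} g_\zero = f_-\, f_+\, g_+^{-1} g_-^{-1}$ and the proposed weight-space comparison contain a genuine gap that your closing paragraph only partially diagnoses.

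The separation you need is false, not merely tedious to verify. The expansion of $f_-\, f_+\, g_+^{-1} g_-^{-1}$ produces monomials $X_{\tgamma_{i_1}} \cdots X_{\tgamma_{i_k}}$ in which positive and negative odd roots are \emph{mixed}; when $k$ is even the total weight shift $\sum_j \pi(\tgamma_{i_j})$ routinely lands in $\Z\Delta_\zero$ and even at $0$ (e.g.\ $\gamma + (-\gamma) = 0$, or $(-\varepsilon_j) + (\varepsilon_i + \varepsilon_j - \varepsilon_k) = \varepsilon_i - \varepsilon_k$ in type $W$), and the accompanying scalar $\vartheta_{i_1}\cdots\vartheta_{i_k} \in A_\uno^{\,k} \subseteq A_\zero$ is even, so the $\Z_2$-grading of $V$ detects nothing. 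Worse, the induction on n-height that drives Lemma \ref{pre-crucial} relies on every root appearing in a cross term having \emph{strictly smaller} n-height than the target root; with mixed signs this fails already at the base case, since a negative odd root of n-height $1$ can be $\gamma' + \gamma''$ with $\gamma' \in \Delta_\uno^+$, $\gamma'' \in \Delta_\uno^-$ both of n-height $1$, so the cross terms are not controlled by any previously established equalities. Lemma \ref{pre-crucial} works precisely because each side of $f_-^{\,-1}g_- = f_+\, g_+^{\,-1}$ involves odd roots of one sign only, and $\N\Delta_\uno^- \cap \N\Delta_\uno^+ = \{0\}$; your $h$ destroys that structure.

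The paper closes the gap by an entirely different device, which you should adopt: first use Lemma \ref{red-to-classical} to replace $A$ by the finitely generated subalgebra $B$ generated by all coefficients occurring in the two factorizations, so that $A_\uno^{\,m} = \{0\}$ for $m \gg 0$; then prove $g_\zero \equiv f_\zero$ and $g_\pm \equiv f_\pm$ modulo the ideal $\big(A_\uno^{\,m}\big)$ by induction on $m$. At each even $m$ one applies Lemma \ref{pre-crucial} in the quotient $G_V\big(A\big/\big(A_\uno^{\,m}\big)\big)$ to get $\vartheta_d^\pm - \eta_d^\pm \in \big(A_\uno^{\,m}\big) \cap A_\uno \subseteq \big(A_\uno^{\,m+1}\big)$; at each odd $m$ one uses $g_\zero, f_\zero \in G_\zero(A_\zero)$ and $\big(A_\uno^{\,m}\big) \cap A_\zero \subseteq \big(A_\uno^{\,m+1}\big)$. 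This parity bootstrap is what lets Lemma \ref{pre-crucial} be applied at all in the presence of the even factors, and it is the step your proposal is missing.
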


\begin{proof}  We shall prove the first mentioned bijection.
 \vskip5pt
   In general,  Proposition \ref{realcrucial}  gives  $ \; G_V(A) = G_\zero(A) \, G_\uno^<(A) \, $,  so the product map from  $ \; G_\zero(A) \times G_\uno^<(A) \; $  to  $ G_V(A) $  is onto.  But in particular, we can choose an ordering on  $ \, \Delta_\uno \, $  such that  $ \, \Delta_\uno^- \preceq \Delta_\uno^+ \, $,  hence  $ \, G_\uno^<(A) = G_\uno^{-,<}(A) \, G_\uno^{+,<}(A) \, $,  so we are done for surjectivity.
 \vskip5pt
   To prove that the product map is injective amounts to showing that for any  $ \, g \in G_V(A) \, $  the factorization  $ \, g = g_\zero \, g_- \, g_+ \, $  with  $ \, g_\zero \in G_\zero(A) \, $,  $ \, g_\pm \in G_\uno^{\pm,<}(A) \, $,  is unique.
 \hbox{In other words, if}
 $ \, g = g_\zero \, g_- \, g_+ \! = \! f_\zero f_- f_+ \, $  with  $ \, g_\zero \, , f_\zero \! \in G_\zero(A) \, $,  $ \, g_\pm \, , f_\pm \! \in G_\uno^{\pm,<}(A) \, $,
we must prove  $ \, g_\zero = \! f_\zero \, $,  $ \, g_\pm = \! f_\pm \; $.
 \eject

   By definition of  $ G_\zero(A) \, $,  both  $ g_\zero $  and  $ f_\zero $  are products of finitely many factors of type  $ x_{\talpha}(t_{\talpha}) $  and  $ h_i(s_i) $  for some  $ \, t_{\talpha} \in A_\zero \, $, $ \, s_i \in U\big(A_\zero\big) \, $   --- with  $ \, \talpha \in \tDelta_\zero \, $,  $ \, i = 1, \dots, r \, $.  Moreover, there exist product expansions
 $ \; g_\pm \, = \, \prod_{d=1}^{N_\pm} \, \big( 1 + \vartheta^\pm_d \, X_{\tgamma^\pm_d} \big) \; $,
 $ \; f_\pm \, = \, \prod_{d=1}^{N_\pm} \, \big( 1 + \eta^\pm_d \, X_{\tgamma^\pm_d} \big) \; $
like in the proof of  Lemma \ref{pre-crucial}.  We call  $ B $  the superalgebra of  $ A $  generated by all the  $ \, t_{\talpha} \, $,  the  $ \, s_i \, $,  the  $ \vartheta^\pm_d $  and  the  $ \eta^\pm_d \, $:  this is finitely generated (as a superalgebra), and  $ B_\uno $  is finitely generated as a  $ B_\zero $--module.
                                                    \par
   By  Lemma \ref{red-to-classical},  $ G_V(B) $  is a subgroup into  $ G_V(A) \, $;  therefore the identity  $ \; g_\zero \, g_- \, g_+ \, = \, f_\zero \, f_- \, f_+ \; $  holds inside  $ G_V(B) $  as well.  Thus we can switch from  $ A $  to  $ B \, $,  i.e.~we can assume from scratch that  $ \, A = B \, $.  In particular  $ A $  is finitely generated, so  $ A_\uno $  is finitely generated as an  $ A_\zero \, $--module.

\medskip

  Consider in  $ A $  the ideal  $ A_\uno \, $,  the submodules  $ A_\uno^{\,m} $  (cf.~section \ref{first_preliminaries})  and the ideal  $ \big( A_\uno^{\,m} \big) $  of  $ A $  generated by  $ A_\uno^{\,m} \, $  ($ \, m \! \in \! \N \, $):  as  $ A_\uno^{\,m} $  is  {\sl homogeneous}, we have  $ \; A \big/ \big( A_\uno^{\,m} \big) \in \salg_\bk \; $.  Moreover, as  $ A_\uno $  is finitely generated (over  $ A_\zero $),  we have  $ \, A_\uno^{\,m} = \{0\} = \big(A_\uno^{\,m}\big) \, $  for  $ m \! \gg \! 0 \, $.  So it is enough to prove
  $$  g_\zero \equiv f_\zero  \mod \big(A_\uno^{\,m}\big) \; ,  \qquad  g_\pm \equiv f_\pm  \mod \big(A_\uno^{\,m}\big)   \eqno \forall \;\;\; m \in \N   \qquad (4.9)  $$
hereafter, for any  $ \, A' \in \salg_\bk \, $,  any  $ I $  ideal of  $ A' $  with  $ \, \pi_I : A' \relbar\joinrel\twoheadrightarrow A' \big/ I \, $  the canonical projection, by  $ \; x \equiv y \mod I \; $  we mean that  $ x $  and  $ y $  in  $ G_V(A') $  have the same image in  $ G_V \big( A' \big/ I \big) \, $  via  $ G_V(\pi_I) \, $.

\smallskip

   We prove (4.9) by induction, the case  $ \, m = 0 \, $  being clear, as there is no odd part.
                                                                   \par
   Let (4.9) be true for even  $ m \, $.  In particular,  $ \, g_\pm \equiv f_\pm  \mod \big(A_\uno^{\,m}\big) \, $: then the proof of  Lemma \ref{pre-crucial}  applied to  $ \, G_V\big( A \big/ \! A_\uno^{\,m} \big) \, $  gives  $ \, \vartheta_d^\pm \equiv \eta_d^\pm \mod \! \big(A_\uno^{\,m}\big) \, $  for all  $ d \, $,  hence  $ \, (\vartheta_d^\pm - \eta_d^\pm) \in \big(A_\uno^{\,m}\big) \cap A_\uno \subseteq \big(A_\uno^{m+1}\big) \, $,  for all  $ d \, $,  by an obvious parity argument.  Thus  $ \, g_\pm \equiv f_\pm  \mod \big(A_\uno^{m+1}\big) \, $  too, hence from  $ \, g_\zero \, g_- \, g_+ = f_\zero \, f_- \, f_+ \, $  we get  $ \, g_\zero \, \equiv f_\zero  \mod \big(A_\uno^{m+1}\big) \, $  as well, that is (4.9) holds for  $ \, m\!+\!1 \, $.
                                                                   \par
   Let now (4.9) hold for odd  $ m \, $.  Then  $ \, g_\zero \, \equiv f_\zero  \mod \big(A_\uno^{\,m}\big) \, $;  but  $ \, g_\zero \, , f_\zero \in G_\zero(A) = G_\zero(A_\zero) \, $  by definition, hence  $ \, g_\zero \, \equiv f_\zero \mod \big(A_\uno^{\,m}\big) \cap A_\zero \, $.  Therefore  $ \, g_\zero \, \equiv f_\zero \mod \big(A_\uno^{m+1}\big) \, $  because, by an obvious parity argument again, one has  $ \, \big(A_\uno^{\,m}\big) \cap A_\zero \subseteq \big(A_\uno^{m+1}\big) \, $.  Thus from  $ \, g_\zero \, g_- \, g_+ = f_\zero \, f_- \, f_+ \, $  we get also  $ \; g_- \, g_+ \equiv f_- \, f_+  \mod \big(A_\uno^{m+1}\big) \, $.  Then  Lemma \ref{pre-crucial}  again   --- now applied to  $ G\big( A \big/ \! A_\uno^{m+1} \big) $  ---   eventually gives  $ \, g_\pm \equiv f_\pm  \mod \big(A_\uno^{m+1}\big) \, $,  so that (4.9) holds for  $ \, m\!+\!1 \, $  too.
\end{proof}

\medskip

   The ``overall consequence'' of the last result is the following, straightforward corollary:

\medskip

\begin{corollary}  \label{nat-transf} {\ }
 \vskip4pt
   (a) \,  The group product yields functor isomorphisms
 \vskip-7pt
  $$  G_\zero \times G_\uno^{-,<} \times G_\uno^{+,<} \;{\buildrel \cong \over {\relbar\joinrel\longrightarrow}}\; G_V \quad ,  \qquad  \bG_\zero \times \bG_\uno^{-,<} \times \bG_\uno^{+,<} \;{\buildrel \cong \over {\relbar\joinrel\longrightarrow}}\; \bG_V  $$
 \vskip-3pt
\noindent
 as well as those obtained by permuting the  $ (-) $-factor  and the  $ (+) $-factor  and/or moving the  $ \big(\,\zero\,\big) $-factor  to the right.  All these induce similar functor isomorphisms with the left-hand side obtained by permuting the factors above, like  $ \, G_\uno^{+,<} \! \times \! G_\zero \times \! G_\uno^{-,<} \,{\buildrel \cong \over \longrightarrow}\; G_V \, $,  $ \, \bG_\uno^{-,<} \! \times \! \bG_\zero \times \! \bG_\uno^{+,<} \,{\buildrel \cong \over \longrightarrow}\; \bG_V \, $,  etc.
 \vskip4pt
   (b) \,  The group product yields functor isomorphisms
 \vskip-13pt
  $$  G_\zero^\pm \times G_\uno^{\pm,<} {\buildrel \cong \over {\relbar\joinrel\longrightarrow}}\; G_V^\pm  \; ,  \hskip7pt  \bG_\zero^\pm \times \bG_\uno^{\pm,<} {\buildrel \cong \over {\relbar\joinrel\longrightarrow}}\; \bG_V^\pm  \; ,  \hskip15pt
  G_\uno^{\pm,<} \! \times G_\zero^\pm {\buildrel \cong \over {\relbar\joinrel\longrightarrow}}\; G_V^\pm  \; ,  \hskip7pt  \bG_\uno^{\pm,<} \! \times \bG_\zero^\pm \,{\buildrel \cong \over {\relbar\joinrel\longrightarrow}}\; \bG_V^\pm  $$
 \vskip-1pt
   (c) \,  Let  $ \, \preceq \, $  be a total order on  $ \tDelta_\uno $  such that  $ \, \tDelta_\uno^- \preceq \tDelta_\uno^+ \, $  or  $ \, \tDelta_\uno^+ \preceq \tDelta_\uno^- \, $.  Then the group product yields
isomorphisms
  $ \;  G_\zero \times G_\uno^< \,{\buildrel \cong \over \longrightarrow}\, G_V \; $,
  $ \; \bG_\zero \times \bG_\uno^< \,{\buildrel \cong \over \longrightarrow}\, \bG_V \; $,
  $ \; G_\uno^< \times G_\zero \,{\buildrel \cong \over \longrightarrow}\, G_V \; $,
  $ \; \bG_\uno^< \times \bG_\zero \,{\buildrel \cong \over \longrightarrow}\ \bG_V \; $.
\end{corollary}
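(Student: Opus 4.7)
The plan is to deduce all three parts by routine unwinding of Theorem~\ref{crucial}, combined with functoriality and the universal property of sheafification; no new computation is needed.

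For part~(a), Theorem~\ref{crucial} already supplies, for every $A \in \salg_\bk$, a set-theoretic bijection between $G_\zero(A) \times G_\uno^{-,<}(A) \times G_\uno^{+,<}(A)$ and $G_V(A)$ given by the group product, together with the analogous bijections obtained by permuting the two odd factors and/or moving the even factor to the right. First I would observe that these bijections are natural in $A$: each of $G_\zero$, $G_\uno^{\pm,<}$ and $G_V$ is a subfunctor of $\rGL(V_\bullet)$, and the multiplication map is induced from multiplication in $\rGL(V_\bullet)$, hence it commutes with the morphisms $G_V(\phi)$ of Definition~\ref{def_Cartan-sgroup_funct} for any $\phi : A \to B$. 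This upgrades the bijections of Theorem~\ref{crucial} to natural isomorphisms of supergroup functors. For the sheafified version, I would then invoke the fact that sheafification with respect to the Zariski topology on $\salg_\bk$ commutes with finite products: applying it to the isomorphism $G_\zero \times G_\uno^{-,<} \times G_\uno^{+,<} \xrightarrow{\cong} G_V$ yields $\bG_\zero \times \bG_\uno^{-,<} \times \bG_\uno^{+,<} \xrightarrow{\cong} \bG_V$. The variants obtained by permuting odd factors or placing the even factor on the right follow from the ``$\cdots$and all the similar bijections$\cdots$'' clause of Theorem~\ref{crucial} by the same argument.

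For part~(b), I would repeat the proof of Theorem~\ref{crucial} verbatim, restricting throughout to the ``$+$'' or ``$-$'' subfunctors: Proposition~\ref{realcrucial} already gives $G^\pm(A) = G_\zero^\pm(A)\, G_\uno^{\pm,<}(A) = G_\uno^{\pm,<}(A)\, G_\zero^\pm(A)$ after combining with Lemma~\ref{pre-realcrucial} for the ``$\pm$'' cases, so surjectivity of the product map is immediate. For injectivity, the key ingredient was Lemma~\ref{pre-crucial}, which applies equally well when only one sign is present (in fact its proof already handles the two signs separately via an n-height induction). Functoriality in $A$ and compatibility with sheafification are then obtained exactly as in part~(a).

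For part~(c), fix an ordering $\preceq$ on $\tDelta_\uno$ with $\tDelta_\uno^- \preceq \tDelta_\uno^+$ (the other case is symmetric). Then by the very definition of the ordered products in Definition~\ref{subgrps}, one has a set-theoretic identification $G_\uno^<(A) = G_\uno^{-,<}(A) \cdot G_\uno^{+,<}(A)$, with the product map $G_\uno^{-,<}(A) \times G_\uno^{+,<}(A) \to G_\uno^<(A)$ being a bijection (injectivity being a special case of Lemma~\ref{pre-crucial} with the even factor trivial). Combining this with part~(a) immediately yields $G_\zero \times G_\uno^< \xrightarrow{\cong} G_V$, and sheafifying gives $\bG_\zero \times \bG_\uno^< \xrightarrow{\cong} \bG_V$; the versions with the even factor on the right follow from the corresponding clause of Theorem~\ref{crucial}.

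The main conceptual point, and the only step requiring any care, is checking that sheafification commutes with the finite products in question and that the isomorphism of functors produced by Theorem~\ref{crucial} is really natural in $A$; both are formal, but they are the only things one must verify, since all hard work has been absorbed into Theorem~\ref{crucial} itself.
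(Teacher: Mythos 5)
Your proposal is correct and follows exactly the route the paper intends: the corollary is stated there without proof as a ``straightforward consequence'' of Theorem~\ref{crucial}, and your fleshing-out of the two formal points (naturality in $A$ via the embedding into $\rGL(V_\bullet)$, and compatibility with sheafification, which preserves finite products) is precisely what is needed. The only minor imprecision is in part~(b), where the factorization $G^\pm(A)=G^\pm_\zero(A)\,G_\uno^{\pm,<}(A)$ comes from Proposition~\ref{g0g1} combined with Lemma~\ref{pre-realcrucial} rather than from Proposition~\ref{realcrucial} itself (which is stated only for $G_V$), but this does not affect the argument.
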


\medskip

   Yet another crucial step we can move on now is the following:

\medskip

\begin{proposition} \label{G_uno^pm-repres}
  The functors  $ \, G_\uno^{\pm,<} : \salg_\bk \longrightarrow \sets \; $  are representable: namely, they are the functor of points of the superscheme  $ \, \mathbb{A}_\bk^{0|N_\pm} $,  where  $ \, N_\pm := \big| \tDelta_\uno^\pm \big| \; $.  In particular they are sheaves, hence  $ \; G_\uno^{\pm,<} = \bG_\uno^{\pm,<} \; $.  Similarly, for any total order in  $ \tDelta_\uno $  such that  $ \, \tDelta_\uno^- \! \preceq \tDelta_\uno^+ \, $  or  $ \, \tDelta_\uno^+ \! \preceq \tDelta_\uno^- \, $,  we have  $ \, G_\uno^< = \bG_\uno^< \cong \mathbb{A}^{0|N} $  as super-schemes, with  $ \, N := \big| \Delta_\uno \,\big| = N_+ + N_- \; $.
\end{proposition}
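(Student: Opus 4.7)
The plan is to exhibit $G_\uno^{\pm,<}$ as the functor of points of $\mathbb{A}_\bk^{0|N_\pm}$ by constructing an explicit natural bijection. I fix the total order $\tgamma_1 \prec \cdots \prec \tgamma_{N_\pm}$ on $\tDelta_\uno^\pm$ and define a natural transformation $\Phi_\pm : \mathbb{A}_\bk^{0|N_\pm} \longrightarrow G_\uno^{\pm,<}$ by sending $(\vartheta_1,\dots,\vartheta_{N_\pm}) \in \mathbb{A}_\bk^{0|N_\pm}(A) = A_\uno^{N_\pm}$ to the ordered product $\prod_{i=1}^{N_\pm} x_{\tgamma_i}(\vartheta_i)$. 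Naturality in $A$ is immediate from the functoriality of each $x_{\tgamma_i}$, and surjectivity of $\Phi_\pm(A)$ holds by the very definition of $G_\uno^{\pm,<}(A)$.

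For injectivity, I appeal to Lemma~\ref{pre-crucial}: the equality $g_- g_+ = f_- f_+$ with $g_\pm, f_\pm \in G_\uno^{\pm,<}(A)$ forces $g_\pm = f_\pm$. Specializing $g_+ = f_+ = 1$ (all their odd parameters zero) yields that the ordered factorization defining an element of $G_\uno^{-,<}(A)$ determines its parameters $\vartheta_i^-$ uniquely; the symmetric specialization handles $G_\uno^{+,<}(A)$. Hence $\Phi_\pm(A)$ is bijective for every $A \in \salg_\bk$, so $G_\uno^{\pm,<}$ is representable, indeed by the exterior (Grassmann) superalgebra $\bk[\xi_1,\dots,\xi_{N_\pm}]$ on $N_\pm$ odd generators, i.e.\ it is the functor of points of $\mathbb{A}_\bk^{0|N_\pm}$. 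Any representable functor is automatically a sheaf for the Zariski topology, so the sheafification $\bG_\uno^{\pm,<}$ coincides with $G_\uno^{\pm,<}$.

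For the last assertion concerning $G_\uno^<$, suppose the chosen total order on $\tDelta_\uno$ satisfies $\tDelta_\uno^- \preceq \tDelta_\uno^+$ (the other case is symmetric). Then any ordered product over $\tDelta_\uno$ splits as an ordered product over $\tDelta_\uno^-$ followed by one over $\tDelta_\uno^+$, so the group-product map $G_\uno^{-,<} \times G_\uno^{+,<} \longrightarrow G_\uno^<$ is surjective on $A$-points by definition. Injectivity is again exactly Lemma~\ref{pre-crucial} (or, alternatively, is extracted from Theorem~\ref{crucial} by restricting to the factor $G_\uno^{-,<} \times G_\uno^{+,<}$ with trivial even component). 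Combining with the first two steps gives a chain of natural isomorphisms
\[
 G_\uno^< \;\cong\; G_\uno^{-,<} \times G_\uno^{+,<} \;\cong\; \mathbb{A}_\bk^{0|N_-} \times \mathbb{A}_\bk^{0|N_+} \;\cong\; \mathbb{A}_\bk^{0|N}
\]
with $N = N_- + N_+$, and since the outer terms are sheaves the equality $G_\uno^< = \bG_\uno^<$ follows.

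The only genuinely delicate point in this outline is the injectivity required in the second step; but its full content is already encapsulated in the inductive ``$n$-height'' argument proving Lemma~\ref{pre-crucial}, so no additional work is needed. All remaining steps are formal consequences of representability, Yoneda, and the Grothendieck-topological fact that representable functors are sheaves.
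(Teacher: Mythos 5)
Your proposal is correct and follows essentially the same route as the paper: the same natural transformation $(\vartheta_1,\dots,\vartheta_{N_\pm})\mapsto\prod_i x_{\tgamma_i}(\vartheta_i)$, injectivity extracted from Lemma~\ref{pre-crucial}, and the same reduction of the $G_\uno^<$ statement to the ordered splitting $G_\uno^{-,<}\times G_\uno^{+,<}$. The only nuance is that the bare statement of Lemma~\ref{pre-crucial} with $g_+=f_+=1$ is tautological (it concludes equality of group elements, which is the hypothesis); what is actually needed is the parameter equality $\vartheta_p=\eta_p$ established \emph{inside} its proof --- which you correctly flag, and which is exactly how the paper itself cites it.
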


\begin{proof}  Clearly, there exists a natural transformation  $ \, \Psi^\pm : \mathbb{A}_\bk^{0|N_\pm} \! \longrightarrow G_\uno^{\pm,<} \; $  given on objects by
 \vskip3pt
   \centerline{ $  \Psi^\pm(A) \, : \, \mathbb{A}_\bk^{0|N_\pm}(A) \! \longrightarrow G_\uno^{\pm,<}(A)  \quad ,  \qquad  (\vartheta_1,\dots,\vartheta_{N_\pm}) \, \mapsto \, {\textstyle \prod_{i=1}^{N_\pm}} \, x_{\tgamma_i}(\vartheta_i) $ }
 \vskip5pt
   \indent   Now given  $ \; g_\uno^\pm = \prod_{i=1}^{N_\pm} x_{\tgamma_i}\!(\vartheta'_i) \in G_\uno^{\pm,<}(A) \, $, $ \, h_\uno^\pm = \prod_{i=1}^{N_\pm} x_{\tgamma_i}\!(\vartheta''_i) \in G_\uno^{\pm,<}(A) \, $,  \, assume that  $ \; g_\uno^\pm = h_\uno^\pm \, $,  hence  $ \; h_\uno^\pm \, {(g_\uno^\pm)}^{-1} = 1 \, $.  Then we get  $ \, \big( \vartheta'_1, \dots, \vartheta'_{N_\pm} \big) = \big( \vartheta''_1, \dots, \vartheta''_{N_\pm} \big) \, $  just as showed in the proof of  Lemma \ref{pre-crucial}.  This means that  $ \Psi^\pm $  is an isomorphism of functors, which proves the first part of the claim.
%
%
   The last part of the claim then follows like for  Corollary \ref{nat-transf}{\it (c)}.
\end{proof}

\vskip9pt

   Finally, we prove that the supergroup functors  $ \bG_V $  are affine algebraic:

\vskip11pt

\begin{theorem}  \label{representability}
   Every functor\/  $ \bG_V $  is an affine algebraic supergroup.
\end{theorem}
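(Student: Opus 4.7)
My plan is to deduce representability and algebraicity of $\bG_V$ directly from the factorization theorem and the representability results already established for the individual factors. The starting point is the functor isomorphism
$$\bG_\zero \,\times\, \bG_\uno^{-,<} \,\times\, \bG_\uno^{+,<} \;{\buildrel \cong \over \longrightarrow}\; \bG_V$$
given by the group product, which is the content of Corollary \ref{nat-transf}{\it (a)}. Thus it suffices to show that each of the three factors on the left is a representable affine algebraic superscheme: their direct product will then automatically be a representable affine algebraic superscheme, with representing superalgebra equal to the (super)tensor product of the three representing superalgebras, so that $\bG_V$ is itself representable.

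For the first factor, I would combine Proposition \ref{bG_0-Ch_V} (which identifies $\bG_0$ with the classical Chevalley--Demazure group scheme $\mathbf{Ch}_V$ attached to $\fg_0$ and $V$, hence an affine algebraic group), Proposition \ref{descr-G_zero}{\it (c)} (which writes $\bG_\zero = \bG_0 \cdot \bG_{\zero^{\,\uparrow}}$), the semidirect product decomposition $\bG_\zero \cong \bG_0 \ltimes \bG_{\zero^{\,\uparrow}}$ of Proposition \ref{semi-direct-G_zero}, and Proposition \ref{factorization} (which gives $\bG_{\zero^{\,\uparrow}} \cong \mathbb{A}^{N_{\zero^{\,\uparrow}}|0}$ as a superscheme). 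Together these show that $\bG_\zero$ is a closed subgroup of $\rGL(V)$, hence an affine algebraic (classical) group scheme; note in particular that it factors through $\alg_\bk$ since no odd generators are involved. For the second and third factors, Proposition \ref{G_uno^pm-repres} gives $\bG_\uno^{\pm,<} \cong \mathbb{A}_\bk^{0|N_\pm}$ as affine superschemes, which are plainly representable and algebraic (their structure superalgebras are finitely generated exterior algebras on $N_\pm$ odd generators).

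Putting these pieces together, the factorization above exhibits $\bG_V$ as representable by the commutative superalgebra
$$\cO(\bG_V) \;\cong\; \cO(\bG_\zero) \,\otimes_\bk\, \bk[\xi_1^{-},\dots,\xi_{N_-}^{-}] \,\otimes_\bk\, \bk[\xi_1^{+},\dots,\xi_{N_+}^{+}]$$
where $\cO(\bG_\zero)$ is the (finitely generated, purely even) coordinate algebra of the affine algebraic group $\bG_\zero$, and each exterior factor is the structure sheaf of $\mathbb{A}_\bk^{0|N_\pm}$. This superalgebra is visibly finitely generated over $\bk$, so $\bG_V$ is algebraic. Finally, because $\bG_V$ is by construction a group-valued functor (it was defined as a subgroup functor of $\rGL(V_\bk)$ and sheafified within supergroup functors), the representing superalgebra automatically acquires a (commutative) Hopf superalgebra structure via Yoneda, making $\bG_V$ an affine algebraic supergroup in the sense of subsection \ref{first_preliminaries}.

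I do not expect serious obstacles: all the substantive work has already been carried out in Corollary \ref{nat-transf}, Proposition \ref{bG_0-Ch_V}, Proposition \ref{G_uno^pm-repres}, and Proposition \ref{factorization}. The only conceptual point requiring a line of comment is that the factorization in Corollary \ref{nat-transf}{\it (a)} is a priori only an isomorphism of set-valued functors, not of group functors; but this is enough to transport representability, and the group (equivalently, Hopf) structure on the representing object is then induced from the preexisting group structure on $\bG_V$ itself, not from the product group structure on the three factors.
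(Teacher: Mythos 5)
Your proposal is correct and follows essentially the same route as the paper: the paper's proof likewise invokes Corollary \ref{nat-transf} for the superscheme factorization $\bG_V \cong \bG_\zero \times \bG_\uno^{-,<} \times \bG_\uno^{+,<}$, cites Proposition \ref{G_uno^pm-repres} and Propositions \ref{descr-G_zero}/\ref{semi-direct-G_zero} for the representability and algebraicity of the factors, and concludes via closure of affine algebraic superschemes under direct products. Your closing remark that the factorization is only a superscheme (not group) isomorphism, with the Hopf structure induced from the pre-existing group functor structure, is a correct clarification of a point the paper leaves implicit.
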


\begin{proof}
 First,  $ \bG_\uno^{-,<} $  and  $ \bG_\uno^{+,<} $  are affine, and algebraic, by  Proposition \ref{G_uno^pm-repres};  moreover, by  Proposition \ref{descr-G_zero}{\it (c)},  or by  Proposition \ref{semi-direct-G_zero},  $ \bG_\zero $  is affine algebraic as well.  Now  Corollary \ref{nat-transf}  gives  $ \; \bG_V \cong \, \bG_\zero \times \bG_\uno^{-,<} \times \bG_\uno^{+,<} \; $  as superschemes.  As any direct product of affine algebraic superschemes is affine algebraic too (see  \cite{ccf},  Ch.~10), we can eventually conclude the same for  $ \bG_V \, $.
\end{proof}

\vskip9pt

\begin{remark}  \label{alt-def_G_V}
 Theorem \ref{representability}  and  Proposition \ref{G_uno^pm-repres}  together  show that  $ \; \bG_V \, \cong \, \bG_\zero \times \bG_\uno^{-,<} \! \times \bG_\uno^{+,<} \; $  as superschemes.  As  $ \, \bG_\uno^{\pm,<} \cong G_\uno^{\pm,<} \, $  is generated by the one-parameter supersubgroups  $ \, x_{\tgamma} \; \big(\, \tgamma \in \tDelta_\uno^\pm \big) \, $,  we conclude that  $ \bG_V  $  can also be described as  $ \; \bG_V(A) = \Big\langle \bG_\zero(A) \cup {\big\{\, x_{\tgamma}(A) \big\}}_{\tgamma \in \tDelta_\uno} \Big\rangle \; $  for all  $ \, A \in \salg_\bk \, $.  Even more, as  $ \; \bG_\zero \, \cong \, \bG_0 \ltimes \bG_{\zero^{\,\uparrow}} \; $  by  Proposition \ref{semi-direct-G_zero}  and  $ \, \bG_{\zero^{\,\uparrow}} \cong \bigtimes_{\;\talpha \in \tDelta_{\zero^{\,\uparrow}}} x_{\talpha} \, $  by  Proposition \ref{factorization},  we have also  $ \; \bG_V(A) = \Big\langle \bG_0(A) \, \cup {\big\{\, x_{\talpha}(A) \big\}}_{\talpha \in \tDelta \setminus \tDelta_0} \Big\rangle \; $,  \, for all  $ \, A \in \salg_\bk \, $.
\end{remark}

\vskip4pt

   We finish with an additional, non-obvious remark: under mild assumptions, the supergroup  $ \bG_V \, $,  which by construction is a supersubgroup of  $ \rGL(V) \, $,  is indeed a  {\sl closed\/}  one:

\vskip11pt

\begin{proposition}  \label{G_V-closed-ssgrp}
   Assume that  $ \, \fg_\uno $  as a  $ \bk $--submodule  of  $ \, {\rgl(V)}_\uno $  is a direct summand with a  $ \bk $--free  complement.  Then  $ \, \bG_V $  is a closed supersubgroup of\/  $ \rGL(V) \, $.  In particular, this is always true if  $ \, \bk $  is a field.
\end{proposition}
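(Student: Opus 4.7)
The plan is to show that the inclusion $\iota : \bG_V \hookrightarrow \rGL(V_\bullet)$ of affine algebraic supergroup schemes is a closed embedding, equivalently that $\iota^* : \cO\big(\rGL(V)\big) \to \cO(\bG_V)$ is surjective as a morphism of commutative Hopf superalgebras.

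Combining the scheme-theoretic factorization of  Corollary \ref{nat-transf}  with  Proposition \ref{G_uno^pm-repres}  yields
\[
\cO(\bG_V) \;\cong\; \cO(\bG_\zero) \otimes_\bk {\textstyle\bigwedge} \, \fg_\uno^{\,*}\,,
\]
so $\cO(\bG_V)$ is generated by $\cO(\bG_\zero)$ together with finitely many odd generators $\{\vartheta_\tgamma\}_{\tgamma \in \tDelta_\uno}$, which generate a nilpotent ideal $J$ with $\cO(\bG_V)/J \cong \cO(\bG_\zero)$.  By  Proposition \ref{descr-G_zero}(c)  the subgroup $\bG_\zero$ is already closed in $\rGL(V)$, so $\cO\big(\rGL(V)\big) \to \cO(\bG_V)/J = \cO(\bG_\zero)$ is surjective, equivalently $\text{Im}(\iota^*) + J = \cO(\bG_V)$; the task therefore reduces to establishing $J \subseteq \text{Im}(\iota^*)$.

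To produce the $\vartheta_\tgamma$ as pull-backs, I would use the direct-summand hypothesis $\rgl(V)_\uno = \fg_\uno \oplus W$ to fix a $\bk$-linear retraction $r : \rgl(V)_\uno \twoheadrightarrow \fg_\uno$; taking the dual basis $\{\phi_\tgamma\} \subset \fg_\uno^{\,*}$ of $\{X_\tgamma\}_{\tgamma \in \tDelta_\uno}$ and setting $\hat\phi_\tgamma := \phi_\tgamma \circ r$, each $\hat\phi_\tgamma$ is an odd $\bk$-linear combination of the off-diagonal matrix coordinates of $\rGL(V)$, hence an odd element of $\cO\big(\rGL(V)\big)$.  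A direct expansion of the odd part of any $A$-point $g = g_\zero \cdot \prod_d (1 + \vartheta_d X_{\tgamma_d}) \in \bG_V(A)$ then gives
\[
\iota^*(\hat\phi_\tgamma) \;=\; \vartheta_\tgamma \,+\, {\textstyle\sum}_d n_{\tgamma,d}\,\vartheta_d \,+\, O(J^3)\,, \qquad n_{\tgamma,d} \in \mathfrak{m}_e \subset \cO(\bG_\zero)\,,
\]
with $\mathfrak{m}_e$ the augmentation ideal at the identity of $\bG_\zero$ (the $n_{\tgamma,d}$ vanish at $g_\zero = 1$ because $r(X_{\tgamma_d}) = X_{\tgamma_d}$ and $\phi_\tgamma(X_{\tgamma_d}) = \delta_{\tgamma,\tgamma_d}$).

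The main obstacle is that the ``correction matrix'' $(\delta_{\tgamma,d} + n_{\tgamma,d})$ is the identity only at the identity point of $\bG_\zero$, not everywhere, so it cannot be inverted over $\cO(\bG_\zero)$ in order to read off $\vartheta_\tgamma$ directly.  I would overcome this by an iteration argument exploiting both $\cO(\bG_\zero) \subseteq \text{Im}(\iota^*) + J$ (from the previous step) and the nilpotency of $J$ (stemming from $|\tDelta_\uno| < \infty$): the $\mathfrak{m}_e J$ correction in the displayed identity is successively absorbed into higher powers of $J$, yielding at each stage an inclusion of the shape $J^k \subseteq \text{Im}(\iota^*) + J^{k+1}$, which by $J^{|\tDelta_\uno|+1} = 0$ collapses after finitely many steps to $J \subseteq \text{Im}(\iota^*)$, completing the proof of surjectivity of $\iota^*$.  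Finally, the ``in particular'' statement is immediate, because over a field every $\bk$-subspace of $\rgl(V)_\uno$ is automatically a direct summand, so the hypothesis is vacuous.
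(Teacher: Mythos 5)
There is a genuine gap, and it sits exactly where you flag ``the main obstacle.'' Your key identity is $\iota^*(\hat\phi_{\tgamma}) = \vartheta_{\tgamma} + \sum_d n_{\tgamma,d}\,\vartheta_d + O(J^3)$ with $n_{\tgamma,d} \in \mathfrak{m}_e$, and you propose to absorb the correction by iterating $J^k \subseteq \mathrm{Im}(\iota^*) + J^{k+1}$. But this inclusion does not follow. Writing $n_{\tgamma,d} = i_{\tgamma,d} + j_{\tgamma,d}$ with $i_{\tgamma,d} \in \mathrm{Im}(\iota^*)$ and $j_{\tgamma,d} \in J$ (which is all that $\mathrm{Im}(\iota^*) + J = \cO(\bG_V)$ gives you), the term $j_{\tgamma,d}\vartheta_d$ indeed lands in $J^2$, but $i_{\tgamma,d}\vartheta_d$ lands only in $\mathrm{Im}(\iota^*)\cdot J$, and since $\mathrm{Im}(\iota^*)$ is a \emph{subalgebra} (not contained in $\bk$) this is no better than $J$ itself: concluding $\mathrm{Im}(\iota^*)\cdot J \subseteq \mathrm{Im}(\iota^*) + J^{k+1}$ presupposes $J \subseteq \mathrm{Im}(\iota^*)$, which is what you are trying to prove. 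Equivalently: in matrix form your identity reads $(\mathrm{Id}+N)\vartheta \in \mathrm{Im}(\iota^*)^{\oplus} + (J^3)^{\oplus}$, and $\mathrm{Id}+N$ is a matrix over $\cO(\bG_\zero)$ that is invertible only near the identity of $\bG_\zero$ (its determinant is a function equal to $1$ at $e$ but possibly vanishing elsewhere), so the $\vartheta_{\tgamma}$ cannot be solved for globally. The source of the problem is that your candidate coordinates $\hat\phi_{\tgamma} = \phi_{\tgamma}\circ r$ see the odd part of $g = g_\zero\cdot\prod_d(1+\vartheta_d X_{\tgamma_d})$, which is $g_\zero\cdot\big(\sum_d \vartheta_d X_{\tgamma_d}\big) + O(J^3)$: the unwanted twist by $g_\zero$ is precisely the non-invertible correction.

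Two ways to repair this, the second being the paper's. (1) Precompose with the scheme morphism $\pi_\uno : \rGL(V) \to I + {\rgl(V)}_\uno$, $g \mapsto \pi_\zero(g)^{-1}g$ (defined on all of $\rGL(V)$, as in \S\ref{splitting-G}); then $\phi_{\tgamma}\circ r\circ(\pi_\uno - I)$ pulls back to $\vartheta_{\tgamma}$ plus terms that are $\bk$--linear combinations of products of at least three $\vartheta$'s, with \emph{constant} coefficients, and now a genuine nilpotent triangular induction on the number of odd factors does close up. (2) The paper instead uses the splitting $\rGL(V) = {\rGL(V)}_\zero \times {\rGL(V)}_\uno$ to reduce to showing that the totally odd factor $\bG_\uno^< \cong \mathbb{A}^{0|N}$ is closed in $\End(V)$; there the underlying topological space is a single point, so invertibility of the Jacobian at $I$ (which is exactly your computation $\partial\hat\xi_h/\partial\vartheta_k = \delta_{h,k}$, using the direct-summand hypothesis in the same way you do) is invertibility everywhere, and the formal Inverse Function Theorem applies. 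Your reduction to $\bG_\zero$ plus odd generators, and your use of the retraction $r$, are the right ingredients; what is missing is the step that removes the $g_\zero$--dependence before inverting.
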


\begin{proof}
 By construction we have that  $ \, \bG \leq \rGL(V) \, $.
 Consider the factorization  $ \; \bG_V = \, \bG_\zero \times \bG_\uno^< \; $  in  Corollary \ref{nat-transf}{\it (c)\/}:  by construction,  $ \, \bG_\zero \, $  is just a classical algebraic group(-scheme), embedded into  $ \rGL(V) $  as a  {\sl closed\/}  subgroup, therefore it is enough to show that  $ \; \bG_\uno^< \; $  is closed too.
 \vskip3pt
%
   Recall that  $ \rGL(V) $  can be realized as an open supersubscheme of  $ \, \End(V) = \text{\sl Mat}_{m|n}(\bk) \, $,  where  $ m|n $  is the (super)rank of  $ V \, $;  so it is enough to prove that  $ \bG_\uno^< $  is closed in  $ \text{\sl Mat}_{m|n}(\bk) \, $;  recall also
 \vskip3pt
   \centerline{ $ \cO\big(\End(V)\big)  \; = \;  \cO\big(\text{\sl Mat}_{m|n}(\bk)\big)  \; = \;  \bk\Big[{\big\{ x'_{i,j} \, , \, x''_{r,s} \, , \, \xi'_{i,s} \, , \, \xi''_{r,j} \big\}}_{i,j=1,\dots,m;}^{r,s=1,\dots,n;} \Big] $ }
 \vskip3pt
   Using  Proposition \ref{G_uno^pm-repres},  we identify  $ \, \mathbb{A}_\bk^{0|N} \cong
\bG_\uno^< \, $
 so that the point  $ 0 $  in
$ \mathbb{A}_\bk^{0|N} $  corresponds to the identity  $ I $  in  $ \bG_\uno^< \, $.
Then the tangent superspace to  $ \bG_\uno^< $  at  $ I $  corresponds to the tangent
superspace to  $ \mathbb{A}_\bk^{0|N} $  at  $ 0 \, $,  naturally identified with
$ \mathbb{A}_\bk^{0|N} $  again.  By the assumption on  $ \fg_\uno \, $,  we can complete the  $ \bk $--basis $ \, \big\{ X_{\tgamma_1} \, , \, \dots \, , \, X_{\tgamma_N} \big\} \, $  of  $ \fg_\uno $  to a  $ \bk $--basis  of  $ {\rgl(V)}_\uno \, $:  this in turn correspond to a ``change of odd variables'' in  $ \cO\big(\End(V)\big) \, $,  from  $ \, {\big\{ \xi'_{i,s} \, , \, \xi''_{r,j} \big\}}_{i,j=1,\dots,m;}^{r,s=1,\dots,n;} \, $  to some new set of odd variables, say  $ \, \big\{ \widehat{\xi}_1 \, , \, \dots \, , \, \widehat{\xi}_{2mn} \big\} \, $,  such that  $ \; \big\langle X_{\tgamma_h} \, , \, \widehat{\xi}_k \big\rangle = \delta_{h,k} \;\, $.  Letting  $ \mathcal{J} $  be the embedding map of  $ \bG_\uno^< $  into  $ \End(V) \, $,  the tangent map  $ \, d_{{}_I}\mathcal{J} \, $  (of  $ \mathcal{J} $  at  $ I \, $)  is expressed by a 2-by-2 block matrix whose only non-trivial block (in the right-bottom corner) is  $ \; {\Big( {{\partial \widehat{\xi}_h} \over {\partial \vartheta_k}} \Big)}_{h,k=1,\dots,N;} \; $.  Now, given  $ \, A \in \salg_\bk \, $,  any  $ \; g = {\textstyle \prod_{i=1}^N} \, x_{\tgamma_i}(\vartheta_i) \in \bG_\uno^<(A) \; $  expands as
 $ \,\; g  \, = \,  {\textstyle \prod_{i=1}^N} \, x_{\tgamma_i}(\vartheta_i)  \, = \,
I + {\textstyle \sum_{i=1}^N} \, \vartheta_i \, X_{\tgamma_i} + \cO(2) \; $,  \;
 where  $ \, \cO(2) \, $  stands for some element in  $ \, \mathfrak{gl}\big(V(A)\big) = A_\zero \otimes_\bk \mathfrak{gl}(V)_\zero + A_\uno \otimes_\bk \mathfrak{gl}(V)_\uno \, $  whose (non-zero) coefficients in  $ A_\zero $  and  $ A_\uno $  actually belong to  $ A_\uno^{\,2} $  (cf.~Subsec.~\ref{first_preliminaries}).
 This implies that  $ \, {{\partial \widehat{\xi}_h} \over {\partial \vartheta_k}} = \delta_{h,k} \, $,  so that the only non-trivial block in the matrix of  $ \, d_{{}_I}\mathcal{J} \, $  is the identity matrix of size  $ N \, $.  Thanks to this last remark, we can adapt the  {\sl Inverse Function Theorem\/}  and its corollaries (see  \cite{ccf},  \S\S 5.1--2)  to the present context: the outcome is that
there exists ``a change of variables''
 \vskip3pt
   \centerline{ $ \; {\big\{ x'_{i,j} \, , \, x''_{r,s} \big\}}_{i,j=1,\dots,m;}^{r,s=1,\dots,n;} \mapsto {\big\{ \widetilde{x}'_{i,j} \, , \, \widetilde{x}''_{r,s} \big\}}_{i,j=1,\dots,m;}^{r,s=1,\dots,n;}  \quad  ,  \qquad  {\Big\{\, \widehat{\xi}_t \,\Big\}}_{t=1,\dots,N;} \mapsto {\Big\{\, \widetilde{\xi}_t \,\Big\}}_{t=1,\dots,N;} $ }
 \vskip3pt
\noindent
%
%
 such that the morphism of superalgebras  $ \, \mathcal{J}^* : \cO\big(\End(V)\big) \longrightarrow \cO\big(\bG_\uno^<\big) \, $  corresponding to  $ \mathcal{J} $  is given by mapping  $ \, \widetilde{x}'_{i,j} \mapsto \delta_{i,j} \, $,  $ \, \widetilde{x}''_{r,s} \mapsto \delta_{r,s} \, $,  $ \, \widetilde{\xi}_t \mapsto \vartheta_t \, $  for  $ \, t \leq N \, $,  $ \; \widetilde{\xi}_t \mapsto 0 \, $  for  $ \, t > N \, $.  In turn,  $ \, \bG_\uno^< = \text{\sl Im}(\mathcal{J}) \, $  is the zero locus  $ \, \text{\it Ker}\big(\mathcal{J}^*\big) \, $,  hence it is a closed supersubscheme of  $ \End(V) \, $.
\end{proof}

\medskip

  \subsection{The dependence on  $ V $}  \label{ind_adm-latt}

\smallskip

   {\ } \quad   The construction of the supergroups  $ \bG_V $  was made via the Lie superalgebra  $ \fg $  and the  $ \fg $--module  $ V \, $.  So we have to clarify how supergroups attached to
%
%
different  $ \fg $--modules  are related among them.  Moreover, the construction involves the choice of an admissible  $ \Z $--lattice  $ M $  in  $ V \, $:  nevertheless, we shall presently show that the outcome, i.e.~$ \bG_V $  itself, is actually independent of that choice.
%

\medskip

\begin{free text}  \label{weight-bG_V}
 {\bf The weight lattice of  $ \bG_V \, $.}  \  Let  $ L_w $  be the lattice of all ``integral weights'' of  $ \fg_0 $  (in short,``the weight lattice of  $ \fg_0 $''),  using standard terminology, cf.~for instance  \cite{hu}:  in particular, these are weights with respect to the Cartan subalgebra  $ \fh $  of  $ \fg \, $.  Also, we let  $ L_r $  be the lattice spanned by all the  $ \fh $--roots  of  $ \, \fg \, $  (in short, ``the root lattice''):  {\sl here by ``$ \, \fh $--roots''  we mean the eigenvalues in  $ \fg $  of the adjoint action of\/  $ \fh $  (again), not of\/  $ \overline{\fh} \, $};  these  $ \fh $--roots  are just the restrictions (as linear functionals) from  $ \overline{\fh} $  to  $ \fh $  of the  roots of  $ \fg $  considered in  \S \ref{def_Cartan-subalg_roots_etc}  (which might be called  ``$ \, \overline{\fh} $--roots'').  Actually, nothing changes in all cases but  $ H(n) \, $:  for the latter, an explicit description of the  $ \fh $--roots  follows from considering the description of the  $ \overline{\fh} $--roots  and reading it modulo  $ \delta \, $.
                                                                            \par
   From  \S \ref{def_Cartan-subalg_roots_etc}  we see that the root lattice  $ L_r $   is spanned by the weights  $ \, \varepsilon_1 \, $,  $ \dots $,  $ \varepsilon_r \, $  of the defining representation of the reductive Lie algebra  $ \fg_0 \, $.  On the other hand, the weight lattice  $ L_w $  is spanned by the so-called fundamental dominant weights  $ \omega_1 \, $,  $ \dots $,  $ \omega_r \, $.  Now, looking at the relationship between the  $ \varepsilon_i $  and the  $ \omega_j $  one sees that the quotient module  $ \, L_w \big/ L_r \, $  is
 \vskip1pt
   \qquad  ---  {\sl trivial},  when  $ \fg $  is of type  $ W $,  $ S $  or  $ \tS \, $,
 \vskip1pt
   \qquad  ---  {\sl isomorphic to}  $ \, \Z_2 \, $,  when  $ \fg $  is of type  $ H(2r+1) \, $,
 \vskip1pt
   \qquad  ---  {\sl isomorphic to}  $ \, \Z_2 \oplus \Z_2 \, $,  when  $ \fg $  is of type  $ H(2r) \, $;

\vskip3pt

\noindent
 therefore, in all cases  $ L_w $  is just ``slightly bigger'' than  $ L_r \, $.

\smallskip

   Now let  $ \bG_V $  be a supergroup constructed as in  section \ref{car-sgroups},  associated with the Lie superalgebra  $ \fg $  of Cartan type and with a faithful, rational, finite dimensional  $ \fg $--module  $ V $  with admissible lattice  $ M \, $.  Now  Corollary \ref{nat-transf},  Proposition \ref{G_uno^pm-repres},  Proposition \ref{semi-direct-G_zero}  and  Proposition \ref{factorization}  altogether give
 \vskip-15pt
  $$  \bG_V  \,\; \cong \;\,  \bG_\zero \times \bG_\uno^<  \,\; \cong \;\,  \bG_0 \times \bG_{\zero^{\uparrow}} \times \bG_\uno^<  \,\; \cong \;\,  \bG_0 \times \mathbb{A}^{N_{\zero^{\uparrow}}\!\big|\,0} \times \mathbb{A}^{0|N}
 \,\; \cong \;\,  \bG_0 \times \mathbb{A}^{N_{\zero^{\uparrow}}\!\big|N}  $$
 \vskip-7pt
\noindent
 (with notations used there),  i.e.~$ \; \bG_V \, \cong \, \bG_0 \times \mathbb{A}^{N_{\zero^{\uparrow}}\!\big|N} \; $.  By  Proposition \ref{bG_0-Ch_V},  $ \, \bG_0 \cong \mathbf{Ch}_V \, $  is a classical, split reductive algebraic group.  By classical theory we know that  $ \, \bG_0 \cong \mathbf{Ch}_V \, $  depends only on the lattice of  $ \fg_0 $--weights  (=$ \, \fg $--weights)  of  $ V \, $:  we denote this weight lattice by  $ L_V \, $.
                                                                            \par
   Now, for the lattice  $ L_V $  associated with the supergroup  $ \bG_V $  we have clearly  $ \, L_r \subseteq L_V \subseteq L_w \; $.  By the remarks above about  $ \, L_w \big/ L_r \, $,  we have that  $ L_V $  is always ``very close'' to  $ L_r $  or  $ L_w \, $:  in particular, we always have equalities  $ \, L_r = L_V = L_w \, $  when  $ \fg $ is of type  $ W $,  $ S $  or  $ \tS \, $  (i.e., not  $ H $).
\end{free text}

\medskip

   Let now  $ \bG_V $  and  $ \bG'_{V'} $  be two Cartan supergroups obtained from  $ \fg \, $  via different  $ \fg $--modules  $ V $  and  $ V' $.  We let  $ x_{\talpha}(\bt) \, $,  $ x'_{\talpha}(\bt) \, $,  and  $ h_H(u) \, $,  $ h'_H(u) \, $,  be the points of the one-parameter supersubgroups in  $ \bG_V $  and  $ \bG'_{V'} $  associated with  $ \, \talpha \in \! \tDelta \, $,  $ \, \bt \in A_\zero \! \cup \! A_\uno \, $,  and  $ \, H \! \in \! \fh_\Z \, $,  $ \, u \! \in \! U\big(A_\zero\big) $   --- cf.~subsection \ref{One-param-ssgroups}.

\vskip13pt

\begin{lemma}  \label{morphism}
 Let  $ \, \phi : \bG_V \longrightarrow \bG'_{V'} \, $  be a morphism of the supergroups mentioned above.  Assume that  $ \; \phi_A\big(\bG_0(A)\big) = \bG'_0(A) \; $  and  $ \; \phi_A \big( x_{\talpha}(\bt) \big) = x'_{\talpha}(\bt) \; $  for all  $ \, A \in \salg_\bk \, $,  $ \, \bt \in A_\zero \cup \! A_\uno \, $,  $ \, \talpha \in \tDelta \setminus \tDelta_0 \, $.
                                                          \par
   Then  $ \; \text{\it Ker}\,\big(\phi\big) \subseteq Z(\bG_0) \; $,  \, where  $ Z(\bG_0) $  is the center of  $ \, \bG_0 \; $.
\end{lemma}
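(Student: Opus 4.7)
The plan is to proceed in three steps: first, show that any $g \in \ker(\phi_A)$ actually sits in $\bG_0(A)$; second, verify that such a $g$ commutes with every generator $x_{\talpha}(\bt)$ whose index $\talpha$ lies outside $\tDelta_0$; third, convert these commutation relations into the identity $\Ad(g) = \mathrm{id}_{\fg_0}$, and conclude centrality.

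For the reduction, fix $A \in \salg_\bk$ and $g \in \ker(\phi_A)$. Using Corollary \ref{nat-transf} together with Proposition \ref{semi-direct-G_zero}, I would factorize $g$ uniquely as $\; g = g_0 \cdot g_{\zero^{\,\uparrow}} \cdot g_-^< \cdot g_+^< \;$ with $g_0 \in \bG_0(A)$, $g_{\zero^{\,\uparrow}} \in \bG_{\zero^{\,\uparrow}}(A)$, and $g_\pm^< \in \bG_\uno^{\pm,<}(A)$. By Propositions \ref{factorization} and \ref{G_uno^pm-repres}, the factors $g_{\zero^{\,\uparrow}}$ and $g_\pm^<$ are ordered products of one-parameter elements $x_\talpha(\bt_\talpha)$ with $\talpha \in \tDelta \setminus \tDelta_0$, so the hypothesis forces $\phi_A$ to send them to the analogous ordered products in $\bG'_{\zero^{\,\uparrow}}(A)$ and $\bG'_\uno{}^{\pm,<}(A)$ with the same coordinates. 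Applying $\phi_A$ to the factorization and invoking the corresponding uniqueness in $\bG'_{V'}$ (Corollary \ref{nat-transf} for the primed groups), the equality $\phi_A(g) = 1$ forces all such coordinates to vanish, whence $g_{\zero^{\,\uparrow}} = 1$ and $g_\pm^< = 1$, and $g = g_0 \in \bG_0(A) \cap \ker(\phi_A)$.

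For the second step, pick any $\talpha \in \tDelta \setminus \tDelta_0$ and set $c := (g_0, x_\talpha(\bt))$. Since $\phi_A(g_0) = 1$ one has $c \in \ker(\phi_A)$, hence by the previous step $c \in \bG_0(A)$. If $\talpha \in \tDelta_{\zero^{\,\uparrow}}$, normality of $\bG_{\zero^{\,\uparrow}}$ in $\bG_\zero$ (Proposition \ref{descr-G_zero}) also places $c$ in $\bG_{\zero^{\,\uparrow}}(A)$, so $c \in \bG_0(A) \cap \bG_{\zero^{\,\uparrow}}(A) = \{1\}$ by the semidirect product decomposition of Proposition \ref{semi-direct-G_zero}. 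If instead $\tgamma \in \tDelta_\uno$, then $\vartheta^2 = 0$ yields
\[
c \; = \; \bigl( 1 + \vartheta\,\Ad(g_0)(X_\tgamma) \bigr)\bigl( 1 - \vartheta\,X_\tgamma \bigr) \; = \; 1 + \vartheta\bigl(\Ad(g_0)(X_\tgamma) - X_\tgamma\bigr),
\]
and writing $\Ad(g_0)(X_\tgamma) - X_\tgamma = \sum_k a_k X_{\tgamma_k}$ with $\tgamma_k \in \tDelta_\uno$ and $a_k \in A_\zero$ (since $\bG_0$ preserves root spaces and acts with even coefficients), the identities $(\vartheta a_k)(\vartheta a_\ell) = 0$ collapse the ordered product $\prod_k x_{\tgamma_k}(\vartheta a_k)$ to exactly $c$; thus $c \in \bG_\uno^<(A)$, and Corollary \ref{nat-transf} forces $c \in \bG_0(A) \cap \bG_\uno^<(A) = \{1\}$. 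Either way, $g_0$ commutes with $x_\talpha(\bt)$.

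The hard part is the third step, converting these commutation relations into centrality. Varying $\bt$ in the identities above (and testing on $V_A$) yields $\Ad(g_0)(X_\talpha) = X_\talpha$ for every $\talpha \in \tDelta \setminus \tDelta_0$, in particular for every $\tgamma \in \tDelta_\uno$. I would then argue that $\fg_0$ is generated as a Lie algebra by brackets of odd root vectors: Definition \ref{def_che-bas}(d) already writes each coroot $H_i$ as a bracket $\pm [X_{\gamma,1}, X_{-\gamma,j}]$ with $\gamma \in \Delta_{-1} \subset \Delta_\uno$, and the surjection $[\fg_{-1}, \fg_1] \twoheadrightarrow \fg_0$ (which holds in each Cartan type because $\fg$ is simple and $\fg_{-1}$ is an irreducible $\fg_0$-module; it is checked directly on the Chevalley bases of Examples \ref{examples_che-bas}) produces every classical root vector $X_{\alpha}$ ($\alpha \in \Delta_0$) as a bracket of elements with indices in $\tDelta_\uno$. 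Since $\Ad(g_0)$ is a Lie superalgebra automorphism of $\fg$, it must therefore act as the identity on all of $\fg_0$. The classical identification $\ker\bigl(\Ad|_{\bG_0}\bigr) = Z(\bG_0)$ for the connected reductive group $\bG_0$ (Proposition \ref{bG_0-Ch_V}) then yields $g_0 \in Z(\bG_0)(A)$, completing the argument.
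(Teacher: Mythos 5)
Your first step coincides exactly with the paper's: factor $g\in\text{\it Ker}\,(\phi_A)$ via the unique decomposition of Corollary \ref{nat-transf} (together with Propositions \ref{semi-direct-G_zero}, \ref{factorization}, \ref{G_uno^pm-repres}), apply $\phi_A$, and use uniqueness of the factorization in $\bG'_{V'}$ to kill every factor except $g_0\,$. Where you genuinely diverge is in the endgame. The paper finishes in one stroke: since $\phi_A\big|_{\bG_0}:\bG_0\twoheadrightarrow\bG'_0$ is an epimorphism of connected split reductive groups with the same tangent Lie algebra, $d\phi$ is an isomorphism, and classical theory then forces $\text{\it Ker}\,\big(\phi_A\big|_{\bG_0}\big)\subseteq Z(\bG_0)\,$. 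You instead prove centrality by hand: you show the commutator $\big(g_0\,,x_{\talpha}(\bt)\big)$ lies in $\text{\it Ker}\,(\phi_A)$, hence in $\bG_0(A)$, and simultaneously in $\bG_{\zero^{\,\uparrow}}(A)$ or $\bG_\uno^<(A)$, so it is trivial; from this you extract $\Ad(g_0)=\text{\sl id}$ on $\fg_{-1}\oplus\fg_{1^{\phantom{\uparrow}}}\!\!\oplus\cdots$, propagate to $\fg_0$ via $\big[\fg_{-1},\fg_1\big]=\fg_0$ (or, equivalently, via transitivity of the graded Lie superalgebra), and conclude with $\text{\it Ker}\big(\Ad\big|_{\bG_0}\big)=Z(\bG_0)\,$. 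Your route is longer but more self-contained and uses only the structure already developed in the paper (the commutation formulas of Lemma \ref{comm_1-pssg} and the factorization theorems), whereas the paper's route outsources the whole difficulty to the classical isogeny theory of reductive group schemes.

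Two points deserve care. First, the step ``varying $\bt$ yields $\Ad(g_0)(X_{\tgamma})=X_{\tgamma}$'' does not work for a fixed $A\,$: from $\,1+\vartheta\big(\Ad(g_0)(X_{\tgamma})-X_{\tgamma}\big)=1\,$ you only learn that $A_\uno$ annihilates $\Ad(g_0)(X_{\tgamma})-X_{\tgamma}\,$, which is vacuous when, e.g., $A_\uno=0\,$. You must base-change to $A\otimes_\bk\bk[\vartheta]$ with a free odd generator (the image of $g_0$ still lies in the kernel by functoriality, and its matrix coefficients live in $A$), and then faithfulness of $V$ gives the conclusion over $A\,$. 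Second, the identification $\text{\it Ker}\big(\Ad\big|_{\bG_0}\big)=Z(\bG_0)$ over an arbitrary base ring $\bk$ is a scheme-theoretic assertion that is more delicate than over a field of characteristic zero; it does hold for the split reductive groups arising here, but it is precisely the kind of statement the paper avoids by arguing through $d\phi$ instead. With these two repairs your argument is complete.
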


\begin{proof}
 First, note that  $ \; \tDelta \setminus \tDelta_0 = \tDelta_{\,\zero^\uparrow} \cup \tDelta^-_\uno \cup \tDelta^+_\uno \; $,  and fix a total order  $ \, \preceq \, $  on this set such that  $ \, \tDelta_{\,\zero^\uparrow} \preceq \tDelta^-_\uno \preceq \tDelta^+_\uno  \, $.  Then take  $ \, g \in \bG_V(A) \, $  with  $ \, g \in \text{\it Ker}\,\big(\phi_A\big) \, $.  By  Corollary \ref{nat-transf},  Proposition \ref{G_uno^pm-repres},  Proposition \ref{semi-direct-G_zero},  Proposition \ref{factorization}  and  Proposition \ref{bG_0-Ch_V},  there is a unique factorization of  $ g $
  $$  g  \; = \;  g_0 \cdot {\textstyle \prod_{\tbeta \in \tDelta_{\,\zero^\uparrow}}} x_{\tbeta}(t_{\tbeta}) \cdot {\textstyle \prod_{\tgamma^- \in \tDelta^-_\uno}} x_{\tgamma^-}(\vartheta_{\tgamma^-}) \cdot {\textstyle \prod_{\tgamma^+ \in \tDelta^+_\uno}} x_{\tgamma^+}(\vartheta_{\tgamma^+})   \eqno (4.10)  $$
(all products being ordered with respect to  $ \preceq \, $)  for some  $ \, g_0 \in \bG_0(A) \, $,  $ \, t_{\tbeta} \in A_\zero \, $  and  $ \, \vartheta_{\tgamma^-} \in A_\uno \, $.  A similar factorization also holds for  $ \, \phi_A(g) \, $  in  $ \, \bG'_{V'}(A) \, $.  All this along with  $ \, \phi_A(g) =  e_{{\bG'_{V'}(A)}} \, $,  with
  $$  \phi_A(g)  \; = \;  \phi_A(g_0) \cdot {\textstyle \prod_{\tbeta \in \tDelta_{\,\zero^\uparrow}}} \phi_A\big( x_{\tbeta}(t_{\tbeta}) \big) \cdot {\textstyle \prod_{\tgamma^- \in \tDelta^-_\uno}} \, \phi_A\big( x_{\tgamma^-}(\vartheta_{\tgamma^-}) \big) \cdot {\textstyle \prod_{\tgamma^+ \in \tDelta^+_\uno}} \, \phi_A\big( x_{\tgamma^+}(\vartheta_{\tgamma^+}) \big)  $$
and with the unicity of the factorization of  $ \, \phi_A(g) \, $  implies, by the assumption  $ \; \phi_A \big( x_{\talpha}(\bt) \big) = x'_{\talpha}(\bt) \, $,  \, that all factors in the product on right-hand side here above are trivial.  In turn, all factors but  $ \, g_0 \, $  on right-hand side of (4.10) are trivial too; therefore  $ \, g = g_0 \in \bG_0(A) \bigcap \text{\it Ker}\,\big(\phi_A\big) = \text{\it Ker}\,\big(\phi_A\big|_{\bG_0(A)}\big) \; $.

\smallskip

   By assumption  $ \; \phi_A\big|_{\bG_0} : \bG_0 \relbar\joinrel\twoheadrightarrow \bG'_0 \; $  is an epimorphism, with  $ \bG_0 $  and  $ \bG'_0 $  being connected split reductive algebraic groups over  $ \Z $  having tangent Lie algebra  $ \fg $  (by  Proposition \ref{bG_0-Ch_V}),  so $ \; d\phi : \bG_0 \relbar\joinrel\rightarrow \bG'_0 \; $  is an isomorphism. By classical theory this forces  $ \; \text{\it Ker}\,\big(\phi_A\big|_{\bG_0}\big) \subseteq Z(\bG_0) \; $.
\end{proof}

\vskip9pt

   Using this last result, we can now show that the relation between supergroups  $ \bG_V $  associated with different  $ \fg $--modules  $ V $
 is the same as in the classical setting.  The result reads as follows:

\medskip

\begin{proposition} \label{mainthm}
 Let  $ \, \bG_V $  and  $ \, \bG'_{V'} $  be two affine supergroups constructed using two  $ \fg $--modules  $ V $  and  $ V' $  as in  subsection \ref{adm-lat}.  If  $ \, L_V \supseteq L_{V'} \, $,  then there exists a unique morphism  $ \; \phi : \bG_V \longrightarrow \bG'_{V'} \; $  such that  $ \; \text{\it Ker}\,(\phi) \subseteq Z\big(\bG_0\big) \; $  and  $ \; \phi_A\big(x_{\talpha}(\bt)\big) = x'_{\talpha}(\bt) \; $  for every  $ \, A \in \salg_\bk \, $,  $ \; \bt \in A_\zero \cup A_\uno \; $,  $ \; \talpha \in \tDelta \setminus \tDelta_0 \, $.  Moreover,  $ \phi $  is an isomorphism if and only if  $ \, L_V = L_{V'} \, $.
\end{proposition}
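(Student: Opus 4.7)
\smallskip

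The plan is to reduce to the classical Chevalley--Demazure situation on the ``even part'' $\bG_0$, and then extend to the whole supergroup by using the canonical superscheme factorization of $\bG_V$. More precisely, by combining Corollary~\ref{nat-transf}, Proposition~\ref{semi-direct-G_zero}, Proposition~\ref{factorization}, and Proposition~\ref{G_uno^pm-repres}, we have (as superschemes, though not yet as groups)
$$  \bG_V  \,\cong\, \bG_0 \times \bG_{\zero^{\,\uparrow}} \times \bG_\uno^{-,<} \times \bG_\uno^{+,<}  \,\cong\, \bG_0 \times \mathbb{A}^{N_{\zero^{\,\uparrow}}\!|\,0} \times \mathbb{A}^{0|N_-} \times \mathbb{A}^{0|N_+}  $$
and identically for $\bG'_{V'}$ with the same numerical data (since $N_{\zero^\uparrow}$, $N_\pm$ depend only on $\fg$). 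The factors other than $\bG_0$ will be matched by the identity map on coordinates, i.e.\ by sending $x_{\talpha}(\bt) \mapsto x'_{\talpha}(\bt)$ for $\talpha \in \tDelta\setminus \tDelta_0$.

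First, I would use Proposition~\ref{bG_0-Ch_V} to identify $\bG_0 \cong \mathbf{Ch}_V$ and $\bG'_0 \cong \mathbf{Ch}'_{V'}$ as (classical) split reductive group schemes attached to $\fg_0$ via the weight lattices $L_V \supseteq L_{V'}$. The classical Chevalley--Demazure theorem then furnishes a unique morphism $\phi_0 : \bG_0 \to \bG'_0$ which sends $h_H(u) \mapsto h'_H(u)$ for all $H \in \fh_\Z$ and $x_{\talpha}(t) \mapsto x'_{\talpha}(t)$ for $\talpha \in \tDelta_0$, with $\ker(\phi_0) \subseteq Z(\bG_0)$, and which is an isomorphism exactly when $L_V = L_{V'}$. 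Then, for each $A \in \salg_\bk$ and each element of $\bG_V(A)$ written in the standard factored form
$$  g \;=\; g_0 \cdot {\textstyle \prod_{\tbeta \in \tDelta_{\,\zero^\uparrow}}} x_{\tbeta}(t_{\tbeta}) \cdot {\textstyle \prod_{\tgamma \in \tDelta^-_\uno}} x_{\tgamma}(\vartheta_{\tgamma}) \cdot {\textstyle \prod_{\tgamma \in \tDelta^+_\uno}} x_{\tgamma}(\vartheta_{\tgamma})  $$
guaranteed uniquely by Theorem~\ref{crucial}, I define
$$  \phi_A(g) \;:=\; \phi_0(g_0) \cdot {\textstyle \prod_{\tbeta}} x'_{\tbeta}(t_{\tbeta}) \cdot {\textstyle \prod_{\tgamma^-}} x'_{\tgamma^-}(\vartheta_{\tgamma^-}) \cdot {\textstyle \prod_{\tgamma^+}} x'_{\tgamma^+}(\vartheta_{\tgamma^+})  $$
and pass to the sheafification to obtain $\phi: \bG_V \to \bG'_{V'}$ by the universal property.

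The main obstacle, and most technical step, is verifying that $\phi_A$ is a \emph{group} morphism (and not merely a natural transformation of superschemes). For this I would argue that the rewriting of an arbitrary product $g \cdot h$ in $\bG_V(A)$ into standard factored form proceeds entirely via the commutation rules of Lemma~\ref{comm_1-pssg}, whose structure constants $c^{k,h}_{\gamma;t} \in \Z$ and torus-conjugation exponents $\alpha(H) \in \Z$ depend \emph{only} on $\fg$, its Chevalley basis, and $\fh_\Z$ --- but not on the module $V$. Hence the very same rewriting procedure, applied in $\bG'_{V'}(A)$ to $\phi_A(g) \cdot \phi_A(h)$, produces the standard factored form of $\phi_A(g \cdot h)$; compatibility of $\phi_0$ with the $\bG_0$-conjugation of the one-parameter subgroups via Lemma~\ref{comm_1-pssg}(c) is also needed, and is automatic since $\phi_0(h_H(u)) = h'_H(u)$.

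Finally, the kernel condition $\ker(\phi) \subseteq Z(\bG_0)$ follows directly from Lemma~\ref{morphism}, since by construction $\phi_A\big(\bG_0(A)\big) = \bG'_0(A)$ and $\phi_A(x_{\talpha}(\bt)) = x'_{\talpha}(\bt)$ for $\talpha \in \tDelta \setminus \tDelta_0$. For uniqueness, any two morphisms $\phi,\psi$ satisfying the stated conditions agree on every $x_{\talpha}(\bt)$ with $\talpha \in \tDelta \setminus \tDelta_0$; their commutators, computed via Lemma~\ref{comm_1-pssg}(b), force agreement on each $h_{H_{\sigma_\gamma(j)}}(1 \mp \vartheta\eta)$, and since $\{\pm H_{\sigma_\gamma(j)}\} = \{\pm H_1,\dots,\pm H_r\}$ by Definition~\ref{def_che-bas}(d), this pins down $\phi|_{\bT_V}$; a further invocation of Lemma~\ref{morphism} applied to $\phi\,\psi^{-1}$ on $\bG_0$ and the hypothesis on $\ker$ extend this to all of $\bG_0$, whence $\phi=\psi$ everywhere by the factorization. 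The isomorphism criterion $L_V = L_{V'}$ then follows because $\phi$ is an iso iff $\phi_0$ is, and classical theory identifies this with the weight lattice equality.
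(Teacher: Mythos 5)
Your proposal is correct and follows essentially the same route as the paper: a classical Chevalley--Demazure morphism $\phi_0$ on $\bG_0$, extended to $\bG_V$ via the unique factorization of Theorem~\ref{crucial}, with the group-morphism property coming from the fact that the commutation relations of Lemma~\ref{comm_1-pssg} are independent of $V$, and the kernel condition from Lemma~\ref{morphism}. The only place you go beyond the paper is the explicit uniqueness argument (the paper omits it); note that your commutator step only pins down $\phi$ on elements $h_{H_i}(1\mp\vartheta\eta)$ rather than the full torus, so that part would need a little more care, but this does not affect the existence argument, which matches the paper's.
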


\begin{proof}
  By classical theory, if  $ \, L_V \supseteq L_{V'} \, $  there exists a well-defined epimorphism  $ \; \phi_0 : \bG_0 \relbar\joinrel\twoheadrightarrow \bG'_0 \; $:  in particular, we can (and we do) choose it so that  $ \, d\phi_0 = \text{\sl id}_{\fg_{{}_0}} \, $  (recall that  $ \, \Lie\big(\bG_0\big) = \fg_0 = \Lie\big(\bG_0\big) \, $  by  Proposition \ref{bG_0-Ch_V}).  As a consequence, we have that  $ \phi_0 $ acts on one-parameter (additive and multiplicative) subgroups of  $ \bG_0 $  as  $ \, \phi_0\big(x_{\talpha}(t)\big) = x'_{\talpha}(t) \, $  and  $ \, \phi_0\big(h_H(u)\big) = h'_H(u) \, $.

\smallskip

   Now we extend  $ \phi_0 $  to a morphism  $ \; \phi : \bG_V \longrightarrow \bG'_{V'} \; $  as follows.
   Fix  $ \, A \in \salg_\bk \, $  and a total order in  $ \, \tDelta \setminus \tDelta_0 \, $;  use the unique factorization in  $ \bG_V(A) $   --- like in the proof of  Lemma \ref{morphism}  ---   to factor  $ \, g \, $  as in (4.10).  Then  {\sl define}
 $ \; \phi_A(g) \, := \, \phi_0(g_0) \, \cdot {\textstyle \prod_{\tbeta \in \tDelta_{\,\zero^\uparrow}}} x'_{\tbeta}(t_{\tbeta}) \, \cdot {\textstyle \prod_{\tgamma^- \in \tDelta^-_\uno}} \, x'_{\tgamma^-}(\vartheta_{\tgamma^-}) \, \cdot {\textstyle \prod_{\tgamma^+ \in \tDelta^+_\uno}} \, x'_{\tgamma^+}(\vartheta_{\tgamma^+}) \; $.
                                                             \par
   This gives a well-defined map  $ \; \phi_A : \bG_V(A) \longrightarrow \bG'_{V'}(A) \; $,  which by construction is functorial in  $ A \, $:  thus we have a natural transformation  $ \phi $   --- a morphism of superschemes ---   from  $ \bG_V $  to  $ \bG'_{V'} \, $.
                                                       \par
   Moreover, this  $ \phi $  is also a morphism of supergroups.  In fact, if  $ \, A \in \salg_\bk \, $  is  {\sl local\/}  then  $ \phi_A $  is a group morphism: indeed,  $ \, \bG_V(A) = G_V(A) \, $  and  $ \, \bG'_{V'}(A) = G'_{V'}(A) \, $,  and we have  $ \; \phi_A(g\,h) = \phi_A(g) \, \phi_A(h) \; $  because all the relations used to commute elements in  $ G_V(A) $  or in  $ G'_{V'}(A) $  so to write a given element in ``normal form'' as in (4.10) actually  {\sl do not depend on the chosen representation}.  Finally, by  Proposition A.12  in  \cite{fg2},  we have that  $ \phi $  is uniquely determined by its effect on local superalgebras, on which we saw it is a morphism: thus we conclude that  $ \phi $  is globally a morphism.

\smallskip

   By construction  $ \phi $  is also onto.  Thus all assumptions of  Lemma \ref{morphism}  hold, and we can conclude that  $ \, \text{\it Ker}\,(\phi) \subseteq Z\big(\bG_0\big) \, $  and  $ \, \phi_A\big(x_{\talpha}(\bt)\big) = x'_{\talpha}(\bt) \; $.  Finally, again by construction  $ \phi $  is an isomor\-phism if and only if  $ \phi_0 $  is an isomorphism itself: but this in turn holds if and only if  $ \, L_V = L_{V'} \, $.
\end{proof}

\smallskip

   As a direct consequence, we have the following ``independence result'':

\vskip9pt

\begin{corollary}
 Every supergroup  $ \, \bG_V $  constructed so far is independent, up to isomorphism, of the choice (which is needed in the very construction) of an admissible lattice  $ M $  of\/  $ V $.
\end{corollary}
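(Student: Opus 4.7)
The plan is to deduce the corollary as an immediate special case of Proposition \ref{mainthm}. To emphasize dependence, let me temporarily write $\bG_{V,M}$ for the supergroup constructed from $V$ with admissible lattice $M$. Given two admissible lattices $M$ and $M'$ of the same faithful, rational, finite-dimensional $\fg$-module $V$, I would apply Proposition \ref{mainthm} to the pair of supergroups $\bG_{V,M}$ and $\bG_{V,M'}$, both arising from the same underlying $\fg$-module $V$.

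The key observation is that the weight lattice $L_V$ introduced in \S\ref{weight-bG_V} depends only on the $\fg_0$-module structure of $V$ (it is the $\Z$-lattice generated by the $\fh$-weights of $V$), and not on the choice of admissible $\Z$-form within $V$. Hence, trivially, $L_{V,M} = L_V = L_{V,M'}$. In particular, the hypothesis $L_V \supseteq L_{V'}$ of Proposition \ref{mainthm} is satisfied (with equality) when $V' = V$ but the lattices differ.

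Applying Proposition \ref{mainthm} then yields a unique morphism $\phi: \bG_{V,M} \longrightarrow \bG_{V,M'}$ sending $x_{\talpha}(\bt) \mapsto x'_{\talpha}(\bt)$ for every $\talpha \in \tDelta \setminus \tDelta_0$, and the ``if and only if'' part of the proposition says that $\phi$ is an isomorphism precisely when the two weight lattices coincide, which is exactly the case here. Thus $\bG_{V,M} \cong \bG_{V,M'}$.

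The only conceptual point to verify is that Proposition \ref{mainthm} genuinely covers the case $V' = V$ with $M' \neq M$: the construction of both supergroups uses the same one-parameter supersubgroups $x_{\talpha}$ and $h_H$ (defined intrinsically from the $\fg$-action via exponentiation of nilpotent operators and via the weight decomposition of $V$), so the two supergroups are both naturally realized as subfunctors of $\rGL(V_\bullet)$, and Proposition \ref{mainthm} applies verbatim. No real obstacle arises; the corollary is essentially a bookkeeping corollary of the uniqueness-and-classification statement already packaged in Proposition \ref{mainthm}.
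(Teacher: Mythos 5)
Your proposal is correct and follows exactly the paper's own argument: set $V' := V$, note $L_V = L_{V'}$ since the weight lattice depends only on the $\fg_0$-module structure and not on the choice of admissible lattice, and invoke the ``if and only if'' clause of Proposition \ref{mainthm} to conclude $\bG_{V,M} \cong \bG_{V,M'}$. The extra remarks you add about the proposition covering the case of equal modules with distinct lattices are sound but not needed beyond what the paper itself records.
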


\begin{proof}
 Let  $ M $  and  $ M' $  be two admissible lattices of  $ V \, $,  and set  $ \, V' := V \, $.  Construct  $ \bG_V $  and  $ \bG_{V'} $  using respectively  $ M $  and  $ M' \, $:  then we have  $ \, L_V = L_{V'} \, $,  so  Proposition \ref{mainthm}  gives  $ \, \bG_V \cong \bG_{V'} \, $.
\end{proof}

\medskip

\subsection{Lie's Third Theorem for the supergroups  $ \bG_V $}
\label{cartan_LieTT}

   {\ } \quad   Let  $ \bG_V $  be an (affine) supergroup over the ring  $ \bk \, $,  built out of the Lie superalgebra  $ \fg $  (of Cartan type)  over  $ \KK $  and of the  $ \fg $--module  $ V $  as in  subsection \ref{const-car-sgroups}.  In  subsection \ref{One-param-ssgroups}  we have introduced the Lie superalgebra  $ \, \fg_{V,\bk} := \bk \otimes_\Z \fg_V \, $  over  $ \bk $  starting from the  $ \Z $--lattice  $ \fg_V \, $.  We now show that the algebraic supergroup  $ \bG_V $  has  $ \fg_{V,\bk} $  as its tangent Lie superalgebra.

\smallskip

   We start recalling how to associate a Lie superalgebra with a supergroup scheme  (\cite{ccf}, \S\S 11.2--5).

\smallskip

\begin{free text}  \label{tangent_Lie_superalgebra}
 {\bf The Lie superalgebra of a supergroup scheme.}  \ For a given  $ \, A \in \salg_\bk \, $  let  $ \, A[\epsilon] := A[x]\big/\big(x^2\big) \, $  be the  {\sl super\/}algebra  of dual numbers, in which  $ \, \epsilon := x \! \mod \! \big(x^2\big) \, $  is taken to be  {\it even}.  Then  $ \, A[\epsilon] = A \oplus A \epsilon \, $,  and there are natural morphisms
 $ \; i : A \longrightarrow A[\epsilon] \, , \, a \;{\buildrel i \over \mapsto}\; a \, $,  and  $ \; p : A[\epsilon] \longrightarrow A \, $,
 $ \, \big( a + a'\epsilon \big) \;{\buildrel p \over \mapsto}\; a \; $,  \; such that  $ \; p \circ i= {\mathrm{id}}_A \; $.
   Given a supergroup  $ \bk $--functor  $ \, G : \salg_\bk \! \longrightarrow \grps \, $,  denote the morphism associated with  $ \; p : A[\epsilon] \longrightarrow A \; $  by  $ \; G(p)_A : G (A(\epsilon)) \longrightarrow G(A) \; $.  This gives a unique Lie algebra valued functor  $ \,\; \Lie(G) : \salg_\bk \longrightarrow \lie_\bk \;\, $  given on objects by  $ \,\; \Lie(G)(A) \, := \, \text{\it Ker}\,\big(G(p)_A\big) \; $.  For the Lie structure, one first defines the  {\it adjoint action\/}  $ \; \Ad : G  \longrightarrow  \rGL(\Lie( G )) \; $  of  $ G $  on  $ \Lie(G) $  as  $ \; \Ad(g)(x) \, := \, G (i)(g) \cdot x \cdot {\big(G (i)(g)\big)}^{-1} \; $  for all  $ \, g \in G(A) \, $,  $ \, x \in \Lie(G)(A) \, $.  Then one defines the  {\it adjoint morphism\/}  $ \; \ad \, := \, \Lie(\Ad) : \Lie(G) \longrightarrow \Lie(\rGL(\Lie(G))) := \End(\Lie(G)) \, $,  \, and finally sets  $ \; [x,y] := \ad(x)(y) \; $  for all  $ \, x,y \in \Lie(G)(A) \, $.  Further details are in  \cite{ccf},  \S\S 11.3--5; note that the authors there assume  $ \bk $  to be a field, yet this is  {\sl not\/}  required for the present context.
                                                           \par
   When  $ G $  is (the functor of points of) a supergroup  $ \bk $--scheme  and  $ \bk $  is a field, the functor  $ \Lie(G) $  is quasi-representable  (cf.~\S \ref{Lie-salg_funct}):  indeed, it can be identified with (the functor of points of) the tangent superspace at the identity of  $ G \, $,  denoted  $ T_e(G) \, $.  In turn,  $ T_e(G) $  bears a structure of Lie  $ \bk $--superalgebra, as usual (cf.~\cite{ccf}, \S 11.4.);  moreover, we point out that it bears also a canonical $ 2 $--operation, which can be given using the (standard) identification of  $ T_e(G) $ with the  $ \bk $--superalgebra  of left-invariant superderivations (into itself) of  $ \cO(G) \, $,  the Hopf  $ \bk $--superalgebra representing  $ G \, $.
                                                           \par
   We shall presently see that for Cartan  $ \bk $--supergroups  $ \bG_V $  this is the case also if  $ \bk $  is not a field: we shall then denote by  $ \Lie(G) $  both the above functor and the associated  $ \bk $--supermodule.  Note that this is also the case for the  $ \bk $--supergroup  $ \Lie(\rGL_{m|n}) \, $:  indeed, it is well known that, whatever  $ \bk $  is, the functor  $ \Lie(\rGL_{m|n}) $  is quasi-representable, and identifies with the Lie  $ \bk $--superalgebra  $ \rgl_{m|n} \, $;  as the latter is free (as a  $ \bk $--module)  of finite rank,  $ \, \Lie\big(\rGL_{m|n}\big) \, $  is in fact representable too.
\end{free text}

\medskip

   Eventually, we are now ready for the main result of this subsection.

\medskip

\begin{theorem}  \label{3rd-Lie-Theorem}
 Let  $ \, \bG_V $  be the affine supergroup of Cartan type built upon\/  $ \fg $  and the  $ \fg $--module  $ V $  (cf.~section \ref{const-car-sgroups}).  Then  $ \, \Lie(\bG_V) $  is quasi-representable, and actually representable, namely  $ \, \Lie(\bG_V) = \cL_{\fg_{{}_{V,\bk}}} $  as functors from  $ \salg_\bk $  to  $ \lie_\bk \, $.
%
%
\end{theorem}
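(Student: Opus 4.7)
The strategy is to exploit the superscheme factorization $\bG_V \cong \bG_\zero \times \bG_\uno^{-,<} \times \bG_\uno^{+,<}$ given by Corollary \ref{nat-transf}, together with the further splitting $\bG_\zero \cong \bG_0 \ltimes \bG_{\zero^{\,\uparrow}}$ (Proposition \ref{semi-direct-G_zero}) and the scheme isomorphism $\bG_{\zero^{\,\uparrow}} \cong \prod_{\talpha \in \tDelta_{\zero^{\,\uparrow}}} x_{\talpha}$ (Proposition \ref{factorization}), to reduce the computation of $\Lie(\bG_V)$ to a factor-by-factor analysis. The Lie superbracket and the $2$-operation are then identified by plugging the explicit one-parameter generators into the commutation formulas of Lemma \ref{comm_1-pssg}.

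Fix $A \in \salg_\bk$ and consider $\bG_V(A[\epsilon])$, where $\epsilon$ is even with $\epsilon^2 = 0$. Applying Theorem \ref{crucial} to the superalgebra $A[\epsilon]$, every element of $\bG_V(A[\epsilon])$ admits a unique factorization into the three factors $\bG_\zero(A[\epsilon])$, $\bG_\uno^{-,<}(A[\epsilon])$, $\bG_\uno^{+,<}(A[\epsilon])$, so an element of $\Lie(\bG_V)(A) := \text{\it Ker}\,\big(\bG_V(p)_A\big)$ factors uniquely with each piece in the kernel of the corresponding restricted projection. Proposition \ref{bG_0-Ch_V} identifies $\bG_0$ with the classical Chevalley-Demazure scheme, whose Lie algebra is the free $A_\zero$-module $A_\zero \otimes_\Z \big(\fh_V \oplus \bigoplus_{\alpha \in \Delta_0} \Z X_\alpha\big)$ (notation of Proposition \ref{stabilizer}); the isomorphism $\bG_{\zero^{\,\uparrow}} \cong \prod_{\talpha \in \tDelta_{\zero^{\,\uparrow}}} x_{\talpha}$ together with $x_{\talpha} \cong \mathbb{A}_\bk^{1|0}$ (Proposition \ref{hopf-alg}) yields $\Lie(\bG_{\zero^{\,\uparrow}})(A) \cong A_\zero \otimes_\Z \fg_{\zero^{\,\uparrow}}^\Z$; and Proposition \ref{G_uno^pm-repres} gives $\bG_\uno^{\pm,<} \cong \mathbb{A}_\bk^{0|N_\pm}$, so that an element $\vartheta \in A[\epsilon]_\uno = A_\uno \oplus A_\uno\epsilon$ in the kernel of the projection to $A$ is of the form $\vartheta = \theta\epsilon$ with $\theta \in A_\uno$, whence $\Lie(\bG_\uno^{\pm,<})(A) \cong A_\uno \otimes_\Z \fg_\uno^{\pm,\Z}$. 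Summing the three contributions and invoking Proposition \ref{stabilizer} to match the underlying lattice, one identifies the $\bk$-supermodule $\Lie(\bG_V)(A)$ with $(A \otimes_\Z \fg_V)_\zero = \cL_{\fg_{V,\bk}}(A)$, functorially in $A$.

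To check that the Lie superbracket on the right agrees with that of $\fg_{V,\bk}$, one works in $A[\epsilon,\epsilon']$ with two commuting even square-zero parameters: for generators $x = 1 + \epsilon X$ and $y = 1 + \epsilon' Y$ of the two copies of $\Lie(\bG_V)$ sitting in $\bG_V(A[\epsilon,\epsilon'])$, the group commutator reads
\[
\big(1 + \epsilon X,\, 1 + \epsilon' Y\big) \; \equiv \; 1 + \epsilon\,\epsilon'\,[X,Y] \pmod{\epsilon^2,{\epsilon'}^{\,2}}.
\]
Applying Lemma \ref{comm_1-pssg} to the pairs $\big(x_{\talpha}(\mathbf{p}\epsilon),\, x_{\tbeta}(\mathbf{q}\epsilon')\big)$ and to the mixed pairs involving $h_H(1 + s\epsilon)$, and then extracting the coefficient of $\epsilon\epsilon'$, reproduces exactly the structure constants of the Chevalley basis $B$ together with the sign ${(-1)}^{p(\mathbf{p})p(\tbeta)}$ prescribed by the super-tensor product $A \otimes_\Z \fg_V$. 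As for the $2$-operation on odd generators, the subfunctor embedding $\bG_V \hookrightarrow \rGL(V_\bullet)$ (Remark \ref{rems-af-defs_Chev-sgrps}(d)) induces $\Lie(\bG_V) \hookrightarrow \Lie(\rGL(V_\bullet))$, so $(\theta \otimes X_{\tgamma})^{\langle 2 \rangle}$ may be computed inside $\End(V_A)$: the identity $X_{\tgamma}^{\,2} = X_{\tgamma}^{\langle 2 \rangle}$ inherited from $U(\fg)$, combined with Definition \ref{def_che-bas}(h), delivers the required value and matches the $2$-operation of $\cL_{\fg_{V,\bk}}$.

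The main obstacle is the third paragraph: carefully keeping track of the signs arising from the super-commutator formulas in Lemma \ref{comm_1-pssg} across all relevant pairs (even/even with $\alpha+\beta \in \Delta$, the cases $2\alpha+\beta \in \Delta$ of Definition \ref{def_che-bas}(g), mixed even/odd with odd $\mathbf{q}$, purely odd with $\gamma+\eta \in \Delta_{-1}$, and Cartan/root) so that the induced bracket on $(A \otimes_\Z \fg_V)_\zero$ agrees, signs included, with the canonical super-bracket of the super-tensor product. Quasi-representability of $\Lie(\bG_V)$ is then a formal consequence of the identification with $\cL_{\fg_{V,\bk}}$; actual representability follows because $\fg_V$ is a free $\Z$-module of finite rank (Proposition \ref{stabilizer}), so $\cL_{\fg_{V,\bk}}$ is represented by the symmetric superalgebra on the dual of $\fg_V$.
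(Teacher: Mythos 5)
Your proposal is correct and follows essentially the same route as the paper: both reduce to the factorization of $\bG_V$ into $\bG_\zero$ times the odd one-parameter factors (Corollary \ref{nat-transf} and Proposition \ref{G_uno^pm-repres}), identify $\Lie(\bG_\zero)=\cL_{{(\fg_\zero)}_{V,\bk}}$ by the classical Chevalley theory, read off the odd part as $1+\epsilon\,A_\uno\otimes_\bk\fg_\uno$, and deduce representability from the finite free rank of $\fg_V$. The paper compresses the bracket and $2$--operation verification into the remark that one argues "as in the standard example of $\Lie(\rGL_{m|n})$" inside $\rGL(V)$, whereas you spell it out via $A[\epsilon,\epsilon']$ and Lemma \ref{comm_1-pssg}; this is added detail, not a different argument.
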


\begin{proof}
 The result follows from sheer computations: as everything takes place inside  $ \rGL(V) $,  one can argue like in the standard example of  $ \Lie(\rGL_{m|n}) $   --- which can be found, e.g., in  \cite{ccf}, \S 11.3.
                                                                 \par
   First, from the decomposition  $ \; \bG_V = \bG_\zero \times \bG_\uno^< = \bG_\zero \times \big(\,
\bigtimes_{\;\tgamma \in \tDelta_\uno} x_{\tgamma} \big) \; $   --- see  Corollary \ref{nat-transf}  and  Proposition \ref{G_uno^pm-repres}  ---   we find at once that
  $$  \displaylines{
   \qquad   \Lie(\bG_V)(A)  \; = \;  \Lie(\bG_\zero)(A_\zero) \, \times \, \big(\, \bigtimes_{\;\tgamma \in \tDelta_\uno} (\, 1 + \epsilon \, A_\uno \, X_{\tgamma}) \big)  \; =   \hfill  \cr
   \hfill   = \;  \Lie(\bG_\zero)(A_\zero) \, \times \,
\big(\, 1 + \epsilon \, {\textstyle \sum_{\;\tgamma \in \tDelta_\uno}} A_\uno \, X_{\tgamma} \big)  \; = \;  \Lie(\bG_\zero)(A_\zero) \, \times \,
\big(\, 1 + \epsilon \, A_\uno \otimes_\bk \fg_\uno \big)   \qquad  }  $$
   \indent   Second, by the results in  subsection \ref{even-part}  and by the classical theory of Chevalley groups we know that  $ \, \Lie(\bG_\zero) \, $  is quasi-representable (and actually representable), with  $ \, \Lie(\bG_\zero) = \cL_{{(\fg_\zero)}_{V,\bk}} \, $  where  $ \; {(\fg_\zero)}_{V,\bk} := \bk \otimes_\Z {(\fg_\zero)}_V \; $  and  $ \; {(\fg_\zero)}_V := \big\{ X \! \in \! \fg_\zero \,\big|\, X.M \subseteq M \big\} = \fh_V \, {\textstyle \bigoplus} \, \Big(\! {\textstyle \bigoplus}_{\talpha \in \tDelta_\zero } \, \Z \, X_{\talpha} \Big) \; $  much like in  Proposition \ref{stabilizer}  and in  subsection  \ref{One-param-ssgroups}.  From this and the previous remark, it follows that
  $$  \Lie(\bG_V)(A)  \; = \;  \cL_{{(\fg_\zero)}_{V,\bk}}(A_\zero) \, \times \,
\big(\, 1 + \epsilon \, A_\uno \otimes_\bk \fg_\uno \big)  \; = \;  A_\zero \otimes_\bk {(\fg_\zero)}_{V,\bk} + A_\uno \otimes_\bk \fg_\uno  \; = \;  \cL_{{(\fg_\zero)}_{V,\bk} \oplus \fg_\uno}(A)  $$
so that  $ \; \Lie(\bG_V) = \cL_{\fg_{{}_{V,\bk}}} \; $   --- as claimed ---   because  $ \, {(\fg_\zero)}_{V,\bk} \oplus \fg_\uno = \fg_{V,\bk} \; $.  In particular, as  $ \, \fg_{V,\bk} \, $  is free of finite rank it follows that  $ \Lie(\bG_V) $  is representable too.
\end{proof}

\medskip

 \subsection{Special supersubgroups of  $ \bG_V \, $}  \label{spec_supsubgroup_G_V}

\smallskip

   {\ } \quad   In subsections  \ref{const-car-sgroups}  and  \ref{even-part}  we considered the (super)subgroups  $ \bG_\zero $  and  $ \bG_0 $  of  $ \bG_V \, $.  We introduce now some other remarkable supersubgroups, associated
 with
 special Lie supersubalgebras of  $ \fg \, $.

\smallskip

\begin{definition}  \label{spec_supsubgrps-def}
 Fix a splitting  $ \, \tDelta_0 = \tDelta_0^+ \coprod \tDelta_0^- \, $  of the classical root system  $ \tDelta_0 = \Delta_0 $  (of  $ \fg_0 $)  into positive and negative roots.  For all  $ \, A \! \in \! \salg_\bk \, $  and  $ \, t \! > \! -1 \, $,  define the subgroups of  $ \, G_V(A) \, $
  $$  \displaylines{
   G_{-1^\uparrow}(A)  \; := \,  \Big\langle\, h_H(A) \, , \, x_{\talpha}(A) \;\Big|\; H\! \in \fh_\Z \, , \, \talpha \in {\textstyle \coprod_{z>-1}} \tDelta_z \,\Big\rangle \; ,  \;\qquad
   {\big( G_{-1^\uparrow} \!\big)}_\zero (A)  \; := \,  G_{-1^\uparrow}(A) \,{\textstyle \bigcap}\, G_\zero(A)  \cr
   G_{t^\uparrow}(A)  \; := \,  \Big\langle\, x_{\talpha}(A) \;\Big|\; \talpha \in {\textstyle \coprod_{z>t}\,} \tDelta_z \,\Big\rangle  \;\; ,  \qquad \qquad
     {\big( G_{t^\uparrow} \!\big)}_\zero (A)  \; := \,  G_{t^\uparrow}(A) \,{\textstyle \bigcap}\, G_\zero(A)  \cr
   G_{-1}(A)  \; := \;  \Big\langle\, x_{\tgamma}(A) \;\Big|\; \tgamma \in \tDelta_{-1} \Big\rangle  \;\; ,
 \qquad
     G_{-1,0}(A)  \; := \;  \Big\langle\, h_H(A) \, , \, x_{\talpha}(A) \;\Big|\; H \in \fh_\Z \, , \, \talpha \in \tDelta_{-1} {\textstyle \coprod} \tDelta_0 \,\Big\rangle  \cr
 }  $$
 \eject
  $$  \displaylines{
   G_{\text{min}}^-(A)  \,\; := \;  \Big\langle\; x_{\tgamma}(A) \, , \, h_H(A) \, , \, x_{\talpha}(A) \;\Big|\; \tgamma \in \! \tDelta_{-1} \, , H \in \fh_\Z \, , \, \talpha \in \! \tDelta_0^- \,\Big\rangle  \cr
   G_{\text{max}}^+(A)  \,\; := \;  \Big\langle\; h_H(A) \, , \, x_{\talpha}(A) \, , \, x_{\tbeta}(A) \;\Big|\; H \in \fh_\Z \, , \, \talpha \in \! \tDelta_0^+ \, , \, \tbeta \in \! \tDelta_z \; \forall \, z \! > \! 0 \,\Big\rangle  }  $$
(the last three rows only for  $ \, \fg \not\cong \tS(n) \, $).  Let  $ \; G_{t^\uparrow} \, , {\big( G_{t^\uparrow} \!\big)}_\zero \, , G_{-1} \, , G_{-1,0} \, , G_{\text{min}}^- \, , G_{\text{max}}^+ : \salg_\bk \longrightarrow \grps \; $,  for  $ \, t \! \geq \! -1 \, $,  be the corresponding full subgroup functors of  $ \, G_V \, $,  and  $ \bG_{t^\uparrow} \, $,  $ {\big( \bG_{t^\uparrow} \!\big)}_\zero \, $   --- for  $ \, t \! \geq \! -1 \, $  ---   $ \; \bG_{-1} \, $,  $ \bG_{-1,0} \, $,  $ \bG_{\text{min}}^- \, $  and  $ \bG_{\text{max}}^+ \, $  the sheafification functors of the each of the above.
\end{definition}

\smallskip

   Lemma \ref{closed_roots}  and  Lemma \ref{comm_1-pssg}  yield the following properties of these special supersubgroups:

\medskip

\begin{proposition}  \label{props_supsubgrps}
 The following hold:
 \vskip3pt
   (a) \;  $ G_{t^\uparrow} \cong \bigtimes_{\;\talpha \, \in \coprod_{z>t} \! \tDelta_z\,} x_{\tgamma} \; \cong \, \bG_{t^\uparrow} \; $,
           $ \; {\big( G_{t^\uparrow} \!\big)}_\zero \cong \bigtimes_{\;\talpha \, \in \coprod_{z>t} \! \tDelta_z \cap \tDelta_\zero\,} x_{\tgamma} \; \cong \, {\big( \bG_{t^\uparrow} \!\big)}_\zero \;\; $  for all  $ \; \! t > \! -1 \; $;
 \vskip3pt
   (b) \;\;  $ G_{p^\uparrow} = \bG_{p^\uparrow} \, \unlhd \, \bG_{q^\uparrow} = G_{q^\uparrow} \; $,  $ \;\; {\big( G_{p^\uparrow} \!\big)}_\zero = {\big( \bG_{p^\uparrow} \!\big)}_\zero \, \unlhd \, {\big( \bG_{q^\uparrow} \!\big)}_\zero = {\big( G_{q^\uparrow} \!\big)}_\zero \; $,  \quad  for all  $ \; -1 < q \leq p \; $;
 \vskip3pt
%
   (c) \;\;  $ G_{-1,0} = G_{-1} \! \rtimes G_0 \; $,  \;\;  $ \bG_{-1,0} = \bG_{-1} \! \rtimes \bG_0 \; $;  \;\quad\;
             $ G_{-1^\uparrow} = G_0 \ltimes G_{0^\uparrow} \; $,  \;\;  $ \bG_{-1^\uparrow} = \bG_0 \ltimes \bG_{0^\uparrow} \; $;
 \vskip3pt
   (d) \;\;  $ G_{\text{\rm min}}^\pm = G_{-1} \! \rtimes G_0^\pm \; $,  \;  $ \bG_{\text{\rm min}}^\pm = \bG_{-1} \! \rtimes \bG_0^\pm \; $,  \; and \;
 $ G_{\text{\rm max}}^\pm = G_0^\pm \ltimes G_{0^\uparrow} \; $,  \;  $ \bG_{\text{\rm max}}^\pm = \bG_0^\pm \ltimes \bG_{0^\uparrow} \; $
 \vskip2pt
\noindent
 where  $ \, G_0^\pm \, $  and  $ \, \bG_0^\pm \, $  are given as in  Definition \ref{def_Cartan-subgroup_funct}  and  Definition \ref{def_Cartan-supergroup}  with respect to the splitting  $ \, \tDelta_0 = \tDelta_0^+ \coprod \tDelta_0^- \, $  fixed in  Definition \ref{spec_supsubgrps-def}.
 \vskip3pt
   (e) \;  the supergroup functors  $ \, \bG_{t^\uparrow} \, $,  $ \, {\big( \bG_{t^\uparrow} \!\big)}_\zero \, $   --- for  $ \, t \geq -1 \, $  ---   and   --- for  $ \, \fg \not\cong \tS(n) \, $  ---   $ \, \bG_{-1} \, $,  $ \, \bG_{-1,0} \, $,  $ \, \bG_{\text{min}}^\pm $  and  $ \, \bG_{\text{max}}^\pm \, $  are all representable, hence they are affine (algebraic) supergroups.
\end{proposition}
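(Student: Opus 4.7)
The approach is to derive each claim from Lemma~\ref{closed_roots} and the commutator formulas of Lemma~\ref{comm_1-pssg}, using weight arguments modelled on Proposition~\ref{semi-direct-G_zero} for the semidirect product statements.

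For part \emph{(a)}, I would verify the hypotheses of Lemma~\ref{closed_roots} for $\widetilde{\mathcal S}_t := \coprod_{z>t}\widetilde{\Delta}_z$. Closedness is immediate, since heights are additive: if $\alpha,\beta\in\widetilde{\mathcal S}_t$ then $\text{\it ht}(\alpha+\beta)=\text{\it ht}(\alpha)+\text{\it ht}(\beta)>2t\geq t$. Moreover $\alpha\in\widetilde{\mathcal S}_t$ forces $\text{\it ht}(\alpha)>t\geq -1\geq 0$, so $\text{\it ht}(-\alpha)<0\leq t$, ruling out $-\alpha\in\widetilde{\mathcal S}_t$. Hence Lemma~\ref{closed_roots} yields the ordered product factorization and the coincidence $G_{t^\uparrow}=\bG_{t^\uparrow}$, and the same argument applied to $\widetilde{\mathcal S}_t\cap\widetilde{\Delta}_{\bar 0}$ handles the even part.

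For part \emph{(b)}, given $q\leq p$ and generators $x_{\talpha}(\bt)\in G_{p^\uparrow}$, $x_{\tbeta}(\bu)\in G_{q^\uparrow}$ with $\text{\it ht}(\alpha)>p$, $\text{\it ht}(\beta)>q$, formula (4.2) expresses the commutator $(x_{\talpha}(\bt),x_{\tbeta}(\bu))$ as an ordered product of $x_{\tgamma}$ with $\gamma=r\alpha+s\beta$, $r,s\geq 1$. Then $\text{\it ht}(\gamma)=r\,\text{\it ht}(\alpha)+s\,\text{\it ht}(\beta)>p$, so the commutator lies in $G_{p^\uparrow}$; thus $G_{p^\uparrow}$ is normalized by the generators of $G_{q^\uparrow}$, hence by all of $G_{q^\uparrow}$, and the same argument gives the even analogue, whence normality passes to the sheafifications.

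For parts \emph{(c)} and \emph{(d)} I would verify, case by case, three things: (i) the claimed smaller factor is normal; (ii) the two factors generate the whole; (iii) their intersection is trivial. Normality follows from the same height-additivity argument as in (b): $G_{-1}$ is normal in $G_{-1,0}$ and in $G_{\text{min}}^-$ because $[\fg_0,\fg_{-1}]\subseteq\fg_{-1}$ via formula (4.2), and $G_{0^\uparrow}$ is normal in $G_{-1^\uparrow}$ and in $G_{\text{max}}^+$ by part (b) with $p=0$. Generation is immediate from the definitions. Trivial intersection is the standard weight-lattice argument of Proposition~\ref{semi-direct-G_zero}: by (4.4), elements of $G_0$ (resp.\ $G_0^\pm$) send each weight space $V_\mu(A)$ into $\bigoplus_{\beta\in\N\widetilde{\Delta}_0}V_{\mu+\beta}(A)$, while elements of $G_{-1}$, $G_{0^\uparrow}$, $G_{\text{min}}^-\setminus G_0$, $G_{\text{max}}^+\setminus G_0^+$ shift weights by strictly smaller/larger integer combinations disjoint from $\N\widetilde{\Delta}_0\setminus\{0\}$; thus $g_0=g_\star$ in the intersection forces both to fix every weight vector, hence equal the identity by faithfulness of the action. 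Sheafifying preserves all four identities because the product maps on the right-hand sides are already sheaves (being products of representable functors and classical affine groups).

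For part \emph{(e)}, the factorizations in \emph{(a)}, \emph{(c)}, \emph{(d)} present each supergroup functor as a cartesian product (as superschemes, ignoring the group structure) of factors of three types: one-parameter additive supersubgroups $x_{\talpha}\cong\mathbb{A}^{1|0}$ or $\mathbb{A}^{0|1}$, the classical split reductive $\bG_0$ and its Borel subgroups $\bG_0^\pm$ (affine algebraic by Proposition~\ref{bG_0-Ch_V}), and multiplicative tori appearing inside $\bG_0$. As any finite product of affine algebraic superschemes is affine algebraic, representability of $\bG_{t^\uparrow}$, ${\big(\bG_{t^\uparrow}\big)}_\zero$, $\bG_{-1}$, $\bG_{-1,0}$, $\bG_{\text{min}}^\pm$ and $\bG_{\text{max}}^\pm$ follows at once, exactly as in Theorem~\ref{representability}.

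The main obstacle will be verifying the trivial-intersection claims in \emph{(c)} and \emph{(d)} cleanly for all four Cartan types simultaneously, since the weight lattices $\N\widetilde{\Delta}_z$ must be checked to be in ``general position'' relative to $\N\widetilde{\Delta}_0$; case-by-case inspection via the root-system descriptions in \S\ref{def_Cartan-subalg_roots_etc} should, however, complete each verification without surprises.
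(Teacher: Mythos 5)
Your overall strategy is the same as the paper's, which derives the whole Proposition in one line from Lemma \ref{closed_roots} and Lemma \ref{comm_1-pssg}; your elaboration of the semidirect--product and representability claims via the weight argument of Proposition \ref{semi-direct-G_zero} and the product decompositions is the intended one.

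There is, however, a genuine gap in your verification of the hypotheses of Lemma \ref{closed_roots} in part \emph{(a)} and in your normality computation in part \emph{(b)}, both of which are asserted for all four Cartan types. You rely on ``heights are additive'' and on $\text{\it ht}(-\alpha)=-\text{\it ht}(\alpha)$, but both fail for $\fg=\tS(n)$: there the $Q$--grading is compatible with the $\Z$--grading \emph{only as a vector--space grading} (cf.\ \S\ref{def_Cartan-subalg_roots_etc}), because $\varepsilon_1+\cdots+\varepsilon_n=0$ in $\fh^*$. Concretely, in $\tS(4)$ the roots $\varepsilon_1+\varepsilon_2$ and $\varepsilon_3+\varepsilon_4=-(\varepsilon_1+\varepsilon_2)$ both have height $2$, so $\widetilde{\mathcal S}_0$ and $\widetilde{\mathcal S}_1$ contain a pair $\{\alpha,-\alpha\}$; moreover $(\varepsilon_1+\varepsilon_2-\varepsilon_3)+(\varepsilon_3+\varepsilon_4)=-\varepsilon_3$ is a root of height $-1$, so $\widetilde{\mathcal S}_t$ is not closed. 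Hence Lemma \ref{closed_roots} does not apply as stated, and the estimate $\text{\it ht}(r\alpha+s\beta)=r\,\text{\it ht}(\alpha)+s\,\text{\it ht}(\beta)$ in \emph{(b)} is unjustified in this case. The conclusions survive because the $\Z$--filtration of $\tS(n)$ \emph{is} a Lie filtration, so $\big[\fg_{\geq z_1}\,,\fg_{\geq z_2}\big]\subseteq\fg_{\geq z_1+z_2}$ forces every offending bracket $[\fg_\alpha,\fg_\beta]$ (with $\text{\it ht}(\alpha),\text{\it ht}(\beta)>t\geq 0$ but $\text{\it ht}(\alpha+\beta)\leq t$, or with $\beta=-\alpha$) to vanish, whence the corresponding coefficients $c^{k,h}_{\gamma;t}$ in (4.2) are zero; you need to add this check, or restrict the additivity argument to types $W$, $S$, $H$ and treat $\tS$ separately. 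Two smaller points: the chain ``$t\geq-1\geq 0$'' is a typo for $t\geq 0$; and the normality of $G_{0^\uparrow}$ in $G_{-1^\uparrow}$ is not literally an instance of \emph{(b)} (which assumes $q>-1$), though the same commutator estimate together with Lemma \ref{comm_1-pssg}\emph{(c)} for conjugation by the $h_H(u)$ covers it.
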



\smallskip

   Finally, the ``Lie's Third Theorem'' holds for these supersubgroups too, by the same arguments:

\medskip

\begin{theorem}
 For every affine supergroup  $ \, \bG_{t^\uparrow} \, $,  $ \, {\big( \bG_{t^\uparrow} \!\big)}_\zero \; (\, t \! \geq \! -1) \, $,  $ \, \bG_{-1} \, $,  $ \, \bG_{-1,0} \, $,  $ \, \bG_{\text{\rm min}}^\pm \, $  and  $ \, \bG_{\text{\rm max}}^\pm \, $  as above the corresponding tangent Lie algebra functor  $ \, \Lie(-) $  is representable, namely
  $$  \displaylines{
   \Lie \big( \bG_{t^\uparrow} \!\big) \, = \, \cL_{\fg^\Z_{{\scriptstyle t}^\uparrow}}  \quad ,  \qquad  \Lie \big( {\big( \bG_{t^\uparrow} \!\big)}_\zero \big) \, = \, \cL_{{(\fg^\Z_{{\scriptstyle t}^\uparrow})}_\zero}   \quad \qquad  \forall \;\; t \geq 0  \cr
   \Lie \big( \bG_{{-1}^\uparrow} \!\big) \, = \, \cL_{{(\fg_{-1^\uparrow})}_{V,\bk}}  \quad ,  \qquad  \Lie \big( {\big( \bG_{{-1}^\uparrow} \!\big)}_\zero \big) \, = \, \cL_{{(\fg_{-1^\uparrow})}_{V,\,\bk\,;\,\zero}}  \cr
   \Lie \big( \bG_{-1} \!\big) \, = \, \cL_{\fg_{-1}^\Z}  \quad ,  \qquad  \Lie \big( \bG_{-1,0} \big) \, = \, \cL_{{(\fg_{-1,0})}_{V,\bk}}  \cr
   \Lie \big( \bG_{\text{\rm min}}^\pm \big) \, = \, \cL_{{(\fb_{\text{\rm min}}^\pm)}_{V,\bk}}  \quad ,  \qquad  \Lie \big( \bG_{\text{\rm max}}^\pm \big) \, = \, \cL_{{(\fb_{\text{\rm max}}^\pm)}_{V,\bk}}  }  $$
as functors from  $ \salg_\bk $  to  $ \lie_\bk \, $,  where  $ \; {(\fg_{-1^\uparrow})}_{V,\bk} := {(\fg_V)}_0 \ltimes \fg^\Z_{0^\uparrow} \; $,  $ \; {(\fg_{-1^\uparrow})}_{V,\,\bk\,;\,\zero} := {(\fg_V)}_0 \ltimes {\big( \fg^\Z_{0^\uparrow} \big)}_\zero \; $,  $ \; {(\fg_{-1,0})}_{V,\bk} := \fg^\Z_{-1} \rtimes {(\fg_V)}_0 \; $,  $ \; {\big( \fb_{\text{\rm min}}^\pm \big)}_{V,\bk} := \fg^\Z_{-1} \rtimes {(\fg_V\!)}_0^\pm \; $  and  $ \; {\big( \fb_{\text{\rm max}}^\pm \big)}_{V,\bk} := {(\fg_V\!)}_0^\pm \ltimes \fg^\Z_{0^\uparrow} \; $,  with  $ \; {(\fg_V\!)}_0 = \fh_V \oplus \big(\! \bigoplus_{\alpha \in \Delta_0} \fg^\Z_\alpha \big) \; $  and  $ \; {(\fg_V\!)}_0^\pm = \fh_V \oplus \big(\! \bigoplus_{\alpha \in \Delta_0^\pm} \fg^\Z_\alpha \big) \, $   --- notation of  Definition \ref{def_Lie-sub-objs}  and Definition \ref{def-kost-superalgebra_sub-objs}.
\end{theorem}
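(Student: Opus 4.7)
The plan is to mirror the proof of Theorem 4.22 ($\Lie(\bG_V) = \cL_{\fg_{V,\bk}}$), using Proposition 4.26 as the principal tool. For each supergroup $\bH$ in the list, Proposition 4.26 provides a factorization into a product (or semidirect product) of a classical even component and an ordered product of one-parameter subgroups $x_{\talpha}$; then I would apply the functor $\Lie$ to this factorization, using the fact that $\Lie$ commutes with direct products of superschemes (and that semidirect products of supergroups induce semidirect products on tangent Lie superalgebras, compatibly with the bracket and the $2$-operation). Since every $\bH$ is a closed supersubgroup of $\bG_V$, its tangent Lie superalgebra automatically sits inside $\cL_{\fg_{V,\bk}}$, so no separate verification of Jacobi, anti-symmetry, or the $2$-operation axioms is required.

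Concretely, I would proceed case by case. For $\bG_{t^\uparrow}$ with $t\geq 0$, Proposition 4.26(a) gives $\bG_{t^\uparrow} \cong \bigtimes_{\talpha \in \coprod_{z>t}\tDelta_z} x_{\talpha}$; applying $\Lie$ and using the computation $\Lie(x_{\talpha})(A) = \epsilon\cdot A_{p(\alpha)}\cdot X_{\talpha}$ from the proof of Theorem 4.22, the tangent functor equals $\cL_{\fg^\Z_{t^\uparrow}}$, and the same is true for ${(\bG_{t^\uparrow})}_\zero$ restricting to even roots. For $\bG_{-1^\uparrow} = \bG_0 \ltimes \bG_{0^\uparrow}$ (Proposition 4.26(c)), I would combine Theorem 4.22 applied to the classical factor $\bG_0$ (which gives $\cL_{{(\fg_V)}_0}$ via the classical Chevalley-Demazure theory, cf. Proposition 4.5) with the previous case $\Lie(\bG_{0^\uparrow}) = \cL_{\fg^\Z_{0^\uparrow}}$, yielding the semidirect product ${(\fg_V)}_0 \ltimes \fg^\Z_{0^\uparrow}$. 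The cases $\bG_{-1}$, $\bG_{-1,0}$, $\bG^\pm_{\text{min}}$ and $\bG^\pm_{\text{max}}$ are handled identically, using parts (c)--(d) of Proposition 4.26 and the corresponding (semi)direct factorizations of the tangent Lie superalgebras. In particular, $\Lie(\bG_{-1}) = \cL_{\fg^\Z_{-1}}$ because $\bG_{-1} \cong \bigtimes_{\tgamma \in \tDelta_{-1}} x_{\tgamma}$ by Lemma 4.11 applied to the closed set $\tDelta_{-1}$.

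The main (and essentially the only) technical point will be to verify that in each semidirect product decomposition the Lie bracket between the two tangent factors, computed via the adjoint action inside $\cL_{\fg_{V,\bk}}$, is indeed the one prescribed by the stated semidirect product of Lie superalgebras. This however is immediate from the conjugation formulas of Lemma 4.9(c) (for the action of $\bG_0$ on odd one-parameter subgroups) together with Remark 2.39 (identifying $\Lie$ of a sub-supergroup with the corresponding sub-superalgebra of distributions). Since all brackets and $2$-operations are computed inside the already understood ambient Lie superalgebra $\fg_{V,\bk}$, no additional compatibility needs to be checked. Finally, representability of each $\Lie(\bH)$ follows, as in Theorem 4.22, from the fact that each of the listed sub-superalgebras of $\fg_{V,\bk}$ is free of finite rank over $\bk$, so the functor $\cL_{(-)}$ is represented by the symmetric superalgebra on its dual.
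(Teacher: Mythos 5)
Your proposal is correct and follows exactly the route the paper intends: the paper gives no written proof for this statement beyond the remark that it holds ``by the same arguments'' as Theorem \ref{3rd-Lie-Theorem}, and your argument is precisely that --- apply $\Lie$ to the factorizations of Proposition \ref{props_supsubgrps} (and Lemma \ref{closed_roots} for $\bG_{-1}$), use the computation $\Lie(x_{\talpha})(A)=1+\epsilon\,A_{p(\alpha)}X_{\talpha}$ and the classical Chevalley--Demazure theory for the reductive factor, and note that all brackets are inherited from the ambient $\fg_{V,\bk}$ so the semidirect-product structures come for free. Representability via freeness of finite rank is likewise the paper's own argument, so nothing is missing.
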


\medskip

 \subsection{The uniqueness theorem}  \label{unique-theorem}

   {\ } \quad   In this subsection we shall prove that every connected affine algebraic  $ \bk $--supergroup  whose tangent Lie superalgebra is of Cartan type and whose  {\sl classical subgroup\/}  (see below) is  $ \bk $--split,  is necessarily isomorphic to one of the supergroups  $ \bG_V $  we constructed.  So, up to isomorphism the supergroups  $ \bG_V $  are the unique ones of the above mentioned type.  We begin with a definition:

\smallskip

\begin{definition}  \label{ass-class-scheme}
  Let  $ \bG $  is an (affine)  {\sl supergroup},  $ \, H := \cO(\bG) \, $  the Hopf  $ \bk $--superalgebra  represen\-ting it,
%
%
 and  $ \; \overline{H} := H \Big/ \big( {H_\uno}^{\!2} \oplus H_\uno \big) = H_\zero \Big/ {H_\uno}^{\!2} \; $,  which is a (classical) commutative Hopf algebra.
                                                       \par
   The affine group-scheme  $ \, \bG_{\text{\it \!ev}} \, $  represented by  $ \; \overline{H} = \overline{\cO(\bG)} \; $   --- so that  $ \, \cO\big(\bG_{\text{\it \!ev}}\big) = \overline{\cO(\bG)} \; $  ---   is called  {\it the classical supersubgroup(-scheme) associated with  $ \bG \, $}.  By construction,  $ \bG_{\text{\it \!ev}} $  coincides, as a group functor, with the restriction of  $ \bG $  to the category of commutative (unital, associative)  $ \bk $--algebras.
                                                       \par
   The quotient map  $ \, \pi : H \! := \cO(\bG) \! \relbar\joinrel\twoheadrightarrow \cO\big(\bG_{\text{\it \!ev}}\big) = \overline{H} \, $  yields an embedding  $ \, j : \bG_{\text{\it \!ev}} \! \lhook\joinrel\longrightarrow \bG \, $,  \, so that  $ \bG_{\text{\it \!ev}} $  actually identifies with a closed (super)subgroup of  $ \bG \, $.
\end{definition}

\medskip

   Again by construction one has that every (closed) supersubgroup  $ \mathbf{K} $  of\/  $ \bG $  which is  {\sl classical}  is actually a (closed) subgroup of\/  $ \bG_{\text{\it \!ev}} \; $.
%
%
 Moreover,
 the functor  $ \, \Lie\big(\bG_{\text{\it \!ev}}\big) \, $  is the restriction of  $ \, {\Lie(\bG)} \, $  to the category of (classical) commutative algebras:  furthermore, when the latter is quasi-representable, say  $ \, {\Lie(\bG)} = \cL_\fg \, $,  then the former is quasi-representable too, with  $ \, \Lie\big(\bG_{\text{\it \!ev}}\big) = \cL_{\fg_\zero} \, $.

\medskip

\begin{remark}  \label{even=zero}
 Let\/  $ \, \bG := \bG_V \, $  be
%
%
 as in  Definition \ref{def_Cartan-supergroup}\,.
   Then  $ \,\; \big(\bG_V\big)_{\text{\it \!ev}} \cong \bG_\zero \;\, $  and  $ \,\; \big(\bG^\pm_V\big)_{\text{\it \!ev}} \cong \bG^\pm_\zero \;\, $.
\end{remark}

\medskip

\begin{free text}  \label{sgrps_tang-Lie_Cartan}
 {\bf Supergroups with tangent Lie superalgebra of Cartan type.}  Let  $ \bG $  be a connected affine algebraic supergroup, defined over  $ \bk \, $.  We assume that the functor  $ \Lie\,(\bG) $  associated with  $ \bG $  (cf.~\S~\ref{tangent_Lie_superalgebra})  is quasi-representable, with  $ \, \Lie\,(\bG) = \cL_{\fg_{{}_{V'\!,\bk}}} $
 where  $ \; \fg_{V'\!,\bk} := \, \bk \otimes_\Z \fg_{V'} \, $:
 in particular,  $ \fg $  is a simple Lie superalgebra of Cartan type, we fix in  $ \fg $  a Chevalley basis,  $ V' $  is a rational  $ \fg $--module  with an admissible lattice,  etc.  In short, we might say that  ``$ \bG $  has tangent Lie superalgebra which is simple of Cartan type''.  In particular, this means that  $ \, \fg_{V'\!,\bk} \, $  is free as a  $ \bk $--module,  with  $ \, \text{\it rk}_{\,\bk}(\fg_{V'\!,\bk}) = \text{\it dim}_{\,\KK}(\fg) \; $,  and it is a Lie  $ \bk $--superalgebra,  whose  $ \Z_2 $--grading  $ \, \fg_{V'\!,\bk} = {(\fg_{V'\!,\bk})}_\zero \oplus {(\fg_{V'\!,\bk})}_\uno \, $  is given by  $ \, {(\fg_{V'\!,\bk})}_{\overline{a}} := \bk \otimes_\Z {(\fg_{V'}\!)}_{\overline{a}} \, $  (for  $ \, a = 0, 1 \, $)  where  $ \, \fg = \fg_\zero \oplus \fg_\uno \, $  is the  $ \Z_2 $--grading  of  $ \fg \, $.
   To simplify notation we shall drop all superscripts  ``$ \, V', \bk \, $'', writing just  $ \fg \, $,  $ \fg_\zero $  and  $ \fg_\uno \, $,  tacitly assu\-ming that all these objects are  $ \bk $--forms  (specified above) of the initial objects defined over  $ \KK \, $.
                                                                        \par
   According to  Definition \ref{ass-class-scheme},  the supergroup  $ \bG $  has a ``classical'' subgroup  $ \bG_{\text{\it \!ev}} \, $,  such that  $ \, \Lie\,(\bG_{\text{\it \!ev}}) = \fg_\zero \; $.  The assumptions also imply that  $ \bG_{\text{\it \!ev}} $  is a connected affine algebraic (classical) group-scheme, defined over  $ \bk \, $.  Now  $ \; \fg_\zero = \fg_0 \oplus \, \fg_{\zero^{\,\uparrow}} \, $  (cf.~Definition \ref{def_Lie-sub-objs}),  with  $ \fg_0 $  reductive and  $ \fg_{\zero^{\,\uparrow}} $  nilpotent: by the classical theory, from  $ \, \Lie\big(\bG_{\text{\it \!ev}}\big) = \fg_\zero \, $  we deduce that $ \, \bG_{\text{\it \!ev}} \, \cong \, \bG'_0 \ltimes \bG'_{\zero^{\,\uparrow}} \, $  for some connected algebraic groups  $ \bG'_0 $  and  $ \bG'_{\zero^{\,\uparrow}} $  such that  $ \, \Lie\,(\bG'_0) \cong \fg_0 \, $  and  $ \, \Lie\big(\bG'_{\zero^{\,\uparrow}}\big) \cong \fg_{\zero^{\,\uparrow}} \, $.  In particular,  $ \bG'_0 $  is reductive and  $ \bG'_{\zero^{\,\uparrow}} $  is unipotent.  In addition,  {\sl we assume that  $ \bG $  is  $ \bk $--split},  by which we mean   --- by definition ---   that the classical reductive group  $ \bG'_0 $  is  $ \bk $--split.
                                                                      \par
   In this subsection we show that  $ \bG $  is (isomorphic to) a ``Cartan supergroup''  $ \bG_V $  associated with  $ \fg $  and with some  $ \fg $--module  $ V $  as in section \ref{car-sgroups}.

\smallskip

   For our arguments to apply, we need yet another technical requirement, namely  {\sl we assume that\/  $ \bG $  is linearizable},  i.e.~it is embeddable into some  $ \rGL_{n|m} $  as a closed supersubgroup (this is true when  $ \, \bG \cong \bG_V \, $,  hence it is a necessary condition).  Note that this is automatically true when the ground ring  $ \bk $  is a field, or  $ \bk $  is a PID   --- e.g.,  $ \, \bk = \Z \, $  --- and  $ \cO(G) $  is free as a  $ \bk $--module.
\end{free text}

\vskip13pt

\begin{free text}  \label{linear-G}
 {\bf Linearizing  $ \bG \, $.}  By classification theory of split reductive groups,  $ \bG'_0 $  can be realized via the classical Chevalley construction: namely, there is a faithful, rational, finite dimensional  $ \fg_0 \, $--module  $ \widetilde{V} \, $,  with an admissible lattice  $ \widetilde{M} \, $,  such that  $ \, \bG'_0 $  is isomorphic to the affine group-scheme associated with  $ \fg_0 $  and with  $ \widetilde{V} $  by the classical Chevalley's construction.  Similarly, by classification theory of unipotent algebraic groups,  $ \, \bG'_{\zero^{\,\uparrow}} \, $  is isomorphic to the group  $ \bG_{\zero^{\,\uparrow}} $  in  Definition \ref{def_Cartan-supergroup}.  Overall, we get  $ \; \bG_{\text{\it \!ev}} \cong \bG'_0 \ltimes \bG_{\zero^{\,\uparrow}} \; $.  Actually, one has even more:  $ \; \bG_{\text{\it \!ev}} \cong \bG'_0 \ltimes \bG_{\zero^{\,\uparrow}} \, $  can be realized at one strike by means of a (slight extension of the) classical Chevalley's construction, based upon a faithful, rational, finite dimensional  $ \fg_\zero \, $--module  $ \widehat{V} $  with an admissible lattice  $ \widehat{M} \, $.  Then the dual  $ \fg_\zero \, $--module  $ \widehat{V}^* $  is also faithful, rational, finite dimensional with  $ \widehat{M}^* $  as an admissible lattice.

%

\medskip

   By assumption  {\sl  $ \, \bG $  is linearizable},  so it identifies with a closed supersubgroup of some  $ \rGL_{n|m} \; $.  Then  $ \bG_{\text{\it \!ev}} $  identifies with a closed subgroup of  $ \, {\big( \rGL_{n|m} \big)}_{\text{\it \!ev}} \, $,  the classical subgroup of  $ \rGL_{n|m} \, $.

\smallskip

   Pick the  $ {\big( \rGL_{n|m} \big)}_{\text{\it \!ev}} $--module  $ \; \widehat{U} := \text{\it Ind}_{\,\bG_{\text{\it \!ev}}}^{\;{(\rGL_{n|m})}_{\text{\it \!ev}}}\big( \widehat{V}^* \big) \; $   --- thought of as a functor from  $ \salg_\bk $  to  $ \text{($ \bk $--mod)} \, $  ---   induced (by the classical theory of representations of algebraic groups) from the  $ \bG_{\text{\it \!ev}} $--module  $ \widehat{V}^* $.  Let  $ \widehat{U}^* $  be the  $ {(\rGL_{n|m})}_{\text{\it \!ev}} $--module  dual to  $ \widehat{U} \, $;  as  $ \, \text{\it Ind}_{\,\bG_{\text{\it \!ev}}}^{\;{(\rGL_{n|m})}_{\text{\it \!ev}}}\big( \widehat{V}^* \big) \, $  maps onto  $ \widehat{V}^* $,  we have that  $ \, \widehat{V} \cong \widehat{V}^{**} \, $  embeds into  $ \widehat{U}^* $,  i.e.~$ \widehat{U}^* $  contains a  $ \bG_{\text{\it \!ev}} $--submodule  isomorphic to  $ \widehat{V} \, $.
                                                                       \par
   Now, as  $ \, \Lie \big( {\big( \rGL_{n|m} \big)}_{\text{\it \!ev}} \big) = {\big( \rgl_{n|m} \big)}_\zero \; $,  the  $ {\big( \rGL_{n|m} \big)}_{\text{\it \!ev}} $--module  $ \widehat{U}^* $  is also a  $ {\big( \rgl_{n|m} \big)}_\zero $--module.  As  $ {\big( \rgl_{n|m} \big)}_\zero $  is a Lie (super)subalgebra of  $ \rgl_{n|m} \, $,  we can perform on  $ \widehat{U}^* $  the induction from  $ {\big( \rgl_{n|m} \big)}_\zero $  to  $ \rgl_{n|m} \, $:  this yields a  $ \rgl_{n|m} $--module  $ \; W := \text{\it Ind}_{\,{(\rgl_{n|m})}_\zero}^{\;\;\rgl_{n|m}}\!\big( \widehat{U}^* \big) \; $,  described by
 \vskip-5pt
  $$  W  \; := \;  \text{\it Ind}_{\,{(\rgl_{n|m})}_\zero}^{\;\;\rgl_{n|m}}\!\big( \widehat{U}^* \big)
\; = \; U\big(\rgl_{n|m}\big) \otimes_{{}_{\scriptstyle U((\rgl_{n|m})_\zero)}} \widehat{U}^*  $$
 \vskip-2pt
   \indent   Now  $ W $  is a  $ \rGL_{n|m} $--module  too: indeed, one simply has to restrict the action of  $ \cL_{\rgl_{n|m}} \! $  to  $ \rGL_{n|m} $  (thought of as a subfunctor of  $ \cL_{\rgl_{n|m}} $).  Yet we need to describe the  $ \rGL_{n|m} $--action  on  $ W $  explicitly.
                                                          \par
   It is known that  $ \rGL_{n|m} $  ``splits'' into a direct product   --- as a superscheme ---   of the subgroup  $ {\big( \rGL_{n|m} \big)}_{\text{\it \!ev}} $  and the totally odd supersubscheme  $ \; {\big( \rGL_{n|m} \big)}_{\text{\it \!odd}} := I + {\big( \rgl_{n|m} \big)}_\uno \; $,  where  $ \, I := I_{n+m} \, $  is the identity (block) matrix of size  $ \, (n\!+\!m) \times (n\!+\!m) \, $:  the splitting is given by the matrix product,
 namely  $ \; \rGL_{n|m}(A) \, \cong\, {\big( \rGL_{n|m} \big)}_{\text{\it \!ev}}(A) \times {\big( \rGL_{n|m} \big)}_{\text{\it \!odd}}(A) \; $
via the unique factorization
 $ \, \displaystyle \bigg(\! {{{\;\; a \;|\; \beta \;\,} \over {\,\; \gamma \;|\; d \;\,}}} \!\bigg) = \bigg(\! {{{\,\; a \;|\; 0 \;\,} \over {\,\; 0 \;|\; d \;\,}}} \!\bigg) \cdot \bigg(\! {{{\,\; \hskip6pt I_n \hskip6pt \;|\; a^{-1} \beta \;\,} \over {\,\; d^{-1} \gamma \;|\; \hskip6pt I_m \hskip6pt \;\,}}} \!\bigg) \, $
  for each block matrix
 $ \, \displaystyle \bigg(\! {{{\;\; a \;|\; \beta \;\,} \over {\,\; \gamma \;|\; d \;\,}}} \!\bigg) \! \in \rGL_{n|m}(A) \, $,  with  $ \, A \in \salg_\bk \; $.
%

%
%

\smallskip

   Every  $ \, g_{\text{\it \!ev}} \in \! {\big( \rGL_{n|m} \big)}_{\text{\it \!ev}}(A) \, $  acts on any decomposable tensor  $ \; y \otimes \widehat{u} \in U\big(\rgl_{n|m}\big) \otimes_{{}_{\scriptstyle U((\rgl_{n|m})_\zero)}} \! \widehat{U}^* = W \; $  via  $ \; g_{\text{\it \!ev}}\,.\big( y \otimes \widehat{u} \big) = \Ad(g_{\text{\it \!ev}})(y) \otimes g_{\text{\it \!ev}}\,.\widehat{u} \; $,  where on left-hand side we take the natural  $ \rGL_{n|m} $--action  on  $ U\big(\rgl_{n|m}\big) $  induced from the adjoint action on  $ \rgl_{n|m} \, $.  Moreover, every  $ \, g_{\text{\it \!odd}} = I + g' \in {\big( \rGL_{n|m} \big)}_{\text{\it \!odd}}(A) \, $  acts on any  $ \, y \otimes \widehat{u} \, $  as above by  $ \; g_{\text{\it \!odd}}\,.\big( y \otimes \widehat{u}\big) = \big( I + g' \big).\big( y \otimes \widehat{u} \big) = \big( y + g' y \big) \otimes \widehat{u} \; $.

\smallskip

   As  $ \bG $  is (embedded as) a closed supersubgroup of  $ \rGL_{n|m} \, $,  the  $ \rGL_{n|m} $--module  $ W $  is also a  $ \bG $--module.  Moreover, by Remark \ref{super-dist},  both for  $ \bG $  and for  $ \rGL_{n|m} $   --- which is a ``Chevalley supergroup'' in the sense of  \cite{fg2}  --- the Kostant superalgebra (with scalars extended to  $ \bk $)  identifies with the superalgebra of distributions: then
 Remark \ref{super-dist}  tells also that  $ U_\bk(\fg) $  embeds into  $ U_\bk\big(\rgl_{\,m|n}\big) \, $.  Then we can consider inside the  $ \bG $--module  $ W $  the subspace  $ \; V := U_\bk(\fg) \otimes_{{}_{\scriptstyle U_\bk(\fg_\zero)}} \!\! \widehat{V} \; $:  also, it is clear (thanks to the explicit description of the  $ \bG $--action) that this  $ V $  is a  $ \bG $--submodule  of  $ W \, $.
                                                            \par
  Tracking through the whole construction, as  $ \widehat{V} $  is rational and faithful as a  $ \bG_{\text{\it \!ev}} $--module  we see that  $ V $  in turn is rational and faithful as a  $ \bG $--module.  Thus  $ \bG $  embeds as a closed supersubgroup inside  $ \, \rGL(V) \, $,  and  $ \bG_{\text{\it \!ev}} $  as a closed subgroup of  $ \bG \, $.  Also, as  $ \widehat{M} $  is an admissible lattice in the  $ \fg_\zero $--module  $ \widehat{V} $,  we see that  $ \, M := K_\bk(\fg) \otimes_{{}_{\scriptstyle K_\bk(\fg_\zero)}} \! \widehat{M} \, $  is admissible in the  $ \fg $--module  $ V \, $  (hereafter, we write  $ \, K_\bk(\mathfrak{t}) := \bk \otimes_\Z K_\Z(\mathfrak{t}) \, $,  and ``admissible lattice'' has the obvious meaning when passing from  $ \Z $--modules  to  $ \bk $--modules).  Finally, as  $ \widehat{V} $  is finite dimensional, and  $ K_\bk(\fg) $  is (free) of finite rank as a  $ K_\bk(\fg_\zero) $--module  (see  Corollary \ref{kost_tens-splitting})  we argue that  $ \, V \, $  is finite dimensional too.

\smallskip

   By construction   --- including the fact that  $ \; V = U(\fg) \otimes_{U(\fg_\zero)} \! \widehat{V} = \bigwedge \! \fg_\uno \! \otimes \! \widehat{V} \; $  as a  $ \fg_\zero $--module  is just  $ \, \widehat{V}^{\,\oplus r} \, $  for  $ \, r := \text{\it rank}_{\,U(\fg_\zero)}(U(\fg)) = 2^{\text{\it dim}(\fg_\uno)} \, $  ---   the  $ \fg_\zero $--action  on  $ V $  is just a diagonalization  ($ r $ times) of the  $ \fg_0 $--action  on  $ \widehat{V} \, $:  as a consequence, the embedded copy of  $ {\big( \bG_V \big)}_{\text{\it \!ev}} $  inside  $ \rGL(V) $  is a  ($ r $  times) diagonalized copy of the group obtained in  $ \rGL\big( \widehat{V} \big) $  from the  $ \fg_\zero $--action  on  $ \widehat{V} $  via the Chevalley construction.  By assumption this group is  $ \bG_{\text{\it \!ev}} \, $,  thus  $ \, {\big( \bG_V \big)}_{\text{\it \!ev}} = \bG_{\text{\it \!ev}} \, $  inside  $ \rGL(V) \, $.
\end{free text}

\vskip7pt

\begin{free text}  \label{splitting-G}
 {\bf Splitting  $ \bG \, $.}  Recall that  $ \rGL(V) $  splits as  $ \, \rGL(V) = {\rGL(V)}_\zero \times {\rGL(V)}_\uno \, $,  with  $ \, {\rGL(V)}_\uno(A) := I + {\mathfrak{gl}(V)}_\uno(A_\uno) \, $
 (cf.~\S~\ref{linear-G}).  Then denote by  $ \, \pi_\zero \, $  and  $ \, \pi_\uno \, $  the projection maps of  $ \rGL(V) $  onto  $ {\rGL(V)}_\zero $  and  $ {\rGL(V)}_\uno \; $.  Note that  $ \, {\rGL(V)}_\zero = \pi_\zero\big(\rGL(V)\big) \, $  coincides with  $ {\big( \rGL(V) \big)}_{\text{\it \!ev}} := {\rGL(V)}\big|_{\alg_\bk} \; $.
                                                       \par
   Look at  $ \bG $  embedded inside  $ \rGL(V) \, $:  then  $ \, \pi_\zero(\bG) \, $  is the restriction  $ \bG\big|_{\alg_\bk} \, $,  hence  $ \, \pi_\zero(\bG) = \bG_{\text{\it \!ev}} \, $  (by Definition \ref{ass-class-scheme}),  so  $ \, \pi_\zero(\bG) \leq \bG \, $.  Given  $ \, g \in \bG(A) \, $,  for  $ \, A \in \salg_\bk \, $,  it factors as  $ \; g = g_\zero \cdot g_\uno \; $  with  $ \; g_\zero := \pi_\zero(g) \in {\rGL(V)}_\zero \; $,  $ \; g_\uno := \pi_\uno(g) \in {\rGL(V)}_\uno \; $.  Then  $ \, g_\zero \in \bG_{\text{\it \!ev}}(A) \leq \bG(A) \, $  and  $ \; \pi_\uno(g) =: g_\uno = {g_\zero}^{\!-1} g \in \bG(A) \; $,  so  $ \, \pi_\uno(g) \in \bG(A) \, $  too; it follows that  $ \; \pi_\uno(\bG) \subseteq \bG \; $  (as a supersubscheme) too.  The outcome is that the factorization  $ \; \rGL(V) \, = \, {\rGL(V)}_\zero \times {\rGL(V)}_\uno \, = \, \pi_\zero\big(\rGL(V)\big) \times \pi_\uno\big(\rGL(V)\big) \; $  of  $ \rGL(V) $  induces the factorization  $ \; \bG = \pi_\zero(\bG) \times \pi_\uno(\bG) = \bG_{\text{\it \!ev}} \times \pi_\uno(\bG) \; $  of  $ \, \bG \, $  as well.
\end{free text}

\vskip9pt

\begin{free text}  \label{const-G_V-&-comparison}
 {\bf Construction of a supergroup  $ \bG_V $  and comparison with  $ \bG \, $.}  The  $ \bG $--module  $ V $  constructed in  \S \ref{linear-G}  is obviously also a representation of the Lie superalgebra  $ \, \Lie(\bG) = \fg \, $.  More precisely,  $ \, V = U(\fg) \otimes_{U(\fg_\zero)} \! \widehat{V} \; $  implies  $ \, V = \text{\it Ind}_{\,\fg_\zero}^{\;\fg}\big( \widehat{V} \big)
\; $.  In addition, we saw that:  \, {\it (a)}  $ \, V $  has finite dimension,  \, {\it (b)}  $ \, V $  is rational,  \, {\it (c)}  $ \, V $  contains  $ \; M := K_\bk(\fg) \otimes_{K_\bk(\fg_\zero)} \widehat{M} \; $  as an admissible lattice.
 \vskip5pt
   Therefore, using  $ V $  and  $ M $  we can construct the affine algebraic supergroup  $ \bG_V \, $,  as in  section \ref{car-sgroups},  which is embeddded inside  $ \rGL(V) $  as a closed  (cf.~Proposition \ref{G_V-closed-ssgrp})  connected supersubgroup.
 \vskip5pt
   By the analysis above, we can embed both  $ \bG $  and  $ \bG_V $  as closed supersubgroups of  $ \rGL(V) \, $:  thus we identify  $ \, \bG $  and  $ \bG_V $  with their images in  $ \rGL(V) \, $,  and their tangent Lie superalgebras with the corresponding images in  $ \rgl(V) \, $.  We can now prove the main result of this subsection:
\end{free text}

\vskip9pt

\begin{theorem}  \label{bG = bG_V}
 {\it Let\/  $ \bG $  and\/  $ \bG_V $  be as above.  Then\/  $ \; \bG = \, \bG_V \; $.}
\end{theorem}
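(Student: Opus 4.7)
The plan is to show both inclusions $\bG_V \subseteq \bG$ and $\bG \subseteq \bG_V$ after noting that, by the preparatory work in \S\ref{linear-G}--\S\ref{const-G_V-&-comparison}, both supergroups already sit as closed supersubgroups of $\rGL(V)$ sharing two common invariants: the classical subgroup $\bG_{\text{\it \!ev}} = (\bG_V)_{\text{\it \!ev}}$, and the tangent Lie superalgebra $\fg \subseteq \rgl(V)$ (the latter by Theorem \ref{3rd-Lie-Theorem}). For the first inclusion I would invoke Remark \ref{alt-def_G_V}: it suffices to show that the generators of $\bG_V$, namely $\bG_0$ together with the one-parameter supersubgroups $x_\talpha$ for $\talpha \in \tDelta \setminus \tDelta_0$, all lie in $\bG$. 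The generators in $\bG_0$ and those associated with $\talpha \in \tDelta_{\zero^{\,\uparrow}}$ already lie in $\bG_\zero \cong (\bG_V)_{\text{\it \!ev}} = \bG_{\text{\it \!ev}} \subseteq \bG$. For an odd root $\tgamma \in \tDelta_\uno$, Proposition \ref{hopf-alg} represents $x_\tgamma$ by the odd dual-numbers superalgebra $\bk[\xi]$, and the associated morphism $\uspec(\bk[\xi]) \to \rGL(V)$ corresponds to $I + \xi X_\tgamma$. Such a morphism factors through the closed supersubgroup $\bG$ precisely when its tangent vector at $I$ lies in $\Lie(\bG)_\uno = \fg_\uno$, and that tangent vector is exactly $X_\tgamma \in \fg_\uno$; thus $x_\tgamma(\vartheta) \in \bG(A)$ for any $A \in \salg_\bk$ and $\vartheta \in A_\uno$, yielding $\bG_V \subseteq \bG$ after sheafification.

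For the reverse inclusion I would exploit the superscheme factorizations $\bG \cong \bG_{\text{\it \!ev}} \times \pi_\uno(\bG)$ and $\bG_V \cong (\bG_V)_{\text{\it \!ev}} \times \pi_\uno(\bG_V)$ established in \S\ref{splitting-G}. Combined with $\bG_{\text{\it \!ev}} = (\bG_V)_{\text{\it \!ev}}$, the equality $\bG = \bG_V$ reduces to $\pi_\uno(\bG) = \pi_\uno(\bG_V)$ inside the totally odd superscheme $\rGL(V)_\uno = I + \rgl(V)_\uno$. The previous step provides the inclusion $\pi_\uno(\bG_V) \subseteq \pi_\uno(\bG)$, and both sides share the same tangent superspace at $I$, namely $\fg_\uno$ (since $\Lie(\bG)_\uno = \fg_\uno = \Lie(\bG_V)_\uno$). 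Applying the Inverse Function Theorem argument used in the proof of Proposition \ref{G_V-closed-ssgrp}, there is a linear change of odd coordinates on $\rgl(V)_\uno$ that exhibits $\pi_\uno(\bG_V)$ as the coordinate affine odd subscheme $\mathbb{A}^{0|N}$, where $N = \dim_\KK \fg_\uno$; running the same recipe on $\pi_\uno(\bG)$, whose tangent space at $I$ is also $\fg_\uno$, realises it as the same $\mathbb{A}^{0|N}$ cut out of $\rGL(V)_\uno$, so the two totally odd supersubschemes coincide.

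The main obstacle I anticipate is precisely this last identification: to apply the Inverse Function Theorem cleanly to $\pi_\uno(\bG)$ one needs to know that $\cO(\bG)$ is free over $\cO(\bG_{\text{\it \!ev}})$ of rank $2^N$, equivalently that $\bG$ is smooth enough at the identity for its odd part to be parametrised by its tangent superspace with no higher-order defining relations. Over a field this is automatic from characteristic zero and linearisability, but for a general base ring $\bk$ it must be extracted from the linearisability hypothesis on $\bG$ (which ensures $\bG$ inherits the product structure $\rGL(V) \cong \rGL(V)_\zero \times \rGL(V)_\uno$ locally around $I$). An alternative that sidesteps this smoothness question is to work directly with the coordinate rings: the inclusion $\pi_\uno(\bG_V) \subseteq \pi_\uno(\bG)$ yields a surjection $\cO(\pi_\uno(\bG)) \twoheadrightarrow \cO(\pi_\uno(\bG_V)) \cong \bigwedge \fg_\uno^{\,*}$, and one shows this is an isomorphism by matching the ranks of both sides as free $\bk$-modules, the common tangent space forcing the defining ideals to coincide in degree one and then, inductively via the Hopf structure, in all degrees.
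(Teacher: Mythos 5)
Your proof is correct and follows essentially the same route as the paper's: both reduce, via the splitting $\, \bG = \bG_{\text{ev}} \times \pi_\uno(\bG) \, $ of \S \ref{splitting-G}, to matching the even parts (which agree by the classical Chevalley realization carried out in \S \ref{linear-G}) and the odd parts (which both equal $ \, I + \fg_\uno \, $). Where the paper simply asserts that $ \, \pi_\uno(\bG) = I + T_I\big(\pi_\uno(\bG)\big) \, $ ``by definitions'', you supply the missing detail --- first the inclusion $ \, \bG_V \subseteq \bG \, $ via the odd dual-numbers criterion on generators, then equality of the odd parts from the common tangent space by a degree-by-degree (Nakayama-type) argument in the exterior coordinate ring --- which is a genuine gain in rigour rather than a different method.
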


\begin{proof}
 By the analysis in  \S \ref{splitting-G},  the splitting  $ \, \rGL(V) = {\rGL(V)}_\zero \times {\rGL(V)}_\uno \, $,  with  $ \, {\rGL(V)}_\uno(A) :=
 I \! + {\mathfrak{gl}(V)}_\uno(A_\uno) \, $,  and the embedding of  $ \bG $  in  $ \rGL(V) $  provide a splitting  $ \; \bG = \bG_{\text{\it \!ev}} \times \bG_{\text{\it \!odd}} \; $,  with  $ \, \bG_{\text{\it \!ev}} = \pi_\zero(\bG) \, $  and  $ \, \bG_{\text{\it \!odd}} := \pi_\uno(\bG) \; $.  Similarly, the same fact with  $ \bG_V $  replacing  $ \bG $  yields  $ \; \bG_V = \big(\bG_V\big)_{\text{\it \!ev}} \! \times \! \big(\bG_V\big)_{\text{\it \!odd}} \; $,  with  $ \, \big(\bG_V\big)_{\text{\it \!ev}} \!\! = \! \pi_\zero(\bG_V) \! = \! \bG_\zero \, $  and  $ \, \big(\bG_V\big)_{\text{\it \!odd}} \! := \! \pi_\uno\big( \bG_V \big) \, $   --- see Remark \ref{even=zero}.  All these splittings are superscheme isomorphisms given by the group product map.  So as  $ \, \bG = \bG_{\text{\it \!ev}} \cdot \bG_{\text{\it \!odd}}
 \, $  and  $ \, \bG_V \! = \big(\bG_V\big)_{\text{\it \!ev}} \! \cdot \! \big(\bG_V\big)_{\text{\it \!odd}}
 \, $,  it is enough to prove  $ \, \bG_{\text{\it \!ev}} = \big(\bG_V\big)_{\text{\it \!ev}} \, $  and  $ \, \bG_{\text{\it \!odd}} = \! \big(\bG_V\big)_{\text{\it \!odd}} \; $.

\smallskip

   First, the identity  $ \, \bG_{\text{\it \!ev}} = \big(\bG_V\big)_{\text{\it \!ev}} \, $  follows from  \S \ref{linear-G}.  Indeed, therein we last pointed out that (the copy of)  $ \, \bG_{\text{\it \!ev}} $  inside  $ \rGL(V) $  can be realized through the classical Chevalley's construction via the  $ \fg_\zero $--module  $ V $  and the lattice  $ M \, $;  but this is exactly the same outcome as first performing the construction of the supergroup  $ \bG_V $  and then taking its classical subgroup  $ \big(\bG_V\big)_{\text{\it \!ev}} \, $,  so we are done.

\smallskip

%
%
   Second, definitions yield  $ \, \bG_{\text{\it \!odd}} = I + T_{{}_I}\big(\bG_{\text{\it \!odd}}\big) \, $  as a supersubscheme of  $ {\rGL(V)}_\uno \, $,  where  $ T_{{}_I}\big( \bG_{\text{\it \!odd}} \big) $  is the tangent superspace to  $ \bG_{\text{\it \!odd}} $  at  $ I \, $;  similarly  $ \, \big(\bG_V\big)_{\text{\it \!odd}} = I + T_{{}_I}\big( \big(\bG_V\big)_{\text{\it \!odd}} \big) \, $.  But by construction we have also  $ \; T_{{}_I}\big( \bG_{\text{\it \!odd}} \big) = \fg_\uno = T_{{}_I}\big( \big(\bG_V\big)_{\text{\it \!odd}} \,\big) \, $,  \, hence  $ \; \bG_{\text{\it \!odd}} = \big(\bG_V\big)_{\text{\it \!odd}} \; $.
\end{proof}

\bigskip

\section{The standard case}  \label{stand-case}

\smallskip

   {\ } \quad   In this section we look somewhat in detail the example of the supergroup  $ \bG_{\Lambda(n)} $  associated with  $ \, \fg := W(n) \, $  and with the ``standard''  $ \fg $--module  $ \, V := \Lambda(n) \, $   --- i.e., the defining representation of  $ \, \fg := W(n) \, $.  Our analysis then can be easily adapted to the case  $ \, \fg := S(n) \, $  and  $ \, V := \Lambda(n) \, $  again.  More in general, as each Cartan type Lie superalgebra is naturally embedded in  $ W(n) \, $,  from the present analysis one can also deduce (with some extra work) a similar analysis for the other cases.

\medskip

 \subsection{The affine algebraic supergroup  $ \bG_{\Lambda(n)} $}  \label{supergroup-G_g}

\smallskip

   {\ } \quad   From now on, we retain the notation of  subsection \ref{Lie-superalg_Cartan-type},  and we let  $ \, \fg := W(n) = \Der_{\,\KK\!}\big(\Lambda(n)\big) \, $  and  $ \, V := \Lambda(n) \, $.  Fix the  $ \KK $--bases  $ \, B_{\Lambda(n)} := \big\{\, \underline{\xi}^{\underline{e}} \;\big|\; \underline{e} \in \! {\{0,1\}}^n \,\big\} \, $  in  $ \Lambda(n) $  and  $ \, B_{W(n)} := \big\{\, \underline{\xi}^{\underline{a}} \; \partial_i \;\big|\; \underline{a} \in \! {\{0,1\}}^n , \; i \! = \! 1, \dots, n \,\big\} \, $  in  $ W(n) $   --- see  subsections \ref{Lie-superalg_Cartan-type}, \ref{def-W(n)}  and  \ref{examples_che-bas}.  Recall that  $ B_{W(n)} $  is a  {\sl Chevalley basis\/}  of  $ \, \fg := W(n) \, $:  the root vectors  $ \, X_{\talpha} \; \big(\, \talpha \in \! \tDelta \,\big) \, $  are the  $ \, \underline{\xi}^{\underline{a}} \; \partial_i \, $  with  $ \, \underline{a} \not= \underline{e}_i \, $,  while the ``toral type'' elements  $ \, H_i \, $  are just the remaining elements  $ \, \xi_i \, \partial_i \, $  of  $ B_{W(n)} $  ($ \, i = 1, \dots, n \, $).

\smallskip

   By definition,  $ \fg $  acts on  $ \, V := \Lambda(n) \, $  by (super)derivations.  Explicitly, the action of any basis element in  $ B_{W(n)} $  onto any basis element in  $ B_{\Lambda(n)} $  reads
  $$  \underline{\xi}^{\underline{a}} \; \partial_i \big(\, \underline{\xi}^{\underline{e}} \big)  \; = \;  \pm \, \delta_{1,\underline{e}(i)} \, \underline{\xi}^{\underline{a} + \underline{e} - \underline{e}_i}   \eqno (5.1)  $$
    \indent   This simple formula has deep consequences.  The first is that  {\sl the  $ W(n) $--module  $ \Lambda(n) $  is rational},  as the  $ H_i $  act diagonally with integral eigenvalues.  A second consequence is that
  $$  {} \hskip-7pt   {\big(\, \underline{\xi}^{\underline{a}} \; \partial_i \big)}^2  = \;  0  \hskip13pt  \forall \;\, \underline{a} \not= \underline{e}_i \;\; ,   \qquad
      {\big(\, \xi_i \, \partial_i \big)}^2  = \;  \xi_i \, \partial_i  \qquad  \forall \;\, i = 1, \dots, n  \quad   \eqno (5.2)  $$
(note that  $ \, \underline{\xi}^{\underline{e}_i} = \xi_i \, $).  The left-hand part of (5.2) implies that all divided powers  $ \, X_{\talpha}^{(m)} \, $  of even root vectors  $ \, \big(\, \talpha \in \tDelta \,\big) \, $  with  $ \, m > 1 \, $  act as zero on  $ \Lambda(n) \, $.  From this and from (5.1) it follows at once that  {\sl the  $ \Z $--span  of  $ B_{\Lambda(n)} \, $,  call it  $ M $,  is an admissible lattice of  $ \Lambda \, $}.

\smallskip

   As another consequence, we can describe the one-parameter supersubgroups  $ x_{\talpha} $  and  $ h_i $   associated with root vectors  $ \, X_{\talpha} = \underline{\xi}^{\underline{a}} \, \partial_i \,\; \big( \underline{a} \! \not= \! \underline{e}_i \big) \, $  and ``toral'' elements  $ \, H_i = \xi_i \, \partial_i \, $.  For  $ x_{\talpha} $  one has
 $ \,\; x_{\talpha}(\bu) \, := \, \exp\big( \bu \, X_{\talpha} \big) \, = \, \sum_{m=0}^{+\infty} {\big( \bu \, X_{\talpha} \big)}^m \big/ m! \, = \, 1 + \bu \, X_{\talpha}  \, = \,  1 + \bu_{\,} \, \underline{\xi}^{\underline{a}} \; \partial_i \; $,  \,
 for any  $ \, A \in \salg_\bk \, $  and  $ \, \bu \in A_{p(\talpha)} \, $,  where  $ \, p\big(\talpha\big) \, $  is the parity of  $ \pi\big(\talpha\big) \, $.  Matching this with (5.1), the action of  $ \, x_{\talpha}(\bu) \, $  on basis elements of  $ \, \Lambda(n)(A) := A_\zero \otimes_\Z \! M_\zero + A_\uno \otimes_\Z \! M_\uno \, $  reads (for  $ \, \bt \in A_{p(\underline{e})} $)
  $$  \bt \, \underline{\xi}^{\underline{e}}
   \;\; {\buildrel {x_{\talpha}(\bu)} \over {\succ\joinrel\relbar\joinrel\longrightarrow}} \;\;
      x_{\talpha}(\bu) \big(\, \bt \, \underline{\xi}^{\underline{e}} \big)  \; = \;\,  \bt \, \underline{\xi}^{\underline{e}} + \bu \, \underline{\xi}^{\underline{a}} \; \partial_i \big(\, \bt \, \underline{\xi}^{\underline{e}} \big)  \; = \;\,  \bt \, \underline{\xi}^{\underline{e}} \pm \delta_{1,\underline{e}(i)} \, \bu \, \bt \, \underline{\xi}^{\underline{a} + \underline{e} - \underline{e}_i}  $$
Similarly,
 for  $ h_i $
%
%
 we find  $ \;\; \displaystyle{ \bt \, \underline{\xi}^{\underline{e}} \; {\buildrel {h_i(u)} \over {\succ\joinrel\relbar\joinrel\longrightarrow}} \; h_i(u) \big(\, \bt \, \underline{\xi}^{\underline{e}} \big) \, = \;  u^{\delta_{1,\underline{e}(i)}} \, \bt \, \underline{\xi}^{\underline{e}} } \;\; $  for all  $ \, \bt \in A_{p(\underline{e})} \, $.

\bigskip

   In particular, this yields the following ``point-set description'' of  $ \bG_{\Lambda(n)} \, $:

\bigskip

\begin{proposition}
 Let  $ \, \bG_{\Lambda(n)} $  be the supergroup associated with  $ \, \fg := W(n) \, $  and the  $ W(n) $--module  $ \Lambda(n) $  as in  Section \ref{car-sgroups},  and let us fix total orders in  $ \, \tDelta_{\zero^{\,\uparrow}} $  and in  $ \, \tDelta_\uno \, $  such that  $ \, \tDelta_\uno^- \! \preceq \tDelta_\uno^+ \, $  or  $ \, \tDelta_\uno^+ \! \preceq \tDelta_\uno^- \, $.  Then for any  $ \, A \in \salg_\bk \, $  the group  $ \, \bG_{\Lambda(n)}(A) $  is given
%
%
by
 \vskip4pt
   \centerline{ $ \displaystyle{ \bG_{\Lambda(n)}(A)  \,\; = \;\,  \rGL_n(A_\zero) \times \big(\, \bigtimes_{\, \talpha \in \tDelta_{\zero^{\,\uparrow}}\!} (1 + A_\zero \, X_{\talpha}) \big) \times \big(\, \bigtimes_{\, \tgamma \in \tDelta_\uno} (1 + A_\uno \, X_{\tgamma}) \big) } $ }
 \vskip1pt
\noindent
 (the products indexed by  $ \tDelta_{\zero^{\,\uparrow}} \! $  or by  $ \tDelta_\uno $  being ordered according to the fixed orders), as well as by all set-theoretic factorizations that one gets by permuting the three factors above with one another.
\end{proposition}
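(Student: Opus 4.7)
The plan is to deduce the stated factorization by combining the general structural results of Section~\ref{car-sgroups} with the very explicit description of the one-parameter supersubgroups that the computations preceding the proposition provide in the case $\fg=W(n)$, $V=\Lambda(n)$.

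First I would assemble the abstract factorization. Corollary~\ref{nat-transf} gives $\bG_V\cong \bG_\zero\times \bG_\uno^<$ (and all the analogous factorizations obtained by permuting the three factors involved), while Proposition~\ref{semi-direct-G_zero} refines this via $\bG_\zero\cong \bG_0\ltimes \bG_{\zero^{\,\uparrow}}$. Since $\bG_0$ factors through $\alg_\bk$ (Remark~\ref{rems-af-defs_Chev-sgrps}), for any $A\in\salg_\bk$ this yields a set-theoretic decomposition
\[
\bG_V(A)\;=\;\bG_0(A_\zero)\cdot \bG_{\zero^{\,\uparrow}}(A)\cdot \bG_\uno^<(A),
\]
together with the analogous decompositions obtained by reordering the three factors.

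Next I would identify each of the three factors with the concrete group appearing in the statement. Proposition~\ref{factorization} (together with Proposition~\ref{G_uno^pm-repres}) gives scheme isomorphisms
\[
\bG_{\zero^{\,\uparrow}}\;\cong\;\bigtimes_{\talpha\in\tDelta_{\zero^{\,\uparrow}}}\!x_{\talpha}\;,\qquad \bG_\uno^<\;\cong\;\bigtimes_{\tgamma\in\tDelta_\uno}\!x_{\tgamma},
\]
for the chosen total orders. On points these become ordered products of one-parameter supersubgroups. The very explicit calculation carried out right before the proposition (using (5.1) and the left-hand part of (5.2), which says ${(\underline\xi^{\underline a}\partial_i)}^2=0$ for $\underline a\neq \underline e_i$) shows that $x_{\talpha}(\bu)=1+\bu X_{\talpha}$ for every $\talpha\in\tDelta$, so the two ordered direct products become exactly $\bigtimes_{\talpha\in\tDelta_{\zero^{\,\uparrow}}}(1+A_\zero X_{\talpha})$ and $\bigtimes_{\tgamma\in\tDelta_\uno}(1+A_\uno X_{\tgamma})$. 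The different set-theoretic factorizations in the statement then follow by permuting the three factors using the full list of alternative orderings in Corollary~\ref{nat-transf}.

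The main (and essentially only nontrivial) step is identifying $\bG_0(A_\zero)$ with $\rGL_n(A_\zero)$. Proposition~\ref{bG_0-Ch_V} tells us $\bG_0\cong\mathbf{Ch}_V$, the classical Chevalley-Demazure group-scheme associated with the reductive Lie algebra $\fg_0\cong\rgl_n$ (cf.~\S\ref{def-W(n)}(b)) and with the $\fg_0$-module $V=\Lambda(n)$; by \S\ref{weight-bG_V} its isomorphism type depends only on the weight lattice $L_V$ carried by $V$. Here I would argue that $L_V$ is the full $\rgl_n$-weight lattice $\Z\varepsilon_1\oplus\cdots\oplus\Z\varepsilon_n$: indeed, the submodule $\Lambda(n)_1=\mathrm{Span}_\KK(\xi_1,\dots,\xi_n)$ of $\Lambda(n)$ is the standard (defining) representation of $\rgl_n$ by \S\ref{def-W(n)}(c) applied after dualisation, so each $\varepsilon_i$ already occurs as an $\fh$-weight of $V$. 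Since the $\varepsilon_i$ generate the full weight lattice $L_w$ of $\rgl_n$, we obtain $L_V=L_w$, and hence $\mathbf{Ch}_V\cong \rGL_n$ as an affine group-scheme over $\bk$. The main potential obstacle is being careful that we are working with $\rgl_n$ (a reductive, not semisimple, Lie algebra) and the corresponding non-simply-connected Chevalley construction: but this is exactly the setting in which Proposition~\ref{bG_0-Ch_V} was stated, with the admissible lattice $M$ of~$V$ providing the requisite integral form, so no new input is needed. Putting the three identifications together yields the claimed description of $\bG_{\Lambda(n)}(A)$.
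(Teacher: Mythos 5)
Your proof is correct and follows essentially the same route as the paper: the factorization $\bG_{\Lambda(n)}\cong\bG_0\times\bG_{\zero^{\,\uparrow}}\times\bG_\uno^<$ from Corollary \ref{nat-transf}, Proposition \ref{semi-direct-G_zero} and Proposition \ref{factorization}, the identification of the unipotent and odd factors as ordered products of the $x_{\talpha}(A)=1+A_{p(\talpha)}X_{\talpha}$ via (5.1)--(5.2), and the identification $\bG_0\cong\mathbf{Ch}_{\Lambda(n)}\cong\rGL_n$ from Proposition \ref{bG_0-Ch_V}. The only difference is that you justify $\mathbf{Ch}_{\Lambda(n)}\cong\rGL_n$ by computing the weight lattice $L_{\Lambda(n)}=\Z\varepsilon_1\oplus\cdots\oplus\Z\varepsilon_n$, a step the paper simply asserts as clear from the construction.
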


\begin{proof} As we noticed in  \S \ref{weight-bG_V},  $ \bG_{\Lambda(n)} $  factors into  $ \; \bG_{\Lambda(n)} \, \cong \, \bG_0 \times \bG_{\zero^{\,\uparrow}} \times \bG_\uno^< \; $,  and in addition  $ \; \bG_\uno^< \cong \bigtimes_{\, \tgamma \in \tDelta_\uno} x_{\tgamma}(A) \, $,  $ \; \bG_{\zero^{\,\uparrow}} \cong \bigtimes_{\, \talpha \in \tDelta_{\zero^{\,\uparrow}}\!} x_{\talpha}(A) \; $  and  $ \; \bG_0 \cong \mathbf{Ch}_{\Lambda(n)} \, $. The latter is the standard (affine, algebraic) group functor associated by the classical Chevalley construction with  $ \, \fg_0 \cong \rgl_n \, $  and with the  $ \rgl_n $--module  $ \Lambda(n) \, $:  but the very construction clearly gives  $ \, \bG_0 \cong \mathbf{Ch}_{\Lambda(n)} \cong \rGL_n \; $.  Finally, taking into account the additional remark that  $ \; x_{\talpha}(A) = (1 + A_\zero \, X_{\talpha}) \; $  for  $ \, \talpha \in \tDelta_{\zero^{\,\uparrow}} \, $  and  $ \; x_{\tgamma}(A) = (1 + A_\uno \, X_{\tgamma}) \; $  for  $ \, \tgamma \in \tDelta_\uno \, $   --- by the above analysis ---   we end up with the claim.
\end{proof}

\medskip

\begin{remark}
 The factorization of  $ \bG_{\Lambda(n)}(A) $  in the above Proposition is a special instance of the general result in  \S \ref{weight-bG_V}.  But the present case is much easier to handle, as commutation relations among one-parameter supersubgroups (as in  Lemma \ref{comm_1-pssg})  look simpler: e.g., for  $ \, \talpha, \tbeta \in \tDelta_{\zero^{\,\uparrow}} \coprod \tDelta_\uno \, $  one has
 $ \; \big( x_{\talpha}(\mathbf{p}) \, , \, x_{\tbeta}(\mathbf{q}) \big) = \big( 1 + \mathbf{p} \, X_{\talpha} \, , 1 + \mathbf{q} \, X_{\tbeta} \big) = 1 + \mathbf{p} \, \mathbf{q} \, \big[ X_{\talpha} \, , X_{\tbeta\,} \big] \; $
where (cf.~Examples \ref{examples_che-bas}{\it (a)\/})  the bracket  $ \, \big[ X_{\talpha} \, , X_{\tbeta\,} \big] \, $  is either zero, or a root vector, or a sum (with signs) of two such vectors.
\end{remark}

\medskip

 \subsection{$ \bG_{\Lambda(n)} $  as a supergroup of automorphisms}  \label{G_g-automorf}

\smallskip

   {\ } \quad   In the present subsection we prove that the supergroup functor  $ \bG_{\Lambda(n)} $  actually is a group functor of automorphisms, namely the group functor of superalgebra automorphisms canonically associated with the  $ \bk $--superalgebra  $ \Lambda(n) \, $.  We begin with a (general) definition.

\medskip

\begin{definition}
 Let  $ \, \mathfrak{A} \in \salg_\bk \, $  be any  $ \bk $--superalgebra  which, as a  $ \bk $--module,  is free of finite rank.  We define the supergroup functor  $ \; \mathbf{Aut}\big(\mathfrak{A}\big) : \salg_\bk \longrightarrow \grps \; $  as the full subfunctor of the group functor  $ \rGL(\mathfrak{A}_\bullet) $   --- cf.~\ref{exs-supvecs}{\it (b)}  ---   whose value on objects is  $ \; \mathbf{Aut}\big(\mathfrak{A}\big)(A) := \text{\sl Aut}_{\salg_{\!A}} \! \big( \mathfrak{A}_A \big) \, $ --- the group of all  $ A $--linear  superalgebra automorphisms of  $ \; \mathfrak{A}_A := A \otimes_\bk \mathfrak{A} \; $   --- for all  $ \, A \in \salg_\bk \, $.
\end{definition}

\smallskip

\begin{free text}  \label{Aut(Lambda(n))}
 {\bf The group functor  $ \, \mathbf{Aut}\big(\Lambda(n)\big) \, $.}  Given the  $ \bk $--superalgebra  $ \, \mathfrak{A} = \Lambda(n) \, $,  we are interested into  $ \, \mathbf{Aut}\big(\Lambda(n)\big) \; $:  our ultimate goal is to show that  $ \, \bG_{\Lambda(n)} = \mathbf{Aut}\big(\Lambda(n)\big) \; $.  Note that
   $$  \mathbf{Aut}\big(\Lambda(n)\big)(A)  \, = \,  \text{\sl Aut}_{\salg_{\!A}} \! \big( {\Lambda(n)}_A \big)  \, = \,  \text{\sl Aut}_{\salg_{\!A}} \! \big( A[\xi_1,\dots,\xi_n] \big)   \eqno \forall  \;\; A \in \salg_\bk  \quad  (5.3)  $$
because  $ \; {\Lambda(n)}_A := A \otimes_\bk \Lambda(n) = A \otimes_\bk \bk[\xi_1,\dots,\xi_n] = A[\xi_1,\dots,\xi_n] \; $.  Now, given  $ \,  A \in \salg_\bk \, $,  any  $ \, \phi \in \text{\sl Aut}_{\salg_{\!A}} \! \big( A[\xi_1,\dots,\xi_n] \big) \, $  is uniquely determined by the images of the  $ \xi_j \, $:  these are of the form
  $$  \phi(\xi_j)  \,\; = \;\,  \kappa_j \, + \, {\textstyle \sum_{i=1}^n} \, c_{i,j} \, \xi_i \; + {\textstyle \sum_{\hskip-1pt {{|\underline{e}| > 1} \atop {|\underline{e}| \text{\ is even}}}}} \hskip-2pt \kappa_{\underline{e},j} \, \underline{\xi}^{\underline{e}} \; + {\textstyle \sum_{\hskip-1pt {{|\underline{e}| > 1} \atop {|\underline{e}| \text{\ is odd}}}}} \hskip-2pt c_{\underline{e},j} \, \underline{\xi}^{\underline{e}}   \eqno \forall \;\; j=1, \dots, n  \qquad  (5.4)  $$
with the only constraints that each  $ \, \phi(\xi_j) \, $  be again  {\sl odd},  which means  $ \; \kappa_j , \kappa_{\underline{e},j} \in A_\uno \, $,  $ \; c_{i,j} , c_{\underline{e},j} \in A_\zero \; $, {\sl and\/}  that  $ \phi $  itself be invertible.  By the nilpotency of the  $ \xi_t \, $,  it is clear that  $ \phi $  is invertible if and only if the square matrix of the  $ c_{i,j} $'s  is invertible, i.e.~$ \, C_\phi := {\big( c_{i,j} \big)}_{i=1,\dots,n;}^{j=1,\dots,n;} \in \rGL_n(A_\zero) \; $.  Note also that (5.4) means that  $ \, \phi \in \text{\sl Aut}_{\salg_{\!A}} \! \big( {\Lambda(n)}_A \big) =
 \text{\sl Aut}_{\salg_{\!A}} \! \big( A[\xi_1,\dots,\xi_n] \big) \, $  can be written as
  $$  \phi  \;\; = \;  {\textstyle \sum\limits_{j=1}^n} \kappa_j \, \partial_j \, + \, {\textstyle \sum\limits_{i=1}^n} {\textstyle \sum\limits_{j=1}^n} \, c_{i,j} \, \xi_i \, \partial_j \, + \, {\textstyle \sum\limits_{j=1}^n} {\textstyle \sum_{\hskip-2pt {{|\underline{e}| > 1} \atop {|\underline{e}| \text{\ is even}}}}} \hskip-2pt \kappa_{\underline{e},j} \, \underline{\xi}^{\underline{e}} \; \partial_j \, + \, {\textstyle \sum\limits_{j=1}^n} {\textstyle \sum_{\hskip-2pt {{|\underline{e}| > 1} \atop {|\underline{e}| \text{\ is odd}}}}} \hskip-2pt c_{\underline{e},j} \, \underline{\xi}^{\underline{e}} \; \partial_j   \eqno (5.5)  $$
Thus every  $ \, \phi \in \text{\sl Aut}_{\salg_{\!A}} \! \big( {\Lambda(n)}_A \big) \, $  is uniquely associated with a string of coefficients: the  $ \kappa_j \, $,  the  $ \kappa_{\underline{e},j} \, $,  the  $ c_{i,j} $  and the $ c_{\underline{e},j} $  as above.  Therefore, the overall conclusion is the following:
\end{free text}

\medskip

\begin{proposition}
 The group functor  $ \, \mathbf{Aut}\big(\Lambda(n)\big) \, $  is  {\sl representable}   --- hence it is an (affine) supergroup scheme ---   and isomorphic, as a superscheme, to  $ \; \mathbb{A}^{0|n} \times \rGL_n \times \mathbb{A}^{|\,\tDelta_{{\zero^{\uparrow\!}}}|\big|\,0} \times \mathbb{A}^{0\big||\,\tDelta_{{\uno^{\uparrow\!}}}|} \;\, $.
\end{proposition}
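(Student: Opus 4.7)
The strategy is to make the parameterization sketched in \S\ref{Aut(Lambda(n))} functorial in $A \in \salg_\bk$, and to identify its four pieces with the $A$-points of the four factors in the claimed product. Given $\phi \in \mathbf{Aut}\big(\Lambda(n)\big)(A) = \text{\sl Aut}_{\salg_A}\!\big(A[\xi_1,\ldots,\xi_n]\big)$, the map $\phi$ is determined by the $n$-tuple $(\phi(\xi_j))_j$, where each $\phi(\xi_j)$ is an odd element of $A[\xi_1,\ldots,\xi_n]$ expanded as in (5.4). Supercommutativity forces $\phi(\xi_j) \phi(\xi_k) + \phi(\xi_k) \phi(\xi_j) = 0$ and $\phi(\xi_j)^2 = 0$ automatically for any such choice, so any odd $n$-tuple defines an $A$-linear superalgebra endomorphism of $\Lambda(n)_A$. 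The only nontrivial constraint is thus invertibility.

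The next step is to show that $\phi$ is invertible if and only if the matrix $C_\phi := (c_{i,j}) \in \text{Mat}_n(A_\zero)$ of linear coefficients lies in $\rGL_n(A_\zero)$. The \emph{only if} direction is the standard observation that invertibility of an $A$-algebra endomorphism implies invertibility of its principal symbol, combined with the fact that the invertible elements of a supercommutative superalgebra lie in its even part (so invertibility of $C_\phi$ as an $n{\times}n$ matrix over $A$ is equivalent to $C_\phi \in \rGL_n(A_\zero)$). The \emph{if} direction proceeds by constructing the inverse order-by-order with respect to the filtration of $A[\xi_1,\ldots,\xi_n]$ by total $\xi$-degree: the linear step amounts to inverting $C_\phi$, and triangularity of the recursion lets one solve all higher-degree steps in finitely many stages because the filtration is nilpotent; the constants $\kappa_j \in A_\uno$ pose no obstruction since $\kappa_j^2 = 0$ and they only contribute to finitely many correction terms.

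Once invertibility is reduced to $C_\phi \in \rGL_n(A_\zero)$, the remaining coefficients in (5.4) are free, and split into four natural groups: the constants $\kappa_j \in A_\uno$ (one per index $j$), the linear matrix $(c_{i,j}) \in \rGL_n(A_\zero)$, the higher-order even-degree coefficients $\kappa_{\underline{e},j} \in A_\uno$ with $|\underline{e}|$ even and $\geq 2$, and the higher-order odd-degree coefficients $c_{\underline{e},j} \in A_\zero$ with $|\underline{e}|$ odd and $\geq 3$. Their totals are $n$, $n^2$, $n(2^{n-1}-1)$, and $n(2^{n-1}-n)$ respectively. The first gives $\mathbb{A}^{0|n}(A) = A_\uno^{\,n}$, the second gives $\rGL_n(A_\zero)$, while the last two match the cardinalities $|\tDelta_{\uno^{\uparrow}}|$ and $|\tDelta_{\zero^{\uparrow}}|$ read off from the description of the $W(n)$-root system in \S\ref{def-W(n)}: the root vectors $\underline{\xi}^{\underline{a}}\,\partial_i$ in $\tDelta_{\uno^{\uparrow}}$ are exactly those with $|\underline{a}|$ even and $\geq 2$, and those in $\tDelta_{\zero^{\uparrow}}$ are those with $|\underline{a}|$ odd and $\geq 3$.

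Finally, the assignment $\phi \mapsto \big((\kappa_j)_j,\,(c_{i,j}),\,(\kappa_{\underline{e},j}),\,(c_{\underline{e},j})\big)$ is manifestly natural in $A$, so it defines a functor isomorphism
$$\mathbf{Aut}\big(\Lambda(n)\big) \;\cong\; \mathbb{A}^{0|n} \,\times\, \rGL_n \,\times\, \mathbb{A}^{|\,\tDelta_{{\zero^{\uparrow\!}}}|\big|\,0} \,\times\, \mathbb{A}^{0\big|\,|\,\tDelta_{{\uno^{\uparrow\!}}}|}.$$
Each factor on the right is (the functor of points of) an affine superscheme, hence so is their product; this transfers representability to $\mathbf{Aut}\big(\Lambda(n)\big)$ and yields the claimed isomorphism of superschemes. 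The main point to watch is the parity bookkeeping: the coefficient of $\underline{\xi}^{\underline{e}}$ in an odd element carries parity \emph{opposite} to $|\underline{e}| \bmod 2$, while the extra $\partial_j$-factor in the associated root vector $\underline{\xi}^{\underline{e}}\partial_j$ flips parity once more, so that even-degree $\underline{e}$'s land in the odd part $\tDelta_{\uno^{\uparrow}}$ and odd-degree $\underline{e}$'s in the even part $\tDelta_{\zero^{\uparrow}}$, just as required for the exponents on the right-hand side of the isomorphism to match.
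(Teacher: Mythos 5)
Your proposal is correct and follows essentially the same route as the paper, which deduces the proposition directly from the coordinatization (5.4)--(5.5) of an automorphism by its free coefficient string $\big((\kappa_j),(c_{i,j}),(\kappa_{\underline{e},j}),(c_{\underline{e},j})\big)$ subject only to $C_\phi\in\rGL_n(A_\zero)$; your parity and cardinality matching with $\tDelta_{\uno^{\uparrow}}$ and $\tDelta_{\zero^{\uparrow}}$ is accurate. The one place where you are (like the paper) slightly informal is the invertibility criterion: the constant terms $\kappa_j$ break the $\xi$-degree filtration, so the cleanest fix is to reduce modulo the nil ideal generated by $A_\uno$ (where $\phi$ becomes genuinely filtration-preserving with symbol $C_\phi$) rather than to invoke ``triangularity of the recursion'' directly.
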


\medskip

   We are now ready for the main result of this subsection:

\medskip

\begin{theorem}  \label{G_Ln-Aut}
  $ \, \bG_{\Lambda(n)} = \mathbf{Aut}\big(\Lambda(n)\big) \; $,  that is  $ \, \bG_{\Lambda(n)} $  coincides with the group functor  $ \, \mathbf{Aut}\big(\Lambda(n)\big) \, $.
\end{theorem}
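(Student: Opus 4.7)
The plan is to prove the two inclusions between these subfunctors of $\rGL\big(\Lambda(n)\big)$.

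First I would establish $\bG_{\Lambda(n)} \subseteq \mathbf{Aut}\big(\Lambda(n)\big)$ by verifying that every generator of the subfunctor $G_{\Lambda(n)}$ acts on $\Lambda(n)_A$ as a superalgebra automorphism. Each additive one-parameter subgroup $x_{\talpha}(\bu) = 1 + \bu\,X_{\talpha}$ is the exponential of a nilpotent superderivation of $\Lambda(n)_A$ (since $X_{\talpha} \in W(n) = \Der_{\,\KK}\big(\Lambda(n)\big)$), hence automatically a superalgebra automorphism. Each multiplicative one-parameter subgroup $h_i(u) = u^{\xi_i \partial_i}$ acts by scaling on the homogeneous components of the natural $\Z$--grading of $\Lambda(n)_A$, and so respects the product. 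Since $\mathbf{Aut}\big(\Lambda(n)\big)$ is representable, hence a Zariski sheaf, it contains the sheafification $\bG_{\Lambda(n)}$ of $G_{\Lambda(n)}$.

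For the reverse inclusion, given any $\phi \in \mathbf{Aut}\big(\Lambda(n)\big)(A)$ expanded as in (5.4), I would construct an explicit factorization of $\phi$ in $\bG_{\Lambda(n)}(A)$ by iteratively removing the coefficients appearing in the $\phi(\xi_j)$, following the decomposition $\bG_{\Lambda(n)} \cong \bG_0 \times \bG_{\zero^{\,\uparrow}} \times \bG_\uno^<$ of Corollary \ref{nat-transf}. The procedure has three stages. First, right-multiplying $\phi$ by $\prod_{j=1}^n x_{\partial_j}(-\kappa_j)$ (each factor lying in $\bG_\uno^<(A)$, since $\partial_j$ is the odd root vector of root $-\varepsilon_j$) annihilates the constant terms $\kappa_j$. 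Second, using the identification $\bG_0 \cong \mathbf{Ch}_V \cong \rGL_n$ of Proposition \ref{bG_0-Ch_V}, right-multiplying by the element of $\bG_0(A) \cong \rGL_n(A_\zero)$ whose defining matrix is $C_\phi^{-1} = {\big( c_{i,j} \big)}^{-1}$ reduces to an automorphism satisfying $\phi(\xi_j) \equiv \xi_j$ modulo $(\xi_1,\dots,\xi_n)^2$. Third, I proceed by induction on the degree of monomials: for each remaining nonzero coefficient in the expansion of $\phi(\xi_j) - \xi_j$ of least degree $|\underline{e}| \geq 2$, right-multiplication by $x_{\underline{\xi}^{\underline{e}} \partial_j}(\mp \lambda_{\underline{e},j})$ (with sign dictated by (5.1)) cancels it. This element belongs to $\bG_{\zero^{\,\uparrow}}(A)$ when $|\underline{e}|$ is odd (so $\underline{\xi}^{\underline{e}} \partial_j$ is even of height $|\underline{e}|-1 \geq 2$) and to $\bG_\uno^<(A)$ when $|\underline{e}|$ is even. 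After finitely many steps (bounded by $\dim_\KK \Lambda(n)$) we reach the identity.

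The main obstacle is to confirm that each inductive step does not reintroduce previously eliminated lower-degree data. For right-multiplication of $\phi$ by $\psi := x_{\underline{\xi}^{\underline{e}} \partial_j}(t)$ one has $(\phi \circ \psi)(\xi_\ell) = \phi(\xi_\ell) + t\,\delta_{j,\ell}\,\phi\big(\underline{\xi}^{\underline{e}}\big)$; since at this stage $\phi\big(\underline{\xi}^{\underline{e}}\big) = \underline{\xi}^{\underline{e}} \,+\, (\text{terms of total degree} > |\underline{e}|)$ by the inductive hypothesis, only coefficients of monomials of degree $\geq |\underline{e}|$ in $\phi(\xi_\ell)$ are affected, leaving the already-normalized lower-degree data intact. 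The parameter count then matches exactly the description of $\mathbf{Aut}\big(\Lambda(n)\big)$ in \S \ref{Aut(Lambda(n))} against the superscheme isomorphism $\bG_{\Lambda(n)} \cong \rGL_n \times \mathbb{A}^{\left|\tDelta_{\zero^{\,\uparrow}}\right|\,\big|\,|\tDelta_\uno|}$ coming from Corollary \ref{nat-transf} and Proposition \ref{G_uno^pm-repres}, whence $\mathbf{Aut}\big(\Lambda(n)\big)(A) \subseteq \bG_{\Lambda(n)}(A)$ for every $A \in \salg_\bk$, completing the proof.
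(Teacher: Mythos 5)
Your proposal is correct and follows essentially the same route as the paper: the first inclusion via the observation that the generators are exponentials of superderivations (plus the sheafification argument), and the reverse inclusion by stripping the coefficients of $\phi$ off degree by degree, starting with the affine-linear part. Your explicit elimination algorithm is just the paper's filtration of $\, \mathbf{Aut}\big(\Lambda(n)\big)_{0^\uparrow} \, $ by the normal subgroups $\, \mathbf{Aut}\big(\Lambda(n)\big)_{>t} \, $ unwound into a factorization, and the key point you isolate --- that each correction $x_{\underline{\xi}^{\underline{e}}\partial_j}(t)$ only disturbs coefficients of degree $\geq |\underline{e}|$ because $\phi\big(\underline{\xi}^{\underline{e}}\big) = \underline{\xi}^{\underline{e}} + (\text{higher degree})$ at that stage --- is exactly what makes the paper's quotient-generation argument work.
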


\begin{proof}
 By construction, we must prove that  $ \; \bG_{\Lambda(n)}(A) \, = \, \mathbf{Aut}\big(\Lambda(n)\big)(A) \, := \, \text{\sl Aut}_{\salg_{\!A}} \! \big( {\Lambda(n)}_A \big) \; $  with  $ \, {\Lambda(n)}_A \! := A \otimes_\bk \Lambda(n) = A[\xi_1,\dots,\xi_n] \, $,  $ \, A \! \in \! \salg_\bk \, $.  We begin by  $ \, \bG_{\Lambda(n)}\!(A) \subseteq \text{\sl Aut}_{\salg_{\!A}} \! \big( {\Lambda(n)}_{\!A} \big) \, $.

\vskip4pt

   By  Remarks \ref{rems-af-defs_Chev-sgrps}{\it (d)},
 $ \bG_{\Lambda(n)}(A) $  is a subgroup of  $ \rGL(V_A) \, $:  we must only prove that its elements are superalgebra automorphisms. As  $ \bG_{\Lambda(n)} $  is the sheafification of  $ G_{\Lambda(n)} \, $,  and  $ \mathbf{Aut} $  is a sheaf, it is enough to prove that  $ \; G_{\Lambda(n)}(A) \subseteq \text{\sl Aut}_{\salg_{\!A}} \! \big( {\Lambda(n)}_A \big) \; $.  Now, the group  $ \, G_{\Lambda(n)}(A) \, $  is generated by such elements as
 $ \; x_{\talpha}(t) := \exp\big(t\,X_{\talpha}\big) \, $,  $ \, x_{\tbeta}(\vartheta) := \exp\big(\vartheta\,X_{\tbeta}\big) \, $,  $ \, h_i(u) := u^{H_i} \, $;
 both  $ X_{\talpha} $  and  $ X_{\tbeta} $  are superderivations of  $ \, \Lambda(n) \, $,  hence they also define (uniquely)  $ A $--linear  superderivations of  $ \, {\Lambda(n)}_A := A \otimes_\bk \Lambda(n) \; $.  But then both  $ \, t \, X_{\talpha} \, $  and  $ \, \vartheta \, X_{\tbeta} \, $  are  $ A $--linear  {\sl derivations\/}  of  $ {\Lambda(n)}_A $  into itself: taking their exponentials we get ($ A $--linear)  {\sl automorphisms\/}  of  $ {\Lambda(n)}_A \, $,  so that  $ \; x_{\talpha}(t) , \, x_{\tbeta}(\vartheta) \in \text{\sl Aut}_{\salg_{\!A}} \! \big( {\Lambda(n)}_A \big) \; $.  A similar argument proves  $ \; h_i(u) \in \text{\sl Aut}_{\salg_{\!A}} \! \big( {\Lambda(n)}_A \big) \; $,  hence we are done.

\vskip5pt

   Now we prove that the above inclusion is an identity.  We begin with an aside observation: by the explicit description of automorphisms in (5.5), one sees that for each  $ \, A \in \salg_\bk \, $  the subsets
  $$  \displaylines{
   {\mathbf{Aut}\big(\Lambda(n)\big)}_{\leq 0}(A)  \; := \;  \big\{\, \phi \in \mathbf{Aut}\big(\Lambda(n)\big)(A) \,\big|\, \kappa_{\underline{e},j} = 0 = c_{\underline{e},j} \; \forall \, \underline{e}, \, j \,\big\}  \cr
   {\mathbf{Aut}\big(\Lambda(n)\big)}_{0^\uparrow}(A)  \; := \;  \big\{\, \phi \in
\mathbf{Aut}\big(\Lambda(n)\big)(A) \,\big|\, \kappa_j = 0 = c_{i,j} \; \forall \, i, j \,\big\}  }  $$
are subgroups of  $ \, \mathbf{Aut}\big(\Lambda(n)\big)(A) \, $,  which altogether generate  $ \, \mathbf{Aut}\big(\Lambda(n)\big)(A) \; $.  This defines two supersubgroups  $ \, {\mathbf{Aut}\big(\Lambda(n)\big)}_{\leq 0} \, $  and  $ \, {\mathbf{Aut}\big(\Lambda(n)\big)}_{0^\uparrow} \, $  which jointly generate  $ \, \mathbf{Aut}\big(\Lambda(n)\big) \; $.
                                                                  \par
   The first supersubgroup  $ \, {\mathbf{Aut}\big(\Lambda(n)\big)}_{\leq 0} \, $  is isomorphic to the algebraic group  $ \, \mathbf{Aff}_n = \mathbb{G}_a^{\times n} \rtimes \rGL_n \, $  of all affine-linear transformations of the (totally odd) affine superspace  $ \mathbb{A}^{0|n} \, $.  It contains the subgroup  $ G'_{\leq 0}(A) $  generated by all the elements  $ \; (1 + \vartheta \, \partial_j) = x_{-\varepsilon_j}\!(\vartheta) \, $,  $ \, (1 + t \, \xi_i \, \partial_j) = x_{\varepsilon_i - \varepsilon_j}\!(t) \, $  and  $ \, h_i(u) \, $  of  $ \bG_{\Lambda(n)}(A) $   --- for  $ \, \vartheta \in A_\uno \, $,  $ \, t \in A_\zero \, $,  $ \, u \in U(A_\zero) \, $,  $ \, i, j = 1, \dots, n \, $.  All these  $ G'_{\leq 0}(A) $  define a supergroup functor, whose sheafification  $ \bG'_{\leq 0} $  clearly  {\sl coincides\/}  with  $ \, {\mathbf{Aut}\big(\Lambda(n)\big)}_{\leq 0} \; $.
                                                                  \par
   The second supersubgroup  $ \, {\mathbf{Aut}\big(\Lambda(n)\big)}_{0^\uparrow} \, $  contains the subgroup  $ G'_{0^\uparrow}\!(A) $  generated by the elements
 $ \, (1 + \bt \, \underline{\xi}^{\underline{e}} \, \partial_j) = x_{\alpha_{\underline{e},j}}\!(\bt) \, $   ---  for  $ \, \bt \in A_\zero \cup A_\uno \, $,  $ \, i, j = 1, \dots, n \, $,  where  $ \, \alpha_{\underline{e},j} \, $  is the unique element of  $ \tDelta $ associated with  $ \underline{e} $  and  $ j \, $.  We shall now show that  $ G'_{0^\uparrow}\!(A) $  {\sl coincides\/}  with  $ {\mathbf{Aut}\big(\Lambda(n)\big)}_{0^\uparrow} \, $:  by the previous analysis, this will be enough to prove that  $ \, \mathbf{Aut}\big(\Lambda(n)\big) = \bG_V \; $.  Consider the subsets
  $$  {\mathbf{Aut}\big(\Lambda(n)\big)}_{>t}(A)  \; := \;  \big\{\, \phi \in
\mathbf{Aut}\big(\Lambda(n)\big)(A) \,\big|\, \kappa_j = 0 = c_{i,j} \, , \, \kappa_{\underline{e},j} = 0 = c_{\underline{e},j} \; \forall \, i, j \, , \, \forall \, |\underline{e}| \leq t \,\big\}  $$
for all  $ \, t = 2, \dots, n \, $;  then easy computations, basing upon (5.5) and upon the formula
  $$  \big( 1 + \bt' \, \underline{\xi}^{\underline{a}} \, \partial_j \big) \, \big( 1 + \bt'' \, \underline{\xi}^{\underline{b}} \, \partial_k \big)  \; = \;  1 \, + \,  \bt' \, \underline{\xi}^{\underline{a}} \, \partial_j \, + \, \bt'' \, \underline{\xi}^{\underline{b}} \, \partial_k \, \pm \, \delta_{\underline{b}(j),1} \, \bt' \, \bt'' \, \underline{\xi}^{\underline{a} + \underline{b} - \underline{e}_j} \, \partial_k  $$
show that these subsets form a strictly decreasing sequence of normal subgroups  $ \, {\mathbf{Aut}\big(\Lambda(n)\big)}_{0^\uparrow} \; $,  which ends with the trivial subgroup.  It is immediate to see that  $ \; {\mathbf{Aut}\big(\Lambda(n)\big)}_{0^\uparrow} \Big/ {\mathbf{Aut}\big(\Lambda(n)\big)}_{>2}(A) \; $  is generated by the cosets  $ \; (1 + \bt \, \underline{\xi}^{\underline{e}} \, \partial_j) \mod {\mathbf{Aut}\big(\Lambda(n)\big)}_{>2}(A) \; $  with $ \, |\underline{e}| = 2 \; $.  Similarly, one sees easily for all  $ t $  (by iteration) that  $ \; {\mathbf{Aut}\big(\Lambda(n)\big)}_{0^\uparrow} \Big/ {\mathbf{Aut}\big(\Lambda(n)\big)}_{>t}(A) \; $  is generated by the cosets  $ \; (1 + \bt \, \underline{\xi}^{\underline{e}} \, \partial_j) \mod {\mathbf{Aut}\big(\Lambda(n)\big)}_{>1}\!(A) \; $  with $ \, 2 \! \leq|\underline{e}| \! \leq t \, $.  For  $ \, t \! = \! n \, $  this yields the expected result.
\end{proof}

\medskip

  \subsection{Special supersubgroups of  $ \bG_{\Lambda(n)} $}  \label{spec_supsubgroups_G_Lbdn}

\smallskip

   {\ } \quad   We finish this section with an explicit description of the special supersubgroups of  $ \bG_{\Lambda(n)} $  that we considered along the way   --- cf.~\S \ref{even-part}  and  \S \ref{spec_supsubgroup_G_V}  ---   namely  (for all  $ \, t \geq -1 \, $)
  $$  \bG_{-1} \;\; ,  \;\quad  \bG_0 \;\; ,  \;\quad  \bG_{-1,0} \;\; ,  \;\quad  \bG_\zero \;\; ,  \;\quad \bG_{\zero^\uparrow}  \;\; ,  \;\quad  \bG_{t^\uparrow}  \;\; ,  \;\quad  {\big( \bG_{t^\uparrow} \big)}_\zero \;\; ,  \;\quad  \bG_{\text{\rm min}}^\pm  \;\; ,  \;\quad  \bG_{\text{\rm max}}^\pm  $$
Such a description follows from  Propositions \ref{descr-G_zero}, \ref{descr-G_zero^pm}, \ref{semi-direct-G_zero}, \ref{factorization}  and  \ref{props_supsubgrps};  using the identification  $ \; \bG_{\Lambda(n)} = \mathbf{Aut}\big(\Lambda(n)\big) \; $  in  Theorem \ref{G_Ln-Aut}  and by (5.5), all those results yield easily the following:

\medskip

\begin{proposition}
 For every  $ \, A \in \salg_\bk \, $,  we have:
 \vskip4pt
   (a)  $ \; \bG_{-1}(A) = \big\{\, \phi \in \mathbf{Aut}\big(\Lambda(n)\big)(A) \,\big|\, c_{i,j} = \kappa_{\underline{e},j} = c_{\underline{e},j} = 0 \,\; \forall \; \underline{e} \, , \, i, j \,\big\} \, \cong \, \mathbb{G}_{a,\text{\it odd}}^{\,\times n}(A) \; $,  \, so that  $ \; \bG_{-1} \, \cong \, \mathbb{G}_{a,\text{\it odd}}^{\,\times n} \; $,  where  $ \, \mathbb{G}_{a,\text{\it odd}} \, $  is defined on objects by  $ \, A \mapsto \mathbb{G}_{a,\text{\it odd}}(A) := A_\uno \; $  (as additive group);
 \vskip4pt
   (b)  $ \; \bG_0(A) = \big\{\, \phi \in \mathbf{Aut}\big(\Lambda(n)\big)(A) \,\big|\, \kappa_j = \kappa_{\underline{e},j} = c_{\underline{e},j} = 0 \,\; \forall \; \underline{e} \, , \, j \,\big\} \, \cong \, \rGL_n(A) \; $,  \, so that  $ \; \bG_0 \, \cong \, \rGL_n \; $,  \, where  $ \, \rGL_n \, $  is the classical general linear affine group extended to superalgebras  via  $ \, \salg_\bk \ni A \mapsto \rGL_n(A) := \rGL_n(A_\zero) \; $;
 \vskip4pt
   (c)  $ \; \bG_{-1,0}(A) = \big\{\, \phi \in \mathbf{Aut}\big(\Lambda(n)\big)(A) \,\big|\, \kappa_{\underline{e},j} = c_{\underline{e},j} = 0 \,\; \forall \; \underline{e} \, , \, j \,\big\} =: {\mathbf{Aut}\big(\Lambda(n)\big)}_{\leq 0}(A) \; $,  \, so that  $ \; \bG_{-1,0} = \bG_{-1} \rtimes \bG_0 \, \cong \, \mathbb{G}_{a,\text{\it odd}}^{\,\times n} \rtimes \rGL_n =: \mathbf{Aff}_{0|n} \; $,  \, the latter being the (classical) algebraic group of all affine-linear transformations of the totally odd affine superspace  $ \, \mathbb{A}^{0|n} \, $;
 \vskip3pt
   (d)  $ \; \bG_{\zero^\uparrow}(A) = \big\{\, \phi \in \mathbf{Aut}\big(\Lambda(n)\big)(A) \,\big|\, \kappa_j = c_{i,j} = \kappa_{\underline{e},j} = 0 \,\; \forall \; \underline{e} \, , \, i, j \,\big\} \; $,  \, so that  $ \; \bG_{\zero^\uparrow} \, \cong \, \mathbb{A}_\bk^{N_{\zero^\uparrow} \!|\, 0} \; $  as affine superschemes, where  $ \; N_{\zero^\uparrow} := \big| \tDelta_{\zero^{\,\uparrow}} \big| = \sum_{z>0} \big| \tDelta_z \cap \tDelta_\zero \big| \; $;
 \vskip4pt
   (e)  $ \; \bG_\zero(A) = \big\{\, \phi \in \mathbf{Aut}\big(\Lambda(n)\big)(A) \,\big|\, \kappa_j = \kappa_{\underline{e},j} = 0 \,\; \forall \; \underline{e} \, , \, j \,\big\} \; $,  \, so that  $ \; \bG_\zero = \bG_0 \ltimes \bG_{\zero^\uparrow} \, \cong \, \rGL_n \times \mathbb{A}_\bk^{N_{\zero^\uparrow} \!|\, 0} \; $  as affine superschemes;
 \vskip4pt
   (f)  $ \; \bG_{t^\uparrow\!}(A) = \big\{\, \phi \in \mathbf{Aut}\big(\Lambda(n)\big)(A) \,\big|\, \kappa_j = c_{i,j} = \kappa_{\underline{e},j} = c_{\underline{e},j} = 0 \,\; \forall \; i, j, \; \forall \, \underline{e} : |\underline{e}| \leq t+1 \,\big\} \; $  for all  $ \, t > -1 \, $,  therefore  $ \; \bG_{t^\uparrow\!} \, \cong \, \mathbb{A}_\bk^{N_{t^\uparrow}^\zero |\, N_{t^\uparrow}^\uno} \, $  as affine superschemes,  where  $ \, N_{t^\uparrow}^{\overline{s}} := \! \sum_{z>t} \big| \tDelta_z \cap \tDelta_{\overline{s}} \big| \; $;  in particular,  $ \; \bG_{0^\uparrow\!}(A) = \big\{\, \phi \in \mathbf{Aut}\big(\Lambda(n)\big)(A) \,\big|\, \kappa_j = c_{i,j} = 0 \,\; \forall \; i, j \,\big\} \, $,  \, hence  $ \; \bG_{0^\uparrow\!} \, \cong \, \mathbb{A}_\bk^{N_{0^\uparrow}^+ |\, N_{0^\uparrow}^-} \; $;
 \vskip1pt
   (g)  $ \; \bG_{-1^\uparrow\!}(A) = \big\{\, \phi \in \mathbf{Aut}\big(\Lambda(n)\big)(A) \,\big|\, \kappa_j = 0 \,\; \forall \, j \,\big\} \; $,  so  $ \; \bG_{-1^\uparrow} = \bG_0 \ltimes \bG_{0^\uparrow} \, \cong \, \rGL_n \ltimes \mathbb{A}_\bk^{N_{0^\uparrow}^+ |\, N_{0^\uparrow}^-} \; $  as affine superschemes;
 \vskip7pt
   (h)  $ \; {\big( \bG_{t^\uparrow\!} \big)}_\zero(A) = \big\{\, \phi \in \mathbf{Aut}\big(\Lambda(n)\big)(A) \,\big|\, \kappa_j = c_{i,j} = \kappa_{\underline{e},j} = c_{\underline{e}',j} = 0 \,\; \forall \; i, j, \underline{e} \, , \; \forall \, \underline{e}' : |\underline{e}'| \leq t+1 \,\big\} \; $  for all  $ \, t \! > \! -1 \, $,  therefore  $ \; {\big( \bG_{t^\uparrow\!} \big)}_\zero \, \cong \, \mathbb{A}_\bk^{N_{t^\uparrow}^+ |\, 0} \; $  as (totally even) affine superschemes;  in particular,  $ \; {\big( \bG_{0^\uparrow\!} \big)}_\zero(A) = \big\{\, \phi \in \mathbf{Aut}\big(\Lambda(n)\big)(A) \,\big|\, \kappa_j = c_{i,j} = \kappa_{\underline{e},j} = 0 \,\; \forall \; i, j, \underline{e} \,\big\} \; $,  \, hence  $ \; \bG_{0^\uparrow\!} \, \cong \, \mathbb{A}_\bk^{N_{0^\uparrow}^+ |\, 0} \; $;
 \vskip7pt
   (i)  $ \; {\big( \bG_{-1^\uparrow\!} \big)}_\zero(A) = \big\{\, \phi \in \mathbf{Aut}\big(\Lambda(n)\big)(A) \,\big|\, \kappa_j = \kappa_{\underline{e},j} = 0 \,\; \forall \; i, j, \underline{e} \,\big\} \; $,  \, thus  $ \; \bG_{-1^\uparrow} = \bG_0 \ltimes \bG_{0^\uparrow} \, \cong \, \rGL_n \ltimes \mathbb{A}_\bk^{N_{0^\uparrow}^+ |\, 0} \; $  as (totally even) affine superschemes;
 \vskip7pt
   (j)  \, let  $ \, \tDelta_0 = \tDelta_0^+ \coprod \tDelta_0^- \, $  be the splitting of the root system  $ \, \tDelta_0 = \Delta_0 \, $  of\/  $ \, \fg_0 = \rgl_n \, $  given by  $ \, \tDelta_0^+ := \big\{\, \varepsilon_i - \varepsilon_j \,\big|\, 1 \leq i < j \leq n \,\big\} \, $  and  $ \, \tDelta_0^- := \big\{\, \varepsilon_i - \varepsilon_j \,\big|\, 1 \leq j < i \leq n  \,\big\} \, $,  and define\/  $ \bG_{\text{\rm min}}^- $  and\/  $ \bG_{\text{\rm max}}^+ $  accordingly as in  Definition \ref{spec_supsubgrps-def}.  Then we have
  $$  \displaylines{
   \bG_{\text{\rm min}}^-(A)  \; = \;  \big\{\, \phi \in \mathbf{Aut}\big(\Lambda(n)\big)(A) \;\big|\; c_{i,j} = \kappa_{\underline{e},j} = c_{\underline{e},j} = 0 \;\; \forall \;\, i, j, \underline{e} : i < j \,\big\}  \,\; \cong \;\,  \bG_{-1\!} \rtimes \mathbf{B}^-  \cr
   \bG_{\text{\rm max}}^+(A)  \,\; = \;\,  \big\{\, \phi \in \mathbf{Aut}\big(\Lambda(n)\big)(A) \;\big|\; \kappa_j = c_{i,j} = 0 \;\; \forall \;\, i, j : i > j \,\big\}  \,\; \cong \;\,  \mathbf{B}^+ \! \ltimes \bG_{0^\uparrow}  }  $$
where  $ \, \mathbf{B}^\pm $  is the Borel subgroup of  $ \, \bG_0 = \rGL_n \, $  of all invertible upper/lower triangular matrices.
\end{proposition}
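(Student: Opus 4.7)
The strategy is to translate the structural results already established for the general Cartan supergroup $\bG_V$ into the explicit language of superalgebra automorphisms of $\Lambda(n)$, using the identification $\bG_{\Lambda(n)} = \mathbf{Aut}\big(\Lambda(n)\big)$ from Theorem~\ref{G_Ln-Aut} and the parametrisation (5.4)--(5.5). The key preliminary step is to compute, for each generator of $\bG_{\Lambda(n)}$, which coefficients in the normal form (5.4) it turns on. For an even root vector $X_{\talpha} = \underline{\xi}^{\underline{a}}\,\partial_i$ with $\talpha \in \tDelta_{\zero^\uparrow}$, the automorphism $x_{\talpha}(t) = 1 + t\,\underline{\xi}^{\underline{a}}\,\partial_i$ acts by $\xi_j \mapsto \xi_j + \delta_{ij}\,t\,\underline{\xi}^{\underline{a}}$, so it modifies only a single $c_{\underline{a},i}$-type coefficient (with $|\underline{a}|$ odd $>1$); for an odd root vector with $\talpha \in \tDelta_\uno$ the analogous statement turns on a single $\kappa$-type coefficient (either $\kappa_i$ if $|\underline{a}|=0$, or $\kappa_{\underline{a},i}$ if $|\underline{a}|>1$ even); and $h_i(u)$ rescales $\xi_i$ by $u$, acting on the classical $\rGL_n$-part. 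In all cases, the height $\mathit{ht}(\pi(\talpha)) = |\underline{a}|-1$ matches the filtration degree $|\underline{e}|$ on the coefficient side.

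With these identifications, each of (a)--(j) follows by combining the appropriate generation/factorisation result from the earlier propositions with the expansion (5.5). Specifically, Proposition~\ref{factorization} gives $\bG_{\zero^\uparrow} \cong \bigtimes_{\talpha \in \tDelta_{\zero^\uparrow}} x_{\talpha}$ (and analogues for $\bG^\pm_\zero$, $\bG^\pm_0$, $\bG^\pm_{\zero^\uparrow}$, $\bG^\pm$); Proposition~\ref{semi-direct-G_zero} gives $\bG_\zero \cong \bG_0 \ltimes \bG_{\zero^\uparrow}$; Proposition~\ref{props_supsubgrps} gives $\bG_{-1} \cong \bigtimes_{\tgamma \in \tDelta_{-1}} x_{\tgamma}$ and the semidirect decompositions for $\bG_{-1,0}$, $\bG_{-1^\uparrow}$, $\bG^\pm_{\min}$, $\bG^\pm_{\max}$, $\bG_{t^\uparrow}$; and Proposition~\ref{bG_0-Ch_V} identifies $\bG_0$ with the classical Chevalley group functor for the defining representation, i.e.\ $\rGL_n$. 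Multiplying out the resulting ordered products and reading off the non-trivial coefficients in (5.4) yields the claimed coefficient descriptions, while the superscheme isomorphisms with affine (super)spaces follow because each one-parameter factor $x_{\talpha}$ is isomorphic to $\mathbb{A}^{1|0}$ or $\mathbb{A}^{0|1}$ according to the parity of $\pi(\talpha)$.

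The only non-routine content is the reverse inclusion---showing that every automorphism of $\Lambda(n)$ whose coefficients satisfy the prescribed vanishing conditions actually lies in the corresponding supersubgroup. This is exactly the step carried out at the end of the proof of Theorem~\ref{G_Ln-Aut}: filter each subgroup by its intersections with $\mathbf{Aut}\big(\Lambda(n)\big)_{>t}$ and argue by induction on $t$ that successive quotients are generated by the cosets of those $(1 + \bt\,\underline{\xi}^{\underline{e}}\,\partial_j)$ with $|\underline{e}|$ at that height. For (a), (b), (c) the induction is trivial because $|\underline{e}| \leq 1$ everywhere; for the remaining parts the filtration argument applies verbatim, and each inductive step uses only the simple identity $\big(1+\bt'\underline{\xi}^{\underline{a}}\partial_j\big)\big(1+\bt''\underline{\xi}^{\underline{b}}\partial_k\big) = 1 + \bt'\underline{\xi}^{\underline{a}}\partial_j + \bt''\underline{\xi}^{\underline{b}}\partial_k \pm \delta_{\underline{b}(j),1}\,\bt'\bt''\,\underline{\xi}^{\underline{a}+\underline{b}-\underline{e}_j}\,\partial_k$ to see that the contributions at degree $|\underline{a}|+|\underline{b}|-1$ only modify coefficients of strictly larger height.

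The main obstacle is purely bookkeeping: keeping careful track of which multi-indices $\underline{e}$ and which parities (determining whether one gets a $\kappa$ or a $c$ coefficient, and whether that coefficient lives in $A_\zero$ or $A_\uno$) correspond to each of the subsystems $\tDelta_{-1}, \tDelta_0, \tDelta_{\zero^\uparrow}, \tDelta_\uno, \coprod_{z>t}\tDelta_z, \tDelta_0^\pm, \ldots$ when unpacking Definition~\ref{spec_supsubgrps-def}. Once this dictionary is set up (essentially in the first paragraph above), all ten statements (a)--(j) are simultaneous consequences; the affine-superscheme isomorphisms then follow by comparing superdimensions with the counts $N_{\zero^\uparrow}$, $N^{\overline{s}}_{t^\uparrow}$ etc.\ already computed in Proposition~\ref{factorization} and Proposition~\ref{props_supsubgrps}, so no new algebraic input is required.
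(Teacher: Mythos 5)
Your proposal is correct and follows essentially the same route as the paper, which itself gives no written argument beyond citing Propositions \ref{descr-G_zero}, \ref{descr-G_zero^pm}, \ref{semi-direct-G_zero}, \ref{factorization}, \ref{props_supsubgrps}, the identification of Theorem \ref{G_Ln-Aut} and the parametrisation (5.5). The generator-to-coefficient dictionary and the filtration/induction argument for the reverse inclusions that you spell out are precisely the details the paper leaves implicit.
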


\bigskip
\bigskip

\end{document}